	\theoremstyle{plain}
		\newtheorem{thm}{Theorem}[section]
		\newtheorem{constr}[thm]{Construction}
		\newtheorem{prop}[thm]{Proposition}
		\newtheorem{lemma}[thm]{Lemma}
		\newtheorem{cor}[thm]{Corollary}
	\theoremstyle{definition}
		\newtheorem{defin}[thm]{Definition}
	\theoremstyle{remark}
		\newtheorem{rem}[thm]{Remark}
	\numberwithin{equation}{section}
	\definecolor{MyBlue}{HTML}{0000FF}
\newcommand{\R}{{\mathbb R}}
\newcommand{\N}{{\mathbb N}}
\renewcommand{\emptyset}{\varnothing}
\renewcommand{\phi}{\varphi}
\renewcommand{\epsilon}{\varepsilon}
\newcommand{\indic}[1]{{\mathbbm 1}_{#1}}
\newcommand\diff{\mathop{}\!\mathrm{d}}
\let\P\relax\DeclareMathOperator{\P}{\mathbf{P}}
\DeclareMathOperator{\E}{\mathbf{E}}
\newcommand{\Expect}[2][]{%
	\E_{#1}\mathopen{}\mathclose\bgroup\left[\,#2\,\aftergroup\egroup\right]}
\newcommand{\condExpect}[3][]{%
	\E_{#1}\mathopen{}\mathclose\bgroup\left[\,#2\;\middle\vert\;#3\,\aftergroup\egroup\right]}
\newcommand{\Prob}[2][]{\P_{#1}\mathopen{}\mathclose\bgroup\left(\,#2\,\aftergroup\egroup\right)}
\newcommand{\condProb}[3][]{%
	\P_{#1}\mathopen{}\mathclose\bgroup\left(\,#2\;\middle\vert\;#3\,\aftergroup\egroup\right)}
\newcommand{\Acal}{\mathcal{A}}
\newcommand{\Bcal}{\mathcal{B}}
\newcommand{\Dcal}{\mathcal{D}}
\newcommand{\Ecal}{\mathcal{E}}
\newcommand{\Fcal}{\mathcal{F}}
\newcommand{\Gcal}{\mathcal{G}}
\newcommand{\Scal}{\mathcal{S}}
\newcommand{\boldxi}{{\boldsymbol{\xi}}}
\newcommand{\boldsigma}{{\boldsymbol{\sigma}}}
\newcommand{\boldK}{\mathcal{K}}
\newcommand{\X}{\mathcal{X}}
\newcommand\Xf{\mathfrak{X}}
\newcommand{\y}{{\mathbf{y}}}
\newcommand{\m}{\mathfrak{m}}
\renewcommand{\b}{\mathfrak{b}}
\newcommand{\q}{\mathfrak{q}}
\newcommand{\perm}{\mathfrak{S}}
\newcommand\mes{\mathcal{M}}
\newcommand{\diam}{\mathrm{diam}}
\newcommand{\size}[1]{|#1|}
\newcommand{\ball}{{B}}
\newcommand\bbM{\mathbb{M}}
\newcommand{\setGHP}{\bbM_{\mathrm{GHP}}}
\newcommand\bbK{\mathbb{K}}
\newcommand{\setGH}{\bbK_{\mathrm{GH}}}
\newcommand\drm{\mathrm{d}}
\newcommand{\dGHP}{\drm_{\mathrm{GHP}}}
\newcommand{\toGHP}{\xrightarrow{\mathrm{GHP}}}
\newcommand\ToGHP{\xRightarrow{\mathrm{GHP}}}
\newcommand{\dGH}{\drm_{\mathrm{GH}}}
\newcommand{\dH}{\updelta_{\mathrm{H}}}
\newcommand{\dP}{\updelta_{\mathrm{P}}}
\newcommand{\isom}{\sim}
\newcommand\distort{\mathrm{dis}}
\newcommand{\down}{\mathrm{Down}}
\newcommand\stpreceq{\preceq_{\mathrm{st}}}
\newcommand{\storder}{\unlhd_{\mathrm{st}}}
\newcommand{\orderGHP}{\unlhd}
\newcommand\Xtop{\mathbb{X}}
\newcommand\xtop{{x}}
\newcommand{\vertices}{{V}}
\newcommand{\Qsimple}{{Q}^{\mathrm{s}}}
\newcommand{\Qirr}{Q^{\mathrm{irr}}}
\newcommand{\distribGHP}{(\mathrm{d}),\,\mathrm{GHP}}
\newcommand\mesFin{\mes_{f}}
\newcommand\dTV{\drm_{\mathrm{ TV}}}
\newcommand\rev{\mathrm{rev}}
\newcommand\Occ{\mathrm{Occ}}
\newcommand\Qirrhex{Q^{\mathrm{irr},\Ngon 6}}
\newcommand\Law{\mathrm{Law}}
\newcommand\remove{\mathrm{Rem}}
\newcommand\Qfilled{Q^{\Ngonfilled 6}}
\newcommand\ehex{e_{\Ngon 6}}
\newcommand\open{\mathrm{Open}}
\newcommand\incrQirrhex{Q^{\mathrm{irr,\Ngon 6, \uparrow}}}
\newcommand\Loop{\mathrm{Loop}}
\newcommand\lb{\ell}
\newcommand\lbs{\lb^{\mathrm{s}}}
\newcommand\lbirr{\lb^{\mathrm{irr}}}
\newcommand\irr{{\mathrm{irr}}}
\newcommand\SComp{\mathfrak{B}^{\mathrm s}}
\newcommand\IrrComp{\mathfrak{B}\relax{}^{\mathrm{irr}}}
\newcommand\qsim{\q^{\mathrm s}}
\newcommand\qirr{\q^{\mathrm{irr}}}
\newcommand\Decs{\mathfrak{D}}
\newcommand\mubiased{\mu^{\mathrm{deg}}}
\newcommand\Glue{\mathrm{\Phi}}
\newcommand\sm{\mathfrak{s}}
\newcommand{\orBdry}{\vec\partial}
\newcommand\Detach{\Psi}
\newcommand{\CompRootedQ}[2]{\Quads_{#1}^{(#2)}}
\newcommand\DecSet{\Dcal}
\newcommand\ifrak{\mathfrak{j}}
\newcommand\Smax{\sm_{\mathrm{max}}}
\newcommand\Qirrfrak{\mathfrak{Q}^{\irr}}
\newcommand\Qsfrak{\mathfrak{Q}^{\mathrm{s}}}
\newcommand{\symbQ}{{\mathcal Q}}
\newcommand{\ensQsimple}{\symbQ^{\mathrm{s}}}
\newcommand{\ensQirred}{\symbQ^{\mathrm{irr}}}
\newcommand{\Ngon}[2][]{{\resizebox{.6em}{!}{\begin{tikzpicture}
			\node[regular polygon,regular polygon sides=#2,draw,minimum size=.1em,#1](#2-gon){};
			\foreach \X in {1,...,#2}{\fill (#2-gon.corner \X) circle[radius=1.5pt];}
			\end{tikzpicture}}}}
\newcommand{\Ngonfilled}[2][]{{\resizebox{.6em}{!}{\begin{tikzpicture}
			\node[regular polygon,regular polygon sides=#2,draw,fill=gray,minimum size=.1em,#1](#2-gon){};
			\foreach \X in {1,...,#2}{\fill (#2-gon.corner \X) circle[radius=1.5pt];}
			\end{tikzpicture}}}}
\newcommand{\ensQirredhex}{\symbQ^{{\mathrm{irr}},{\Ngon{6}}}}
\newcommand{\Qirred}{Q^{\mathrm{irr}}}
\newcommand{\Qirredhex}{Q^{{\mathrm{irr}},\Ngon{6}}}
\newcommand{\pat}{ {\mathtt P_0} }
\newcommand{\occPat}{\Occ_{\pat}}
\newcommand\rempat{\remove_{\pat}}
\newcommand{\card}[1]{\left|#1\right|}
\newcommand{\Quads}{\symbQ}
\newcommand\Var{\mathbf{Var}}
\newcommand\boldnu{\boldsymbol{\nu}}
\newcommand\rmUnif{\mathrm{Unif}}
\newcommand{\lnorm}[2]{|#2|_{#1}}
\newcommand\boldX{\boldsymbol{X}}
\newcommand\Cov{\mathbf{Cov}}
\newcommand{\pclass}[1]{\underline{#1}}
\title[A Tauberian approach to scaling limits of discrete structures]{A Tauberian approach to metric scaling limits of random discrete structures,\\ {\footnotesize \em with an application to random planar maps}}
\author{William FLEURAT}
\email{william.fleurat@universite-paris-saclay.fr}
\address{Université Paris-Saclay, France}
\begin{document}


\begin{abstract}
	We prove sandwich theorems and a Tauberian theorem in the space of compact metric measure spaces, endowed with the Gromov--Hausdorff--Prokhorov (GHP) topology.
	These results hold with respect to a close relative of Gromov's Lipschitz order.
	As a proof-of-concept of a general method to prove \textit{metric} scaling limits of random discrete structures, we give an application to the theory of random planar maps: the Brownian sphere is the scaling limit in the GHP topology of irreducible quandrangulations.
	Our main inputs are (i) the convergence of general quadrangulations to the Brownian sphere (Le~Gall, 2013; Miermont, 2013); and (ii) couplings where irreducible quadrangulations of the hexagon are ``grown'' by face-openings (Addario-Berry, 2014).
\end{abstract}


\vspace*{-.3em}
\maketitle


\section{Introduction}
\label{sec:introduction}

This work aims to present a novel approach to ``descend'' metric properties of large random discrete structures from a \textit{host} model to a related \textit{component} model, in a {condensation} regime.
For concreteness, we let $H_n$ (resp.~$C_n$) denote a uniformly random size-$n$ object from the \textit{host} model (resp.~from the \textit{component} model).
The \textit{condensation} phenomenon---which is ubiquitous in probabilistic combinatorics, see the related surveys~\cite{Janson12,Stufler20} and references therein---may be described as follows: with high probability as $n\to\infty$, one can find in $H_n$ an object $\mathfrak c(H_n)$ from the \textit{component} model, which is a random-size version of the $(C_n)_{n\geq1}$, with asymptotically linear size:
\begin{align*}
	\mathfrak c(H_n)\overset{(d)}{=} C_{N(n)},\qquad\text{with}\qquad \frac{N(n)}{n}\xrightarrow[n\to\infty]{}\theta
\end{align*}
in probability, for some $\theta>0$.

It is often possible to transfer asymptotics from $H_n$ to $\mathfrak c(H_n)$. Since the latter has the same distribution as $C_{N(n)}$, one may therefore hope to extract the asymptotic behavior of $C_n$ by comparison with $C_{N({m})}$, where $m$ is chosen so that $N(m)\approx n$, say $m=\lfloor n/\theta\rfloor$.
While transferring asymptotics from the non-randomized sequence $(C_n)_{n\geq1}$ to the randomized one $(C_{N(\lfloor n/\theta\rfloor)})_{n\geq1}$ is straightforward, the converse direction that we consider here is a problem of a \textit{Tauberian} nature, which requires ruling out bad behavior by means of an extra \textit{Tauberian assumption}.

To that end, we establish a \textit{Tauberian theorem} in the Gromov--Hausdorff--Prokhorov space of (isometry classes of) compact metric measure spaces, where the Tauberian assumption involves a variant of Gromov's Lipschitz order~\cite{Gromov99,GrieshammerRippl16}. Along the way, we prove that the corresponding order satisfies a sandwich theorem of independent interest.

As a proof-of-concept, we prove a new scaling limit result in the theory of random planar maps: random \textit{irreducible quadrangulations} properly normalized converge to the Brownian sphere, a result we obtain by reduction to the case of \textit{general quadrangulations} covered by Le~Gall~\cite{LeGall13} and Miermont~\cite{Miermont13}.
The proof decomposes into a few well-identified and largely independent steps, for which we provide arguments that we believe to be fairly robust. We therefore expect it to find applications beyond the specific model studied in this paper.

\subsection{Sandwich and Tauberian theorems in the GHP space}

We denote by $\bbM$ the set of \textit{compact metric measure spaces}---namely triples $\X=(X,d,\mu)$ where $(X,d)$ is a compact metric space and $\mu$ is a \textit{finite} measure on its Borel $\sigma$-algebra---and we denote by $\setGHP$ the quotient of $\bbM$ with respect to the relation of measure-preserving isometry: $(X,d,\mu)$ and $(X',d',\mu')$ are measure-preserving isometric if there is a bijective mapping $\phi \colon X\rightarrow X'$ such that $\phi_*\mu=\mu'$ and $d'(\phi(x),\phi(y))=d(x,y)$ for all $x,y\in X$.
We denote by $[X,d,\mu]$ the measure-preserving isometry class of $(X,d,\mu)\in\bbM$.
The quotient space $\setGHP$ may be equipped with the \textit{measured Gromov--Hausdorff} topology \cite{Fukaya87}, or \textit{Gromov--Hausdorff--Prokhorov} (GHP) topology, which is induced by the GHP metric $\dGHP$, see \cite{EvansWinter06,Miermont09,AbrahamDelmasHoscheit13}.
Convergence in the GHP topology will be denoted by the symbol ${\scriptstyle\smash{\toGHP}}$.

\subsubsection{A partial order on the GHP space}

We introduce a partial order on the space $\setGHP$ which is a variant of Gromov's Lipschitz order \cite{Gromov99,Shioya16}, see also Grieshammer and Rippl's unpublished work \cite{GrieshammerRippl16} for a closely related variant on the Gromov--Prokhorov (GP) space.

\begin{defin}\label{def:order-GHP}
	Given $\X=[X,d,\mu]$ and $\X'=[X',d',\mu']$ in $\setGHP$, we write $\X\orderGHP \X'$, if there exists a \textit{surjective}
mapping $\phi\colon X'\rightarrow X$ which is \textit{1-Lipschitz}, that is:
\begin{align}
	d\bigl(\phi(x'),\phi(y')\bigr)&\leq d'(x',y'),\label{eq:def-GHP-order-1} \qquad x',y'\in X';
\end{align}
and \textit{measure-contracting}, that is:
\begin{align}
\mu(A) &\leq \mu'\bigl(\phi^{-1}(A)\bigr),\qquad A\in\Bcal,\label{eq:def-GHP-order-2}
\end{align}
where $\Bcal$ is the Borel $\sigma$-algebra of $(X,d)$.
\end{defin}

We note that Definition~\ref{def:order-GHP} does not depend on the choice of representatives modulo measure-preserving isometry, as it should.
Let us give a concrete example.
	We denote by $V(G)$ the vertex-set of a graph $G$, which we equip with the graph distance $d_G$ and counting measure $\mu_G$.
	Given two graphs $G$ and $G'$, if there exists a surjective graph homomorphism $G'\to G$, then we have $[V(G),d_G,\mu_G]\orderGHP [V(G'),d_{G'},\mu_{G'}]$, see Lemma~\ref{lem:graph-hom-and-GHP-order}.

\subsubsection{A sandwich theorem}

The binary relation $\orderGHP$ actually defines a partial order on $\setGHP$ which interacts nicely with the GHP topology, as evidenced by the following sandwich theorem.

\begin{thm}[\textsc{GHP Sandwich theorem}]\label{thm:sandwich-thm-GHP}
	The binary relation $\orderGHP$ is a partial order on $\setGHP$, which obeys the following \emph{sandwich theorem}:
	\begin{align*}
	\begin{cases}
	\forall n,\quad\X^-_n\orderGHP\X_n\orderGHP\X^+_n,\\
	\X^-_n\toGHP\X_\infty,\\ \X^+_n\toGHP\X_\infty,
	\end{cases}
	\qquad \implies\qquad
	\Bigl[\X_n\toGHP\X_\infty\Bigr],
	\end{align*}
	for all sequences $(\X^-_n)_{n\geq1}$, $(\X_n)_{n\geq1}$, and $(\X^+_n)_{n\geq1}$ of elements of $\setGHP$, and all $\X_\infty\in\setGHP$.
\end{thm}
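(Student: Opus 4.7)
My plan is to prove the sandwich theorem by a subsequence argument, based on three intermediate results: (i) $\orderGHP$ is a partial order on $\setGHP$; (ii) the sequence $(\X_n)_{n\geq1}$ is GHP-precompact; and (iii) $\orderGHP$ is closed under GHP convergence, in the sense that if $\X_n\toGHP\X$, $\mathcal{Y}_n\toGHP\mathcal{Z}$, and $\X_n\orderGHP\mathcal{Y}_n$ for all $n$, then $\X\orderGHP\mathcal{Z}$. Granting (i)--(iii), the sandwich is routine: any subsequence of $(\X_n)$ admits, by (ii), a further GHP-convergent sub-subsequence with some limit $\X'$; applying (iii) along this sub-subsequence to $\X_n^-\orderGHP\X_n$ and to $\X_n\orderGHP\X_n^+$ yields $\X_\infty\orderGHP\X'\orderGHP\X_\infty$, whence $\X'=\X_\infty$ by antisymmetry, so all subsequential limits agree and $\X_n\toGHP\X_\infty$.

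For (i), reflexivity (via the identity) and transitivity (via composition of witnessing maps) are immediate. The only nontrivial part is antisymmetry: given $\X\orderGHP\X'$ via $\phi\colon X'\to X$ and $\X'\orderGHP\X$ via $\psi\colon X\to X'$, the composition $\psi\circ\phi\colon X'\to X'$ is a surjective 1-Lipschitz self-map of a compact metric space, hence an isometric bijection by a classical rigidity result. The same applies to $\phi\circ\psi$, forcing $\phi$ and $\psi$ to be mutually inverse bijective isometries. Chaining the measure-contracting inequalities with $A=X$ and $A=X'$ yields $\mu(X)=\mu'(X')$, and a routine complementation argument then upgrades this to $\phi_*\mu'=\mu$, so that $\X=\X'$ in $\setGHP$.

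Precompactness (ii) reduces to monotonicity under $\orderGHP$ of a few elementary geometric quantities: from $\X\orderGHP\X'$ one reads off $\diam(X)\leq\diam(X')$, that the minimal size of an $\varepsilon$-net of $X$ is bounded by that of $X'$, and, taking $A=X$ in \eqref{eq:def-GHP-order-2}, $\mu(X)\leq\mu'(X')$. The sandwich combined with $\X_n^\pm\toGHP\X_\infty$ therefore gives uniform bounds on the diameters, covering numbers, and total masses of $(\X_n)_{n\geq1}$, hence relative GHP-compactness by the standard criterion.

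The main obstacle is (iii). Along the given convergent subsequence, embed all $X_n,Y_n,X,Z$ isometrically into a common compact metric space $(E,d_E)$ so that $X_n\to X$ and $Y_n\to Z$ in Hausdorff distance and the pushed-forward measures converge in Prokhorov distance. The graphs $\Gamma_n=\{(y,\phi_n(y)):y\in Y_n\}\subset E\times E$ of the witnessing maps $\phi_n\colon Y_n\to X_n$ lie in a compact set, so Blaschke's selection theorem produces a Hausdorff limit $\Gamma\subset Z\times X$ along a sub-subsequence. The 1-Lipschitz inequality transfers to $\Gamma$ at the limit; in particular $(z,x),(z,x')\in\Gamma$ forces $d(x,x')\leq d(z,z)=0$, so $\Gamma$ is the graph of a function $\phi\colon Z\to X$, which is total by $Y_n\to Z$ and compactness of $X$, and surjective by the analogous argument using surjectivity of the $\phi_n$. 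The most delicate point is the measure-contracting property: I plan to test against continuous functions $f\colon X\to[0,1]$, extended continuously to $E$, so that the $n$-th analogue reads $\int f\,d\mu_{X_n}\leq\int f\circ\phi_n\,d\mu_{Y_n}$; passing to the limit via weak convergence of the measures on $E$ and continuous convergence of $f\circ\phi_n$ along $y_n\to y$ (a direct consequence of the Hausdorff convergence $\Gamma_n\to\Gamma$) yields $\int f\,d\mu_X\leq\int f\circ\phi\,d\mu_Z$, from which the measure-contracting property follows.
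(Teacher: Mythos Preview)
Your proposal is correct and follows essentially the same route as the paper. The paper also proves the sandwich by showing that $\orderGHP$ is (i) a partial order (antisymmetry via the rigidity of surjective $1$-Lipschitz self-maps on compacts, plus a complementation argument for the measures), (ii) \emph{down-compact} (your precompactness step, via monotonicity of diameter, covering numbers, and total mass under $\orderGHP$), and (iii) \emph{closed}; it then packages the subsequence argument as a general lemma. The only cosmetic differences are that the paper extracts the limiting map via an Arzel\`a--Ascoli theorem for maps between Hausdorff-convergent compacta rather than via Blaschke selection on graphs, and it verifies the limiting measure-contracting property by testing on closed sets with the Portmanteau theorem rather than against continuous functions; both pairs of techniques are equivalent here.
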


We note that our proof of this sandwich theorem relies on proving two natural properties of the partial order $\orderGHP$, namely that it is \textit{closed} and \textit{down-compact}, see Definition~\ref{def:closed-and-down-compact} and Lemma~\ref{lem:general-sandwich-theorem}. It is to be expected that natural variants of the order $\orderGHP$ for other ``Gromov--Hausdorff-type'' topologies also satisfy those properties and thus a sandwich theorem. 
In fact, Grieshammer and Rippl's partial order on the Gromov--Prokhorov space satisfies these properties---see Prop.~3.2 and Prop.~4.1 in \cite{GrieshammerRippl16}---and thus also a sandwich theorem. Also, our arguments can be readily adapted to the Gromov--Hausdorff topology by just dropping all mentions of the measures. 

\subsubsection{A second sandwich theorem}

We can go further and ask for a probabilistic version of the above sandwich theorem.
In the sequel, a \textit{random compact metric measure space} is a random variable $\Xf$ on some probability space, taking values in $\setGHP$ endowed with its Borel $\sigma$-algebra. Its distribution is therefore an element of the space $\mes_1(\setGHP)$ of Borel probability measures on $\setGHP$.
Naturally, $\mes_1(\setGHP)$ is endowed with the topology of weak convergence induced by the GHP topology. Such convergence will be represented by the symbol ${\scriptstyle\smash{\ToGHP}}$.
Stochastic ordering on $\mes_1(\setGHP)$ with respect to $\orderGHP$ will be denoted by $\storder$, that is $\P(\diff\X)\storder\P'(\diff\X)$ if and only if the inequality $\P(A)\leq \P'(A)$ holds for every $\orderGHP$-increasing event $A$---where the latter means that $\X'\in A$ whenever $\X\in A$ and $\X\storder\X'$.
We extend Theorem~\ref{thm:sandwich-thm-GHP} as follows.

\begin{thm}[\textsc{Probabilistic GHP Sandwich theorem}]\label{thm:sandwich-thm-GHP-random}
	The binary relation $\storder$ is a partial order on $\mes_1(\setGHP)$, which obeys the following \emph{sandwich theorem}:
	\begin{align*}
	\begin{cases}
	\forall n,\: \P^-_n(\diff\X)\storder\P_n(\diff\X)\storder\P^+_n(\diff\X),\\
	\P^-_n(\diff\X)\ToGHP\P_\infty(\diff\X),\\ \P^+_n(\diff\X)\ToGHP\P_\infty(\diff\X),
	\end{cases}
	\mkern-25mu \implies\quad
	\Bigl[\P_n(\diff\X)\ToGHP\P_\infty(\diff\X)\Bigr],
	\end{align*}
	for all sequences $(\P^-_n(\diff\X))_{n\geq1}$, $(\P_n(\diff\X))_{n\geq1}$, and $(\P^+_n(\diff\X))_{n\geq1}$ of elements of $\mes_1(\setGHP)$, and all $\P_\infty(\diff\X)\in\mes_1(\setGHP)$.
\end{thm}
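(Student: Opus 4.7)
The plan is to reduce Theorem~\ref{thm:sandwich-thm-GHP-random} to the two properties of $\orderGHP$ that already underlie Theorem~\ref{thm:sandwich-thm-GHP}, namely that it is a \emph{closed} and \emph{down-compact} partial order on the Polish space $\setGHP$. Reflexivity and transitivity of $\storder$ on $\mes_1(\setGHP)$ are immediate from those of $\orderGHP$. For antisymmetry, if $\P\storder\P'$ and $\P'\storder\P$ then $\int f\,\diff\P=\int f\,\diff\P'$ for every bounded continuous $\orderGHP$-increasing $f$; when $\orderGHP$ is closed, such functions separate probability measures on the Polish space $\setGHP$ (a Kamae--Krengel--O'Brien-type fact), yielding $\P=\P'$.

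\textbf{Closure of $\storder$ under weak limits, and tightness.} The next ingredient is that $\storder$ is preserved under weak limits: if $\P_n\storder\P_n'$ with $\P_n\ToGHP\P$ and $\P_n'\ToGHP\P'$, then $\P\storder\P'$. For any bounded continuous $\orderGHP$-increasing $f$, the inequality $\int f\,\diff\P_n\leq\int f\,\diff\P_n'$ passes to the limit; that such functions suffice to test $\storder$ is again a consequence of $\orderGHP$ being closed. Second, I need tightness of $(\P_n)$: since $\P_n^+\ToGHP\P_\infty$, the family $(\P_n^+)$ is tight, so for every $\epsilon>0$ there is a compact $K\subseteq\setGHP$ with $\P_n^+(K)\geq 1-\epsilon$ for all $n$. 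By a Strassen-type theorem applied to $\P_n\storder\P_n^+$, there is a coupling $(\X_n,\X_n^+)$ with $\X_n\orderGHP\X_n^+$ almost surely, hence $\X_n\in\down(K):=\bigcup_{\mathcal{Y}\in K}\down(\mathcal{Y})$ on the event $\{\X_n^+\in K\}$. Down-compactness of $\orderGHP$ makes each $\down(\mathcal{Y})$ compact, and combined with closedness of $\orderGHP$ one verifies that $\overline{\down(K)}$ is compact, yielding $\P_n(\overline{\down(K)})\geq 1-\epsilon$ uniformly in $n$, hence tightness.

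\textbf{Conclusion and main obstacle.} These ingredients combine via a standard subsequence argument: any subsequential weak limit $\mathbf{Q}$ of $(\P_n)$ along some $(n_k)$ satisfies $\P_\infty\storder\mathbf{Q}\storder\P_\infty$ by closure of $\storder$ under weak limits applied to $\P_{n_k}^-\storder\P_{n_k}\storder\P_{n_k}^+$; antisymmetry then forces $\mathbf{Q}=\P_\infty$, and tightness of $(\P_n)$ upgrades convergence along arbitrary subsequences to $\P_n\ToGHP\P_\infty$. I expect the main obstacle to lie in the tightness step, specifically the passage from compactness of each individual down-set $\down(\mathcal{Y})$ to compactness of $\overline{\down(K)}$ for $K\subseteq\setGHP$ compact; this requires a semicontinuity argument exploiting that the relation $\{(\X,\mathcal{Y}):\X\orderGHP\mathcal{Y}\}\subseteq\setGHP^2$ is closed and has compact fibers over the second coordinate. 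A secondary point is to verify that the required Strassen-type representation for $\storder$ applies in $(\setGHP,\orderGHP)$, but this again follows from the closed, down-compact partial-order structure that already underlies Theorem~\ref{thm:sandwich-thm-GHP}.
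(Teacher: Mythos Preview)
Your proposal is correct and follows essentially the same route as the paper: establish that $\storder$ is a closed, down-compact partial order on the Polish space $\mes_1(\setGHP)$, then run the subsequence argument (Lemma~\ref{lem:general-sandwich-theorem}). Two small simplifications are worth noting. First, your tightness step via a Strassen coupling works, but the paper avoids Strassen entirely: the complement $\setGHP\setminus\down_{\orderGHP}(K)$ is an $\orderGHP$-increasing event, so $\P_n\storder\P_n^+$ gives $\P_n(\setGHP\setminus\down_{\orderGHP}(K))\leq\P_n^+(\setGHP\setminus\down_{\orderGHP}(K))\leq\P_n^+(\setGHP\setminus K)$ directly from the definition of $\storder$. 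Second, your ``main obstacle'' is not one: down-compactness as stated in Definition~\ref{def:closed-and-down-compact} already asserts that $\down_{\orderGHP}(K)$ is compact for \emph{any} compact $K$, not just singletons, so no passage from individual fibers to $\overline{\down(K)}$ is needed.
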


We actually prove in Lemma~\ref{lem:lifting-closed-and-down-compact} that the two sufficient conditions we singled out for a sandwich theorem to hold (the order being \textit{closed} and \textit{down-compact}) automatically lift to the corresponding stochastic order, provided the underlying space is Polish.

\subsubsection{A Tauberian theorem}

The prototypical use case we have in mind for Theorem~\ref{thm:sandwich-thm-GHP-random} is ``de-randomizing'' or ``conditioning'' scaling limit results.
Suppose we are trying to prove that $\Xf_n$ properly rescaled converges to some $\Xf_\infty$ in distribution with respect to the GHP topology, but that we only have access to such a convergence for the sequence $(\Xf_{N_n},n\geq 1)$ where $N_n\approx n$ is a ``randomized time'' independent from the original sequence $(\Xf_n,n\geq1)$.
This randomization has an averaging effect on the distribution of the random variables at play, which may well hide oscillatory behavior of the original sequence.

This is strongly reminiscent of \textit{Tauberian problems} in real or complex analysis: given a transform that regularizes a sequence, \textit{Tauberian theorems} provide conditions under which the asymptotic behavior of the transform implies related asymptotics for the original sequence. These \textit{Tauberian} conditions ensure that the original sequence does not behave too wildly.
In our setting, the Tauberian assumption will be phrased in terms of stochastic increase with respect to the order $\orderGHP$.

For $a,b>0$ and $[X,d,\mu]\in\setGHP$, we define the rescaled space $(a,b)\cdot[X,d,\mu]=[X,a\cdot d,b\cdot\mu]$.
We say that a positive sequence $(x_n)_{n\geq1}$ is \textit{tame} if $x_{n+k_n}/x_n\to 1$ whenever $k_n=o(n)$.
For instance, $(n^{\alpha}(\log n)^\beta)_{n\geq2}$ is tame for every $\alpha,\beta\in\R$.

\begin{thm}[\textsc{GHP Tauberian theorem}]\label{thm:GHP-Tauberian-thm}
	Let $\left(\Xf_n, n\geq1\right)$ and $\Xf_\infty$ be random compact metric measure spaces.
	Let $(a_n)_{n\geq1}$, $(b_n)_{n\geq1}$ be tame sequences, and let $(N_n)_{n\geq1}$ be random variables which are independent of $(\Xf_n)_{n\geq1}$ and $\Xf_\infty$.
	Assume that:
	\begin{enumerate}
		\item 
			\emph{[Convergence with randomized time.]}
			In distribution for the GHP topology,
			\begin{align*}
				(a_n,b_n)\cdot\Xf_ {N_n}\xrightarrow[n\to\infty]{}\Xf_\infty.
			\end{align*}
		\item
			\emph{[Concentration of the randomized time.]}
			In probability,
			\begin{align*}
				\frac{N_n}{n}\xrightarrow[n\to\infty]{}1.
			\end{align*}
		\item
			\emph{[Tauberian assumption.]}
			The sequence $(\Xf_n,n\geq 1)$ is $\orderGHP$-stochastically non-decreasing, that is, with $\P_n(\diff\X)$ the law of $\Xf_n$ for $n\geq1$,
			\begin{align*}
				\P_1(\diff\X)\storder\P_2(\diff\X)\storder\P_3(\diff\X)\storder\dots.
			\end{align*}
	\end{enumerate}
	Then, $(a_n,b_n)\cdot\Xf_n\to\Xf_\infty$ in distribution for the GHP topology.
\end{thm}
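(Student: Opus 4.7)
Our plan is to deduce the conclusion from the probabilistic sandwich theorem (Theorem~\ref{thm:sandwich-thm-GHP-random}) by constructing, for each $n$, laws $\P_n^- \storder \Law((a_n, b_n) \cdot \Xf_n) \storder \P_n^+$, both converging weakly to $\Law(\Xf_\infty)$. The essential preliminary, immediate from Definition~\ref{def:order-GHP}, is that for any scalars $a, b > 0$ the rescaling $\X \mapsto (a, b) \cdot \X$ preserves $\orderGHP$: a $1$-Lipschitz, measure-contracting surjection remains so when both metrics and both measures are scaled by the same factors. Hence rescaling also preserves $\storder$ at the level of laws.

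For the upper bound, we use the qualitative concentration $N_k/k \to 1$ in probability to fix a non-increasing, slowly varying sequence $\epsilon_k \downarrow 0$ with $\P(|N_k/k - 1| > \epsilon_k) \leq \epsilon_k$. Set $k_n^+ := n + \lceil 3\epsilon_n n \rceil$ and $\tilde N_n^+ := \max(N_{k_n^+}, n)$, so that $\tilde N_n^+ \geq n$ deterministically and $k_n^+/n \to 1$. On the high-probability event $\{|N_{k_n^+}/k_n^+ - 1| \leq \epsilon_{k_n^+}\}$, the slow variation of $\epsilon_\bullet$ forces $N_{k_n^+} \geq n$, and thus $\tilde N_n^+ = N_{k_n^+}$ there. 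By the Tauberian assumption~(3) together with the independence of $N_\bullet$ from $\Xf_\bullet$, the law of $\Xf_{\tilde N_n^+}$ is a mixture of $\Law(\Xf_k)$ for $k \geq n$, each of which $\storder$-dominates $\Law(\Xf_n)$; rescaling then preserves this inequality, giving $\Law((a_n, b_n) \cdot \Xf_n) \storder \P_n^+ := \Law((a_n, b_n) \cdot \Xf_{\tilde N_n^+})$.

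The convergence $\P_n^+ \ToGHP \Law(\Xf_\infty)$ combines three ingredients: assumption~(1) at index $k_n^+$ yields $(a_{k_n^+}, b_{k_n^+}) \cdot \Xf_{N_{k_n^+}} \ToGHP \Xf_\infty$; tameness of $(a_n), (b_n)$ with $k_n^+/n \to 1$ then lets us swap the scaling $(a_{k_n^+}, b_{k_n^+})$ for $(a_n, b_n)$ by continuity of the rescaling map (Slutsky's lemma); and the total variation distance between $\Law(\Xf_{\tilde N_n^+})$ and $\Law(\Xf_{N_{k_n^+}})$ is bounded by $\P(\tilde N_n^+ \neq N_{k_n^+}) \to 0$, which survives push-forward by the measurable rescaling map and dominates weak convergence. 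The lower law $\P_n^-$ is built by the symmetric recipe $k_n^- := n - \lceil 3\epsilon_n n \rceil$, $\tilde N_n^- := \min(N_{k_n^-}, n)$, with the slow variation of $\epsilon_\bullet$ ensuring $N_{k_n^-} \leq n$ on the good event. Theorem~\ref{thm:sandwich-thm-GHP-random} applied to the sandwich $\P_n^- \storder \Law((a_n, b_n) \cdot \Xf_n) \storder \P_n^+$ with common limit $\Law(\Xf_\infty)$ then yields the theorem. The main delicate point is orchestrating the scales $k_n^\pm$ against the concentration rate $\epsilon_k$: the drifts $k_n^\pm - n$ must be $o(n)$ so that tameness absorbs the scaling mismatch, yet wide enough that $N_{k_n^\pm}$ lies on the required side of $n$ with probability tending to one, making the truncation error in total variation negligible.
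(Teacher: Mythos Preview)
Your proof is correct and follows essentially the same route as the paper's: construct truncated random indices $\tilde N_n^\pm$ lying deterministically on either side of $n$, show they agree with $N_{k_n^\pm}$ with probability $1-o(1)$ so that assumption~(1) transfers, absorb the scaling mismatch $(a_{k_n^\pm},b_{k_n^\pm})\to(a_n,b_n)$ via tameness, and conclude by the probabilistic sandwich theorem. The paper's choice of offsets is marginally cleaner (it defines $k_n$ directly as the smallest shift making $\P(N_{n-k_n}<n<N_{n+k_n})\geq 1-\epsilon$), which avoids your appeal to a ``slowly varying'' $\epsilon_k$; but that appeal is harmless, since any non-increasing null sequence $c_k$ with $\P(|N_k/k-1|>c_k)\leq c_k$ can be replaced by its Ces\`aro mean $\epsilon_n=\tfrac1n\sum_{k\leq n}c_k$, which still majorizes $c_k$, still tends to $0$, and satisfies $\epsilon_{n\pm o(n)}/\epsilon_n\to1$.
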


\begin{rem}\label{rem:GHP-taub-theorem-case-lim-not-1}
	If Assumption \emph{(2)}~is replaced by ``$N_n/n\rightarrow c$ in probability'', $c>0$, then we obtain instead that $(a_{\lfloor n/c\rfloor},b_{\lfloor n/c\rfloor})\cdot\Xf_n\to\Xf_\infty$ in distribution for the GHP topology. This is an easy consequence of the case $c=1$.
\end{rem}

\subsection{Application to random planar maps}

A (rooted) \textit{planar map} is a proper embedding in the two-dimensional sphere, without edge-crossings, of a finite connected planar graph with a distinguished oriented edge.
We allow loops and multiple edges.
We identify two planar maps if there is an orientation-preserving homeomorphism of the sphere that maps one onto the other while matching their root-edges.
Given a map $\m$, we equip its vertex-set $V(\m)$ with the graph distance $d_\m$ and the counting measure $\mu_\m=\sum_{v\in V(\m)}\delta_v(\diff x)$, thus forming a discrete metric measure space $(V(\m),d_\m,\mu_\m)$. A \textit{quadrangulation} is a planar map whose faces all have degree $4$.

A major breakthrough in the theory of random planar maps was the proof---by Le~Gall\footnote{
		In the same paper, Le Gall also proved the convergence of suitably renormalized $p$-angulations with $n$ faces, for $p=3$ and all \textit{even} $p\geq 4$.
	}~\cite{LeGall13}
and independently by Miermont~\cite{Miermont13}---that the \textit{Brownian sphere} is the scaling limit of uniformly random quadrangulations with $n$ faces as $n\to\infty$.
In this paper, we will use their result\footnote{
	To be precise, in \cite{LeGall13,Miermont13}, the authors show convergence to $(S,d)$ in the Gromov--Hausdorff topology, which concerns (isometry classes of) compact metric spaces without mention of measures on them. See the discussion by Le~Gall after \cite[Thm~7]{LeGall19} on the measure $\mu$ and on why the result may be strengthened to GHP convergence to the \textit{measured} Brownian sphere $(S,d,\mu)$. Theorem~\ref{thm:Le-Gall--Miermont} is the GHP version thus obtained.
}---reproduced hereafter---as a definition of the Brownian sphere $(S,d,\mu)$, see also Marckert and Mokkadem \cite{MarckertMokkadem06} for its original definition using the so-called \textit{Brownian snake}, and Miller and Sheffield~\cite{MillerSheffield21} for an axiomatic characterization.

\begin{thm}[Le Gall, 2013; Miermont, 2013]\label{thm:Le-Gall--Miermont}
	Letting $Q_n$ be uniformly random in the set $\Quads_n$ of quadrangulations with $n$ faces, we have, in distribution for the GHP topology,
	\begin{align*}
	\left(\vertices(Q_n),\,\Bigl(\frac{9}{8n}\Bigr)^{1/4}\cdot d_{Q_n},\,\frac{1}{n+2}\cdot\mu_{Q_n}\right)\xrightarrow[n\rightarrow\infty]{}(S,d,\mu),
	\end{align*}
	where $(S,d,\mu)$ is the Brownian sphere.
\end{thm}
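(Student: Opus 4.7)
The plan is to obtain this statement as a strengthening of the Gromov--Hausdorff convergence established by Le~Gall \cite{LeGall13} and Miermont \cite{Miermont13}, enhanced with the convergence of the rescaled counting measure on vertices. Both proofs proceed via the Cori--Vauquelin--Schaeffer (CVS) bijection, which encodes a uniformly random rooted and pointed quadrangulation with $n$ faces as a uniformly random labeled plane tree $T_n$ with $n$ edges, in such a way that the graph distance on $V(Q_n)$ can be recovered (up to an additive $O(1)$) from the labels. Rescaling the contour and label processes of $T_n$ and invoking Chassaing--Schaeffer-type asymptotics yields the joint convergence of these encoding processes towards a normalized Brownian excursion $\mathbf{e}$ together with the head of the Brownian snake driven by $\mathbf{e}$; the Brownian sphere metric $d$ on $S$ is obtained as the limiting pseudo-distance constructed from these processes, which gives the GH convergence.

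To upgrade from GH to GHP, I would show that the rescaled counting measure $\mu_{Q_n}/(n+2)$ converges to the mass measure $\mu$. Under CVS, the $n+2$ vertices of $Q_n$ correspond canonically to the $n+1$ vertices of $T_n$ plus one distinguished point, so $\mu_{Q_n}/(n+2)$ is—up to a negligible atom of mass $O(1/n)$—the uniform measure on $V(T_n)$. Pulled back through the contour encoding $[0,1]\to T_n$, this becomes Lebesgue measure on $[0,1]$, whose push-forward through the limiting projection $[0,1]\to \mathcal{T}_{\mathbf{e}}$ is the natural mass measure on the CRT. Finally, the Brownian sphere $(S,d,\mu)$ is by construction the quotient of $\mathcal{T}_{\mathbf{e}}$ by an equivalence relation read off from the Brownian snake, and $\mu$ is precisely the push-forward of the CRT mass measure through this quotient.

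Packaging everything into a single GHP statement is then routine: use the Le~Gall--Miermont near-isometry as a correspondence between $(V(Q_n),(9/(8n))^{1/4} d_{Q_n})$ and $(S,d)$, embed both into a common metric space via this correspondence, and verify that the rescaled counting measure and $\mu$ are close in Prokhorov distance once transported there. The only subtlety is to ensure that the correspondence realizing the near-isometry is compatible with the map realizing the push-forward of measures; this is essentially automatic because both are built from the same CVS encoding and the same quotient construction of the Brownian sphere. The main obstacle in practice is the careful bookkeeping in this last step, and a complete argument is given in the discussion following \cite[Thm~7]{LeGall19}, to which I would refer for the details.
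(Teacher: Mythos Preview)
The paper does not give its own proof of this statement: it is cited as a known result of Le~Gall and Miermont, with the GHP upgrade attributed in a footnote to the discussion after \cite[Thm~7]{LeGall19}. Your sketch is a reasonable outline of that standard argument and, in particular, correctly identifies the same reference for the measure part, so there is nothing to compare beyond noting that your approach matches the one the paper points to.
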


Convergence to the Brownian sphere has since then been proven for many other models: quadrangulations with no pendant vertices \cite{BeltranLeGall13}, general maps \cite{BettinelliJacobMiermont14}, bipartite maps \cite{Abraham16}, simple quadrangulations/triangulations \cite{AddarioBerryAlbenque17}, cubic maps \cite{CurienLeGall19}, 2-connected quadrangulations \cite{AddarioBerryWen17}, odd-angulations \cite{Addario-BerryAlbenque21}, Eulerian triangulations \cite{Carrance21},  maps with prescribed face degrees \cite{Marzouk22}, general/simple cubic planar graphs \cite{AlbenqueFusyLehericy23, Stufler24}, and Weil--Petersson punctured hyperbolic surfaces~\cite{BuddCurien25}.

We prove a new scaling limit result of this vein, for the model of \textit{irreducible quadrangulations}. A quadrangulation is \textit{irreducible} if it has at least four faces, no multiple edges, and if all its 4-cycles bound a face.

\begin{thm}\label{thm:scaling-limit-irred}
	Letting $\Qirr_n$ be uniformly random in the set $\ensQirred_n$ of \textit{irreducible} quadrangulations with $n$ faces, we have, in distribution for the GHP topology,
	\begin{align*}
		\left(\vertices(\Qirr_n),\,\frac{1}{(8n)^{1/4}}\cdot d_{\Qirr_n},\,\frac{1}{n+2}\cdot\mu_{\Qirr_n}\right)\xrightarrow[n\rightarrow\infty]{\distribGHP}(S,d,\mu),
		\end{align*}
	where $(S,d,\mu)$ is the Brownian sphere.
\end{thm}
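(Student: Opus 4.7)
I would deduce Theorem~\ref{thm:scaling-limit-irred} from Theorem~\ref{thm:Le-Gall--Miermont} by an application of the Tauberian theorem (Theorem~\ref{thm:GHP-Tauberian-thm}), taking as ``host'' model the uniform general quadrangulations $Q_n$ and as ``component'' model the irreducible ones $\Qirr_n$. The bridge between them is the classical substitution decomposition along $2$-cuts: each $Q\in\Quads$ decomposes uniquely into an ``irreducible core'' $\mathfrak c(Q)\in\ensQirred$ decorated with smaller quadrangulations (with boundary) slotted into its faces. Applied to $Q_n$, this yields a canonical core $\mathfrak c(Q_n)$ with the distributional identity $\mathfrak c(Q_n)\overset{(d)}{=}\Qirr_{N_n}$, where $N_n:=|F(\mathfrak c(Q_n))|$ is independent of the decorations at fixed value. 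A standard generating function analysis in this subcritical condensation regime then yields the concentration $N_n/n\to\theta$ in probability for some $\theta\in(0,1)$, which a comparison of scalings below pins down to $\theta=1/9$.

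\textbf{Convergence with randomized time (hypotheses (1)--(2)).} Here I would show that the rescaled core converges in GHP distribution to the Brownian sphere. The subcritical condensation regime forces the decorations attached to the core faces to be of sub-polynomial size in $n^{1/4}$; this should give both that the intrinsic core graph metric differs from the restriction of $d_{Q_n}$ to core vertices by $o(n^{1/4})$, and that core vertices form an $o(n^{1/4})$-net in $V(Q_n)$ for $d_{Q_n}$. Combined with Theorem~\ref{thm:Le-Gall--Miermont} and the fact that core vertices account for a proportion $N_n/(n+2)\to\theta$ of the total mass, this gives GHP convergence of $\mathfrak c(Q_n)$, rescaled by $(a_n,b_n):=\bigl((9/(8n))^{1/4},\,1/(\theta n+2)\bigr)$, to $(S,d,\mu)$. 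Via the identity $\mathfrak c(Q_n)\overset{(d)}{=}\Qirr_{N_n}$, this is hypothesis~(1) with the tame sequences $(a_n,b_n)$, while hypothesis~(2) is the concentration $N_n/n\to\theta=1/9$. Remark~\ref{rem:GHP-taub-theorem-case-lim-not-1} with $c=1/9$ will then translate the theorem's conclusion into convergence of $(a_{9n},b_{9n})\cdot\Qirr_n$; a direct computation gives $a_{9n}=(8n)^{-1/4}$ and $b_{9n}=1/(n+2)$, which are precisely the rescaling factors of Theorem~\ref{thm:scaling-limit-irred}.

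\textbf{The Tauberian assumption (hypothesis (3)), the main obstacle.} The key remaining task is to verify that the laws of $(\Qirr_n)_n$ form a $\storder$-non-decreasing sequence. This is where Addario-Berry's face-opening couplings enter the picture, on irreducible quadrangulations \emph{of the hexagon} $\ensQirredhex$: there is a Markovian growth chain whose transition opens a face---a local combinatorial move incrementing the face count by one. Inspection of the local picture shows that the inverse ``face-collapse'' is a surjection which is $1$-Lipschitz for the graph metrics (contracting a face can only decrease distances) and measure-contracting (preimages have bounded size), hence realizes $\Qirrhex_m\orderGHP\Qirrhex_{m+1}$ in the canonical coupling given by the chain, and therefore yields $\storder$-monotonicity of $(\Qirrhex_m)_m$. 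It remains to transfer this monotonicity from the hexagonal to the spherical setting via an appropriate canonical bijection, while checking that the bijection preserves $\orderGHP$-comparisons. Once hypothesis~(3) is in hand, Theorem~\ref{thm:GHP-Tauberian-thm} delivers the desired scaling limit.
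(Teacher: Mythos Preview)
Your overall strategy matches the paper's, but there is a genuine gap in how you handle hypothesis~(3), and a subtlety you gloss over in~(1).

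For~(3), you propose to ``transfer this monotonicity from the hexagonal to the spherical setting via an appropriate canonical bijection, while checking that the bijection preserves $\orderGHP$-comparisons.'' No such bijection is available: $\ensQirred_n$ and $\ensQirredhex_m$ do not even have the same cardinality for any natural $m=m(n)$, and there is no combinatorial operation that closes the hexagon and commutes with face-opening so as to preserve $\orderGHP$. The paper does \emph{not} establish $\storder$-monotonicity for $(\Qirr_n)_n$ at all. Instead, it applies Theorem~\ref{thm:GHP-Tauberian-thm} to the \emph{hexagonal} sequence $(\Qirrhex_m)_m$, for which Addario-Berry's coupling yields monotonicity directly, and uses a total-variation coupling (Lemma~\ref{lem:comparison-irred-vs-hexagon}) to shuttle the \emph{convergence}---not the monotonicity---between the two models. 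Concretely: one fixes an irreducible quadrangulation $\pat$ of the hexagon with nine inner faces and shows that gluing $\pat$ into the hexagon of a uniform $\Qirrhex_{n-9}$ and re-rooting uniformly produces a law at $o(1)$ total variation from $\rmUnif(\ensQirred_n)$. This coupling is used once to pass from $\Qirr_{N_n}\to(S,d,\mu)$ to $\Qirrhex_{N_n-9}\to(S,d,\mu)$, and then again, after the Tauberian step delivers $\Qirrhex_n\to(S,d,\mu)$, to pass back to $\Qirr_{n}\to(S,d,\mu)$. Since $\pat$ has bounded diameter and bounded vertex count, the GHP perturbation from the gluing is $O(a_n+b_n)\to 0$, so these transfers are cheap once the TV coupling is in hand.

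For~(1), your argument that the core $\mathfrak c(Q_n)$ inherits the scaling limit is underspecified on the measure side. Knowing that core vertices carry a $\theta$-fraction of the total mass is not enough to conclude GHP convergence with the \emph{renormalized counting measure on the core}: one must also show that the remaining $(1-\theta)$-mass, when projected from the decorations onto their attachment points in the core, spreads evenly rather than concentrating. The paper does this via an exchangeability argument (its Step~III, Proposition~\ref{prop:Add-Wen-in-intro}): conditionally on the core size, the decorations are exchangeable across core faces, and a variance bound then forces the projected measure to be close in Prokhorov distance to a multiple of the uniform measure on core vertices. Separately, your claim that decorations have ``sub-polynomial size in $n^{1/4}$'' is plausible but not how the paper proceeds: rather than bounding decoration diameters \emph{a priori}, the paper uses a topological argument (its Step~II, Lemma~\ref{lem:informal-lemma-star-decomp}) exploiting that the decorations are attached along $4$-cycles of rescaled diameter $O(n^{-1/4})$ and that the Brownian sphere is $2$-connected with diffuse measure, which forces decoration diameters to vanish \emph{a posteriori}.
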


A complete proof of Theorem~\ref{thm:scaling-limit-irred} modulo lemmas is given at the end of Section~\ref{sec:structure-proof}.
	It uses our GHP Tauberian theorem to transfer Le~Gall and Miermont's result (Theorem~\ref{thm:Le-Gall--Miermont}) that \emph{general} quadrangulations converge to the Brownian sphere.
	This is accomplished through several largely independent steps: (i)
	the condensation phenomenon allows one to find in a large \emph{general} quadrangulation a large \emph{irreducible}  one, whose size is random but concentrated, as is known from Gao and Wormald \cite{GaoWormald99}; (ii) convergence to the Brownian sphere is transferred from the former to the latter \emph{via} robust topological and measure-theoretic arguments; (iii) the Tauberian theorem in GHP space allows to transfer from irreducible quadrangulations with \emph{random} size to \emph{deterministic} size. The \textit{Tauberian assumption} we need to check will follow from the existence of increasing couplings for irreducible quadrangulations of the hexagon---constructed by Addario-Berry \cite{Addario-Berry14}---and a probabilistic argument to ``remove the hexagon''.

	While the ``upwards'' movement to derive asymptotics of a \textit{host} model from that of a \textit{component} model has been carried successfully in a number of works on random planar maps---see~\cite{AddarioBerryWen17,AlbenqueHoldenSun20,AlbenqueFusyLehericy23,Stufler24} for scaling limits and~\cite{Stufler23_JEMS,Stufler23_Bernoulli} for local limits---the converse direction, which requires a means to ``condition'' a randomized version of the {component model}, has to the best of our knowledge only been considered in \cite{GwynneMiller17} and \cite{BettinelliCurienFredesSepulveda25}.
	In both of these works, a portion of the unconditioned planar map is removed, and then used as a proxy to compare the conditioned and unconditioned versions: in \cite{GwynneMiller17}, Gwynne and Miller rely on fine properties of the peeling process; while in \cite{BettinelliCurienFredesSepulveda25}, Bettinelli, Curien, Fredes, and Sepulveda extract geometric estimates and absolute continuity relations using the known combinatorics of the model.

	By contrast, our unconditioning step is an immediate application of Theorem~\ref{thm:GHP-Tauberian-thm}, provided that the corresponding stochastic monotonicity assumption holds. The caveat is that the latter condition is arguably non-trivial to check, since it requires to provide couplings between uniformly random maps of size $n$ and $n+1$ (from the model of interest), in a ``distance-increasing'' and ``measure-increasing'' way. Still, Caraceni and Stauffer \cite{CaraceniStauffer20,CaraceniStauffer23} have shown that couplings similar to that of Addario-Berry \cite{Addario-Berry14} exist for a number of other models beyond irreducible quadrangulations.

\subsection{Organisation of the article}

In Section~\ref{sec:structure-proof}, taking for granted the GHP Tauberian theorem (Theorem~\ref{thm:GHP-Tauberian-thm}), we decompose our proof of Theorem~\ref{thm:scaling-limit-irred} into a few key lemmas, and conclude the proof modulo these lemmas.
The GHP sandwich theorems (Theorems~\ref{thm:sandwich-thm-GHP} and~\ref{thm:sandwich-thm-GHP-random}) and GHP Tauberian theorem (Theorem~\ref{thm:GHP-Tauberian-thm}) are proven in Section~\ref{sec:sandwich-tauberian-GHP}, along with various properties of the order $\orderGHP$.
Subsequent sections are dedicated to the proof of the key lemmas entering in the proof of Theorem~\ref{thm:scaling-limit-irred}. They go as follows: Section~\ref{sec:largest-components} elaborates on the condensation phenomenon for irreducible components of general quadrangulations observed by Gao and Wormald~\cite{GaoWormald99}; Section~\ref{sec:bottlenecks-and-Hausdorff-cvg} supplies robust topological and measure-theoretic arguments to transfer scaling limit results to ``the largest component'', focusing on obtaining the metric (Gromov--Hausdorff) part of the convergence; Section~\ref{sec:exch-and-Prokhorov} adds the convergence of the measures (Prokhorov part), using ideas of Addario-Berry and Wen~\cite{AddarioBerryWen17}; Section~\ref{sec:growing-irred-of-hexagon} uses couplings by Addario-Berry~\cite{Addario-Berry14} to verify the Tauberian assumption on irreducible quadrangulations \textit{of the hexagon}; and lastly in Section~\ref{sec:removing-hexagon}, we ``get rid of the hexagon''. A concluding section terminates this work.

\subsection*{Acknowledgments}

The author is thankful to Bénédicte Haas and Svante Janson for their comments on a previous version of this work, and to Grégory Miermont for  his careful reading and for many discussions.

This work has been carried for the most part at the ENS de Lyon, and finished at the Université Paris-Saclay (Orsay, France) with support from
SuPerGRandMa (ERC Consolidator Grant no 101087572).


\section{Structure of the proof of Theorem~\ref{thm:scaling-limit-irred}}
\label{sec:structure-proof}

In this section, we take for granted the ``GHP Tauberian theorem'', Theorem~\ref{thm:GHP-Tauberian-thm}, which will be proved in Section~\ref{sec:sandwich-tauberian-GHP}, and jump straight ahead to the description of the key steps in our proof of Theorem~\ref{thm:scaling-limit-irred}.

We will repeatedly use the notation $\rmUnif(F)$ or $\rmUnif_F$ to denote the uniform probability distribution on a finite non-empty set $F$.

We assume that for every quadrangulation $\q$, we have fixed some arbitrary total order on the vertex-set, face-set, and edge-set of $\q$, which we denote by $<_\q$ in all four cases.
For instance, such orders can be constructed by performing a breadth-first exploration with clockwise priority, starting from the root oriented edge of $\q$.

\subsection{Step I: The largest irreducible component}
Let $4\leq\ell\leq n$.
In Section~\ref{sec:largest-components}, we define a specific set of quadrangulations $\widetilde\Quads$, and we consider, for $n,\ell\geq1$, the set $\DecSet_{n,\ell}$ of tuples $(\q_1,\dots,\q_\ell)\in\widetilde\Quads^\ell$ such that $\sum_i|\q_i|=n+\ell$.
We then show that given an irreducible quadrangulation $\qirr\in\ensQirred_\ell$, a collection $\Decs=(\q_1,\dots,\q_\ell)\in\DecSet_{n,\ell}$, and an integer $\ifrak\in\{1,\dots,4n\}$, we can suitably glue each $\q_j$ to the $j$-th face of $\qirr$ in $<_{\qirr}$-order.
This gives a quadrangulation $\q'$ rooted at the root-edge of $\qirr$, which allows to define $\q=\Gamma(\qirr;\Decs,\ifrak)$ the re-rooting of $\q'$ at the $\ifrak$-th oriented edge in $<_{\q'}$-order.

This construction reflects the fact that quadrangulations contain ``irreducible components'' which can serve as ``building blocks'' to reconstruct them.
A striking fact with most notions of components in planar maps is that there is usually a unique largest component which gathers a positive and well-concentrated proportion of the mass \cite{GaoWormald99, BanderierFlajoletSchaefferSoria01}.
We will use this line of ideas to prove the following.

\begin{lemma}\label{lem:summary-largest-comp}
	There exist random integers $(N_n)_{n\geq 4}$, and random tuples
	\begin{align*}
	(\Qirrfrak_n,\Decs_n,\ifrak_n)
		\qquad\text{in}\qquad\ensQirred_{N_n}\times\DecSet_{n,N_n}\times\{1,\dots,4n\}
	\end{align*}
	for $n\geq 4$, such that:
	(i) the law of $Q'_n:=\Gamma(\Qirrfrak_n;\Decs_n,\ifrak_n)$ is at $o(1)$ total variation distance from $\rmUnif(\Quads_n)$ as $n\to\infty$; (ii)
	$\Qirrfrak_n$ is uniformly distributed in $\smash{\ensQirred_{N_n}}$ conditionally on $N_n$; (iii) we have the deterministic bound $|K_n|\leq n^{2/3+0.1}$ where  $K_n=N_n-\lfloor n/9\rfloor$; and, (iv)  the collection $\Decs_n=(\q_1,\dots,\q_{N_n})\in\DecSet_{n,N_n}$ is exchangeable conditionally on $N_n$.
\end{lemma}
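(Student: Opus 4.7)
My plan is to construct $(N_n, \Qirrfrak_n, \Decs_n, \ifrak_n)$ by inverting a bijective substitution decomposition of $Q_n \sim \rmUnif(\Quads_n)$ along its irreducible skeleton, and then to truncate on the rare event that this skeleton is atypical in size so as to enforce (iii) deterministically while losing only $o(1)$ in total variation.

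\textbf{The bijective decomposition and its pushforward.} The first task is to set up a bijection
\begin{align*}
\Quads_n \;\longleftrightarrow\; \bigsqcup_{\ell \geq 4} \ensQirred_\ell \times \DecSet_{n,\ell} \times \{1,\ldots,4n\}
\end{align*}
sending $\q \in \Quads_n$ to (a)~its irreducible skeleton $\qirr \in \ensQirred_\ell$, obtained by contracting each maximal separating $4$-cycle into a single face; (b)~the ordered tuple of patches $(\q_1,\ldots,\q_\ell) \in \DecSet_{n,\ell}$, where $\q_j \in \widetilde\Quads$ fills the $j$-th face of $\qirr$ in $<_\qirr$-order (the sum constraint $\sum_j |\q_j| = n + \ell$ reflecting the shared $4$-cycle boundary); and (c)~a re-rooting index $\ifrak \in \{1,\ldots,4n\}$. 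The gluing map $\Gamma$ of the statement is by definition the inverse of this bijection. Pushing $\rmUnif(\Quads_n)$ through it, and writing $L_n$ for the skeleton size of $Q_n$, conditionally on $L_n = \ell$ the three coordinates $(\Qirrfrak_n^\circ, \Decs_n^\circ, \ifrak_n^\circ)$ are independent and uniform on $\ensQirred_\ell$, $\DecSet_{n,\ell}$, and $\{1,\ldots,4n\}$ respectively; in particular $\Decs_n^\circ$ is exchangeable since $\DecSet_{n,\ell}$ is invariant under permutations.

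\textbf{Concentration and truncation.} The analytic core is the moderate-deviation estimate
\begin{align*}
\Prob{\,|L_n - \lfloor n/9 \rfloor| > n^{2/3+0.1}\,} \;=\; o(1),
\end{align*}
in which $1/9$ is the Gao--Wormald condensation constant and $n^{2/3}$ is the Airy-type fluctuation scale of the subcritical composition regime induced by the substitution. The bijection translates into a functional equation for the bivariate generating function marking $(n, L_n)$, to which I apply a Chernoff/saddle-point bound on the contour integral representing $\Prob{L_n = \ell}$ for $\ell$ at distance $\geq n^{2/3+0.1}$ from $n/9$; the generous extra exponent $0.1$ makes sharp constants unnecessary. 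Letting $G_n$ be the corresponding good event, I set $(N_n, \Qirrfrak_n, \Decs_n, \ifrak_n) := (L_n, \Qirrfrak_n^\circ, \Decs_n^\circ, \ifrak_n^\circ)$ on $G_n$, and redraw these independently from the product of uniform measures on $\ensQirred_{\lfloor n/9 \rfloor} \times \DecSet_{n, \lfloor n/9 \rfloor} \times \{1,\ldots,4n\}$ on $G_n^c$. Property (iii) then holds by construction; (ii) and (iv) hold on $G_n$ by the pushforward computation and on $G_n^c$ by the explicit resampling, and the overall conditional law given $N_n$ is a mixture of uniforms, hence uniform. Finally, on $G_n$ one has $Q'_n = \Gamma(\Qirrfrak_n;\Decs_n,\ifrak_n) = Q_n$ by definition of $\Gamma$ as the inverse bijection, so the coupling inequality yields $\dTV\bigl(\Law(Q'_n), \rmUnif(\Quads_n)\bigr) \leq \Prob{Q'_n \neq Q_n} \leq \Prob{G_n^c} = o(1)$, which is (i).

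\textbf{Main obstacle.} The heart of the work is the moderate-deviation bound above: the bare $L_n/n \to 1/9$ in probability coming from Gao--Wormald's asymptotic enumeration is not strong enough, and a genuine polynomial tail estimate at the critical $n^{2/3}$ scale is needed. Since the exponent $2/3+0.1$ leaves comfortable slack, a coarse bivariate Chernoff/saddle-point estimate ought to suffice, or alternatively one may appeal to a moderate-deviation principle for the core-size in substitution schemes leading to an Airy limit. Everything else reduces to the bijection of Step~2 and an elementary coupling argument.
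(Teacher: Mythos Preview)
There is a genuine gap in your setup: the bijection you claim,
\[
\Quads_n \;\longleftrightarrow\; \bigsqcup_{\ell \geq 4} \ensQirred_\ell \times \DecSet_{n,\ell} \times \{1,\ldots,4n\},
\]
does not exist, because a general quadrangulation does not have a single ``irreducible skeleton''. Contracting maximal separating $4$-cycles does not yield one irreducible quadrangulation; rather, the irreducible pieces of a quadrangulation are arranged in a tree-like block structure, so there are many irreducible components (Definition~\ref{def:irred-components}), not a canonical one. What the paper actually has (Lemma~\ref{lem:bij-psi-irred}) is a bijection $\CompRootedQ{n}{\ell}\to \ensQirred_\ell \times \DecSet_{n,\ell} \times \{1,\ldots,4n\}$, where the left-hand side consists of pairs $(\q;\sm,\alpha)$ of a quadrangulation \emph{together with a distinguished irreducible component}. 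Consequently your random variable $L_n$ (``skeleton size'') is not well-defined, and the pushforward computation that follows collapses.

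The paper's construction fixes this by taking $\Smax$ to be a uniformly chosen \emph{largest} irreducible component of $Q_n$, and conditioning on the event $\Ecal'_n$ that this largest component is unique up to re-rooting and has size within $n^{2/3+0.1}$ of $\lfloor n/9\rfloor$. Two points then differ from your sketch. First, conditionally on $\Ecal'_n\cap\{\lbirr(Q_n)=\ell\}$, the decoration tuple is uniform on the \emph{restricted} set $\DecSet_{n,\ell}^{<\ell}$ (patches with no irreducible component of size $\geq\ell$), not on all of $\DecSet_{n,\ell}$; this is still permutation-invariant, so (iv) survives, but the argument requires the separation lemma (Lemma~\ref{lem:sepration-lemma}) to identify this set. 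Second, the moderate-deviation input you flag as the ``main obstacle'' is not something you need to derive by Chernoff/saddle-point: it is already the content of Gao--Wormald's Theorem~2 (restated here as Proposition~\ref{prop:Gao-Wormald}), applied twice (general$\to$simple, then simple$\to$irreducible) as in Corollary~\ref{cor:largest-comp-irred}, together with the uniqueness statement of Lemma~\ref{lem:uniqueness-largest-comp}.
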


Our strategy to prove Lemma~\ref{lem:summary-largest-comp} is roughly to condition an $\rmUnif(\Quads_n)$-distributed random quadrangulation on having a unique largest irreducible component satisfying the deterministic bound in (iii) above.
Using results of Gao and Wormald \cite{GaoWormald99}, this amounts to conditioning by an event with probability $1-o(1)$.
The other distributional properties shall then follow easily.
Note that the exponent $2/3+0.1$ in the above has been chosen for concreteness.
In fact, Lemma~\ref{lem:summary-largest-comp} still holds if we replace $2/3+0.1$ by $2/3+\epsilon$ for any $\epsilon>0$; and in our proof of Theorem~\ref{thm:scaling-limit-irred}, a deterministic bound $|K_n|=o(n)$ is sufficient.
We prove Lemma~\ref{lem:summary-largest-comp} in Section~\ref{subsec:proof-lemma-summary-largest-irred}.

\subsection{Step II: A topological/measure-theoretic argument}

Our next step is quite general, and completely agnostic to combinatorial details, as it relies on simple topological and measure-theoretic properties of our objects.
Its conclusion can be summarized as follows: if a sequence $(Z_n)_n$ of ``nice'' spaces converges in the GHP sense to a limit $Z_\infty$ which ``has no bottlenecks''; and if for every $n$, the subset $X_n\subset Z_n$ is ``not too small'' and separated from the rest of $Z_n$ by bottlenecks; then actually $X_n\to Z_\infty$ in the GHP sense.
In the preceding, ``nice'' spaces correspond to \textit{compact geodesic metric measure spaces}, the limit ``having no bottlenecks'' means that it is 2-connected and diffuse (see below), and $X_n$ being ``not too small'' means that it has a mass bounded away from $0$.
We refer to Section~\ref{sec:bottlenecks-and-Hausdorff-cvg} for definitions.
Let us now make a more precise statement, in Lemma~\ref{lem:informal-lemma-star-decomp} below.
For convenience, it is stated with the Hausdorff and Prokhorov metrics, but it can be used to prove Gromov--Hausdorff or Gromov--Hausdorff--Prokhorov convergence, using embedding theorems.

Fix some compact metric space $(Z,\delta)$.
For $n\geq1$, we let $Z_n=X_n\cup\bigsqcup_{i\geq1} Y^{(i)}_n$ be a compact subspace of $(Z,\delta)$, where   $X_n$ is a non-empty compact subset of $Z_n$, and where $\smash{(Y^{(i)}_n,i\geq1)}$ is a collection of pairwise disjoint and possibly empty compact subsets of $Z_n$, each one of them intersecting $X_n$.
We suppose that we have chosen some $\smash{y_n^{(i)}\in Y^{(i)}_n}\cap X_n$ for every $n,i\geq1$.
This setup is represented schematically in Figure~\ref{fig:setup-star-decomp}.
\begin{figure}
	\begin{center}
		\includegraphics[scale=.65]{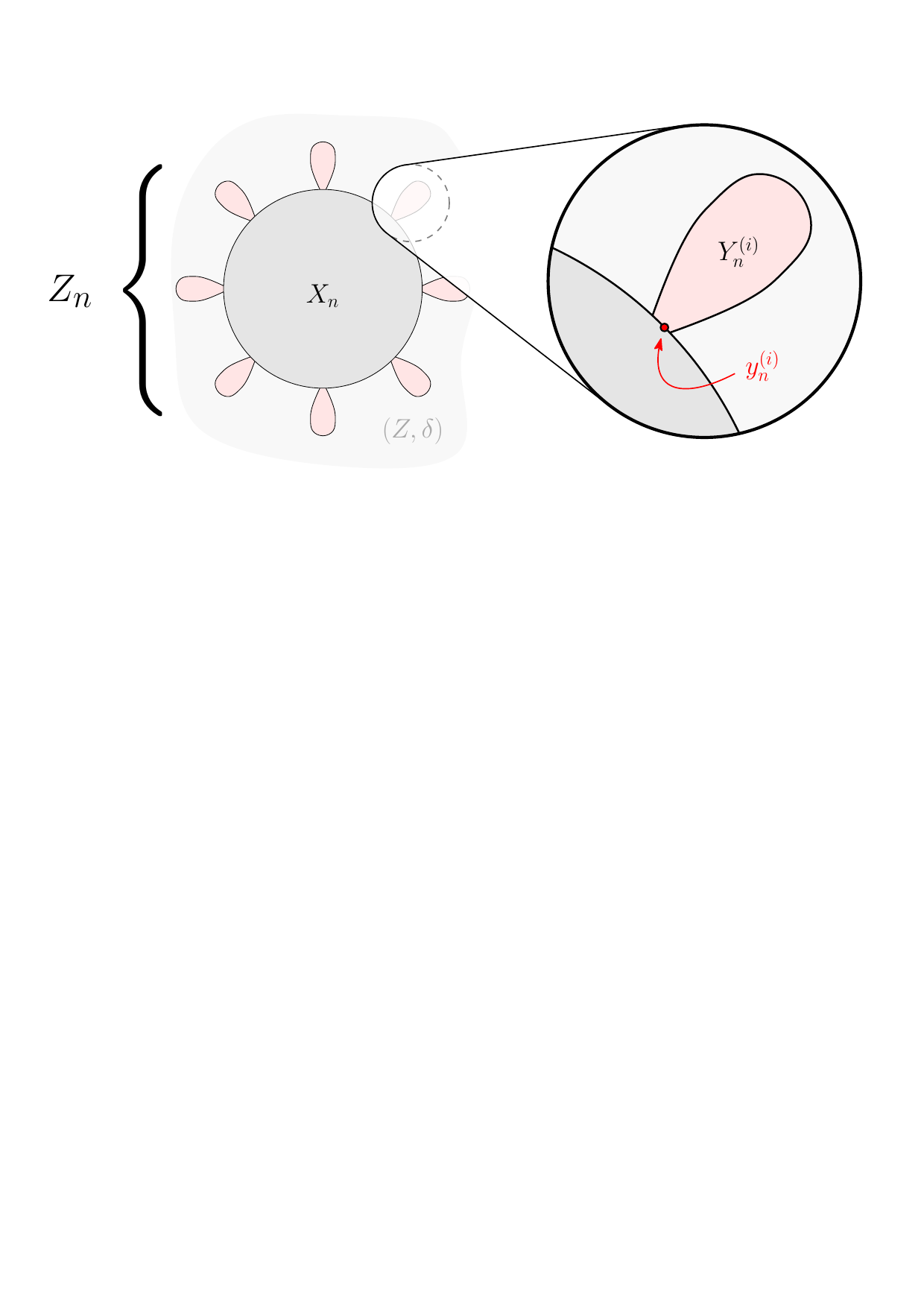}
	\end{center}
	\caption{A schematic representation of the setup of Lemma~\ref{lem:informal-lemma-star-decomp}.}
	\label{fig:setup-star-decomp}
\end{figure}

On top of that, we let $\mu_n$ be a finite Borel measure on $Z_n$ for every $n\geq1$.
Let us write $\gamma^{(i)}_n=\mu_n\bigl(Y^{(i)}_n\setminus X_n\bigr)$, and denote by $\mu^X_n=\mu_n(\cdot\cap X_n)$ the measure $\mu_n$ restricted to $X_n$.
Lastly, define for all $n\geq1$ the measure
\begin{align}
\widetilde\mu_n(\diff x)=\mu^X_n(\diff x)+\sum_{i\geq 1}\gamma^{(i)}_n\cdot\delta_{y^{(i)}_n}(\diff x),
\end{align}
which is supported on $X_n$, and which will serve as an approximation of $\mu_n$.

The following Lemma~\ref{lem:informal-lemma-star-decomp} is a combination of Lemmas~\ref{lem:star-decomposition-with-diffuse-lim},~\ref{lem:star-decomp-discrete-approx} and~\ref{lem:star-decomposition-diffuse-lim-bis}, which are proven in Section~\ref{sec:bottlenecks-and-Hausdorff-cvg}.

\begin{lemma}\label{lem:informal-lemma-star-decomp}
	Suppose that $Z_n\to Z_\infty$ in the $\delta$-Hausdorff metric, that each $Z_n$ is geodesic, and that $Z_\infty$ is 2-connected.
	Suppose additionally that $\mu_n\to\mu_\infty$ in the $\delta$-Prokhorov metric and that $\mu_\infty$ is diffuse.
	If  the following holds
	\begin{align*}
	\textstyle\sup_{i\geq0}\diam(X_n\cap Y^{(i)}_n)\to 0\qquad\text{and}\qquad\textstyle\liminf _n\mu_n(X_n)>0,
	\end{align*}
	then $\sup_{i_\geq1}\diam(Y^{(i)}_n)\to 0$, and thus: (i) we have $X_n\to Z_\infty$ in the $\delta$-Hausdorff metric, and (ii) we also have $\widetilde\mu_n\to\mu_\infty$ in the $\delta$-Prokhorov metric.
	Furthermore, we also have $\smash{\sup_{i\geq1}\gamma^{(i)}_n\to 0}$ as $n\to\infty$.
\end{lemma}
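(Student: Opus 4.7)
The plan is to establish claim (i), that $\sup_i \diam(Y_n^{(i)}) \to 0$, first; this is the topological heart of the lemma and the other three consequences then follow from clean measure-theoretic arguments.

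\textbf{Step 1: $\sup_{i\geq 1}\diam(Y_n^{(i)})\to 0$ (the main obstacle).}
I would argue by contradiction. Suppose along a subsequence that $\diam(Y_n^{(i_n)})\geq\epsilon$, and pick $z_n\in Y_n^{(i_n)}$ with $\delta(z_n,y_n^{(i_n)})\geq\epsilon/2$; note $z_n\notin X_n$ for $n$ large, since $\diam(X_n\cap Y_n^{(i_n)})\to 0$. By compactness of $Z$, extract subsequential limits $y_n^{(i_n)}\to y_\infty$ and $z_n\to z_\infty$, both in $Z_\infty$ thanks to $Z_n\to Z_\infty$ in Hausdorff. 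Set $A_n:=Y_n^{(i_n)}$ and $B_n:=\overline{Z_n\setminus Y_n^{(i_n)}}$, and extract Hausdorff limits $A_n\to A_\infty$, $B_n\to B_\infty$, so that $A_\infty\cup B_\infty=Z_\infty$ and both contain $y_\infty$.

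The crux is then three sub-claims. \emph{(a)} $A_\infty\cap B_\infty=\{y_\infty\}$: any $p$ in this intersection is approximated by $a_n\in Y_n^{(i_n)}$ and by $\widetilde b_n\in Z_n\setminus Y_n^{(i_n)}$, both tending to $p$. A geodesic in $Z_n$ from $a_n$ to $\widetilde b_n$ has vanishing length and must exit $Y_n^{(i_n)}$ through the ``neck'' $X_n\cap Y_n^{(i_n)}$, since the topological boundary of the closed set $Y_n^{(i_n)}$ in $Z_n$ sits inside that intersection (using pairwise disjointness of the $Y$'s). Hence $p$ is a limit of points in $X_n\cap Y_n^{(i_n)}$, which has diameter $\to 0$ and contains $y_n^{(i_n)}\to y_\infty$, forcing $p=y_\infty$. \emph{(b)} $z_\infty\in A_\infty$ satisfies $\delta(z_\infty,y_\infty)\geq\epsilon/2>0$. \emph{(c)} To exhibit a point of $B_\infty$ other than $y_\infty$, I would use $\liminf_n\mu_n(X_n)>0$ together with diffuseness of $\mu_\infty$: choosing $r>0$ small enough that $\mu_\infty(\overline B(y_\infty,r))<\tfrac12\liminf_n\mu_n(X_n)$, Portmanteau's inequality for closed sets gives $\mu_n(X_n\setminus\overline B(y_\infty,r))$ bounded below, so that (since $X_n\cap Y_n^{(i_n)}$ shrinks into $\{y_\infty\}$) $X_n\setminus Y_n^{(i_n)}$ contains a point $w_n$ with $\delta(w_n,y_\infty)\geq r$, and any subsequential limit is a point of $B_\infty$ at distance $\geq r$ from $y_\infty$.

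Combining \emph{(a)--(c)}, $Z_\infty\setminus\{y_\infty\}$ decomposes as a disjoint union of two nonempty relatively closed sets, $A_\infty\setminus\{y_\infty\}$ and $B_\infty\setminus\{y_\infty\}$, so that $y_\infty$ is a cut point of $Z_\infty$ --- contradicting 2-connectedness. The main technical point is \emph{(a)}: the fact that a continuous path $\gamma$ from $Y_n^{(i_n)}\setminus X_n$ to $Z_n\setminus Y_n^{(i_n)}$ must cross $X_n\cap Y_n^{(i_n)}$, proved by taking $t^*:=\sup\{t:\gamma(t)\in Y_n^{(i_n)}\}$ and observing that $\gamma(t^*)\in Y_n^{(i_n)}$ is a limit of points outside, which must lie in $X_n$ (the alternative, points in other $Y_n^{(j)}$'s, would violate closedness and pairwise disjointness of the $Y$'s).

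\textbf{Steps 2--4 (routine consequences).} Given Step 1, \emph{(i) $\Rightarrow$ (ii)} is immediate: every point of $Z_n$ is within $\sup_i\diam(Y_n^{(i)})$ of some $y_n^{(i)}\in X_n$, so $\dH(X_n,Z_n)\to 0$ and hence $X_n\to Z_\infty$ in Hausdorff. For the Prokhorov conclusion, a direct computation yields $\dP(\mu_n,\widetilde\mu_n)\leq\eta_n:=\sup_i\diam(Y_n^{(i)})\to 0$: for any Borel $A$, whenever $y_n^{(i)}\in A$ one has $Y_n^{(i)}\subset A^{\eta_n}$, and whenever $A\cap(Y_n^{(i)}\setminus X_n)\neq\emptyset$ one has $y_n^{(i)}\in A^{\eta_n}$, giving the symmetric inequalities $\mu_n(A^{\eta_n})\geq\widetilde\mu_n(A)$ and $\widetilde\mu_n(A^{\eta_n})\geq\mu_n(A)$; the triangle inequality with $\mu_n\to\mu_\infty$ then yields $\widetilde\mu_n\to\mu_\infty$. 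Finally, if $\gamma_n^{(i_n)}\geq\eta>0$ along a subsequence, Step 1 forces $Y_n^{(i_n)}$ to concentrate near $y_\infty:=\lim y_n^{(i_n)}$; Portmanteau's inequality applied to shrinking closed balls around $y_\infty$ then gives $\mu_\infty(\{y_\infty\})\geq\eta>0$, contradicting diffuseness, and establishing $\sup_i\gamma_n^{(i)}\to 0$.
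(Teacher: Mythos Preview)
Your proposal is correct and follows essentially the same route as the paper: a contradiction argument producing a cut point in $Z_\infty$ via the ``geodesic must cross the neck $X_n\cap Y_n^{(i_n)}$'' observation, followed by the same direct Prokhorov computation for $\widetilde\mu_n$ and the same diffuseness-versus-Portmanteau argument for $\sup_i\gamma_n^{(i)}\to 0$. The only organizational difference is that the paper first isolates the intermediate fact $\liminf_n\diam(X_n)>0$ (via diffuseness) and then runs a purely metric bottleneck lemma, whereas you weave the measure hypothesis directly into sub-claim~(c); this is a matter of packaging rather than substance.
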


Let us succinctly  explain how this is useful in our context.
First, we can turn maps into geodesic spaces using the so-called \textit{metric graph} associated to a graph by replacing its edges by unit segments.
Also, in Lemma~\ref{lem:summary-largest-comp}, the irreducible quadrangulation $\Qirrfrak_n$ is realized as a submap of $Q'_n$, in such a way that its metric is the metric induced by $Q'_n$---that is, informally, that one cannot find shortcuts between points of $\Qirrfrak_n$ by venturing outside of $\Qirrfrak_n$.
This later fact will be easily proven, but it is crucial since it allows to consider the metric graph of $\Qirrfrak_n$ as a (metric) subspace $X_n$ of $Z_n$---the metric graph of $Q'_n$.

Once distances are renormalized by $n^{-1/4}$ and masses by $n^{-1}$, we are in a similar situation as in Lemma~\ref{lem:informal-lemma-star-decomp}, since: (i) we know by Lemma~\ref{lem:summary-largest-comp} that $X_n$ has (renormalized) mass concentrated around $1/9$ and thus bounded away from zero; (ii) the connected components of $Z_n\setminus X_n$ can ``touch'' $X_n$ only along a 4-cycle, which has (renormalized) diameter $4\cdot n^{-1/4}\to 0$; and (iii) $Z_n$, equipped with the (renormalized) mass measure of $Q'_n$, converges to the Brownian sphere using the results of Le~Gall and Miermont recalled as Theorem~\ref{thm:Le-Gall--Miermont}.
Note that the Brownian sphere is almost surely homeomorphic to the two-dimensional sphere \cite{LeGallPaulin08,Miermont08} and thus 2-connected, and it has diffuse measure by \cite[Thm.~3]{Miermont09}.

All in all, we will be able to use Lemma~\ref{lem:informal-lemma-star-decomp} to prove Lemma~\ref{lem:cvg-after-projection}.
Let $(N_n,\Qirrfrak_n,\Decs_n)$, $n\geq4$, be as in Lemma~\ref{lem:summary-largest-comp}.
For $n\geq4$, if $\Decs_n=(\q_1,\dots,\q_{N_n})$, then we set $\boldnu_{\Decs_n}(\diff x)=\sum_{j} \bigl(|V(\q_j)|-4\bigr)\cdot \delta_{v_j}(\diff x)$,
where $v_j$ is a uniformly chosen vertex incident to the $j$-th face of $\Qirrfrak_n$ in $<_{\Qirrfrak_n}$-order.
 
\begin{lemma}\label{lem:cvg-after-projection}
	We have the convergence
	\begin{align*}
	\left(\vertices(\Qirrfrak_n),\,\left(\frac{9}{8n}\right)^{1/4}\cdot d_{\Qirrfrak_n},\,\frac{1}{n+2}\cdot(\mu_{\Qirrfrak_n}+\boldnu_{\Decs_n})\right)\xrightarrow[n\rightarrow\infty]{\distribGHP}(S,d,\mu),
	\end{align*}
	where $(S,d,\mu)$ is the Brownian sphere.
	Furthermore, the collection $(\frac{1}{n+2}\cdot\boldnu_{\Decs_n}(v_j),1\leq j\leq N_n)$ is exchangeable conditionally on $N_n$, and its supremum converges to $0$ in probability as $n\to\infty$.
\end{lemma}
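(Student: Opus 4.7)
The plan is to apply Lemma~\ref{lem:informal-lemma-star-decomp}. By combining Theorem~\ref{thm:Le-Gall--Miermont} with Lemma~\ref{lem:summary-largest-comp}(i) (which puts $\Law(Q'_n)$ at $o(1)$ total variation distance from $\rmUnif(\Quads_n)$), the rescaled metric measure space attached to $Q'_n$ converges in distribution to the Brownian sphere. Passing from vertex-sets to metric graphs modifies the rescaled space by only $O(n^{-1/4})$ in GHP, so the limit is unchanged. Skorokhod's representation theorem then allows me to realize the convergence almost surely in a common Polish ambient space $(Z,\delta)$: I set $Z_n$ equal to the rescaled metric graph of $Q'_n$ embedded in $Z$, and $X_n\subset Z_n$ equal to the rescaled metric graph of $\Qirrfrak_n$, viewed as a sub-metric graph thanks to the shortcut-free property of $\Gamma$ highlighted after the lemma statement. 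The components of $Z_n\setminus X_n$ are indexed by $j\in\{1,\dots,N_n\}$: $Y^{(j)}_n$ is the rescaled metric graph of $\q_j$, and $Y^{(j)}_n\cap X_n$ is its boundary 4-cycle.

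Next I verify the hypotheses of Lemma~\ref{lem:informal-lemma-star-decomp}. Metric graphs of finite graphs are compact and geodesic; the Brownian sphere is almost surely homeomorphic to $S^2$ (hence 2-connected) by \cite{LeGallPaulin08,Miermont08} and carries a diffuse measure by \cite[Thm.~3]{Miermont09}. Each $X_n\cap Y^{(j)}_n$ is a 4-cycle of rescaled diameter at most $4\cdot(9/(8n))^{1/4}\to 0$, uniformly in $j$. Euler's formula gives $|\vertices(\Qirrfrak_n)|=N_n+2$; combined with Lemma~\ref{lem:summary-largest-comp}(iii), this yields $\mu_n(X_n)=(N_n+2)/(n+2)\to 1/9$ in probability, so in particular $\liminf_n\mu_n(X_n)>0$ in probability. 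Choosing $y^{(j)}_n:=v_j$ (the uniformly random vertex on the $j$-th face used in the definition of $\boldnu_{\Decs_n}$), a direct bookkeeping gives $\gamma^{(j)}_n=(|\vertices(\q_j)|-4)/(n+2)$ and
\[
\widetilde\mu_n = (n+2)^{-1}\bigl(\mu_{\Qirrfrak_n}+\boldnu_{\Decs_n}\bigr).
\]
Lemma~\ref{lem:informal-lemma-star-decomp} then delivers Hausdorff convergence of $X_n$ to $Z_\infty$ and Prokhorov convergence of $\widetilde\mu_n$ to the limit measure in $(Z,\delta)$. Standard equivalences between embedded Hausdorff--Prokhorov convergence and GHP convergence, together with undoing Skorokhod, yield the stated GHP convergence in distribution. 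Exchangeability of $\bigl((n+2)^{-1}\boldnu_{\Decs_n}(v_j)\bigr)_{j=1}^{N_n}$ conditional on $N_n$ follows from Lemma~\ref{lem:summary-largest-comp}(iv) and the i.i.d.\ choices of the $v_j$'s. Its supremum equals $\sup_j\gamma^{(j)}_n$, which converges to $0$ in probability by the last conclusion of Lemma~\ref{lem:informal-lemma-star-decomp}.

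The main obstacle is verifying that $X_n$ carries the correct intrinsic metric as a subspace of $Z_n$, i.e.\ that no excursion into some $\q_j$ can shorten the $d_{\Qirrfrak_n}$-distance between two vertices of $\Qirrfrak_n$. Because any such excursion enters and exits through vertices on a single boundary 4-cycle, which are at $d_{\Qirrfrak_n}$-distance at most $2$, only excursions of length $0$ or $1$ could a priori serve as shortcuts; such degenerate excursions are excluded once $\Qirrfrak_n$ is irreducible (no loops or multiple edges) and the gluing $\Gamma$ is set up to respect this, as is announced in the discussion preceding the lemma. A secondary and routine technical point is the metric-graph-versus-vertex-set passage, handled by an elementary Hausdorff estimate of order $n^{-1/4}$.
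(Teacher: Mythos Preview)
Your proposal is correct and follows essentially the same approach as the paper's proof: transfer Le~Gall--Miermont to $Q'_n$ via the total-variation bound, pass to metric graphs, use Skorokhod and a common embedding to set up the star-decomposition, verify the hypotheses of Lemma~\ref{lem:informal-lemma-star-decomp} (geodesic pieces, 2-connected diffuse limit, vanishing bottleneck diameters, mass of $X_n$ bounded away from zero via $N_n/n\to 1/9$), and read off the conclusions. The shortcut-free argument you sketch is the same as the paper's, though the paper phrases it slightly more cleanly: antipodal vertices on a boundary 4-cycle cannot be at distance~$0$ (simplicity) or~$1$ (bipartiteness), so no excursion into a $\q_j$ can beat the 4-cycle itself.
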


Lemma~\ref{lem:cvg-after-projection} will be proven in Section~\ref{subsec:proof-cvg-after-proj}.

\subsection{Step III: Concentration for exchangeable random measures}

In order to control the measure $\frac{1}{n+2}\cdot(\mu_{\Qirrfrak_n}+\boldnu_{\Decs_n})$ in Lemma~\ref{lem:cvg-after-projection}, we will use the following result, which we prove in Section~\ref{sec:exch-and-Prokhorov}.
It is greatly inspired by arguments of Addario--Berry and Wen \cite{AddarioBerryWen17}.

We let $(Z,\delta)$ be a compact metric space, and we consider a sequence $(D_n,n\geq1)$ of finite subsets of $Z$.
Suppose that $(\boldnu_n,n\geq1)$ are \textit{random} measures supported on $D_n$ respectively.

\begin{prop}[See Prop.~\ref{prop:Add-Wen-exch-argument}]\label{prop:Add-Wen-in-intro}
	Assume that the collection $(\boldnu_n(x))_{x\in D_n}$ is exchangeable
	for every $n\geq1$.
	If $\max_{x\in D_n} \boldnu_n(x) \to 0$ in probability, then
	\begin{align*}
	\dP\Bigl(\boldnu_n,|\boldnu_n|\cdot\rmUnif({D_n})\Bigr)\xrightarrow[n\to\infty]{}0
	\end{align*}
	in probability.
\end{prop}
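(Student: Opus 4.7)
The plan is to bound the Prokhorov distance by reducing, via a finite partition of $Z$ into small cells, to controlling the cell-by-cell fluctuations of $\boldnu_n$ around the values prescribed by $|\boldnu_n|\cdot\rmUnif(D_n)$, and to obtain such control from the exchangeability assumption via the variance formula for sampling without replacement. Concretely, fix $\eta>0$ and, using compactness of $(Z,\delta)$, choose a finite Borel partition $Z=\bigsqcup_{j=1}^{m}P_j$ with $\diam(P_j)\le\eta$ and $m=m(\eta)$. Set $N_n=|D_n|$, $k_j=|D_n\cap P_j|$, $S_n=|\boldnu_n|$, $M_n=\max_{x\in D_n}\boldnu_n(x)$, and
\begin{align*}
e_j:=\bigl|\boldnu_n(P_j)-S_n\,k_j/N_n\bigr|.
\end{align*}
For any Borel set $A\subset Z$ the union $\bigcup\{P_j:P_j\cap A\neq\emptyset\}$ contains $A$ and lies in its $\eta$-neighborhood $A^{\eta}$, so
\begin{align*}
\boldnu_n(A)\le\sum_{j:\,P_j\cap A\neq\emptyset}\boldnu_n(P_j)\le\bigl(S_n\cdot\rmUnif(D_n)\bigr)(A^{\eta})+m\cdot\max_j e_j,
\end{align*}
and the symmetric inequality holds in exactly the same way. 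It follows that
\begin{align*}
\dP\bigl(\boldnu_n,\,S_n\cdot\rmUnif(D_n)\bigr)\le\max\bigl\{\eta,\,m\cdot\max_j e_j\bigr\}.
\end{align*}

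The second step is to bound $\max_j e_j$. Let $\Fcal_n$ be the $\sigma$-algebra generated by the multiset $\{\boldnu_n(x):x\in D_n\}$; conditionally on $\Fcal_n$, exchangeability presents $(\boldnu_n(x))_{x\in D_n}$ as a uniformly random arrangement of this multiset over $D_n$, so $\boldnu_n(P_j)$ is the sum of $k_j$ values sampled \emph{without replacement} from that multiset. The classical sampling-without-replacement variance formula then gives
\begin{align*}
\Var\bigl(\boldnu_n(P_j)\bigm|\Fcal_n\bigr)\le k_j\cdot\frac{1}{N_n}\sum_{x\in D_n}\Bigl(\boldnu_n(x)-\frac{S_n}{N_n}\Bigr)^{\!2}\le k_j\,M_n\,\frac{S_n}{N_n},
\end{align*}
where the last step uses $\boldnu_n(x)^{2}\le M_n\,\boldnu_n(x)$. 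Chebyshev's inequality and a union bound over $1\le j\le m$ yield
\begin{align*}
\Prob{\max_j e_j>\delta\,\bigm|\,\Fcal_n}\le\sum_{j=1}^{m}\frac{k_j M_n S_n/N_n}{\delta^{2}}=\frac{M_n S_n}{\delta^{2}}.
\end{align*}

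Combining the two steps with $\delta=\eta/m$ gives
\begin{align*}
\Prob{\dP\bigl(\boldnu_n,\,S_n\cdot\rmUnif(D_n)\bigr)>\eta}\le\Expect{\min\!\Bigl(1,\,\tfrac{m^{2}M_n S_n}{\eta^{2}}\Bigr)}.
\end{align*}
Since $M_n\to0$ in probability and (as is the case in the target application, where Euler's formula makes $|\boldnu_{\Decs_n}|/(n+2)$ concentrated near $8/9$, hence tight) $S_n$ is tight, we have $M_n S_n\to0$ in probability, and dominated convergence sends the right-hand side to $0$. The main delicate point is precisely the appearance of the product $M_n S_n$ in the Chebyshev bound: the argument requires a tightness condition on the total mass $|\boldnu_n|$, which is not spelled out in the summary statement but is either hypothesised or automatic in the full statement of Proposition~\ref{prop:Add-Wen-exch-argument}; the rest consists of two classical ingredients, namely a compactness partition of $Z$ and the variance estimate for sampling without replacement.
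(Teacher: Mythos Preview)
Your proof is correct and follows essentially the same strategy as the paper: choose a finite partition of $Z$ into cells of small diameter, bound the Prokhorov distance in terms of the cell-by-cell deviations of $\boldnu_n$ from $|\boldnu_n|\cdot\rmUnif(D_n)$, and control those deviations via a variance bound coming from exchangeability together with Chebyshev. The paper's Lemma~\ref{lem:coarse-graining-and-dP} gives the bound $\dP\le\epsilon+\sum_j e_j$ where you use $\dP\le\max\{\eta,m\max_j e_j\}$; and the paper obtains the variance bound unconditionally via Proposition~\ref{prop:variance-bound}, whereas you condition on the multiset of values and invoke the sampling-without-replacement formula directly---these are cosmetic differences.

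Your remark about the product $M_nS_n$ is well taken. The paper's proof of Proposition~\ref{prop:Add-Wen-exch-argument} actually writes ``since $\boldnu_n$ is a probability measure'' to pass from convergence in probability of $M_n$ to $L^1$ convergence, and Lemma~\ref{lem:reproduced-cor:concentration-exch-proba-measure} is likewise stated for probability measures; so the paper is implicitly assuming $|\boldnu_n|=1$ in the proof, even though the statement does not say this and the application in Section~\ref{subsec:proof-cvg-after-concentration} has $|\boldnu_n|\to 8/9$. Your conditional treatment isolates the dependence on $S_n$ cleanly and shows that tightness of $|\boldnu_n|$ suffices, which is exactly what holds in the application.
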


Let again $(\Qirrfrak_n,\Decs_n)$, $n\geq4$, be as given by Lemma~\ref{lem:summary-largest-comp}.
Given a map $\m$, we let $\X(\m)=(V(\m),d_\m,\mu_\m)$.
Given a metric measure space $\X=(X,d,\mu)$ and $a,b>0$, we write $(a,b)\cdot \X=(X,a\cdot d, b\cdot\mu)$.
By combining Proposition~\ref{prop:Add-Wen-in-intro} and Lemma~\ref{lem:cvg-after-projection}, we will deduce the following.

\begin{lemma}\label{lem:cvg-after-concentration}
	If we set $a_n = (9/8n)^{1/4}$ and $b_n=9/(n+2)$, then we have the convergence:
	\begin{align*}
	(a_n,b_n)\cdot\X(\Qirrfrak_n)\xrightarrow[n\rightarrow\infty]{\distribGHP}(S,d,\mu),
	\end{align*}
	where $(S,d,\mu)$ is the Brownian sphere.
\end{lemma}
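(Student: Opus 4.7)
The plan is to upgrade the convergence of Lemma~\ref{lem:cvg-after-projection} by showing that, in the Prokhorov distance for the rescaled metric $a_n d_{\Qirrfrak_n}$, the measure $b_n\mu_{\Qirrfrak_n}$ differs from $\frac{1}{n+2}(\mu_{\Qirrfrak_n}+\boldnu_{\Decs_n})$ by $o(1)$ in probability. Since both live on the common metric space $(\vertices(\Qirrfrak_n),a_n d_{\Qirrfrak_n})$, the identity correspondence realizes $\dGHP\le\dP$ between the corresponding random metric measure spaces, so Lemma~\ref{lem:cvg-after-concentration} will follow from Lemma~\ref{lem:cvg-after-projection} by a triangle inequality for $\dGHP$. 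The two ingredients will be Proposition~\ref{prop:Add-Wen-in-intro}, which lets us replace the exchangeable noise measure $\boldnu_{\Decs_n}$ by a multiple of $\mu_{\Qirrfrak_n}$, and the deterministic bound $|K_n|\le n^{2/3+0.1}$ from Lemma~\ref{lem:summary-largest-comp}(iii), which controls a residual scalar factor.

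\textbf{Applying Proposition~\ref{prop:Add-Wen-in-intro}.} I plan to invoke the proposition, conditionally on $(\Qirrfrak_n,N_n)$, with ambient space $(\vertices(\Qirrfrak_n),a_n d_{\Qirrfrak_n})$ and $\boldnu_n=\frac{1}{n+2}\boldnu_{\Decs_n}$; its two hypotheses (exchangeability of the weight vector and vanishing of the maximum atom) are precisely the two supplementary statements in Lemma~\ref{lem:cvg-after-projection}. The conclusion reads
\begin{align*}
\dP\Bigl(\tfrac{1}{n+2}\boldnu_{\Decs_n},\;\tfrac{|\boldnu_{\Decs_n}|}{n+2}\cdot\rmUnif(\vertices(\Qirrfrak_n))\Bigr)\xrightarrow[n\to\infty]{}0
\end{align*}
in probability. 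The identities $\sum_j|\q_j|=n+N_n$ (from $\Decs_n\in\DecSet_{n,N_n}$) and $|V(\q_j)|=|\q_j|+2$ (Euler's formula for quadrangulations) give $|\boldnu_{\Decs_n}|=n-N_n$, and since $\rmUnif(\vertices(\Qirrfrak_n))=\frac{1}{N_n+2}\mu_{\Qirrfrak_n}$, the approximating measure equals $\frac{n-N_n}{(n+2)(N_n+2)}\mu_{\Qirrfrak_n}$.

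\textbf{Arithmetic and conclusion.} Adding the common measure $\frac{1}{n+2}\mu_{\Qirrfrak_n}$ to both arguments preserves the Prokhorov bound, and the identity $\frac{1}{n+2}+\frac{n-N_n}{(n+2)(N_n+2)}=\frac{1}{N_n+2}$ yields
\begin{align*}
\dP\Bigl(\tfrac{1}{n+2}\bigl(\mu_{\Qirrfrak_n}+\boldnu_{\Decs_n}\bigr),\;\rmUnif(\vertices(\Qirrfrak_n))\Bigr)\xrightarrow[n\to\infty]{}0
\end{align*}
in probability. On the other hand, $b_n\mu_{\Qirrfrak_n}=\frac{9(N_n+2)}{n+2}\rmUnif(\vertices(\Qirrfrak_n))$; writing $9(N_n+2)-(n+2)=9K_n+(9\lfloor n/9\rfloor-n)+16$ and using $|K_n|\le n^{2/3+0.1}$ from Lemma~\ref{lem:summary-largest-comp}(iii) shows that the scalar $\frac{9(N_n+2)}{n+2}$ tends to $1$ almost surely (with a deterministic rate $O(n^{-1/3+0.1})$), hence $\dP(b_n\mu_{\Qirrfrak_n},\rmUnif(\vertices(\Qirrfrak_n)))\le\bigl|\tfrac{9(N_n+2)}{n+2}-1\bigr|\to 0$. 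A triangle inequality then gives the target estimate $\dP(b_n\mu_{\Qirrfrak_n},\frac{1}{n+2}(\mu_{\Qirrfrak_n}+\boldnu_{\Decs_n}))\to 0$, which closes the proof.

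\textbf{Main obstacle.} The one delicate point I foresee is that the exchangeability supplied by Lemma~\ref{lem:cvg-after-projection} is indexed by face indices $j\in\{1,\dots,N_n\}$, whereas the cleanest reading of Proposition~\ref{prop:Add-Wen-in-intro} indexes by distinct vertices of $\vertices(\Qirrfrak_n)$. Bridging this gap is the one step requiring care, but should not pose a genuine obstruction: either the proposition is stated in a form tolerant of indexed collections $(x_j,w_j)_j$ with exchangeable weights (so that $\rmUnif(D_n)$ naturally becomes the empirical measure $\frac{1}{N_n}\sum_j\delta_{v_j}$ which can then be compared with $\rmUnif(\vertices(\Qirrfrak_n))$), or an averaging argument leveraging the conditional-on-$N_n$ uniformity of $\Qirrfrak_n$ in $\ensQirred_{N_n}$ from Lemma~\ref{lem:summary-largest-comp}(ii) recovers vertex-indexed exchangeability and allows the arithmetic above to compose directly.
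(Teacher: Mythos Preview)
Your overall strategy matches the paper's: start from Lemma~\ref{lem:cvg-after-projection}, use Proposition~\ref{prop:Add-Wen-in-intro} to replace $\frac{1}{n+2}\boldnu_{\Decs_n}$ by a multiple of a ``uniform'' measure, then do bookkeeping on scalars using Lemma~\ref{lem:summary-largest-comp}(iii). Your arithmetic in the last paragraph is in fact a bit cleaner than the paper's.

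However, the point you flag as the ``main obstacle'' is a genuine gap, and neither of your proposed fixes works as stated. Fix~2 fails outright: even conditionally on $N_n$ and on $\Qirrfrak_n$ being uniform in $\ensQirred_{N_n}$, the collection $(\boldnu_{\Decs_n}(x))_{x\in V(\Qirrfrak_n)}$ is \emph{not} exchangeable, because the expected weight at a vertex $x$ is proportional to the number of faces incident to $x$, i.e.\ to $\deg(x)$, and vertex degrees are not exchangeable in an irreducible quadrangulation. Fix~1 is the right direction---this is what the paper does, applying the proposition with the face-indexed family $D_n=(v_1,\dots,v_{N_n})$ so that $\rmUnif(D_n)=\frac{1}{N_n}\sum_j\delta_{v_j}$---but you then need to show that this empirical measure is Prokhorov-close to $\frac{1}{N_n+2}\mu_{\Qirrfrak_n}$, and this comparison is not automatic.

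The paper bridges this via an extra ingredient you do not mention: since $v_j$ is a uniform vertex on the boundary of face $j$, the empirical measure $\frac{1}{N_n}\sum_j\delta_{v_j}$ is (up to $O(n^{-1/4})$ in Prokhorov distance) the degree-biased measure $\mubiased_n(v)=\deg(v)/2|E(\Qirrfrak_n)|$, and then a deterministic inequality of Addario-Berry and Wen \cite[Lemma~5.1]{AddarioBerryWen17} shows that $\mubiased_n$ and the uniform vertex measure are Prokhorov-close for any quadrangulation. Once this is in place, your computation goes through.
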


Lemma~\ref{lem:cvg-after-concentration} is proven in Section~\ref{subsec:proof-cvg-after-concentration}.

\subsection{Step IV: Stochastic increase for irreducible quadrangulations of the hexagon}\label{subsec:step-growing-irred-hex}
As we will see, the convergence in Lemma~\ref{lem:cvg-after-concentration} is amenable to the application of the ``GHP Tauberian theorem'', Theorem~\ref{thm:GHP-Tauberian-thm}. To verify the \textit{Tauberian assumption}, we will need the following stochastic increase statement regarding irreducible quadrangulations \textit{of the hexagon}. The next step will then ``remove the hexagon''.

We let $\ensQirredhex_{n}$ denote the set of \textit{irreducible quadrangulations of the hexagon} with $n$ faces, namely planar maps in which (i) the face to the right of the root edge has degree $6$ while all other faces have degree $4$, (ii) there are no multiple edges, and (iii) all 4-cycles bound a face.

\begin{lemma}\label{lem:stoch-increase-irred-hexagon}
	There exists a coupling $(\incrQirrhex_n,n\geq4)$ of the uniform distributions on $(\ensQirredhex_{n}, n\geq 4)$ respectively, such that $\X(\incrQirrhex_n)\orderGHP \X(\incrQirrhex_{n+1})$ for every $n\geq4$.
\end{lemma}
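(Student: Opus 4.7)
My plan is to take as input the \emph{growing} coupling $(\incrQirrhex_n, n \geq 4)$ constructed by Addario-Berry~\cite{Addario-Berry14}: each $\incrQirrhex_n$ is uniform on $\ensQirredhex_n$, and $\incrQirrhex_{n+1}$ is obtained from $\incrQirrhex_n$ by a single \emph{face-opening}, in which a fresh vertex $v$ is inserted inside a quadrangular face of $\incrQirrhex_n$ and joined by two new edges to two opposite corners $a, c$ of that face. In particular, the underlying graph of $\incrQirrhex_n$ is a subgraph of $\incrQirrhex_{n+1}$, and both added edges are incident to $v$. By transitivity of $\orderGHP$ (Theorem~\ref{thm:sandwich-thm-GHP}), it then suffices to exhibit, for each $n \geq 4$, a surjective, $1$-Lipschitz and measure-contracting map $\phi_n \colon V(\incrQirrhex_{n+1}) \to V(\incrQirrhex_n)$ in the sense of Definition~\ref{def:order-GHP}.

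I would define $\phi_n$ to be the identity on $V(\incrQirrhex_n) \subset V(\incrQirrhex_{n+1})$ and send $v$ to $b$, where $b$ is one of the two remaining corners of the opened face (hence a common neighbor of $a$ and $c$ in $\incrQirrhex_n$). Surjectivity is immediate. For the $1$-Lipschitz property, I would first observe that distances between old vertices are unaffected by the face-opening: any path in $\incrQirrhex_{n+1}$ through $v$ must enter and leave via $\{a, c\}$, and any subpath of the form $a, v, c$ can be replaced by $a, b, c$ of equal length in $\incrQirrhex_n$, so $d_{\incrQirrhex_n}$ and $d_{\incrQirrhex_{n+1}}$ agree on $V(\incrQirrhex_n)$. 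It then remains to check, for every $y \in V(\incrQirrhex_n)$, that $d_{\incrQirrhex_n}(b, y) \leq d_{\incrQirrhex_{n+1}}(v, y)$. Since any geodesic from $v$ starts with an edge to $a$ or $c$, one has $d_{\incrQirrhex_{n+1}}(v, y) = 1 + \min\{d_{\incrQirrhex_n}(a, y), d_{\incrQirrhex_n}(c, y)\}$, and the triangle inequality combined with $d_{\incrQirrhex_n}(b, a) = d_{\incrQirrhex_n}(b, c) = 1$ yields the desired bound.

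The measure-contracting property is then trivial: both measures are counting measures on the respective vertex sets, the preimage $\phi_n^{-1}(\{w\})$ is a singleton for every $w \neq b$, and $\phi_n^{-1}(\{b\}) = \{b, v\}$, whence $|A| \leq |\phi_n^{-1}(A)|$ for every $A \subseteq V(\incrQirrhex_n)$. The main obstacle I anticipate is really combinatorial: one must inspect Addario-Berry's construction to confirm that each step of the coupling indeed takes the form described above---in particular that exactly two edges are added and that both are incident to the new vertex---and that the coupling has uniform marginals on each $\ensQirredhex_n$. Once those combinatorial facts are in place, the verification of Definition~\ref{def:order-GHP} is essentially formal.
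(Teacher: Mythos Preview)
Your overall strategy matches the paper's: take Addario-Berry's coupling and show that each step witnesses $\X(\incrQirrhex_n)\orderGHP\X(\incrQirrhex_{n+1})$ via an explicit surjection. However, your description of the face-opening is not what the coupling actually does. In Addario-Berry's construction (as recorded in the paper as Proposition~\ref{prop:Addario-Berry-coupling-irred}), one picks two edges $e=(a,b)$ and $e'=(b,c)$ sharing a single endpoint $b$, \emph{slits} along the path $a\!-\!b\!-\!c$, and fills the resulting hole with a quadrangle; the middle vertex $b$ is duplicated into $b$ and $b'$, with the old neighbours of $b$ split between them according to which side of the slit they lie on. This is not the ``insert a fresh vertex inside a face and join it to two opposite corners'' operation you describe. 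In particular, the underlying graph of $\incrQirrhex_n$ is \emph{not} a subgraph of $\incrQirrhex_{n+1}$ (some edges incident to $b$ are moved to $b'$), so your claim that distances between old vertices are unchanged, and hence your $1$-Lipschitz verification, breaks down as written. You rightly flagged this combinatorial check as the main obstacle; on inspection it fails.

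The fix is short and is exactly what the paper does. The ``closing'' map that re-identifies $b'$ with $b$ (identity elsewhere) is a surjective \emph{graph homomorphism} $\incrQirrhex_{n+1}\to\incrQirrhex_n$: every edge incident to $b'$ in the new map corresponds to an edge incident to $b$ in the old map, and all other edges are unchanged. The paper packages this as Lemma~\ref{lem:graph-hom-and-GHP-order} (a surjective graph homomorphism is automatically $1$-Lipschitz for graph distances and measure-contracting for counting measures), and then Lemma~\ref{lem:opening-face-gives-GHP-increase} is a one-line application. So rather than your explicit distance computation, it is both cleaner and correct to observe that closing the opened face yields a surjective graph homomorphism and invoke that lemma.
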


Lemma~\ref{lem:stoch-increase-irred-hexagon} is the combination of Proposition~\ref{prop:Addario-Berry-coupling-irred} and Lemma~\ref{lem:opening-face-gives-GHP-increase}, proven in Section~\ref{sec:growing-irred-of-hexagon}.
It will be deduced in a straightforward way from the existence of a coupling for the uniform distributions on $(\ensQirredhex_{n}, n\geq 4)$ which only perform a ``face-opening'' at each step.
Such a coupling has been constructed by Addario--Berry~\cite{Addario-Berry14}, using Luczak and Winkler's increasing coupling for binary trees \cite{LuczakWinkler04}, and a bijection by Fusy, Poulalhon and Schaeffer \cite{FusySchaefferPoulalhon08} relating binary trees and irreducible quadrangulations of the hexagon.
There are other models of random planar maps for which such couplings exist, as proven by Caraceni and Stauffer~\cite{CaraceniStauffer23}.

\subsection{Step V: Removing the hexagon}
The preceding Lemma~\ref{lem:stoch-increase-irred-hexagon} is not \textit{exactly} what we need to apply the GHP Tauberian theorem, Theorem~\ref{thm:GHP-Tauberian-thm}, since it deals with irreducible quadrangulations \textit{of the hexagon}, and not plain irreducible quadrangulations.
In order to compensate for this fact, we prove in Section~\ref{sec:removing-hexagon} the following statement.

\begin{lemma}[See Prop.~\ref{prop:dTV-coupling-irred}]\label{lem:comparison-irred-vs-hexagon}
	There exists an irreducible quadrangulation of the hexagon $\pat$ with nine quadrangular faces, as well as couplings $(\Qirred_n,\Qirredhex_{n-9})_{n\geq 11}$ between---respectively---the uniform distributions on $\ensQirred_n$ and $\ensQirredhex_{n-9}$, such that with probability $1-o(1)$, the quadrangulation $\Qirred_n$ is obtained from $\Qirredhex_{n-9}$ by gluing $\pat$ to its hexagon and choosing a new oriented root edge.
\end{lemma}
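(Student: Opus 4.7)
First, I would exhibit an explicit pattern $\pat \in \ensQirredhex$ with nine quadrangular faces, chosen so that all its internal vertices sit at graph-distance at least $1$ from the hexagonal boundary. The key combinatorial requirement is that $\Glue(\pat, \qhex) \in \ensQirred_n$ whenever $\qhex \in \ensQirredhex_{n-9}$, which amounts to checking that gluing creates no multiple edges and no 4-cycles failing to bound a face, either within the glued region or straddling the seam. I expect this \emph{preservation-of-irreducibility check} to be the main obstacle, and one that relies on a finite combinatorial analysis tailored to the explicit choice of $\pat$. The inverse removal operation $\rempat$ is then defined on each $Q \in \ensQirred_n$ containing at least one occurrence of $\pat$ by selecting the smallest occurrence with respect to $<_Q$ and rooting the resulting $\qhex$ canonically.

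Next, I would double-count pairs $(Q, o)$ with $Q \in \ensQirred_n$ and $o$ an occurrence of $\pat$ in $Q$: such pairs are in bijection with pairs $(\qhex, e)$ where $\qhex \in \ensQirredhex_{n-9}$ and $e$ is one of the $4n$ oriented edges of $\Glue(\pat, \qhex)$, playing the role of the new root. This gives
\begin{align*}
\E[\occPat(\Qirred_n)] \;=\; \frac{4n\cdot|\ensQirredhex_{n-9}|}{|\ensQirred_n|} \;\sim\; \alpha\, n
\end{align*}
for some $\alpha > 0$, using standard asymptotic enumeration that gives $|\ensQirred_n|$ and $|\ensQirredhex_n|$ the same exponential growth rate with matching polynomial corrections. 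A similar bijection applied to \emph{ordered pairs of disjoint occurrences} yields $\E[\occPat(\Qirred_n)^2] = (1+o(1))\,(\alpha n)^2$, the overlapping pairs contributing only $O(n)$ thanks to the constant-size cost of any overlap between two occurrences of $\pat$. Chebyshev's inequality then gives $\occPat(\Qirred_n)/n \to \alpha$ in probability.

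Finally, to obtain the coupling, I would draw $\Qirredhex_{n-9}$ uniformly in $\ensQirredhex_{n-9}$ and, independently, an oriented edge $E$ uniformly among the $4n$ oriented edges of $\Glue(\pat, \Qirredhex_{n-9})$, setting $\Qirred_n^\star$ to be the re-rooted glued map. By the above bijection, $\Qirred_n^\star$ has law proportional to $\occPat$ on $\ensQirred_n$, and the concentration of $\occPat$ combined with the deterministic bound $\occPat(\Qirred_n) = O(n)$ yields
\begin{align*}
\dTV\bigl(\Law(\Qirred_n),\,\Law(\Qirred_n^\star)\bigr) \;=\; \tfrac{1}{2}\,\E\bigl[\,\bigl|1 - \occPat(\Qirred_n)/\E[\occPat(\Qirred_n)]\bigr|\,\bigr] \;\xrightarrow[n\to\infty]{}\; 0
\end{align*}
by uniform integrability. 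A maximal coupling of $\Qirred_n$ with $\Qirred_n^\star$, while retaining $\Qirredhex_{n-9}$ as the auxiliary variable, produces the desired pair: on the matching event---of probability $1-o(1)$---one has $\Qirred_n = \Glue(\pat, \Qirredhex_{n-9})$ re-rooted at $E$.
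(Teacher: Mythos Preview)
Your proposal is essentially correct and follows the same strategy as the paper: exhibit an explicit $\pat$, verify that gluing/removing it preserves irreducibility, observe that the glued-and-rerooted map $\Qfilled_n$ has the law of $\Qirr_n$ size-biased by the occurrence count $N(\cdot)=|\occPat(\cdot)|$, prove concentration of $N(\Qirr_n)/\Expect{N(\Qirr_n)}$ via first and second moments, and conclude $\dTV\to 0$ (then take a maximal coupling). Your first-moment identity $\E[N(\Qirr_n)]=4n\,|\ensQirredhex_{n-9}|/|\ensQirred_n|\sim \alpha n$ is exactly what the paper obtains, and your uniform-integrability step (via the deterministic bound $N\le 4n$) is a valid alternative to the paper's $L^2\Rightarrow L^1$ passage.

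The one place where your sketch is thinner than the paper is the second-moment estimate. Your ``similar bijection applied to ordered pairs of disjoint occurrences'' unwinds as follows: removing one marked occurrence in $\Qirr_n$ gives a uniformly random $\qhex\in\ensQirredhex_{n-9}$ together with a remaining occurrence, so $\E[N(\Qirr_n)^2]=\E[N(\Qirr_n)]\cdot(\E[N(\Qirredhex_{n-9})]+3)$; you then need $\E[N(\Qirredhex_{n-9})]\sim \alpha n$. Iterating the bijection once more lands you in irreducible quadrangulations with \emph{two} hexagonal holes, whose enumeration is not the ``standard asymptotic enumeration'' you invoke. The paper closes this gap differently: it appeals to Addario-Berry's result that $\Qirr_n$ and $\Qirredhex_{n-9}$, re-rooted uniformly, share the same Benjamini--Schramm local limit, which immediately gives $\P(\vec U'_n\in\occPat(\Qirredhex_{n-9}))\to c$ and hence $\E[N(\Qirredhex_{n-9})]\sim 4nc$. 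So your route is correct in outline, but at this step you should either invoke that local-limit input or supply the two-hexagon enumeration explicitly. (Incidentally, for the specific $\pat$ the paper chooses, distinct unrooted copies never overlap in an irreducible quadrangulation, so your ``overlap contributes $O(n)$'' is in fact ``overlap contributes exactly $3\,\E[N]$'', coming solely from the three rootings of the same copy.)
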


This is the last key lemma that we needed in order to deduce the scaling limit of irreducible quandrangulations, Theorem~\ref{thm:scaling-limit-irred}.

\subsection{Proof of Theorem~\ref{thm:scaling-limit-irred}}
In this proof, we take for granted the GHP Tauberian theorem, Theorem~\ref{thm:GHP-Tauberian-thm}, and the preceding lemmas. These results will be proven in the remaining sections.

	We let $(\Qirred_n,\Qirredhex_{n-9})_{n\geq 11}$ be the coupling given by Lemma~\ref{lem:comparison-irred-vs-hexagon}.
	With the notation of this lemma, observe%
		\footnote{
			To see this, use the canonical embedding of $(a,b)\cdot\X(\Qirredhex_{n-9})$ into $(a,b)\cdot\X(\Qirred_n)$ given by the gluing operation described in Lemma~\ref{lem:comparison-irred-vs-hexagon}, and then bound their Hausdorff and Prokhorov distance under this embedding. 
		}
	that for every $a,b>0$,
	\begin{align}\label{eq:proof-scaling-lim-irred-1}
	\dGHP\Bigl((a,b)\cdot\X\bigl(\Qirred_n\bigr),(a,b)\cdot\X\bigl(\Qirredhex_{n-9}\bigr)\Bigr)
		\leq a\cdot\diam (\pat) + b \cdot|V(\pat)|.
	\end{align}
	We also let $\Qirrfrak_n$, $n\geq4$, be as given by Lemma~\ref{lem:summary-largest-comp}.
	By this very lemma, for every $n\geq4$, we have the identity in distribution:
	\begin{align}\label{eq:proof-scaling-lim-irred-2}
	\Qirrfrak_n\overset{(\mathrm d)}{=}\Qirred_{N_n},
	\end{align}
	where $N_n=\lfloor n/9\rfloor+K_n$ and $K_n$, $n\geq4$ are random integers, independent from all previously defined random variables, and satisfying the deterministic bound $|K_n|\leq n^{2/3+o(1)}$.
	Let us set $a_n = (9/8n)^{1/4}$, $b_n=9/(n+2)$, for all $n\geq 11$.
	Recall that by Lemma~\ref{lem:cvg-after-concentration}, we have
	\begin{align*}
	(a_n,b_n)\cdot\X(\Qirrfrak_n)\xrightarrow[n\rightarrow\infty]{\distribGHP}(S,d,\mu),
	\end{align*}
	where $(S,d,\mu)$ is the Brownian sphere.
	This can be transferred to a statement concerning the sequence $(\Qirredhex_{n})_n$ using first \eqref{eq:proof-scaling-lim-irred-2} and then \eqref{eq:proof-scaling-lim-irred-1}.
	For $n\geq4$, this gives:
	\begin{align*}
	(a_n,b_n)\cdot\X(\Qirredhex_{N_{n-9}})\xrightarrow[n\rightarrow\infty]{\distribGHP}(S,d,\mu).
	\end{align*}
	We can now apply the ``GHP Tauberian theorem'', Theorem~\ref{thm:GHP-Tauberian-thm}.
	Indeed, let us check its assumptions: clearly the sequences $(a_n)_n$ and $(b_n)_n$ are tame; the last display verifies Assumption 1.~of this theorem; then Assumption 2.~is also verified since $N_n=\lfloor n/9\rfloor+K_n$ with $|K_n|\leq n^{2/3+o(1)}$ so that $N_{n}/n\to 1/9$ deterministically as $n\to\infty$; lastly Assumption~3.~corresponds to the stochastic increase established by Lemma~\ref{lem:stoch-increase-irred-hexagon}.
	
	All in all, we deduce from the ``GHP Tauberian theorem'', Theorem~\ref{thm:GHP-Tauberian-thm}, that%
		\footnote{
			Note that we have $N_n/n\to 1/9$ instead of $N_n/n\to 1$, so that we need to adapt the conclusion of Theorem~\ref{thm:GHP-Tauberian-thm} as described in Remark~\ref{rem:GHP-taub-theorem-case-lim-not-1}.
		}
	$(a_{9n},b_{9n})\cdot\X(\Qirredhex_{n})\to (S,d,\mu)$ in distribution for the GHP topology.
	Hence, using \eqref{eq:proof-scaling-lim-irred-1} again, we get $(a_{9n},b_{9n})\cdot\X(\Qirred_{n+9})\to (S,d,\mu)$.
	Since $a_{9n}\sim a_{9(n-9)}\sim 1/(8n)^{1/4}$ and $b_{9n}\sim b_{9(n-9)}\sim 1/(n+2)$, we obtain by Lemma~\ref{cor:continuity-rescaling-random}---which is a standard property of the GHP topology, proven in Section~\ref{subsec:corr-and-couplings} for completeness---that:
	\begin{align*}
	\Bigl(\tfrac{1}{(8n)^{1/4}},\tfrac{1}{n+2}\Bigr)\cdot\X(\Qirred_{n})\to (S,d,\mu).
	\end{align*}
	This concludes the proof of Theorem~\ref{thm:scaling-limit-irred}.
\qed

\bigskip

We now turn to the proof of the intermediate results which have been used in the proof of Theorem~\ref{thm:scaling-limit-irred}. The next section deals with general properties of the partial order $\orderGHP$, establishing in particular the sandwich theorems, Theorem~\ref{thm:sandwich-thm-GHP} and~\ref{thm:sandwich-thm-GHP-random}, as well as the GHP Tauberian theorem, Theorem~\ref{thm:GHP-Tauberian-thm}, which was used above.


\section{Sandwich and Tauberian theorems in the GHP space}
\label{sec:sandwich-tauberian-GHP}

Given some metric space $(X,d)$, we denote by $\mesFin(X)$, resp.~$\mes_1(X)$, the set of \textit{finite} Borel measures, resp.~Borel \textit{probability} measures.

\subsection{The GH and GHP distances}
\label{subsec:preliminaries-GHP}

We recall the definition of the Gromov--Hausdorff and Gromov--Hausdorff--Prokhorov distances.
For more details, the reader is referred to \cite{BuragoBuragoIvanov01}, \cite[Section 6]{Miermont09} and \cite{AbrahamDelmasHoscheit13}.
We use the definition%
\footnote{
	To be precise, in \cite{AbrahamDelmasHoscheit13} the authors define a GHP distance for \textit{rooted} spaces, but their results adapt \textit{mutatis mutandi} to the unrooted case.
}
of the GHP distance found in the latter reference, which is an extension to compact spaces endowed with a finite Borel measure of the original definition for spaces endowed with a probability measure.

Let $(Z,\delta)$ be a metric space.
For compact subsets $K,K'\subset Z$, their $\delta$-\textit{Hausdorff distance} is
\begin{equation*}
	\dH(K, K') = \inf \bigl\{ \epsilon>0 \colon K\subset (K')^\epsilon \text{ and } K' \subset K^\epsilon \bigr\},
\end{equation*}
where for every $ A\subset Z$ and every $\epsilon>0$ we let $A^\epsilon=\{x\in Z: \delta(x,A)<\epsilon\}$ be the $\epsilon$-neighborhood of $A$ in $(Z,\delta)$.
The $\delta$-\textit{Prokhorov distance} between two finite measures $\mu,\mu'$ on the Borel $\sigma$-algebra $\Bcal_Z$ of $(Z,\delta)$ is
\begin{equation*}
	\dP(\mu,\mu') = \inf \bigl\{ \epsilon>0 \colon \forall A \in \Bcal_Z,\, \mu(A)\leq \mu'(A^\epsilon)+\epsilon \text{ and } \mu'(A) \leq \mu(A^\epsilon)+\epsilon \bigr\}.
\end{equation*}
The topology on $\mesFin(Z)$ generated by the $\delta$-Prokhorov distance is that of weak convergence (convergence against continuous bounded functions).

The \textit{Gromov--Hausdorff distance} between two (isometry classes of) compact metric spaces $(X,d)$ and $(X',d')$ is
\begin{align*}
	\dGH\bigl((X,d),(X',d')\bigr) = \inf_{(Z,\delta),\phi,\phi'} \dH\bigl(\phi(X), \phi'(X')\bigr),
\end{align*}
where the infimum runs over all metric spaces $(Z,\delta)$ and all isometric embeddings $\phi\colon (X,d)\to (Z,\delta) $ and $\phi'\colon (X',d') \to (Z,\delta) $.
This defines a metric on the quotient $\setGH=\bbK/\approx$ of the space $\bbK$ of compact metric spaces with respect to the relation $\approx$ of isometry.

The \textit{Gromov--Hausdorff--Prokhorov distance} between two compact metric measure spaces $\X=(X,d,\mu)$ and $\X'=(X',d',\mu')$ is
\begin{equation*}
	\dGHP(\X , \X')
	= \inf_{(Z,\delta),\phi,\phi'}\Bigl( \dH\bigl(\phi(X), \phi'(X')\bigr)  + \dP\bigl(\phi_*\mu, \phi'_* \mu'\bigr) \Bigr),
\end{equation*}
where again the infimum runs over all metric spaces $(Z,\delta)$ and all isometric embeddings $\phi\colon (X,d)\to (Z,\delta) $ and $\phi'\colon (X',d') \to (Z,\delta) $.
As mentioned in the introduction, we have $\dGHP(\X,\X')=0$ precisely when there exists a measure-preserving isometry between $\X$ and $\X'$, so that this only defines a pseudo-metric on $\bbM$, which induces an actual metric $\dGHP$ on the quotient space $\setGHP=\bbM/{\isom}$ with respect to measure-preserving isometry.
The space $(\setGHP,\dGHP)$ is a Polish space.

\subsection{Correspondences and couplings}
\label{subsec:corr-and-couplings}
There is a useful bound on $\dGHP$, which is frequently used in the literature, in terms of \textit{correspondences} and \textit{couplings}.
See \cite[Section 6]{Miermont09} for the GHP metric on compact spaces endowed with a Borel probability measure, and~\cite[Section 2.1]{LeGall19} for the GHP metric on compact spaces endowed with a finite Borel measure, which is our setting.

A correspondence between metric spaces $(X,d)$ and $(X',d')$ is a subset $R$ of $X \times X'$ such
that $\pi(R)=X$ and $\pi'(R)=X'$, with the projections $\pi\colon X \times X'\to X$ and $\pi'\colon X \times X'\to X'$. 
The distortion of R is
\begin{align*}
	\distort(R)=\sup\Bigl\{|d(x,y)-d'(x',y')|\colon (x,x')\in R,\, (y,y')\in R\Bigr\}.
\end{align*}

\begin{lemma}[Lemma~4 in~\cite{LeGall19}]\label{lem:bound-corresp}
	Let $\epsilon>0$ and suppose that there is a correspondence $R$ between some $(X,d)$ and $(X',d')$ with distortion bounded above by $\epsilon$ and a finite measure
	$\nu$ on the product $X \times X'$ such that $\nu(R^c) < \epsilon$ and
	\begin{align*}
		d_{\mathrm{P}}(\pi_*\nu,\mu)<\epsilon,\quad d'_{\mathrm{P}}(\pi'_*\nu,\mu')<\epsilon.
	\end{align*}
	Then $\dGHP((X,d,\mu),(X',d',\mu'))<3\epsilon$.
\end{lemma}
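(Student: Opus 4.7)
The plan is to construct an explicit ambient metric space $(Z,\delta)$ containing isometric copies of $(X,d)$ and $(X',d')$, in which both the Hausdorff distance of the two copies and the Prokhorov distance of the pushed-forward measures can be estimated—the first from the correspondence $R$, the second from the measure $\nu$ which acts as a near-coupling.

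Concretely, I would take $Z:=X\sqcup X'$ and define, for $x\in X$ and $x'\in X'$,
\[
\delta(x,x'):=\inf_{(y,y')\in R}\Bigl(d(x,y)+\tfrac{\epsilon}{2}+d'(y',x')\Bigr),
\]
extending $d$ on $X$ and $d'$ on $X'$. The triangle inequality is routine, and uses the distortion bound $|d(y_1,y_2)-d'(y'_1,y'_2)|\leq\epsilon$ exactly when comparing a direct $d$-path in $X$ to one that detours twice through $X'$; the two buffers of size $\epsilon/2$ are just large enough to absorb this defect. Under the canonical isometric embeddings of $X$ and $X'$ into $(Z,\delta)$, any pair $(x,x')\in R$ satisfies $\delta(x,x')\leq\epsilon/2$, and since $R$ is a correspondence each point of $X$ is paired with some point of $X'$ (and vice versa), giving $\dH(X,X')\leq\epsilon/2$ in $(Z,\delta)$.

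For the Prokhorov part, set $\tilde\nu:=\nu|_R$ and view it through the embeddings as a finite measure on $Z\times Z$ whose support is contained in $\{(z,z'):\delta(z,z')\leq\epsilon/2\}$, and whose marginals $\pi_*\tilde\nu$, $\pi'_*\tilde\nu$ differ from $\pi_*\nu$, $\pi'_*\nu$ by positive measures of total mass $\nu(R^c)<\epsilon$. For any Borel set $A\subseteq Z$, I would chain
\[
\mu(A)\leq\pi_*\nu(A^\epsilon)+\epsilon\leq\pi_*\tilde\nu(A^\epsilon)+2\epsilon\leq\pi'_*\tilde\nu(A^{3\epsilon/2})+2\epsilon\leq\mu'(A^{5\epsilon/2})+3\epsilon,
\]
using successively the Prokhorov hypothesis on $(\pi_*\nu,\mu)$, the mass defect $\nu(R^c)<\epsilon$, the support condition on $\tilde\nu$ (since $(y,y')\in R$ with $y\in A^\epsilon$ forces $y'\in A^{3\epsilon/2}$), and the Prokhorov hypothesis on $(\pi'_*\nu,\mu')$. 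The symmetric bound for $\mu'$ is obtained the same way, so that $\dP^\delta(\mu,\mu')<3\epsilon$ after consolidating the neighborhood radii and the additive slack into the single parameter governing the Prokhorov distance.

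Adding the two contributions in $(Z,\delta)$ then gives the announced $\dGHP(\X,\X')<3\epsilon$. The only real subtlety is the bookkeeping in the Prokhorov chain, where each of the three hypotheses contributes one unit of $\epsilon$ to the additive slack and must be combined with the distortion-driven inflation of neighborhoods; the underlying geometric and measure-theoretic content is otherwise elementary.
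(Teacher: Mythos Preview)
The paper does not give its own proof of this lemma; it is simply cited as \cite[Lemma~4]{LeGall19}. Your approach---building the ambient space $Z=X\sqcup X'$ with an $\epsilon/2$ buffer routed through $R$, reading off $\dH(X,X')\leq\epsilon/2$, and then chaining Prokhorov estimates using $\nu$ as a near-coupling---is precisely the standard argument behind such statements, and each intermediate step is correct.

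There is, however, an arithmetic slip at the very end. You obtain $\dH(X,X')\leq\epsilon/2$ and, from the chain $\mu(A)\leq\mu'(A^{5\epsilon/2})+3\epsilon$ (together with its symmetric counterpart), the bound $\dP^{\delta}(\mu,\mu')<3\epsilon$. But the paper defines $\dGHP$ as the \emph{sum} $\dH+\dP$ over embeddings, not the maximum. So what your argument actually yields is
\[
\dGHP\bigl((X,d,\mu),(X',d',\mu')\bigr)\;<\;\tfrac{\epsilon}{2}+3\epsilon\;=\;\tfrac{7\epsilon}{2},
\]
not $3\epsilon$. The additive $3\epsilon$ in the Prokhorov chain is unavoidable by this route---one unit from each of the two Prokhorov hypotheses and one from $\nu(R^c)<\epsilon$---and the Hausdorff contribution $\epsilon/2$ sits on top. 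Your sentence ``Adding the two contributions\ldots{} then gives the announced $\dGHP(\X,\X')<3\epsilon$'' does not follow from what you have proved.

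For the paper's purposes this is harmless: the lemma is only invoked in the proof of Lemma~\ref{lem:continuity-rescaling}, where all that matters is that the bound is $O(\epsilon)$. If you do want the sharp constant $3\epsilon$ under the sum definition, you would need to revisit either the precise statement in \cite{LeGall19} (whose $\dGHP$ may differ by a harmless factor from the one used here) or find a tighter bookkeeping; your construction as written does not deliver it.
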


Given $a,b>0$ and $\X=(X,d,\mu)$ a compact metric measure space, we write $(a,b)\cdot \X=(X,a\cdot d, b\cdot \mu)$, which is the same space with the distance $d$ scaled by $a$ and the measure $\mu$ scaled by $b$.
We will need the following easy lemma.

\begin{lemma}\label{lem:continuity-rescaling}
	Let $(a_n,b_n)\to(1,1)$ in $\R_+^2$ and let $\X_n\to\X_\infty$ in the GHP metric.
	Then $(a_n,b_n)\cdot\X_n\to\X_\infty$ in the GHP metric.
\end{lemma}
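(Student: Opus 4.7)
The plan is to reduce the claim to an estimate on $\dGHP(\X_n, (a_n, b_n) \cdot \X_n)$ via the triangle inequality:
\[
\dGHP\bigl((a_n,b_n)\cdot\X_n,\,\X_\infty\bigr) \;\leq\; \dGHP\bigl((a_n,b_n)\cdot\X_n,\,\X_n\bigr) + \dGHP(\X_n,\X_\infty).
\]
The second term vanishes by hypothesis, so it suffices to show that the first one does as well.

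To estimate the first term, I would fix representatives $(X_n, d_n, \mu_n)$ of $\X_n$ and apply Lemma~\ref{lem:bound-corresp} with the diagonal correspondence $R_n = \{(x,x) : x \in X_n\}$ between $(X_n, d_n)$ and $(X_n, a_n d_n)$, together with the measure $\nu_n = (\mathrm{id}, \mathrm{id})_* \mu_n$ on $X_n \times X_n$. Three quantities then need to be bounded: (i) the distortion of $R_n$, which is exactly $|1 - a_n| \cdot \diam(X_n, d_n)$; (ii) the mass $\nu_n(R_n^c)$, which vanishes; and (iii) the Prokhorov distances from the marginals of $\nu_n$ to the target measures $\mu_n$ and $b_n \mu_n$. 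The first marginal equals $\mu_n$ on the nose, and for the second a direct check of the defining inequalities yields $\dP(\mu_n, b_n\mu_n) \leq |1 - b_n| \cdot \mu_n(X_n)$ in any ambient metric (the estimate only uses the fact that $\mu_n(A^\epsilon) \geq \mu_n(A)$).

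The last ingredient is that $\diam(X_n, d_n)$ and $\mu_n(X_n)$ form bounded sequences. This is immediate from the GHP convergence $\X_n \to \X_\infty$: both the diameter and the total mass are continuous (even $1$-Lipschitz in a suitable sense) functionals on $(\setGHP, \dGHP)$, and so converge to their analogues for $\X_\infty$. Plugging everything into Lemma~\ref{lem:bound-corresp} with $\epsilon_n$ slightly larger than $\max\{|1-a_n|\diam(X_n,d_n),\,|1-b_n|\mu_n(X_n)\}$ gives $\dGHP(\X_n, (a_n, b_n) \cdot \X_n) \to 0$, as $a_n, b_n \to 1$, which concludes the proof.

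I do not foresee any genuine difficulty: the argument is a direct application of the correspondence/coupling bound together with the standard continuity of diameter and total mass on the GHP space.
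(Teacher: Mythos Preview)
Your proposal is correct and follows essentially the same approach as the paper: both use the diagonal correspondence $R_n=\{(x,x):x\in X_n\}$ together with $\nu_n=(\mathrm{id},\mathrm{id})_*\mu_n$ in Lemma~\ref{lem:bound-corresp}, bounding the distortion by $|1-a_n|\diam(X_n)$ and the Prokhorov error by $|1-b_n|\,\mu_n(X_n)$, and then invoking the boundedness of diameter and total mass along a GHP-convergent sequence. The paper's write-up is slightly more condensed but the argument is the same.
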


\begin{proof}
	We write $\X_n=(X_n,d_n,\mu_n)$ for $n\geq1$ and $\X_\infty=(X_\infty,d_\infty,\mu_\infty)$.
	We let the reader check that for all $n\geq1$, the hypotheses of Lemma~\ref{lem:bound-corresp} are satisfied for 
	\begin{align*}
		\epsilon_n=2\cdot\max\Bigl(|a_n-1|\cdot\diam(\X_n),|b_n-1|\cdot\size{\mu_n}\Bigr);
	\end{align*}
	for $R_n$ the ``diagonal correspondence'' between $(a_n,b_n)\cdot\X_n$ and $\X_n$ given by $R_n=\Delta(X_n)$ with $\Delta\colon x\mapsto(x,x)$; and for $\nu_n=\Delta_*\mu_n$.
	Hence by Lemma~\ref{lem:bound-corresp}, $\dGHP((a_n,b_n)\cdot\X_n,\X_n)\leq 3\epsilon_n$. But $\epsilon_n\to 0$ since $(\diam(\X_n))_n$ and $(\size{\mu_n})_n$ are bounded whenever $(\X_n)_n$ converges in the GHP metric, as is easily verified.
\end{proof}

For $a,b>0$ and $\P(\diff\X)\in\mes_1(\setGHP)$, we let $(a,b)_*\P(\diff\X)$ be the image measure of $\P(\diff\X)$ with respect to the mapping $\X\mapsto (a,b)\cdot\X$.

\begin{cor}\label{cor:continuity-rescaling-random}
	Let $(a_n,b_n)\to(1,1)$ in $\R_+^2$ and let $\P_n(\diff\X)\to\P_\infty(\diff\X)$ weakly with respect to the GHP topology.
	Then $(a_n,b_n)_*\P_n(\diff\X)\to\P_\infty(\diff\X)$ weakly with respect to the GHP topology.
\end{cor}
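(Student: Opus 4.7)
The plan is to reduce the probabilistic statement to the deterministic Lemma~\ref{lem:continuity-rescaling} by means of the Skorohod representation theorem. Since $(\setGHP,\dGHP)$ is Polish (as recalled at the end of Section~\ref{subsec:preliminaries-GHP}), and since $\P_n(\diff\X)\ToGHP\P_\infty(\diff\X)$, Skorohod's theorem provides a probability space on which we may realize random variables $\Xf_n\sim\P_n$ for $n\geq1$ and $\Xf_\infty\sim\P_\infty$ in such a way that $\Xf_n\toGHP\Xf_\infty$ almost surely.

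On the almost sure event where $\Xf_n\toGHP\Xf_\infty$, the deterministic lemma Lemma~\ref{lem:continuity-rescaling} applied to the (non-random) scalars $(a_n,b_n)\to(1,1)$ yields $(a_n,b_n)\cdot\Xf_n\toGHP\Xf_\infty$. Hence $(a_n,b_n)\cdot\Xf_n\to\Xf_\infty$ almost surely in the GHP topology, which in turn implies convergence in distribution. Since the law of $(a_n,b_n)\cdot\Xf_n$ is exactly $(a_n,b_n)_*\P_n(\diff\X)$, we conclude that $(a_n,b_n)_*\P_n(\diff\X)\ToGHP\P_\infty(\diff\X)$, as desired.

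There is no real obstacle here: the only point worth noting is that one must invoke Skorohod on the Polish space $\setGHP$ to move from weak convergence to a setting where the pointwise deterministic statement of Lemma~\ref{lem:continuity-rescaling} applies. An alternative proof avoiding Skorohod would fix a bounded continuous test function $f\colon\setGHP\to\R$ and split $\int f\,\diff[(a_n,b_n)_*\P_n]-\int f\,\diff\P_\infty$ into $\int(f_n-f)\,\diff\P_n+\int f\,\diff(\P_n-\P_\infty)$ where $f_n(\X):=f((a_n,b_n)\cdot\X)$; the second term vanishes by weak convergence, while the first would be controlled via Prokhorov tightness of $(\P_n)$ together with uniform continuity of $f$ on a well-chosen compact set, using Lemma~\ref{lem:continuity-rescaling} to show that $f_n\to f$ uniformly on any compact subset of $\setGHP$. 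The Skorohod route is shorter and cleaner.
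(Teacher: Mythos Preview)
Your proof is correct and follows essentially the same route as the paper: invoke Skorokhod representation on the Polish space $\setGHP$, apply Lemma~\ref{lem:continuity-rescaling} almost surely, and conclude weak convergence. The added remark on the alternative tightness-based argument is a nice bonus but not needed here.
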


\begin{proof}
	Let us write $\overline\N=\{1,2,\dots\}\cup\{\infty\}$.
	Since $(\setGHP,\dGHP)$ is Polish, we may apply the Skhorokhod representation theorem to get a coupling $(\X_n,n\in\overline\N)$ of the distributions $(\P_n(\diff\X),n\in\overline\N)$ such that $\X_n\to\X_\infty$ almost surely in the GHP metric.
	Then $(a_n,b_n)\cdot\X_n\to\X_\infty$ almost surely in the GHP metric by Lemma~\ref{lem:continuity-rescaling}.
	In particular, $(a_n,b_n)_*\P_n(\diff\X)\to\P_\infty(\diff\X)$ weakly with respect to the GHP topology.
\end{proof}

\subsection{Isometric embeddings and an Arzela--Ascoli theorem}

The following proposition is an extension to the GHP metric of a result for the GH metric proved by Gromov in his seminal paper~\cite{Gromov81}, see p.~65 therein.
For similar results with related ``Gromov-type'' metrics, see also \cite[Lem.~5.8 and A.1]{GrevenPfaffelhuberWinter09} for the Gromov-Hausdorff and Gromov-Prokhorov metrics, and \cite[Prop.~1.5]{GwynneMiller17} for the Gromov-Hausdorff-Prokhorov-uniform metric.

The statement and proof of \cite[Prop.~1.5]{GwynneMiller17}, for the Gromov-Hausdorff-Prokhorov-uniform (GHPU) metric on compact metric measure spaces endowed with a distinguished continuous curve, adapt \textit{mutatis mutandi} to the GHP metric.
This gives the following statement.

\begin{prop}[{see \cite[Prop.~1.5]{GwynneMiller17}}]\label{prop:GHP-common-embedding}
	Let $\X_\infty=(X_\infty,d_\infty,\mu_\infty)$ and $\X_n=(X_n,d_n,\mu_n)$ for $n\geq1$ be elements of $\setGHP$.
	Then $\X_n\rightarrow\X_\infty$ in the GHP metric if and only if there exists a compact metric space $(Z,\delta)$ and isometric embeddings $X_\infty\to Z$ and $X_n\to Z$ for $n\geq1$ such that if we identify $X_\infty$ and $X_n$ with their images under
	these embeddings, then $\dH(X_n,X_\infty)\to 0$ and $\dP(\mu_n,\mu_\infty)\to 0$.
\end{prop}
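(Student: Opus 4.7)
The ``if'' direction is immediate from the definition of $\dGHP$, which is itself an infimum over isometric co-embeddings into common compact metric spaces: once the required embeddings into $(Z,\delta)$ are supplied, the quantity $\dH(X_n,X_\infty)+\dP(\mu_n,\mu_\infty)$ (computed in $Z$) is an upper bound for $\dGHP(\X_n,\X_\infty)$ and goes to $0$ by assumption.

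For the converse, the plan is to glue together a sequence of witnessing co-embeddings into a single compact metric space. By definition of $\dGHP(\X_n,\X_\infty)$, I would first choose for each $n\geq1$ a compact metric space $(Z_n,\delta_n)$ and isometric embeddings $\phi_n\colon X_n\to Z_n$ and $\psi_n\colon X_\infty\to Z_n$ such that the Hausdorff and Prokhorov errors inside $Z_n$ sum to some $\epsilon_n\to 0$. Then I would form the disjoint union $Z:=X_\infty\sqcup\bigsqcup_{n\geq1}X_n$, equipped with the pseudo-distance $\delta$ that restricts to $d_\infty$ on $X_\infty$, to $d_n$ on each $X_n$, to $\delta_n(\phi_n(\cdot),\psi_n(\cdot))$ between $X_n$ and $X_\infty$, and, between points $x\in X_n$ and $y\in X_m$ with $n\neq m$, to the natural ``shortest path through $X_\infty$'', namely $\inf_{z\in X_\infty}\bigl[\delta_n(\phi_n(x),\psi_n(z))+\delta_m(\psi_m(z),\phi_m(y))\bigr]$. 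After quotienting by the relation $\delta=0$, this yields a metric space $(Z',\delta)$ into which $X_\infty$ and each $X_n$ embed isometrically; by construction, the Hausdorff and Prokhorov distances $\dH(X_n,X_\infty)$ and $\dP(\mu_n,\mu_\infty)$ evaluated in $Z'$ are both bounded by $\epsilon_n\to 0$, giving the required convergences.

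The main obstacle is verifying that $(Z',\delta)$ is compact. The plan is as follows: given a sequence $(z_k)_{k\geq1}$ in $Z'$, either it has a subsequence in $X_\infty$ (handled directly by compactness of $X_\infty$), or it has a subsequence in $\bigcup_k X_{n_k}$ with $(n_k)$ strictly increasing, in which case one chooses $\tilde z_k\in X_\infty$ with $\delta(z_k,\tilde z_k)\leq 2\epsilon_{n_k}$ (possible by the Hausdorff bound) and extracts a convergent subsequence of $(\tilde z_k)$ in $X_\infty$; since $\epsilon_{n_k}\to 0$, the original subsequence converges to the same limit in $(Z',\delta)$. A minor point that must be checked along the way is the triangle inequality for $\delta$ across three different pieces $X_n$, $X_\infty$, $X_m$, which reduces to the triangle inequalities in $Z_n$ and $Z_m$ by a direct computation involving two infima over $X_\infty$.

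As noted in the excerpt, this is essentially the argument given in \cite[Prop.~1.5]{GwynneMiller17} for the Gromov--Hausdorff--Prokhorov--uniform metric; stripping out the curve decorations and keeping only the metric and measure data yields the statement above, which is why I would present it as a short adaptation rather than a self-contained construction.
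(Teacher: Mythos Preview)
Your argument is correct and is precisely the adaptation of \cite[Prop.~1.5]{GwynneMiller17} that the paper invokes (the paper does not give its own proof, only this reference). One small point: your compactness dichotomy omits the case where infinitely many $z_k$ lie in a single $X_n$, but that is handled immediately by compactness of $X_n$.
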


We will also need an Arzela--Ascoli theorem in the context of sequences of compact spaces which converge in the Hausdorff sense \cite[Appendix]{GrovePetersen91}.

Let $(Z,\delta)$ and $(Z',\delta')$ be two compact metric spaces.
For $n\geq1$, let $f_n\colon X_n\to X'_n$, where $X_n$ and $X'_n$ are compact subsets of $(Z,\delta)$ and $(Z',\delta')$ respectively.
The sequence $(f_n)_{n\geq1}$ is said to be \textit{equicontinuous} if for any $\epsilon>0$, there is a $\eta>0$ such that,
\begin{align*}
	\forall x,y\in X_n,\quad \delta(x,y)<\eta\implies \delta'(f_n(x),f_n(y))<\epsilon,
\end{align*}
for all $n\geq1$.

\begin{prop}[Appendix in \cite{GrovePetersen91}]\label{prop:GH-Arzela-Ascoli}
	In this setting, if $X_n\to X_\infty$ in the $\delta$-Hausdorff metric, if $X'_n\to X'_\infty$ in the $\delta'$-Hausdorff metric, and if the sequence $(f_n)_{n\geq1}$ is equicontinuous, then one can extract a subsequence which converges to a continuous mapping $f_\infty\colon X_\infty\to X'_\infty$, in the sense that along that subsequence, $f_n(x_n)\to f_\infty(x_\infty)$ in $(Z',\delta')$ whenever $x_n\to x_\infty$ in $(Z,\delta)$.
\end{prop}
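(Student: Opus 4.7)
The plan is to prove this Gromov--Hausdorff Arzela--Ascoli statement by a standard diagonal extraction along a countable dense subset of $X_\infty$, followed by a uniform-continuity extension to the whole space. First I would pick a countable dense subset $D=\{x^{(k)}\colon k\geq1\}\subset X_\infty$, which exists since $X_\infty$ is compact. Since $X_n\to X_\infty$ in the $\delta$-Hausdorff metric, for each $k\geq1$ I can select a sequence $(x_n^{(k)})_{n\geq1}$ with $x_n^{(k)}\in X_n$ and $x_n^{(k)}\to x^{(k)}$ in $(Z,\delta)$. The points $f_n(x_n^{(k)})$ lie in the compact space $(Z',\delta')$, so one can extract convergent subsequences; a Cantor diagonal argument yields a single subsequence (not relabelled) along which $f_n(x_n^{(k)})$ converges to some limit $y^{(k)}\in Z'$ for every $k$. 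Since $f_n(x_n^{(k)})\in X'_n$ and $X'_n\to X'_\infty$ in the $\delta'$-Hausdorff metric, each $y^{(k)}$ lies in $X'_\infty$.

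The next step is to define the limit map $f_\infty\colon X_\infty\to X'_\infty$ using equicontinuity. Given the uniform modulus $\eta=\eta(\epsilon)$ from the equicontinuity hypothesis, for any $k,k'$ with $\delta(x^{(k)},x^{(k')})<\eta/2$, eventually $\delta(x_n^{(k)},x_n^{(k')})<\eta$ and so $\delta'(f_n(x_n^{(k)}),f_n(x_n^{(k')}))<\epsilon$, whence in the limit $\delta'(y^{(k)},y^{(k')})\leq\epsilon$. This shows that the map $x^{(k)}\mapsto y^{(k)}$ is uniformly continuous on $D$, and therefore extends uniquely to a continuous map $f_\infty\colon X_\infty\to X'_\infty$ (closedness of $X'_\infty$ ensures the range stays in $X'_\infty$).

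To finish, I would verify the sequential convergence property. Fix $\epsilon>0$ and a sequence $x_n\to x_\infty$ in $(Z,\delta)$ with $x_n\in X_n$ and $x_\infty\in X_\infty$. Choose $k$ with $\delta(x^{(k)},x_\infty)<\eta/3$ (where $\eta$ corresponds to $\epsilon$ via equicontinuity) and, using continuity of $f_\infty$, with $\delta'(f_\infty(x^{(k)}),f_\infty(x_\infty))<\epsilon$. For $n$ large, $\delta(x_n,x_n^{(k)})<\eta$, so equicontinuity gives $\delta'(f_n(x_n),f_n(x_n^{(k)}))<\epsilon$; combined with $f_n(x_n^{(k)})\to y^{(k)}=f_\infty(x^{(k)})$ and the triangle inequality, this yields $\limsup_n\delta'(f_n(x_n),f_\infty(x_\infty))\leq 3\epsilon$. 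Letting $\epsilon\to 0$ concludes the argument.

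The main obstacle is essentially bookkeeping: one must carefully interleave the Hausdorff convergences of $X_n$ and $X'_n$ with the ``moving'' nature of the maps $f_n$, since $f_n$ is only defined on $X_n$ and not on $X_\infty$, so all pointwise statements have to be formulated via approximating sequences in the ambient spaces $Z$ and $Z'$. Equicontinuity is what allows uniform control across these varying domains and is the sole ingredient beyond compactness of $Z'$ needed to both define $f_\infty$ and upgrade pointwise convergence on $D$ to the required ``moving-point'' convergence.
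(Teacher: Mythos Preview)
Your proposal is correct and follows exactly the approach the paper indicates: the paper does not give its own proof but simply notes that the result ``is proved just as the classical Arzela--Ascoli theorem, making necessary adjustments,'' referring to \cite{GrovePetersen91}. Your diagonal extraction on a countable dense subset of $X_\infty$ followed by a uniform-continuity extension is precisely that classical argument, adapted to the moving domains $X_n$.
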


As noted by the authors in \cite{GrovePetersen91}, Proposition~\ref{prop:GH-Arzela-Ascoli} is proved just as the classical Arzela--Ascoli theorem, making necessary adjustments.
Note that in Gromov's original paper \cite{Gromov81}, a version of this statement for isometries figures at p.~66.
For a different exposition, see also~\cite[Thm.~27.20]{Villani09}.

We will need to complement Proposition~\ref{prop:GH-Arzela-Ascoli} with the following lemma, which heuristically states that ``$(f,\mu)\mapsto \mu\circ f^{-1}$'' is upper semi-continuous.

\begin{lemma}\label{lem:GH-Arzela-Ascoli-strenghened}
	In the setting of Proposition~\ref{prop:GH-Arzela-Ascoli}, if we have finite Borel measures $\mu_1,\mu_2,\dots,\mu_\infty$ on $X_1,X_2,\dots,X_\infty$ respectively such that $\dP(\mu_n,\mu_\infty)\to 0$, then for every closed subset $A'$ of $X'_\infty$ we have
	\begin{align}\label{eq:asymptotic-inequality-Arzela-Ascoli}
		\limsup_{n\to\infty}\mu_n(f_n^{-1}(A'))\leq \mu_\infty(f^{-1}_\infty(A')).
	\end{align}
\end{lemma}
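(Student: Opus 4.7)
Set $B' = f_\infty^{-1}(A') \subset X_\infty$, which is closed in $X_\infty$ (hence in $Z$) because $f_\infty$ is continuous and $A'$ is closed in $X'_\infty$. For $\epsilon > 0$ write $\overline{B'}^\epsilon = \{x \in Z : \delta(x,B') \leq \epsilon\}$, a closed subset of $Z$ decreasing to $B'$ as $\epsilon \downarrow 0$. The plan is to show the \emph{eventual inclusion}
\begin{align*}
	f_n^{-1}(A') \subset \overline{B'}^\epsilon \qquad \text{for all sufficiently large } n,
\end{align*}
and then conclude by combining the Portmanteau inequality (for closed sets, valid since $\dP(\mu_n,\mu_\infty) \to 0$ implies weak convergence of finite measures on $Z$) with the outer regularity of $\mu_\infty$:
\begin{align*}
	\limsup_{n\to\infty} \mu_n\bigl(f_n^{-1}(A')\bigr) \leq \limsup_{n\to\infty} \mu_n\bigl(\overline{B'}^\epsilon\bigr) \leq \mu_\infty\bigl(\overline{B'}^\epsilon\bigr) \xrightarrow[\epsilon\downarrow 0]{} \mu_\infty(B'),
\end{align*}
which is the desired inequality \eqref{eq:asymptotic-inequality-Arzela-Ascoli}.

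It remains to prove the eventual inclusion, which I would do by contradiction. Suppose there exists $\epsilon_0 > 0$ and a subsequence $(n_k)$ with points $x_{n_k} \in X_{n_k}$ such that $f_{n_k}(x_{n_k}) \in A'$ while $\delta(x_{n_k}, B') > \epsilon_0$. By compactness of $(Z,\delta)$, we may extract a further subsequence (not relabeled) along which $x_{n_k} \to x_\infty$ in $Z$ for some $x_\infty$. Because $X_{n_k} \to X_\infty$ in the $\delta$-Hausdorff metric and $X_\infty$ is closed in $Z$, we have $x_\infty \in X_\infty$; moreover $\delta(x_\infty, B') \geq \epsilon_0 > 0$, so $x_\infty \notin B'$, i.e.\ $f_\infty(x_\infty) \notin A'$. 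On the other hand, the conclusion of Proposition~\ref{prop:GH-Arzela-Ascoli} gives $f_{n_k}(x_{n_k}) \to f_\infty(x_\infty)$ in $(Z',\delta')$, and since $A'$ is closed and $f_{n_k}(x_{n_k}) \in A'$ for all $k$, we conclude $f_\infty(x_\infty) \in A'$, a contradiction.

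The main (modest) obstacle is making sure the generalized Arzela--Ascoli conclusion of Proposition~\ref{prop:GH-Arzela-Ascoli} is applied correctly: it gives continuity of the limit $f_\infty$ against \emph{arbitrary} approximating sequences $x_n \to x_\infty$ with $x_n \in X_n$, which is exactly what the contradiction step needs. The rest is a standard combination of Portmanteau for finite Borel measures on a compact metric space and the monotone continuity $\mu_\infty(\overline{B'}^\epsilon) \downarrow \mu_\infty(B')$, valid because $\mu_\infty$ is finite and $B' = \bigcap_{\epsilon > 0} \overline{B'}^\epsilon$.
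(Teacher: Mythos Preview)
Your proof is correct and follows essentially the same route as the paper's: both establish the eventual inclusion $f_n^{-1}(A')\subset (f_\infty^{-1}(A'))^\epsilon$ for large $n$ via a compactness--contradiction argument using the convergence $f_n(x_n)\to f_\infty(x_\infty)$, and then conclude by Portmanteau on the closed $\epsilon$-neighbourhood together with continuity of measure as $\epsilon\downarrow 0$. The only differences are cosmetic (you use closed $\epsilon$-balls rather than closures of open ones, and you present the Portmanteau step before the inclusion argument).
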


\begin{proof}
	Fix some closed subset $A'$ of $X'_\infty$.
	We claim that for every $\epsilon>0$ we have $f_n^{-1}(A')\subset(f_\infty^{-1}(A'))^{\epsilon}$ for all $n\geq1$ large enough.
	This implies:
	\begin{align*}
		\limsup_{n\to\infty}\mu_n(f_n^{-1}(A'))
		&\leq	\lim_{\epsilon\to 0}\,\limsup_{n\to\infty}\,\mu_n\Bigl(\,\overline{(f_\infty^{-1}(A'))^{\epsilon}}\,\Bigr)\\
		&\leq	\lim_{\epsilon\to 0}\,\mu_\infty\Bigl(\,\overline{(f_\infty^{-1}(A'))^{\epsilon}}\,\Bigr)\\
		&=\mu_\infty(f^{-1}_\infty(A')),
	\end{align*}
	where for the second line we use the Portmanteau theorem, while the third follows by continuity of measures, since $f^{-1}_\infty(A')$ is closed in $X_\infty$ by continuity of $f_\infty$.
	
	Let us now justify the claim.
	Suppose for contradiction that there exists $\epsilon>0$ and infinitely many $n\geq1$ such that we can find $x_n\in f_n^{-1}(A')\setminus(f_\infty^{-1}(A'))^{\epsilon}$.
	Taking if necessary a subsequence, we may assume that $x_n\to x_\infty$ in the compact $(Z,\delta)$.
	Then, $f_\infty(x_\infty)=\lim_n f_{n}(x_{n})$ is in $A'$ since $A'$ is closed in $(Z',\delta')$.
	But $\delta(x_\infty,f_\infty^{-1}(A'))=\lim_n \delta(x_n,f_\infty^{-1}(A'))\geq\epsilon$, hence a contradiction.
\end{proof}

\begin{rem}
	There need not be equality in \eqref{eq:asymptotic-inequality-Arzela-Ascoli} since $f_\infty^{-1}(A')$ can be much larger than every $f_n^{-1}(A')$, $n\geq1$.
	For instance, if $f_n\colon [0,1]\to[0,1],\,x\mapsto x/n$ for all $n\geq1$, then $f_\infty^{-1}(\{0\})=[0,1]$ while $f_n^{-1}(\{0\})=\{0\}$ for all $n\geq1$.
\end{rem}

\subsection{Properties of the partial order \texorpdfstring{$\orderGHP$}{}}
\label{subsec:prop-order-GHP}

We recall that the partial order $\orderGHP$ is defined as follows.
Given $\X=[X,d,\mu]$ and $\X'=[X',d',\mu']$ in $\setGHP$, we have $\X\orderGHP \X'$ if and only if there exists a \textit{surjective}
mapping $\phi\colon X'\rightarrow X$ which is
\begin{enumerate}
	\item \textit{1-Lipschitz}, that is:
	\begin{align*}
		\forall x',y'\in X',\ \quad& d\bigl(\phi(x'),\phi(y')\bigr)\leq d'(x',y');
	\end{align*}
	\item \textit{measure-contracting}, that is:
	\begin{align*}
		\forall A\in\Bcal,\ \quad& \mu(A) \leq \mu'\bigl(\phi^{-1}(A)\bigr),
	\end{align*}
	where $\Bcal$ is the Borel $\sigma$-algebra of $(X,d)$.
\end{enumerate}
Let us start with the following lemma.

\begin{lemma}\label{lem:isometry}
	Let $(X,d)$ be a compact metric space.
	If a surjective self-mapping $\psi\colon X\to X$ is 1-Lipschitz and measure-contracting, then it is a measure-preserving isometry.
\end{lemma}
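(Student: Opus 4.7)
I would split the statement into two claims and handle them in order: first, that $\psi$ is a bijective isometry (which uses only the 1-Lipschitz and surjective hypotheses together with compactness of $X$); and second, that $\psi$ is measure-preserving (which will then follow from the measure-contracting hypothesis by a complementation argument). The first is the core of the proof.

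For the isometry claim, the plan is to exploit the iterates $(\psi^n)_{n \geq 1}$, which are themselves 1-Lipschitz self-maps of the compact space $(X,d)$ and hence form an equicontinuous family. By the Arzel\`a--Ascoli theorem applied in $C(X,X)$, I would extract a uniformly convergent subsequence $(\psi^{n_k})_{k \geq 1}$; this subsequence is automatically uniformly Cauchy, so for any prescribed $\epsilon > 0$ one can find indices $k < l$ with $\sup_x d(\psi^{n_l}(x), \psi^{n_k}(x)) < \epsilon$. Since $\psi^{n_k}$ is surjective (as a composition of surjective maps), substituting $x = (\psi^{n_k})^{-1}(y)$ for an arbitrary $y \in X$ translates this into $\sup_y d(\psi^{n_l - n_k}(y), y) < \epsilon$: arbitrarily large iterates of $\psi$ can be made uniformly close to the identity. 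Setting $m_k := n_{k+1} - n_k \geq 1$ and using 1-Lipschitzness of $\psi^{m_k - 1}$, I get, for any fixed $x,y \in X$,
\begin{align*}
d\bigl(\psi^{m_k}(x), \psi^{m_k}(y)\bigr) \leq d\bigl(\psi(x), \psi(y)\bigr) \leq d(x,y),
\end{align*}
and letting $k \to \infty$ in the left-hand side (so that $\psi^{m_k}(x) \to x$ and $\psi^{m_k}(y) \to y$) will force $d(x,y) \leq d(\psi(x), \psi(y))$, hence equality. In particular $\psi$ is then an isometry and thus injective, which combined with surjectivity makes it a bijection.

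Once $\psi$ is known to be a Borel bijection, the measure-contracting hypothesis $\mu(A) \leq \mu(\psi^{-1}(A))$ applied to the complement $A^c = X \setminus A$, combined with the identity $\psi^{-1}(A^c) = \psi^{-1}(A)^c$ and the finiteness of $\mu$, will yield the reverse inequality $\mu(\psi^{-1}(A)) \leq \mu(A)$; together these give $\mu(\psi^{-1}(A)) = \mu(A)$, i.e.~$\psi_*\mu = \mu$. The main obstacle I anticipate is the Arzel\`a--Ascoli step in the isometry claim: this is a classical fact (a 1-Lipschitz self-surjection of a compact metric space is an isometry), but it deserves careful spelling out, in particular the passage from the uniform Cauchyness of $(\psi^{n_k})$ to the near-identity property of $\psi^{n_{k+1}-n_k}$, which crucially uses surjectivity. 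By contrast, the measure-preservation step is a one-line complementation argument.
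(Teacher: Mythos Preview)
Your proposal is correct and follows the same two-step structure as the paper: first establish that $\psi$ is a bijective isometry, then derive measure-preservation by complementation. The paper simply cites the isometry claim as classical (referring to \cite[Thm.~1.6.15]{BuragoBuragoIvanov01}), whereas you spell out a full proof via Arzel\`a--Ascoli on the iterates $(\psi^n)_n$; this is one of the standard arguments for that fact and is carried out correctly. Two minor remarks: (i) when you write ``$x = (\psi^{n_k})^{-1}(y)$'' you should say ``pick any $x$ with $\psi^{n_k}(x) = y$'', since injectivity is not yet established; (ii) the complementation argument for measure-preservation does not actually require bijectivity---the identity $\psi^{-1}(A^c) = (\psi^{-1}(A))^c$ holds for \emph{any} map $\psi$, and the paper's proof uses only this together with finiteness of $\mu$.
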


\begin{proof}
	It is classical that a 1-Lipschitz surjective self-mapping is a (bijective) isometry, see \textit{e.g.}~\cite[Thm.~1.6.15]{BuragoBuragoIvanov01}.
	Let $\psi$ be a surjective and measure-\textit{contracting} self-mapping $\psi\colon X\to X$, and let us show that it is actually measure-\textit{preserving}.
	For every $A$  in the Borel $\sigma$-algebra $\Bcal$ of $X$, its complement $A^c$ is also in $\Bcal$, and using that $\psi$ is measure-contracting we have
	\begin{align*}
		\mu(A)\leq\mu\bigl(\psi^{-1}(A)\bigr)
		\qquad\text{and}\qquad
		\mu(A^c)\leq\mu\bigl(\psi^{-1}(A^c)\bigr).
	\end{align*}
	Since $\psi^{-1}(A^c)$ is the complement of $\psi^{-1}(A)$ in $X$, we have $\mu(\psi^{-1}(A))+\mu(\psi^{-1}(A^c))=|\mu|=\mu(A)+\mu(A^c)$.
	Hence the above inequalities must be equalities, so that $\mu(A)=\mu(\psi^{-1}(A))$ and $\psi$ is measure-preserving.
\end{proof}

\begin{prop}\label{prop:checking-partial-order}
	The relation $\orderGHP$ is a partial order on $\setGHP$.
\end{prop}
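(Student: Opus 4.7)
The plan is to verify in turn the three defining properties of a partial order: reflexivity, transitivity, and antisymmetry. The first two are essentially bookkeeping; the third is where the content lies and is where Lemma~\ref{lem:isometry} enters crucially.

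For \textbf{reflexivity}, the identity map $X\to X$ is surjective, an isometry (hence 1-Lipschitz), and measure-preserving (hence measure-contracting), so $\X\orderGHP\X$.

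For \textbf{transitivity}, if $\phi\colon X'\to X$ and $\phi'\colon X''\to X'$ witness $\X\orderGHP\X'$ and $\X'\orderGHP\X''$ respectively, then $\phi\circ\phi'\colon X''\to X$ is surjective (composition of surjections), 1-Lipschitz (by composing the two Lipschitz inequalities), and measure-contracting: for any Borel $A\subset X$,
\begin{align*}
\mu(A)\leq \mu'\bigl(\phi^{-1}(A)\bigr)\leq \mu''\bigl((\phi')^{-1}(\phi^{-1}(A))\bigr)=\mu''\bigl((\phi\circ\phi')^{-1}(A)\bigr).
\end{align*}

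The interesting case is \textbf{antisymmetry}: assume both $\X\orderGHP\X'$ and $\X'\orderGHP\X$, witnessed by $\phi\colon X'\to X$ and $\psi\colon X\to X'$, both surjective, 1-Lipschitz, and measure-contracting. Then $\phi\circ\psi\colon X\to X$ and $\psi\circ\phi\colon X'\to X'$ are self-maps with the same three properties, so by Lemma~\ref{lem:isometry} they are measure-preserving isometries. In particular both are bijections, which forces $\phi$ and $\psi$ to be bijections themselves. To upgrade $\phi$ to an isometry, combine the isometry $\phi\circ\psi=\mathrm{id}_X$ up to an isometry of $X$ with the 1-Lipschitz bound on $\phi$: for $x,y\in X$,
\begin{align*}
d(x,y)=d\bigl(\phi(\psi(x)),\phi(\psi(y))\bigr)\leq d'\bigl(\psi(x),\psi(y)\bigr)\leq d(x,y),
\end{align*}
so $\psi$ is an isometric bijection, and symmetrically $\phi$ is one too. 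For the measure, applying that $\phi\circ\psi$ is measure-preserving gives $\mu(A)=\mu(\psi^{-1}(\phi^{-1}(A)))$ for every Borel $A\subset X$, while $\psi$ measure-contracting applied to $B=\phi^{-1}(A)$ yields $\mu'(\phi^{-1}(A))\leq\mu(\psi^{-1}(\phi^{-1}(A)))=\mu(A)$. Combined with $\mu(A)\leq\mu'(\phi^{-1}(A))$ coming from $\phi$ itself, we conclude $\mu(A)=\mu'(\phi^{-1}(A))$, i.e.\ $\phi_*\mu'=\mu$. Hence $\phi$ is a measure-preserving isometry between representatives of $\X$ and $\X'$, so $\X=\X'$ in $\setGHP$.

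The only delicate point is antisymmetry, and its difficulty has essentially been absorbed into Lemma~\ref{lem:isometry}; beyond invoking that lemma the argument is a short diagram chase. One small care is that the definition of $\orderGHP$ and the conclusion we need involve measures on different spaces, so one must be precise about pulling Borel sets across $\phi$ and $\psi$ (both of which are continuous, and in fact homeomorphisms by compactness and bijectivity), but no genuine obstacle arises there.
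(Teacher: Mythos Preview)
Your proof is correct and follows essentially the same route as the paper's: reflexivity and transitivity are dispatched quickly, and antisymmetry is obtained by applying Lemma~\ref{lem:isometry} to the composite self-map(s), then sandwiching the Lipschitz and measure inequalities to force equality. Your write-up is in fact slightly more explicit than the paper's (you apply the lemma to both compositions and spell out the inequality chains), but the idea is identical.
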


\begin{proof}
	The transitivity and reflexivity of $\orderGHP$ are clear.
	What is needed for $\orderGHP$ to be a partial order is therefore its anti-symmetry.
	This amounts to proving that for $\X=(X,d,\mu)$ and $\X'=(X',d',\mu')$ two compact metric measure spaces, if $\X\orderGHP\X'\orderGHP\X$ then $\X$ and $\X'$ are measure-preserving isometric.
	Hence let $\X,\X'$ be such that $\X\orderGHP\X'\orderGHP\X$, with respective Borel $\sigma$-algebras $\Bcal$ and $\Bcal'$.
	By definition, there exist surjective mappings $\phi\colon X'\to X$ and $\phi'\colon X\to  X'$ such that:
	\begin{multline*}
		\left\{
		\begin{matrix*}[l]
			\forall x',y'\in X',\quad &d(\phi(x'),\phi(y')) \leq d'(x',y'),\\
			\forall A\in\Bcal,\quad &\mu(A) \leq \mu'(\phi^{-1}(A)),
		\end{matrix*}
		\right.\\
		\text{and}\qquad
		\left\{
		\begin{matrix*}[l]
			\forall x,y\in X,\quad &d'(\phi'(x),\phi'(y))\leq d(x,y),\\
			\forall A'\in\Bcal',\quad &\mu'(A') \leq \mu((\phi')^{-1}(A')).
		\end{matrix*}
		\right.
	\end{multline*}
	In particular, the self-mapping $\psi\colon X\to X$ defined by $\psi=\phi\circ\phi'$ satisfies the hypotheses of Lemma~\ref{lem:isometry} and it is therefore a measure-preserving isometry.
	Hence the inequalities in the last display must be equalities and $\phi$ and $\phi'$ are measure-preserving isometries.
\end{proof}

The following properties are useful in order to prove a sandwich theorem, see Lemma~\ref{lem:general-sandwich-theorem}.

\begin{defin}\label{def:closed-and-down-compact}
	Let $\preceq$ be a partial order on some metric space $\Xtop$.
	It is said to be:
	\begin{enumerate}
		\item \emph{closed} if its graph $\{(x,y)\in X\colon x\preceq y\}$ is closed in $\Xtop$,
		\item \emph{down-compact} if given any compact $K^+\subset \Xtop$, the set $$\down_\preceq(K^+)=\{x\in \Xtop\colon \exists x^+\in K^+,\, x\preceq x^+\}$$ is also compact.
	\end{enumerate}
\end{defin}

We now verify that the order $\orderGHP$ is indeed closed and down-compact.

\begin{prop}\label{prop:checking-order-is-closed}
	The partial order $\orderGHP$ on $(\setGHP,\dGHP)$ is closed.
\end{prop}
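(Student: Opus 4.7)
The plan is to show that if $\X_n \orderGHP \Y_n$ for every $n$, with $\X_n \toGHP \X_\infty$ and $\Y_n \toGHP \Y_\infty$, then one can build a surjective, $1$-Lipschitz, measure-contracting map $\phi_\infty$ witnessing $\X_\infty \orderGHP \Y_\infty$ as a subsequential limit of witnesses $\phi_n \colon Y_n \to X_n$ of $\X_n \orderGHP \Y_n$. The key ingredients will be the common-embedding Proposition~\ref{prop:GHP-common-embedding}, the Arzela--Ascoli-type Proposition~\ref{prop:GH-Arzela-Ascoli}, and its upper semicontinuity complement Lemma~\ref{lem:GH-Arzela-Ascoli-strenghened}.

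First, I invoke Proposition~\ref{prop:GHP-common-embedding} twice to isometrically embed all the $X_n$'s and $X_\infty$ into one compact metric space $(Z,\delta)$, and all the $Y_n$'s and $Y_\infty$ into another compact $(Z',\delta')$, in such a way that $X_n \to X_\infty$ and $Y_n \to Y_\infty$ in Hausdorff distance, while the associated measures $\mu_n$ and $\nu_n$ converge to $\mu_\infty$ and $\nu_\infty$ in Prokhorov distance. The maps $\phi_n$ are $1$-Lipschitz, hence uniformly equicontinuous, so Proposition~\ref{prop:GH-Arzela-Ascoli} yields a subsequence (which I rename $n$) along which $\phi_n$ converges to some continuous map $\phi_\infty \colon Y_\infty \to X_\infty$, in the sense that $\phi_n(y_n) \to \phi_\infty(y)$ whenever $y_n \to y$ with $y_n \in Y_n$, $y \in Y_\infty$. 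The $1$-Lipschitz property passes to the limit directly.

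For surjectivity, given $x \in X_\infty$, Hausdorff convergence yields $x_n \in X_n$ with $x_n \to x$, and surjectivity of $\phi_n$ supplies $y_n \in Y_n$ with $\phi_n(y_n) = x_n$. By compactness of $Z'$, a further subsequence satisfies $y_n \to y_\infty$, with $y_\infty \in Y_\infty$ because $Y_n \to Y_\infty$ in Hausdorff, and then the Arzela--Ascoli convergence gives $\phi_\infty(y_\infty) = \lim \phi_n(y_n) = x$. For the measure-contracting property, fix a closed subset $A \subset X_\infty$; $A$ is also closed in $(Z,\delta)$. Viewing $\mu_n,\mu_\infty$ as finite Borel measures on $Z$ (supported on $X_n,X_\infty$), the Portmanteau theorem gives $\mu_\infty(A) \leq \liminf_n \mu_n(A)$, and since $\phi_n$ takes values in $X_n$ one has $\mu_n(A) = \mu_n(A \cap X_n) \leq \nu_n(\phi_n^{-1}(A \cap X_n)) = \nu_n(\phi_n^{-1}(A))$ by the hypothesis $\X_n \orderGHP \Y_n$. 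Lemma~\ref{lem:GH-Arzela-Ascoli-strenghened} applied to the closed set $A \subset X_\infty$ gives $\limsup_n \nu_n(\phi_n^{-1}(A)) \leq \nu_\infty(\phi_\infty^{-1}(A))$, and chaining these bounds yields $\mu_\infty(A) \leq \nu_\infty(\phi_\infty^{-1}(A))$. By inner regularity of the finite Borel measure $\mu_\infty$ on the compact metric space $X_\infty$, this extends to all Borel sets, so $\phi_\infty$ is measure-contracting and $\X_\infty \orderGHP \Y_\infty$.

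The main obstacle is the measure-contracting step: one must simultaneously pass to the limit in the measures $\mu_n$ (from below, via Portmanteau) and in the preimage sets $\phi_n^{-1}(A)$ (from above). The asymmetry between $\liminf$ and $\limsup$ is exactly what makes Lemma~\ref{lem:GH-Arzela-Ascoli-strenghened} the right tool; the fact that one only obtains an inequality in the limit is harmless here, as the defining inequality of $\orderGHP$ is itself one-sided. Everything else is routine limit-passage using equicontinuity and Hausdorff convergence.
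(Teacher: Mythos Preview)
Your overall architecture matches the paper's proof exactly: embed via Proposition~\ref{prop:GHP-common-embedding}, extract a limit map via Proposition~\ref{prop:GH-Arzela-Ascoli}, and pass the three properties to the limit. The surjectivity and $1$-Lipschitz arguments are fine.

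There is, however, a genuine gap in the measure-contracting step. You write that for a closed set $A\subset X_\infty$ (hence closed in $Z$), Portmanteau gives $\mu_\infty(A)\leq\liminf_n\mu_n(A)$. This is the wrong direction: for closed sets Portmanteau yields $\limsup_n\mu_n(A)\leq\mu_\infty(A)$, which is useless for chaining with the upper bound from Lemma~\ref{lem:GH-Arzela-Ascoli-strenghened}. (Concretely: take $Z=[0,1]$, $\mu_n=\delta_{1/n}$, $\mu_\infty=\delta_0$, $A=\{0\}$.) The two inequalities you need point in opposite directions, so working with arbitrary closed sets cannot close the chain.

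The paper resolves this by an extra reduction you are missing: after passing to compact $A$ by inner regularity, it further restricts to compacts whose \emph{boundary in $X_\infty$} has zero $\mu_\infty$-measure. For such continuity sets Portmanteau gives genuine convergence $\mu_n(A)\to\mu_\infty(A)$, and then
\[
\mu_\infty(A)=\lim_n\mu_n(A)\leq\limsup_n\nu_n\bigl(\phi_n^{-1}(A)\bigr)\leq\nu_\infty\bigl(\phi_\infty^{-1}(A)\bigr)
\]
via Lemma~\ref{lem:GH-Arzela-Ascoli-strenghened}. The reduction to null-boundary compacts is standard (intersect with closed balls of suitable radii; only countably many radii can carry mass on their spheres), but it is not optional here.
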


\begin{proof}
	We have to prove that if $\X_n\to\X_\infty$ and $\X'_n\to\X'_\infty$ in the GHP metric, and if $\X_n\orderGHP\X'_n$ for all $n$, then $\X_\infty\orderGHP\X'_\infty$.
	
	By Proposition~\ref{prop:GHP-common-embedding}, we only need to treat the case where there are two compact metric spaces $(Z,\delta)$ and $(Z',\delta')$ such that $X_\infty$ and $X_n$ are subsets of $(Z,\delta)$ equipped with the induced metric, $X'_\infty$ and $X'_n$ are subsets of $(Z',\delta')$ equipped with the induced metric, and $\dH(X_n,X_\infty)\to 0$, $\dP(\mu_n,\mu_\infty)\to 0$, $\dH'(X'_n,X'_\infty)\to 0$, and $\dP'(\mu'_n,\mu'_\infty)\to 0$.
	
	Let us therefore consider that this is the case, and for every $n\geq1$ assume that we have $\X_n\orderGHP\X'_n$, \textit{i.e.} there is a surjective mapping $\phi_n\colon X'_n\to X_n$, such that
	\begin{align}
		\forall x'_n,y'_n\in X'_n,\ \qquad& \delta(\phi_n(x'_n),\phi_n(y'_n))\leq \delta'(x'_n,y'_n)\label{eq:proof-closed-order:d-incr}\\
		\forall A\in\Bcal_n,\ \qquad&\mu_n(A) \leq \mu'_n(\phi^{-1}_n(A))\label{eq:proof-closed-order:mu-incr},
	\end{align}
	where $\Bcal_n$ is the Borel $\sigma$-algebra of $(X_n,d_n)$.
	We need to prove that $\X_\infty\orderGHP\X'_\infty$.
	
	The condition \eqref{eq:proof-closed-order:d-incr} directly implies that the sequence $(\phi_n)_{n\geq1}$ is equicontinuous,%
	\footnote{
		Here the roles of `` $X$ '' and `` $X'$ '' are reversed compared to how we defined equicontinuity, since we consider `` $X'\to X$ '' mappings.
	}	
	so that by Proposition~\ref{prop:GH-Arzela-Ascoli}, up to taking a subsequence we may assume that $(\phi_n)_{n\geq1}$ converges, to some limit $\phi_\infty\colon X'_\infty\rightarrow X_\infty$.
	The meaning of this convergence, as given by Proposition~\ref{prop:GH-Arzela-Ascoli}, is that $\phi_n(x'_n)\to \phi_\infty(x'_\infty)$ in $(Z,\delta)$ whenever $x'_n\to x'_\infty$ in $(Z',\delta')$.
	
	We claim that $\phi_\infty\colon X'_\infty\to X_\infty$ is surjective.
	Let $x_\infty\in X_\infty$.
	Since $X_n\to X_\infty$ in the Hausdorff metric, we may find elements $x_n\in X_n$, $n\geq1$, such that $x_n\to x_\infty$.
	For $n\geq1$, by surjectivity of $\phi_n$ we may find $x'_n\in X'_n$ such that $\phi_n(x'_n)=x_n$.
	Since $(Z',\delta')$ is compact, after possibly taking a subsequence, we have $x'_n\to x'_\infty$ for some element $x'_\infty\in Z'$, which is actually an element of $X'_\infty$ since $X'_n\to X'_\infty$ in the Hausdorff metric.
	Lastly, $\phi_\infty(x'_\infty)=\lim_n \phi_n(x'_n)=\lim_n x_n=x_\infty$, so that $x_\infty\in\phi_\infty(X'_\infty)$.
	This proves that $\phi_\infty\colon X'_\infty\to X_\infty$ is indeed surjective.
	
	We claim that:
	\begin{align}
		\forall x'_\infty,y'_\infty\in X'_\infty,\ \qquad& \delta(\phi_\infty(x'_\infty),\phi_\infty(y'_\infty))\leq \delta'(x'_\infty,y'_\infty)\label{eq:proof-closed:claim-1}\\
		\forall A\in\Bcal_\infty,\ \qquad&\mu_\infty(A) \leq \mu'_\infty(\phi^{-1}_\infty(A)),
		\label{eq:proof-closed:claim-2}
	\end{align}
	where $\Bcal_\infty$ is the Borel $\sigma$-algebra of $X_\infty$ equipped with the metric induced by $\delta$.
	Once proven, this precisely tells that $\X_\infty\orderGHP\X'_\infty$ and the proof is complete.
	
	Let us prove this claim.
	Since $\phi_n(x'_n)\to \phi_\infty(x'_\infty)$ in $(Z,\delta)$ whenever $x'_n\to x'_\infty$ in $(Z',\delta')$, we obtain \eqref{eq:proof-closed:claim-1} from \eqref{eq:proof-closed-order:d-incr} by taking suitable convergent sequences $x'_n\to x'_\infty$ and $y'_n\to y'_\infty$, as in the proof of the surjectivity of $\phi_\infty$.
	Since $\mu_\infty$ is a finite Borel measure on the compact $X_\infty$, it is inner regular.
	Hence we only need to check \eqref{eq:proof-closed:claim-2} for subsets $A$ of $X_\infty$ which are \textit{compact}. It is sufficient%
		\footnote{Let $A$ be any compact in $X_\infty$ and fix some $x\in X_\infty$. The boundaries in $X_\infty$ of the sets $\bigl(A\cap B(x,r), r>0\bigr)$ are disjoint, and $\mu_\infty$ is a finite measure, so that except in finitely many instances, the boundaries of the sets $\bigl(A\cap B(x,r), r>0\bigr)$ have zero $\mu_\infty$-measure. We can therefore apply the reasoning to them and conclude by continuity of measure.}
	to consider compacts $A$ whose boundary in $X_\infty$ have zero $\mu_\infty$-measure.
	For such an $A$, we have $\mu_n(A)\to \mu_\infty(A)$ by the Portmanteau theorem, while Lemma~\ref{lem:GH-Arzela-Ascoli-strenghened} tells us that $\limsup_{n\to\infty}\mu'_n(\phi^{-1}_n(A))\leq\mu'_\infty(\phi^{-1}_\infty(A))$.
	Therefore, taking the limit superior on both sides of \eqref{eq:proof-closed-order:mu-incr} yields~\eqref{eq:proof-closed:claim-2}.
\end{proof}

\begin{prop}\label{prop:checking-order-is-down-compact}
	The partial order $\orderGHP$ on $(\setGHP,\dGHP)$ is down-compact.
\end{prop}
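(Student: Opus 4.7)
The plan is to prove down-compactness by sequential extraction, combining three ingredients: Gromov's precompactness criterion for the GHP topology, the fact that the order $\orderGHP$ transfers upper bounds on "size" (diameter, total mass, covering numbers) from larger to smaller elements, and the already-established closedness of $\orderGHP$ (Proposition~\ref{prop:checking-order-is-closed}).

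First, I would fix a compact $K^+\subset\setGHP$, take a sequence $(\X_n)_{n\geq1}$ in $\down_{\orderGHP}(K^+)$, and for each $n$ choose $\X^+_n\in K^+$ together with a surjective, $1$-Lipschitz, measure-contracting map $\phi_n\colon X^+_n\to X_n$ witnessing $\X_n\orderGHP\X^+_n$. Compactness of $K^+$ lets me pass to a subsequence so that $\X^+_n\toGHP\X^+_\infty\in K^+$.

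Next comes the main technical step: showing that $(\X_n)_{n\geq1}$ is itself relatively compact in $(\setGHP,\dGHP)$. Recall that a subset of $\setGHP$ is relatively compact iff diameters and total masses are uniformly bounded, and for every $\epsilon>0$ the $\epsilon$-covering numbers are uniformly bounded. Since $\phi_n$ is $1$-Lipschitz and surjective, any $\epsilon$-net of $X^+_n$ is pushed by $\phi_n$ to an $\epsilon$-net of $X_n$ of no larger cardinality, so $N(X_n,\epsilon)\leq N(X^+_n,\epsilon)$; similarly $\diam(X_n)\leq\diam(X^+_n)$. Taking $A=X_n$ in the measure-contracting condition gives $|\mu_n|\leq|\mu^+_n|$. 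Since the convergent sequence $(\X^+_n)_{n\geq1}\cup\{\X^+_\infty\}$ is relatively compact, the same three quantities are uniformly bounded along $(\X^+_n)$, hence along $(\X_n)$. Gromov's precompactness criterion for GHP then yields a further subsequence along which $\X_n\toGHP\X_\infty$ for some $\X_\infty\in\setGHP$.

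Finally, along this subsequence we have $\X_n\orderGHP\X^+_n$ for all $n$, and both sides converge in GHP (to $\X_\infty$ and $\X^+_\infty$ respectively). Proposition~\ref{prop:checking-order-is-closed} states precisely that the order $\orderGHP$ is closed, so $\X_\infty\orderGHP\X^+_\infty$, and since $\X^+_\infty\in K^+$ this places $\X_\infty\in\down_{\orderGHP}(K^+)$. This proves the sequential compactness (equivalent here to compactness, since $(\setGHP,\dGHP)$ is Polish and in particular metric) of $\down_{\orderGHP}(K^+)$.

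The only substantive obstacle is verifying that $1$-Lipschitz surjections transfer covering number bounds, which is trivial, and that measure-contracting maps transfer total mass bounds, which is immediate from the definition. Once these are observed, the proof reduces to the standard Gromov precompactness machinery plus the closedness of the order already proved.
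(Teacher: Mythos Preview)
Your proof is correct and uses essentially the same ingredients as the paper: the Gromov precompactness criterion (bounded diameters, bounded total masses, uniform covering numbers), the observation that these quantities are dominated under $\orderGHP$ via the $1$-Lipschitz surjective measure-contracting witness, and the closedness of $\orderGHP$ from Proposition~\ref{prop:checking-order-is-closed}. The only difference is cosmetic: the paper argues directly that $\down_{\orderGHP}(K^+)$ is relatively compact (since $K^+$ satisfies the three conditions, so does $\down_{\orderGHP}(K^+)$) and closed (by closedness of the order and of $K^+$), hence compact, whereas you thread the same facts through a sequential-compactness argument.
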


\begin{proof}
	By Theorem~2.6 in \cite{AbrahamDelmasHoscheit13}, a subset $A\subset\setGHP$ is relatively compact if and only if%
	\footnote{\cite[Thm.~2.6]{AbrahamDelmasHoscheit13} only says that these conditions are sufficient, but it is easily verified that they are necessary.
		For instance, if there exists a sequence of elements of $\setGHP$ violating one of three conditions, then one can extract a converging subsequence and reason as in \cite[Sect.~4.2]{AbrahamDelmasHoscheit13}.}
	the following three conditions hold:
	\begin{enumerate}
		\item
		we have $\sup_{(X,d,\mu)\in A}\,\diam(X)<\infty$;
		\item
		for every $\epsilon>0$, there exists a finite integer $N_\epsilon\geq1$ such that for $(X,d,\mu)\in A$, there is a covering of $X$ by at most $N_\epsilon$ balls with radius less than $\epsilon$; and,
		\item
		we have $\sup_{(X,d,\mu)\in A}\,\mu(X)<\infty$.
	\end{enumerate}
	The crucial observation is that if $A\subset\setGHP$ satisfies the above conditions, then so does $\down_{\orderGHP}(A)=\{\X\in\setGHP\colon\exists\X^+\in A,\,\X\orderGHP\X'\}$, as is clear from the definition of the order $\orderGHP$.
	In particular, if $K^+\subset\setGHP$ is compact, then $\down_\orderGHP(K^+)$ is relatively compact in $\setGHP$; and using Proposition~\ref{prop:checking-order-is-closed}, it is also closed in $\setGHP$.
	Hence, $\down_\orderGHP(K^+)$ is compact in $\setGHP$, as needed.
\end{proof}

We conclude this section with the following immediate yet useful lemma.

\begin{lemma}\label{lem:order-vs-scaling}
	Let $a,b>0$.
	\begin{enumerate}
		\item We have $(a,b)\cdot\X\orderGHP(a,b)\cdot\X'$ whenever $\X\orderGHP\X'$ in $\setGHP$.
		\item We have $(a,b)_*\P(\diff\X)\storder(a,b)_*\P'(\diff\X)$ whenever $\P(\diff\X)\storder\P'(\diff\X)$ in $\mes_1(\setGHP)$.
	\end{enumerate}
\end{lemma}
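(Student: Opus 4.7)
The plan is to verify the two parts in turn, noting that part~(1) is almost immediate and part~(2) follows from part~(1) together with the definition of stochastic ordering.

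For part~(1), I would start from the hypothesis $\X \orderGHP \X'$, which furnishes a surjective $\phi\colon X' \to X$ satisfying \eqref{eq:def-GHP-order-1} and \eqref{eq:def-GHP-order-2}. I claim that the very same map $\phi$ witnesses $(a,b)\cdot\X \orderGHP (a,b)\cdot\X'$. Indeed, $\phi$ is unchanged as a set-theoretic map, so surjectivity is preserved. The rescaled distances satisfy
\begin{align*}
(a\cdot d)\bigl(\phi(x'),\phi(y')\bigr) = a\cdot d\bigl(\phi(x'),\phi(y')\bigr) \leq a\cdot d'(x',y') = (a\cdot d')(x',y'),
\end{align*}
for all $x',y'\in X'$, where the inequality is just \eqref{eq:def-GHP-order-1} multiplied by $a>0$. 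Likewise, the rescaled measures obey $(b\cdot\mu)(A) = b\cdot\mu(A) \leq b\cdot\mu'(\phi^{-1}(A)) = (b\cdot\mu')(\phi^{-1}(A))$ for every Borel $A$, using \eqref{eq:def-GHP-order-2} and $b>0$. This settles part~(1).

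For part~(2), fix $a,b>0$ and let $T\colon\setGHP\to\setGHP$ denote the rescaling map $\X\mapsto (a,b)\cdot\X$. By Lemma~\ref{lem:continuity-rescaling}, $T$ is continuous, hence Borel measurable. Let $A\subset\setGHP$ be an $\orderGHP$-increasing Borel set; it suffices to show $(a,b)_*\P(A)\leq (a,b)_*\P'(A)$. Set $B=T^{-1}(A)$, which is Borel. I claim $B$ is itself $\orderGHP$-increasing: if $\X\in B$ and $\X\orderGHP\Y$, then $T(\X)\in A$ and by part~(1) we have $T(\X)\orderGHP T(\Y)$, so since $A$ is increasing, $T(\Y)\in A$, that is $\Y\in B$. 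The hypothesis $\P(\diff\X)\storder\P'(\diff\X)$ therefore yields $\P(B)\leq\P'(B)$, which is precisely $(a,b)_*\P(A)\leq (a,b)_*\P'(A)$, as required.

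There is essentially no obstacle: the lemma is a soft compatibility statement between the partial order and the scaling action, and both parts reduce to the observation that multiplying both sides of an inequality by a positive constant preserves it. The only subtlety worth flagging is checking that $T^{-1}$ preserves Borel measurability of increasing sets in part~(2), which follows from the continuity of $T$ (Lemma~\ref{lem:continuity-rescaling}).
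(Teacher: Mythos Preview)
Your proof is correct and the paper provides no proof of this lemma, simply calling it ``immediate.'' Your argument spells out exactly the details one would expect. One minor quibble: Lemma~\ref{lem:continuity-rescaling} concerns the case $(a_n,b_n)\to(1,1)$, not continuity of $\X\mapsto(a,b)\cdot\X$ for fixed $(a,b)$; however, the latter is equally elementary (the same diagonal-correspondence argument works, or observe that $T$ is a bijection with inverse $T^{-1}=(1/a,1/b)\cdot$, both easily seen to be continuous), so this does not affect the validity of your argument.
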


\subsection{Two lemmas on sandwich theorems}

\begin{lemma}\label{lem:general-sandwich-theorem}
	If $\Xtop$ is a metric space and $\preceq$ is a closed down-compact partial order on it, then we have the following sandwich theorem:
	\begin{align}\label{eq:lemma-general-sandwich-theorem}
		\begin{cases}
			\forall n,\quad\xtop^-_n\preceq\xtop_n\preceq\xtop^+_n,\\
			\xtop^-_n\to\xtop_\infty,\\ \xtop^+_n\to\xtop_\infty,
		\end{cases}
		\qquad\implies\qquad
		\xtop_n\to\xtop_\infty,
	\end{align}
	for all sequences $(\xtop^-_n)_{n\geq1},(\xtop_n)_{n\geq1},(\xtop^+_n)_{n\geq1}$ of elements of $\Xtop$, and all $\xtop_\infty\in\Xtop$.
\end{lemma}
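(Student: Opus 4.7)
The plan is to combine the two hypotheses on $\preceq$ in a standard topological way: down-compactness will give relative compactness of $(\xtop_n)_{n\geq1}$, and closedness combined with antisymmetry will force every subsequential limit to equal $\xtop_\infty$.

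First, I would establish relative compactness of $(\xtop_n)_{n\geq1}$. The set $K^+=\{\xtop_n^+:n\geq1\}\cup\{\xtop_\infty\}$ is compact in $\Xtop$, being a convergent sequence together with its limit. Since $\xtop_n\preceq \xtop_n^+\in K^+$ for every $n$, the whole sequence $(\xtop_n)_{n\geq1}$ lies in $\down_\preceq(K^+)$, which is compact by down-compactness of $\preceq$. Thus $(\xtop_n)_{n\geq1}$ is relatively compact in $\Xtop$.

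Next, to conclude that $\xtop_n\to\xtop_\infty$, it suffices to show that every convergent subsequence of $(\xtop_n)_{n\geq1}$ has $\xtop_\infty$ as its limit. So let $(\xtop_{n_k})_{k\geq1}$ converge to some $y\in\Xtop$. Along this subsequence we still have $\xtop_{n_k}^-\to\xtop_\infty$ and $\xtop_{n_k}^+\to\xtop_\infty$, together with $\xtop_{n_k}^-\preceq \xtop_{n_k}\preceq\xtop_{n_k}^+$ for every $k$. Since $\preceq$ is closed in $\Xtop\times\Xtop$, passing to the limit in both inequalities yields $\xtop_\infty\preceq y$ and $y\preceq\xtop_\infty$. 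Antisymmetry of the partial order $\preceq$ then forces $y=\xtop_\infty$, which completes the argument.

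There is no real obstacle here: both ingredients have essentially been built into the definitions. The only mild care is to invoke down-compactness on a compact set that contains \emph{all} the $\xtop_n^+$ together with their limit (rather than just the tail), so that relative compactness of $(\xtop_n)_{n\geq1}$ is immediate without having to argue separately for finitely many early terms.
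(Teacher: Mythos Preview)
Your proof is correct and follows essentially the same approach as the paper: both use down-compactness applied to $K^+=\{\xtop^+_n:n\geq1\}\cup\{\xtop_\infty\}$ to get relative compactness of $(\xtop_n)$, and then use closedness of $\preceq$ together with antisymmetry to pin every subsequential limit to $\xtop_\infty$.
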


\begin{proof}
	It is sufficient to prove that every subsequence of $(\xtop_n)_{n\geq1}$ admits a subsequence converging to $\xtop_\infty$.
	Since $\xtop^+_n\to\xtop_\infty$, the subset $K^+=\{\xtop^+_1,\xtop^+_2,\dots\}\cup\{\xtop_\infty\}$ of $\Xtop$ is compact, and thus $\down_\preceq(K^+)$ is compact too, using the down-compactness of $\preceq$.
	But $(\xtop_n)_{n\geq1}$ takes values in $\down_\preceq(K^+)$, so that every subsequence of $(\xtop_n)_{n\geq1}$ admits a converging subsequence, say with limit $\xtop'_\infty$.
	It remains to prove that necessarily $\xtop'_\infty=\xtop_\infty$.
	But since $\preceq$ is closed we must have $\xtop_\infty\preceq\xtop'_\infty\preceq\xtop_\infty$ and therefore $\xtop'_\infty=\xtop_\infty$ since $\preceq$ is a partial order.
\end{proof}

\begin{lemma}\label{lem:lifting-closed-and-down-compact}
	If $\Xtop$ is a Polish space and $\preceq$ is a closed down-compact partial order on it, then $\mes_1(\Xtop)$ is also Polish and $\stpreceq$ is also a closed down-compact partial order.
\end{lemma}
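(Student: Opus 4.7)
My plan is to use Strassen's coupling theorem as the central tool throughout. Recall that since $(\Xtop,\preceq)$ is a Polish space with a \emph{closed} partial order, Strassen's theorem (in the Kamae--Krengel--O'Brien form) asserts that $\P\stpreceq\P'$ holds if and only if there exists a Borel probability measure $\mu$ on $\Xtop\times\Xtop$ with marginals $\P,\P'$ and supported on the closed set $G:=\{(x,y)\in\Xtop\times\Xtop : x\preceq y\}$. The Polish character of $\mes_1(\Xtop)$ is a classical consequence of $\Xtop$ being Polish, so I would simply invoke it.

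The closedness of $\stpreceq$ goes as follows. Given sequences $\P_n\Rightarrow\P_\infty$ and $\P'_n\Rightarrow\P'_\infty$ with $\P_n\stpreceq\P'_n$ for every $n$, I would pick couplings $\mu_n$ on $\Xtop\times\Xtop$ with the prescribed marginals and supported on $G$. Tightness of $(\P_n)$ and $(\P'_n)$ implies tightness of $(\mu_n)$ on the product space, so by Prokhorov's theorem I can extract a weak subsequential limit $\mu_\infty$, which automatically has marginals $\P_\infty,\P'_\infty$. The Portmanteau theorem applied to the closed set $G$ then gives $\mu_\infty(G)\geq\limsup_n \mu_n(G)=1$, so $\mu_\infty$ witnesses $\P_\infty\stpreceq\P'_\infty$ via Strassen.

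For down-compactness, let $K^+\subset\mes_1(\Xtop)$ be compact. By Prokhorov's theorem, $K^+$ is tight, so for every $\varepsilon>0$ there is a compact $L\subset\Xtop$ with $\P'(L)\geq 1-\varepsilon$ for all $\P'\in K^+$. By the down-compactness of $\preceq$, the set $L^-:=\down_\preceq(L)$ is compact in $\Xtop$. If $\P\stpreceq\P'$ with $\P'\in K^+$, then Strassen provides a coupling $\mu$ on $G$ with marginals $\P,\P'$; since $y\in L$ and $x\preceq y$ force $x\in L^-$, we get $\P(L^-)\geq\mu(\{y\in L\})=\P'(L)\geq 1-\varepsilon$. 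Hence $\down_\stpreceq(K^+)$ is tight, and is also closed in $\mes_1(\Xtop)$ by combining the compactness of $K^+$ with the closedness of $\stpreceq$ established in the previous step (extract a subsequence of witnesses $\P'_n\in K^+$ converging in $K^+$, then pass to the limit). Tight and closed gives compact by Prokhorov.

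The most delicate point is to verify that $\stpreceq$ is genuinely a \emph{partial order}, that is, antisymmetric, as reflexivity and transitivity are immediate from the definition. The key ingredient is a Nachbin-type separation result: on a Polish space with a closed partial order, the set of bounded continuous $\preceq$-increasing functions separates probability measures, in the sense that if $\P\neq \P'$ then some such function integrates to different values under $\P$ and $\P'$. Granting this, if $\P\stpreceq\P'$ and $\P'\stpreceq\P$, then $\int f\,\drm\P=\int f\,\drm\P'$ for every bounded continuous increasing $f$, forcing $\P=\P'$. If a direct Nachbin citation is undesirable, one can instead iterate Strassen to build a Markov chain $(X_k)_{k\geq0}$ with $X_0\sim\P$ and $X_k\preceq X_{k+1}$ a.s., whose even- and odd-indexed terms alternate between laws $\P$ and $\P'$; the a.s.\ monotone chain in the compact downset $\down_\preceq(\{X_0\}\cup\{X_1\})$ (itself compact by down-compactness) must stabilize, forcing $X_0=X_1$ a.s.\ and hence $\P=\P'$. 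I expect this antisymmetry step to be the main obstacle, as the other three items are essentially mechanical applications of Strassen, Prokhorov, and the closedness of $G$.
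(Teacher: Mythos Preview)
Your proof is largely correct and close in spirit to the paper's, but the execution differs and one of your alternative arguments has a genuine gap.

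\textbf{Comparison with the paper.} The paper does not prove closedness or antisymmetry of $\stpreceq$ directly: it simply cites \cite[Thm.~2]{KamaeKrengel78}, which handles ``$\mes_1(\Xtop)$ Polish and $\stpreceq$ a closed partial order'' in one stroke. Your Strassen-plus-Portmanteau proof of closedness is correct and is essentially what underlies that reference. For down-compactness the two arguments are nearly identical: both pass from tightness of $\boldK^+$ to a compact $L\subset\Xtop$, form the compact $\down_\preceq(L)$, and bound $\P(\down_\preceq(L))$ from below. The only difference is that the paper observes $\Xtop\setminus\down_\preceq(L)$ is a $\preceq$-increasing Borel set and applies the \emph{definition} of $\stpreceq$ directly, whereas you invoke a Strassen coupling. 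Both work; the paper's route is marginally lighter since it avoids calling Strassen.

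\textbf{The gap.} Your Markov-chain alternative for antisymmetry is flawed as written. You build an a.s.\ \emph{increasing} chain $X_0\preceq X_1\preceq X_2\preceq\cdots$, but then claim it lives in the compact $\down_\preceq(\{X_0\}\cup\{X_1\})$. This is backwards: an increasing chain lies in the \emph{up}-set of $X_0$, not its down-set, and you have no hypothesis making up-sets compact. Even if you could confine the chain to a compact, ``monotone in a compact partially ordered set'' does not imply stabilisation (think of $X_k=1-1/k$ in $[0,1]$). The argument can be repaired, but only by returning to the Nachbin-type separation you already sketched: from $X_0\preceq X_2$ a.s.\ with $X_0,X_2$ both of law $\P$, every bounded continuous increasing $f$ satisfies $f(X_0)\leq f(X_2)$ a.s.\ with equal expectations, hence $f(X_0)=f(X_2)$ a.s.; separation by such $f$ then gives $X_0=X_2$ a.s. So the Markov-chain detour buys nothing over your first proposal, and you should either cite Kamae--Krengel (as the paper does) or commit to the Nachbin route.
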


\begin{proof}
	First, if $\Xtop$ is Polish and $\preceq$ is a closed partial order, then $\mes_1(\Xtop)$ is also Polish and $\stpreceq$ is also a closed partial order by~\cite[Thm.~2]{KamaeKrengel78}.
	Now suppose that $\preceq$ is down-compact, and let us prove that $\stpreceq$ is also down-compact.
	
	Let $\boldK^+$ be an arbitrary compact subset of $\mes_1(\Xtop)$, and fix some $\epsilon>0$.
	By Prokhorov's theorem, $\boldK^+$ is a tight collection in $\mes_1(\Xtop)$, so that there exists some compact subset $K^+_\epsilon\subset\Xtop$ such that
	\begin{align}\label{eq:tightness-proof-lifting-property}
		\sup_{\P^+(\diff\xtop)\in\boldK^+}\P^+(\Xtop\setminus K^+_\epsilon)\leq \epsilon.
	\end{align}
	Let $\P(\diff\xtop)\in\down_{\stpreceq}(\boldK^+)$, which means by definition that $\P(\diff\xtop)\stpreceq\P^+(\diff\xtop)$ for some $\P^+(\diff\xtop)\in\boldK^+$.
	Since $\preceq$ is down-compact, we have that $D_\epsilon=\down_\preceq(K^+_\epsilon)$ is a \textit{compact} subset of $\Xtop$.
	By construction, $\Xtop\setminus D_\epsilon$ is an $\preceq$-increasing Borel event on $\Xtop$.
	Since $\P(\diff\xtop)\stpreceq\P^+(\diff\xtop)$, this implies:
	\begin{align*}
		\P(\Xtop\setminus D_\epsilon)
		\leq \P^+(\Xtop\setminus D_\epsilon)
		\leq \P^+(\Xtop\setminus K^+_\epsilon)
		\leq\epsilon,
	\end{align*}
	where the second inequality comes from the inclusion $K^+_\epsilon\subset D_\epsilon$, and the last one uses~\eqref{eq:tightness-proof-lifting-property}.
	Hence we just proved that for every $\epsilon>0$ we can find a compact subset $D_\epsilon\subset\Xtop$ such that
	\begin{align*}
		\sup_{\P(\diff\xtop)\in\down_{\stpreceq}({\boldK^+})}\P(X\setminus D_\epsilon)\leq \epsilon.
	\end{align*}
	That is, we proved that $\down_{\stpreceq}({\boldK^+})$ is a tight collection in $\mes_1(\Xtop)$, and therefore it is pre-compact by Prokhorov's theorem.
	On top of being pre-compact, $\down_{\stpreceq}(\boldK^+)$ is also closed (and hence compact) in $\mes_1(\Xtop)$.
	This is easily seen from the definition of $\down_{\stpreceq}$, using that $\stpreceq$ is a closed partial order and that $\boldK^+$ is closed in $\mes_1(\Xtop)$.
	
	Wrapping up, we proved that for every compact subset $\boldK^+\subset\mes_1(\Xtop)$, the set $\down_{\stpreceq}(\boldK^+)$ is also a compact subset of $\mes_1(\Xtop)$.
	Hence, $\stpreceq$ is down-compact, as needed.
\end{proof}

\subsection{Proofs of the GHP sandwich and Tauberian theorems}

\begin{proof}[Proof of Theorem \ref{thm:sandwich-thm-GHP}]
	The binary relation $\orderGHP$ is a partial order on $\setGHP$ by Proposition~\ref{prop:checking-partial-order}, it is closed by Proposition~\ref{prop:checking-order-is-closed}, and it is down-compact by Proposition~\ref{prop:checking-order-is-down-compact}.
	We conclude with Lemma~\ref{lem:general-sandwich-theorem} that the claimed sandwich theorem is satisfied.
\end{proof}

\begin{proof}[Proof of Theorem \ref{thm:sandwich-thm-GHP-random}]
	Since $(\setGHP,\dGHP)$ is Polish~\cite[Thm.~2.5]{AbrahamDelmasHoscheit13}, and $\orderGHP$ is a closed down-compact partial order on it by Theorem~\ref{thm:sandwich-thm-GHP}, we deduce using Lemma~\ref{lem:general-sandwich-theorem} that $\mes_1(\setGHP)$ is also Polish and $\storder$ is also a closed down-compact partial order.
	Again, we conclude with Lemma~\ref{lem:general-sandwich-theorem} that the claimed sandwich theorem is satisfied.
\end{proof}

\begin{proof}[Proof of Theorem \ref{thm:GHP-Tauberian-thm}]

We recall the setting of Theorem~\ref{thm:GHP-Tauberian-thm}: we have random compact metric measure spaces $\left(\Xf_n, n\geq1\right)$ and $\Xf_\infty$, two tame sequences $(a_n)_{n\geq1}$, $(b_n)_{n\geq1}$, and random variables $(N_n)_{n\geq1}$ independent of $(\Xf_n)_{n\geq1}$ and $\Xf_\infty$ such that:
\begin{enumerate}
	\item
	There holds $(a_n,b_n)\cdot\Xf_ {N_n}\to\Xf_\infty$ in distribution for the GHP topology.
	\item
	There holds $N_n/n\rightarrow 1$ in probability.
	\item
	We have $\P_1(\diff\X)\storder\P_2(\diff\X)\storder\P_3(\diff\X)\storder\dots$, where $\P_n(\diff\X)$ is the distribution of $\Xf_n$ for every $n\geq1$.
\end{enumerate}

Since $N_n/n\rightarrow 1$ in probability, we can find%
\footnote{
	For instance, set $k_n=\lfloor{n\epsilon_n}\rfloor$ where $\epsilon_n$ is the infimal $\epsilon$ such that the probability that $N_{n-\lfloor{n\epsilon}\rfloor}<n<N_{n+\lfloor{n\epsilon}\rfloor}$ is at least $1-\epsilon$.
}
positive integers $k_n=o(n)$ such that $\Prob{N_{n-k_n}< n}\to 1$ and $\Prob{N_{n+k_n}>n}\to 1$.
In particular, the random variables $L^-_n=\min(n, N_{n-k_n})$ and $L^+_n=\max(n, N_{n+k_n})$ are at vanishing total variation distance from $ N_{n-k_n}$ and $ N_{n+k_n}$ respectively.
Hence we can replace $N_{n\pm k_n}$ by $L^\pm_n$ in the convergence $(a_{n\pm k_n},b_{n\pm k_n})\cdot\Xf_{N_{n\pm k_n}}\to\Xf_\infty$ which follows from (1) above: this gives $(a_{n\pm k_n},b_{n\pm k_n})\cdot\Xf_ {L^\pm_n}\to\Xf_\infty$ after replacement.
Now, since $(a_n)_{n\geq1}$ and $(b_n)_{n\geq1}$ are tame and $k_n=o(n)$, we have $a_{n\pm k_n}\sim a_n$ and $b_{n\pm k_n}\sim b_n$, so that by Corollary~\ref{cor:continuity-rescaling-random}, we deduce that $(a_n,b_n)\cdot\Xf_ {L^\pm_n}\to \Xf_\infty$ in distribution for the GHP topology.
Hence we have shown that
\begin{align}\label{eq:proof-application-conditioning-1}
	(a_n,b_n)_*\P^\pm_n(\diff\X)\xRightarrow[n\rightarrow\infty]{\mathrm{GHP}}\P_\infty(\diff\X),
\end{align}
where $\P^\pm_n(\diff\X)$ is the distribution of $\Xf_ {L^\pm_n}$ for every $n\geq1$.
For all $n\geq1$, the distribution $\P^-_n(\diff\X)$ is a mixture of $(\P_k(\diff\X),k\leq n)$ while  $\P^+_n(\diff\X)$ is a mixture of $(\P_k(\diff\X),k\geq n)$.
Hence (3) above gives:
\begin{align*}
	\P^-_n(\diff\X)\storder \P_n(\diff\X)\storder\P^+_n(\diff\X).
\end{align*}
Since the order $\storder$ is scale-invariant in the sense of Lemma~\ref{lem:order-vs-scaling}, we get
\begin{align}\label{eq:proof-application-conditioning-2}
	(a_n,b_n)_*\P^-_n(\diff\X)
	\storder (a_n,b_n)_*\P_n(\diff\X)
	\storder (a_n,b_n)_*\P^+_n(\diff\X).
\end{align}
Finally, from \eqref{eq:proof-application-conditioning-1} and \eqref{eq:proof-application-conditioning-2}, the sandwich theorem Theorem~\ref{thm:sandwich-thm-GHP-random} yields that:
\begin{align*}
	(a_n,b_n)_* \P_n(\diff\X)\xRightarrow[n\rightarrow\infty]{\mathrm{GHP}}\P_\infty(\diff\X),
\end{align*}
which means that $(a_n,b_n)\cdot\Xf_n\to\Xf_\infty$ in distribution for the GHP topology.
\end{proof}

This concludes the present section regarding the partial order $\orderGHP$. We have established in particular the GHP Tauberian theorem, Theorem~\ref{thm:GHP-Tauberian-thm}, which serves as the main tool behind the proof of Theorem~\ref{thm:scaling-limit-irred} on the scaling limit of irreducible quadranagulations. The proof of the latter theorem was given in Section~\ref{sec:structure-proof} up to some some key lemmas, which we now address. In the next section, we begin with the first step: finding a large \textit{irreducible component} in a large \textit{general quadrangulation}, in order to prove Lemma~\ref{lem:summary-largest-comp}.


\section{Step I: The largest irreducible component}
\label{sec:largest-components}

Given some combinatorial object or algebraic structure, there are often two dual ways to make sense of what ``an object found in another object'' means, namely by considering either \textit{sub-objects} or \textit{quotients}.

Informally, for us a ``sub-object'' of a planar map $\m$ will correspond to a ``map embedded in $\m$'', that is a \textit{submap} as defined in Section~\ref{subsec:prelim-submaps}; and a ``quotient'' of a planar map $\m$ will be any planar map that we can obtain by removing/collapsing some parts of $\m$.
This will give us two notions of what the ``largest irreducible quadrangulation'' found in some quadrangulation $\q$ means:
\begin{enumerate}
	\item
	 The largest \textit{irreducible component}, where an irreducible component is a submap which is irreducible and satisfies an additional condition which guarantees uniqueness properties.
	 \item
	 The largest \textit{irreducible block}, where an irreducible block is obtained informally by collapsing the sides of the 2-cycles which do not contain some fixed edge, and then emptying the ``sides of the 4-cycles'' which do not contain that edge.
\end{enumerate}

Definitions vary among authors, and the words ``component'' and ``block'' are often used interchangeably, mainly to describe what we call a ``block''.
Sometimes there is a clear choice of embedding which turns the block into an actual submap (for instance 2-connected blocks in general maps).
One of the aims of this section is to clarify the links between the two notions, explaining in particular why the \textit{blocks} considered in the literature can be canonically realized as submaps, which is convenient for metric comparisons between a quadrangulation and its largest irreducible block.

After some preliminaries, we define the notion of irreducible \textit{components} we will consider.
Then, we show that it leads to the same objects as the notion of \textit{blocks} mentioned above.
This will allow us to use results of Gao and Wormald~\cite{GaoWormald99} on block-sizes, and in particular to deduce a \textit{condensation} phenomenon: asymptotically, in a uniformly random quadrangulation with $n$ faces, there is a unique largest component with approximately $n/9$ faces.
Lastly, we complement this result with some distributional properties of the largest irreducible component.

\subsection{Preliminaries on maps and submaps}
\label{subsec:prelim-submaps}

Let us first recall some definitions from the introduction. A \textit{planar map} is a proper embedding in the two-dimensional sphere, without edge-crossings, of a finite connected planar graph with a distinguished oriented edge. Loops and multiple edges are allowed and two planar maps are identified if there is an orientation-preserving homeomorphism of the sphere that maps one onto the other while matching their root-edges.

We recall that a \textit{quadrangulation} is a planar map in which all faces have degree four. It is \textit{simple} if it has no 2-cycle, and it is \textit{irreducible} if it is simple, has at least four faces, is such that all 4-cycles bound a face.

\subsubsection{Sides of a simple cycle}

We will repeatedly speak about the ``sides'' of a simple cycle, so let us give a formal definition once and for all.
In a planar map, a $k$-cycle (reps.~oriented $k$-cycle) is a path (resp.~oriented path) of length $k$ which ends at its starting vertex, and it is said to be a \textit{simple cycle} if it uses any other vertex at most once.

Consider some planar $\m$, and let us fix an embedding of it in the 2-sphere.
Under this embedding, every simple cycle $C$ in $\m$ corresponds to a Jordan curve, separating the 2-sphere into two simply connected regions whose closures are, say, $R_1$ and $R_2$ respectively.
We call these the \textit{sides} of $C$.
We then say that two faces/edges/vertices/paths/\textellipsis of $\m$ are \textit{on the same side} of $C$ if the embedding sends them both in $R_1$ or both in $R_2$.
In particular, the only edges and vertices which are on both sides are the ones which belong to $C$.
Note that the relation of ``being on the same side'' does not actually depend on the choice of embedding (in its orientation-preserving homeomorphism class).

Given an \textit{oriented} simple cycle $\vec C$, its \textit{interior side}, or simply its \textit{interior}, is the side of $\vec C$ around which $\vec C$ runs counter-clockwise.
The other side is called the \textit{exterior side}, or simply the \textit{exterior} of $\vec C$.
By convention, the \textit{oriented boundary} $\orBdry f$ of a face $f$ with simple boundary in some map $\m$ is the oriented cycle whose interior is that face.

\subsubsection{Submaps}

A submap of a planar map $\m$ is a pair $(\sm,\alpha)$ where $\sm$ is a rooted planar map and $\alpha\colon V(\sm)\sqcup E(\sm)\to V(\m)\sqcup E(\m)$ is an injective mapping sending vertices/edges of $\sm$ to vertices/edges of $\m$ respectively, in such a way that any embedding of $\m$ in the sphere induces through $\alpha$ a proper embedding of the underlying graph of $\sm$ which is precisely in the orientation-preserving homeomorphism class characterizing $\sm$.
Informally, this just means that $\alpha$ describes a graph homomorphism which is compatible with the embeddings prescribed by $\sm$ and $\m$.
In a slightly abusive way, we will often drop mention of the mapping $\alpha$ and identify the vertices/edges of $\sm$ with the vertices/edges of $\m$ they are mapped to \textit{via} the mapping $\alpha$.

We will say that two submaps $\sm$ and $\sm'$ of a planar map $\m$ are \textit{equivalent up to re-rooting}, or \textit{re-rooting equivalent}, if we have $E(\sm)=E(\sm')$ as subsets of $E(\m)$.
Note however that, in this case, the oriented root edges of $\sm$ and $\sm'$---seen as oriented edges of $\m$---might be distinct.

\subsubsection{Detaching and gluing}

Suppose we are given some algorithm $\psi$ which to a map $\m$ and a face $f$ on it associates an oriented edge $\psi(\m;f)$ on the oriented cycle $\orBdry f$.
It will be implicit in notation.
Let $(\sm,\alpha)$ be a submap of a map $\m$.
Suppose that $\sm$ has $\ell\geq2$ faces, all of them with a \textit{simple boundary}, and enumerate them as $(f_1,\dots, f_\ell)$ using the order $<_{\sm}$.
For $i=1,\dots,\ell$, the interior of $\orBdry f_i$ can be rooted at $\psi(\sm;f_i)$.
This yields, using any proper embedding of $\m$, a rooted planar map $\m_i=\m_i(\m;\sm,\alpha)$.
We can thus form the collection $\Decs=\Decs(\m;\sm,\alpha)$ of rooted maps by setting $\Decs=(\m_i)_{1\leq i\leq \ell}$.
Lastly, we let $\ifrak=\ifrak(\m;\sm,\alpha)\in\{1,\dots,4|\m|\}$ be the index of the root edge of $\m$ in the order $<_{\m'}$, where $\m'$ is $\m$ re-rooted at the root edge of $\sm$.
All in all, we have defined a triple $(\sm,\Decs,\ifrak)$, which is a function of $(\m,\sm,\alpha)$.
Hence we may define:
\begin{align*}
\Detach(\m;\sm,\alpha)=(\sm,\Decs,\ifrak).
\end{align*}
There is an inverse operation.
Consider given a planar map $\sm$ whose faces $(f_1,\dots, f_\ell)$---in the order $<_{\sm}$---have a simple boundary, a collection $\Decs=(\m_i)_{1\leq i\leq \ell}$ of rooted planar maps whose root faces have a simple boundary of length equal to that of $\orBdry f_i$ respectively, and an integer $1\leq \ifrak\leq 4\sum_{i}(|\m_i|-1)$.
Then, one easily reverses the operation described above, so as to ``glue back'' the collection of maps of $\Decs$ in order to realize a rooted map $\m'$ and a submap $(\sm,\alpha)$ satisfying that $\Detach(\m;\sm,\alpha)=(\sm,\Decs,\ifrak)$.
We write:
\begin{align*}
\Glue(\sm;\Decs,\ifrak)&=(\m,\sm)\\
\Gamma(\sm;\Decs,\ifrak)&=\m.
\end{align*}

\begin{rem}
	The ``detaching'' and ``gluing'' operations can still be defined if we relax the assumption that the submap $\sm$ we consider is such that all its faces have a simple boundary.
	This requires to extend the notion of ``sides of a cycle'' to cycles which are only non-crossing.
	However, we will not need this level of generality.
\end{rem}

\subsection{Irreducible submaps and components} 

Irreducibility (as defined in the introduction) is in some sense a strengthening of the notion of ``connectedness'': informally, a quadrangulation is irreducible if it not possible to partition it non-trivially into two pieces whose intersection is a 4-cycle; whereas a space is connected if it cannot be non-trivially partitioned into two pieces whose intersection is the empty set.
Hence, just as connected components are maximal connected subsets in a topological space, it is to be expected that there is a well-defined notion of ``irreducible components''.
It is tempting to declare that an irreducible component is just a maximal irreducible submap, with respect to inclusion.
But in order to enforce desirable uniqueness properties, we will need to consider ``rightmost'' irreducible submaps, as we now define.

\subsubsection{Canonical orientation and rightmost cycles}

Let $\q$ be a quadrangulation.
Since its faces has even degrees, it is bipartite, that is it admits a \textit{proper bi-coloring}: a coloring of each vertex with black or white such that no two neighbor vertices have the same color.
If we require the root oriented edge of $\q$ to be oriented black-to-white, then there is a unique such proper bi-coloring; which is easily constructed by a greedy exploration starting from the root edge.
The obtained bi-coloring of $\q$ will be called the \textit{canonical bi-coloring}, and when there is no confusion possible, we will simply speak about \textit{black} and \textit{white} vertices of $\q$ in reference to their color under the canonical bi-coloring.
The black-to-white orientation $\vec e$ of a non-oriented edge $e$ of $\q$ will be called the \textit{canonical orientation} of $e$.

Let $C$ be a simple cycle on $\q$.
We will say that it is a \textit{rightmost cycle} if the following holds: for every edge $e$ on $C$, say between some black vertex $v_\bullet$ and some white vertex $v_\circ$, all the edges joining $v_\bullet$ to $v_\circ$ belong to the side of $C$ which sits to the left of $e$, when oriented in the canonical black-to-white orientation.
Accordingly, a submap $\sm$ of the quadrangulation $\q$ is said to be a \textit{rightmost submap} if its faces are bordered by cycles which are rightmost in $\q$.
Note that this notion depends on $\sm$ only through its edge-set, so that $\sm$ is rightmost if and only if any of the submaps of $\q$ which are re-rooting equivalent to $\sm$ is rightmost.

\subsubsection{Irreducible components}

We are now equipped to define irreducible components, as well as component-rooted quadrangulations.

\begin{defin}\label{def:irred-components}
	An \textit{irreducible component} of a quadrangulation $\q$ is a rightmost submap $(\sm,\alpha)$ corresponding to an irreducible quadrangulation $\sm$.
	It is automatically maximal with respect to the submap relation.%
		\footnote{
			Indeed, if it is a submap of another irreducible quadrangulation $\sm'$, then the borders of the faces of $\sm$ cannot be separating in $\sm'$, so that they also bound a face of $\sm'$.
		}
\end{defin}
\noindent
We recall that we made the convention that irreducible quadrangulations must have at least $4$ faces.
In particular a quadrangulation must have at least $4$ faces to possess an irreducible component.

\begin{rem}\label{rem:submaps-and-rerooting}
	Let $\q$ be a quadrangulation, and let $\q'$ be the same quadrangulation obtained by re-rooting at some edge.
	Let us justify that the collections of irreducible components of $\q$ and $\q'$ coincide, when these components are viewed as \textit{maps} $\sm$, even though they may differ as \textit{submaps} $(\sm,\alpha)$.
	
	Indeed, either $\q$ and $\q'$ have the same canonical bi-coloring, in which case the irreducible components of $\q$ and of $\q'$ are identified; or the bi-coloring of $\q'$ is obtained from that of $\q$ by switching colors.
	In the latter case, this reverses the canonical orientation, so that irreducible components of $\q$ are identified with ``leftmost'' irreducible submaps of $\q'$ instead.
	These are in bijection with irreducible components of $\q'$ by taking their ``rightmost version'', that is by replacing each of their edges by the rightmost edge of $\q'$ with same endpoints.
	Clearly, under this bijection, the irreducible \textit{submaps} of $\q$ and $\q'$ which are identified correspond to isomorphic \textit{maps}. 
\end{rem}

The following lemma is quite intuitive and it will be essential in cases where we want to say that ``there is a unique largest component''.

\begin{lemma}[Separation lemma]\label{lem:sepration-lemma}
	Let $\q$ be a quadrangulation and $\sm_1$, $\sm_2$ be two of its irreducible components.
	If $\sm_1$ is not \emph{re-rooting equivalent} to $\sm_2$, then there exists a simple 4-cycle which separates $\sm_1$ and $\sm_2$, that is a simple 4-cycle $C$ such that $\sm_1$ is on one side of $C$ and $\sm_2$ on the other.
\end{lemma}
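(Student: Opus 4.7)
My plan is to realize the separating $4$-cycle as the boundary of a well-chosen face of $\sm_1$. Since $\sm_1$ and $\sm_2$ are not re-rooting equivalent, their edge sets differ; since both are maximal among irreducible submaps of $\q$ (by Definition~\ref{def:irred-components}), neither is a submap of the other. I may therefore pick an edge $e \in E(\sm_2)\setminus E(\sm_1)$. Viewed as an arc in the sphere that does not lie on any edge of $\sm_1$, $e$ has its interior inside a single face $f$ of $\sm_1$; I denote by $C = \partial f$ the simple $4$-cycle of $\q$ bounding $f$ (simple because $\sm_1$ is a simple quadrangulation).

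I then claim that $\sm_2 \subseteq \overline{f}$, the closed side of $C$ containing $f$. Granting this claim, $C$ separates $\sm_1$ and $\sm_2$: indeed $\sm_1 \setminus C$ lies on the exterior side of $C$ (since $f$ is a face of $\sm_1$) while $\sm_2 \setminus C$ lies on the $f$-side, which is the statement of the lemma. To prove the claim, I argue by contradiction: suppose $\sm_2$ also has an edge strictly outside $\overline{f}$. Combined with $e$ inside $f$, this means $\sm_2$ straddles $C$, and the connectedness of $\sm_2$ forces the set $V_C := V(\sm_2)\cap V(C)$ to be a vertex cut of the planar map $\sm_2$ (since any path from the inside-$f$ part to the outside-$\overline{f}$ part must cross the Jordan curve $C$ at a vertex). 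My aim is then to extract from this straddling configuration a simple $4$-cycle $D$ of $\sm_2$ whose two sides both carry $\sm_2$-content; such a $D$ would contradict irreducibility of $\sm_2$, which dictates that every $4$-cycle of $\sm_2$ bounds a face and thus has one side empty of the map.

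The natural candidate for $D$ is $C$ itself, which reduces the proof to showing that $V(C)\subseteq V(\sm_2)$ and $E(C)\subseteq E(\sm_2)$. For the vertex containment, I plan to use the well-known $3$-connectedness of irreducible quadrangulations: $V_C$ being a vertex cut of $\sm_2$ forces $|V_C|\geq 3$, and a planar-topological refinement, based on the fact that the two sides of $C$ must both be non-trivially reached by $\sm_2$, should promote this to $|V_C| = |V(C)| = 4$. The edge containment then follows by exploiting the rightmost property shared by $\sm_1$ and $\sm_2$ (so that any edge of $\sm_2$ between two shared vertices coincides with the corresponding edge of $\sm_1$, which belongs to $C$) together with the local structure of $\sm_2$ around the vertices of $C$ dictated by the bipartite/quadrangular embedding.

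The main obstacle is precisely this reduction to $C \subseteq \sm_2$. I must rule out degenerate configurations in which $\sm_2$ straddles $C$ only via a strict subset of $V(C)$, or misses some edges of $C$, using instead a longer separating cycle inside $\overline{f}$ that detours around the missing vertices or edges. I expect to handle this by a case analysis on $|V_C|$ and on which edges of $C$ lie in $\sm_2$: in each degenerate case, one should be able to identify a shorter $4$-cycle of $\sm_2$ lying near $C$ which still separates the inside-$f$ and outside-$\overline{f}$ portions of $\sm_2$, again contradicting irreducibility. This step is where all the hypotheses (irreducibility, rightmost choice, bipartite planar embedding) must come together.
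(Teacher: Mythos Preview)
Your overall strategy matches the paper's: pick an edge of one component lying in a face $f$ of the other, and take $C=\partial f$ as the candidate separating $4$-cycle. The claim that $\sm_2$ lies entirely on one side of $C$ is exactly the content of the paper's intermediate Lemma~\ref{lem:irred-comp-vs-cycles}, which states that an irreducible component must lie on one side of \emph{any} rightmost simple $4$-cycle (and $C$ is rightmost since $\sm_1$ is a rightmost submap). So you have correctly isolated the crux.

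Where your proposal is genuinely incomplete is the step you yourself flag as the ``main obstacle''. Your plan---first promote $|V_C|\geq 3$ to $|V_C|=4$ by an unspecified ``planar-topological refinement'', then deduce $E(C)\subseteq E(\sm_2)$ from the rightmost property---does not work in that order: knowing all four vertices of $C$ lie in $\sm_2$ does not by itself produce edges of $\sm_2$ between consecutive ones, and the rightmost property only lets you identify an edge of $\sm_2$ with the corresponding edge of $C$ \emph{once you already know $\sm_2$ has some edge between those two endpoints}. The paper interleaves the two steps. From three vertices $v_1,v_2,v_3$ of $C$ in $\sm_2$ (via $3$-connectedness), it first shows the $C$-edges $v_1v_2$ and $v_2v_3$ lie in $E(\sm_2)$: if say $v_1v_2\notin E(\sm_2)$, that edge sits inside some face $f'$ of $\sm_2$; since $\q$ is bipartite, $v_1$ and $v_2$ have different colours and hence are \emph{adjacent} on the $4$-cycle $\partial f'$, so some $e'\in E(\sm_2)$ joins them; now rightmostness of both $C$ and $\sm_2$ forces $e'$ to coincide with the $C$-edge. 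Only after securing these two edges does the paper recover $v_4$: if $v_4\notin V(\sm_2)$, the edges $v_1v_2,\,v_2v_3$ together with the boundary of the face of $\sm_2$ containing $v_4$ form a $4$-cycle with a diagonal, impossible in an irreducible quadrangulation with at least four faces. With all four vertices in hand the edge argument then gives $E(C)\subseteq E(\sm_2)$, and $C$ becomes a separating $4$-cycle in $\sm_2$, the desired contradiction. Your proposed case analysis might eventually rediscover this, but the bipartite ``adjacent on $\partial f'$'' trick and the ``$4$-cycle with a diagonal'' obstruction are the specific missing ideas.
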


We will need the following intermediate lemma.

\begin{lemma}\label{lem:irred-comp-vs-cycles}
	Let $\q$ be a quadrangulation and let $\sm$ be one of its irreducible components.
	For every oriented rightmost simple 4-cycle $\vec C$ on $\q$, either $\sm$ in in the interior of $\vec C$, or it is in its exterior.
\end{lemma}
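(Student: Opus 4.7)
I will argue by contradiction: suppose $\sm$ contains a vertex or edge strictly inside the open interior $R_1$ of $\vec C$ and also strictly inside its open exterior $R_2$. The starting observation is a dichotomy forced by planarity: since $\sm$ is a submap of $\q$, every edge of $\q$ either belongs to $E(\sm)$ (and then lies on the common boundary of two faces of $\sm$) or is drawn inside the open interior of a unique face of $\sm$; and similarly every vertex of $\q$ is either in $V(\sm)$ or strictly inside a unique face of $\sm$. In particular, any $\q$-edge incident to a vertex of $\q$ lying strictly inside a face $f$ of $\sm$ must be entirely contained in $\bar f$.

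If all four edges of $\vec C$ lie in $E(\sm)$, then $C$ is a simple 4-cycle in the irreducible quadrangulation $\sm$; by the defining property of irreducibility, $C$ bounds a face of $\sm$, whose interior contains no vertex or edge of $\sm$. Hence $\sm$ lies entirely on the opposite side of $C$, contradicting the assumption. The remaining cases are those in which some vertex or edge of $C$ lives in the interior of a face of $\sm$.

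The main case is when some vertex $v\in V(C)\setminus V(\sm)$ lies in the open interior of a face $f_0$ of $\sm$. By the preliminary observation, the two neighbors $v',v''$ of $v$ on $C$ both belong to $\bar{f_0}$. Let $w$ denote the fourth vertex of $C$. If $w\in\bar{f_0}$ as well, then $C\subseteq\bar{f_0}$, so the closed disk cut out by $C$ on the side contained in $\bar{f_0}$ carries no $\sm$-structure, forcing $\sm$ to the other side---contradiction. Otherwise $w$ lies strictly inside some face $f_1\neq f_0$; the edges $v'w$ and $v''w$ then exit $\bar{f_0}$, which compels $v',v''$ to lie on $b_{f_0}$, and a symmetric argument near $w$ places them on $b_{f_1}$ as well. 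Since $v'$ and $v''$ are separated by two edges on $C$, the canonical bi-coloring of $\q$ assigns them the same color, so on each of the bipartite 4-cycles $b_{f_0}$ and $b_{f_1}$ they must be \emph{opposite} vertices.

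I now invoke a combinatorial fact about the irreducible quadrangulation $\sm$, which follows by combining ``every 4-cycle bounds a face'' with the planar count of faces around a fixed pair of vertices: no pair of vertices of $\sm$ can admit four distinct common neighbors. Applied to $(v',v'')$---whose common neighbors already include the two middle vertices of $b_{f_0}$ and the two of $b_{f_1}$---this forces the multiset $\{a_1,a_2,a_1',a_2'\}$ of middle vertices to have at most three distinct elements. A short case analysis then closes every possibility: in the case of only two distinct middle vertices, $b_{f_0}$ and $b_{f_1}$ are bipartite 4-cycles on the same four vertices and thus coincide, making $f_0$ and $f_1$ the two regions of the sphere cut out by a single 4-cycle and forcing $\sm$ to have just two faces---contradicting the ``at least four faces'' clause of irreducibility; in the case of three distinct middle vertices, the extra 4-cycle $v'a_2 v''a_2'v'$ in $\sm$ must itself bound a face, producing three faces of $\sm$ incident to the pair $(v',v'')$ whose cyclic arrangement around that pair is incompatible with the position of $v$ inside $f_0$ and $w$ inside $f_1$.

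Finally, the residual case $V(C)\subseteq V(\sm)$ with some edge of $C$ strictly inside a face $f$ of $\sm$ is handled as follows: bipartiteness forces the endpoints of that edge to be adjacent on the bipartite 4-cycle $b_f$, producing a parallel edge in $E(\sm)$, and the rightmost hypotheses on both $\vec C$ and the rightmost cycle $\orBdry f$ (which comes from $\sm$ being a rightmost submap) together rule out such parallel-edge configurations. \textbf{The main obstacle} is the sub-case analysis in the third paragraph: one has to combine bipartiteness, the face-counting consequence of irreducibility, and the ``at least four faces'' clause to close off every configuration in which $C$ traverses two distinct faces of $\sm$; the rightmost hypothesis enters only in the residual case to eliminate otherwise surviving parallel-edge configurations.
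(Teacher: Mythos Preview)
Your approach differs from the paper's in a substantive way. The paper invokes the (well-known) 3-connectivity of irreducible quadrangulations: once $\sm$ meets both open sides of $C$, the set $V(\sm)\cap V(C)$ is a vertex cut of $\sm$, hence has size at least three; from there the paper shows the consecutive edges of $C$ lie in $E(\sm)$ via the rightmost hypotheses, then that the fourth vertex is in $V(\sm)$ using the absence of a ``4-cycle with a diagonal'' in irreducible quadrangulations, reaching a separating 4-cycle in $\sm$. You instead attempt a direct case analysis avoiding 3-connectivity, which is a legitimate strategy but makes the argument longer and more delicate.

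There is a genuine gap in your Case~2. Your dichotomy for the opposite vertex $w$ is ``$w\in\bar f_0$'' versus ``$w$ lies strictly inside some face $f_1\neq f_0$'', but this omits the possibility $w\in V(\sm)\setminus b_{f_0}$. In that missing case one still gets $v',v''\in b_{f_0}$ (as you argue), and then the edges $v'w,v''w$ can be forced into $E(\sm)$ by your residual-case rightmost argument, giving $v',v''$ three common $\sm$-neighbors $a_1,a_2,w$; but you have not addressed this. Relatedly, in your sub-case ``$w\in\bar f_0$'', the implication $C\subseteq\bar f_0$ is not automatic when $w\in b_{f_0}$: the edges $v'w,v''w$ then have parallel counterparts on $b_{f_0}$, and you need the rightmost hypotheses to force them to coincide.

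Your ``three distinct middle vertices'' sub-case is also not convincingly closed: the phrase ``cyclic arrangement incompatible with the positions of $v$ and $w$'' is vague. A clean way to finish (which also handles the missing case above and streamlines your two-distinct sub-case) is this: if $v',v''$ have exactly three common neighbors in $\sm$, the embedded $K_{2,3}$ on these five vertices has three complementary regions, each bounded by a 4-cycle that must bound a face of $\sm$ by irreducibility; since these three regions tile the sphere, $\sm$ would have exactly three faces, contradicting the ``at least four faces'' clause.
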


\begin{proof}[Proof of Lemma~\ref{lem:irred-comp-vs-cycles}]
	Suppose for the sake of contradiction that $\sm$ intersects strictly both the interior of $\vec C$ and its exterior.
	It is well-known that irreducible quadrangulations are 3-connected, that is that one needs to remove at least 3 vertices to disconnect them.
	In particular, $\sm$ must contain at least three of the vertices on the cycle $\vec C$.
	Let us therefore say that $\vec C$ visits vertices $v_1$, $v_2$, $v_3$, $v_4$ in that order, where $\sm$ contains $v_1$, $v_2$, $v_3$, and possibly $v_4$.
	We write $e_1$ and $e_2$ for the edges of $\vec C$ joining $v_1$ to $v_2$, resp.~$v_2$ to $v_3$.
	
	We claim that $e_1$ and $e_2$ both belong to $E(\sm)$.
	Suppose that is not the case, say $e_1\notin E(\sm)$.
	Then, $e_1$ is a path in the interior of some face $f$ of $\sm$, between the vertices $v_1$ and $v_2$, which are on the boundary of $f$.
	But $\q$ being bipartite implies that $v_1$ and $v_2$ are of different colors, which implies that they are actually adjacent vertices on $\orBdry f$ since it is a 4-cycle.
	All in all, we deduce that there is an edge $e'_1\in E(\sm)$ between $v_1$ and $v_2$.
	Now, since $\vec C$ is a rightmost cycle and $\sm$ is a rightmost submap, we deduce that $e_1$ and $e'_1$ are actually both the rightmost edge between $v_1$ and $v_2$.
	Hence $e_1\in E(\sm)$, which gives a contradiction and proves the claim.
	
	Now observe that $v_4$ is a vertex of $\sm$.
	Otherwise, the edges $e_1$ and $e_2$, together with the boundary $\orBdry f'$ of the face $f'$ containing $v_4$, would create a ``4-cycle with a diagonal'', which cannot happen in an irreducible quadrangulation.%
		\footnote{
			This uses our convention that irreducible quadrangulations must have at least 4 faces.
			Otherwise, the unique irreducible quadrangulation with 3 faces possesses a ``4-cycle with a diagonal''.
		}
	Hence, $v_4$ is a vertex of $\sm$, so that re-using the arguments at the beginning of this proof, the edges of $\vec C$ actually all belong to $\sm$.
	Since $\sm$ intersects strictly both the interior of $\vec C$ and its exterior, we obtain that $\vec C$ is a separating 4-cycle in $\sm$, which contradicts the irreducibility of $\sm$.
	This concludes our proof by contradiction.
\end{proof}

\begin{proof}[Proof of Lemma~\ref{lem:sepration-lemma}]
	Let $\q$ be a quadrangulation, let $\sm_1$, $\sm_2$ be two of its irreducible components and suppose that $\sm_1$ and $\sm_2$ are not \textit{re-rooting equivalent}, that is $E(\sm_1)\neq E(\sm_2)$.
	In particular, up to exchanging $\sm_1$ and $\sm_2$, there is some edge $e_1\in E(\sm_1)\setminus E( \sm_2)$.
	Hence, under a proper embedding of $\q$ in the 2-sphere, the edge $e_1$, seen as a path on the sphere, must belong---except maybe at its endpoints---to the strict interior of some face $f_2$ of $\sm_2$.
	Since $\sm_2$ is a rightmost and simple submap, the cycle $\orBdry f_2$ is simple and rightmost.
	Hence it follows by Lemma~\ref{lem:irred-comp-vs-cycles} and the fact that $\sm_1$ intersects the strict interior of $\orBdry f_2$ that it cannot also intersect its strict exterior.
	But $\sm_2$ is on the exterior of $\orBdry f_2$ since $f_2$ is one of its faces.
	Hence $\orBdry f_2$ separates $\sm_1$ and $\sm_2$.
\end{proof}

\subsubsection{Reconstruction}

We will use the ``detach'' and ``glue'' operations from Section~\ref{subsec:prelim-submaps}.
Once and for all, we choose an algorithm $\psi$ which to a quadrangulation $\q$ and a face $f$ on it associates an oriented edge $\psi(\q;f)$ on the oriented cycle $\orBdry f$, \textbf{which we furthermore require to be white-to-black}.

For $4\leq \ell\leq n$, we let $\CompRootedQ{n}{\ell}$ denote the set of tuples $(\q;\sm,\alpha)$ where $\q$ is a quadrangulation with $n$ faces, and $(\sm,\alpha)$ is an irreducible component of $\q$ with $\ell$ faces.
We wish to describe the image of $\CompRootedQ{n}{\ell}$ under the mapping $\Detach$ from Section~\ref{subsec:prelim-submaps}, with the above choice of algorithm $\psi$.

To this end, we let $\widetilde\Quads$ be the set of quadrangulations $\q$ such that both the root edge and its opposite edge in the root face are \textit{solo edges}---we call \textit{solo edge} an edge such that no other edge has the same endpoints.
Then, for $4\leq \ell\leq n$, we let $\DecSet_{n,\ell}$ be the set of tuples $(\q_1,\dots,\q_\ell)$ such that each $\q_i$ belongs to $\widetilde\Quads$ and such that $\sum_i|\q_i|=n+\ell$.

\begin{lemma}\label{lem:bij-psi-irred}
	For $4\leq \ell\leq n$, the mapping $\Detach$ induces a bijection
	\begin{align*}
	\Detach\colon\CompRootedQ{n}{\ell}\longrightarrow \ensQirred_\ell \times \DecSet_{n,\ell}\times \{1,\dots,4n\},
	\end{align*}
	whose inverse is given by $\Glue$.
\end{lemma}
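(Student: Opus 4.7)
The approach is to verify that $\Detach$ sends $\CompRootedQ{n}{\ell}$ into the claimed product, that $\Glue$ sends the product back into $\CompRootedQ{n}{\ell}$, and then that the two are mutually inverse. The mutual-inverse property is essentially by construction: each map reverses the cutting/gluing along the cycles $\orBdry f_i$, and the deterministic rule $\psi$ lets one recover the root edge of each $\m_i$ from $\sm$ alone. So the real content lies in the two inclusions.

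For the forward inclusion, starting from $(\q;\sm,\alpha)$ and writing $\Detach(\q;\sm,\alpha)=(\sm,\Decs,\ifrak)$, the factor $\sm\in\ensQirred_\ell$ and the range $\ifrak\in\{1,\dots,4n\}$ are immediate, while the identity $\sum_i|\m_i|=n+\ell$ follows from the fact that each $\m_i$ contributes $|\m_i|-1$ faces to $\q$, its root face being identified with the face $f_i$ of $\sm$, which is not a face of $\q$. The substantive claim is that each $\m_i$ lies in $\widetilde{\Quads}$. Since $\orBdry f_i$ is a 4-cycle whose vertices alternate in color by bipartiteness, it contains exactly two white-to-black edges, necessarily opposite within the cycle. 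One of these is the root edge $\psi(\sm;f_i)$ of $\m_i$; the other is then the opposite edge in its root face. Applying the rightmost property of $\sm$ to either of these two white-to-black edges shows that all their parallels in $\q$ sit outside $\orBdry f_i$, hence outside $\m_i$, and both are therefore solo in $\m_i$ as required.

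For the reverse inclusion, pick $(\sm,\Decs,\ifrak)$ in the product and form $(\q,\sm,\alpha)=\Glue(\sm;\Decs,\ifrak)$. Counting gives $|\q|=n$, irreducibility of $\sm$ is given, and $\sm$ is a submap of $\q$ by construction; the non-trivial step is rightmostness of $\sm$ in $\q$. Fix any edge $e$ of $\sm$ bordering two $\sm$-faces $f_i,f_j$. In its canonical white-to-black orientation, $e$ coincides with the counter-clockwise direction along $\orBdry f_i$ for exactly one of $i,j$; say for $i$. Then $e$ is one of the two white-to-black edges of $\orBdry f_i$, which, as just observed, are precisely the root edge of $\m_i$ and its opposite in the root face. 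Both are solo in $\m_i$ by the $\widetilde{\Quads}$ hypothesis, so $e$ is solo in $\m_i$, and every parallel of $e$ in $\q$ therefore lies in $\m_j$. A quick check of orientations confirms that $\m_j$'s interior is exactly the side of $\orBdry f_i$ sitting to the left of $e$ in the canonical black-to-white orientation, as the rightmost definition demands.

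\textbf{Main obstacle.} The delicate point is that the $\widetilde{\Quads}$ hypothesis constrains only two of the four boundary edges of each $\m_i$'s root face, and one must verify that these two are \emph{exactly} the pair needed to enforce rightmostness at the corresponding edges of $\sm$. This matching is forced by the alternating-color structure of the 4-cycles $\orBdry f_i$ combined with the requirement that $\psi$ return a white-to-black edge. A further piece of bookkeeping concerns the canonical bi-coloring of $\q$, which depends on the re-rooting prescribed by $\ifrak$ and may differ from that of $\sm$; the rightmost/leftmost symmetry under color swap noted in Remark~\ref{rem:submaps-and-rerooting} lets one reconcile the two without loss.
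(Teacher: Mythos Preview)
Your proposal is correct and follows essentially the same route as the paper. The paper's own proof is much terser: it disposes of the forward inclusion with the same white-to-black/rightmost argument you give, and then handles the reverse inclusion in a single sentence (``similar considerations show\ldots''), whereas you spell out explicitly why the $\widetilde\Quads$ hypothesis on each $\m_i$ forces the boundary cycles $\orBdry f_i$ to be rightmost in the glued map. Your identification of the two white-to-black edges of $\orBdry f_i$ with the root edge and its opposite in the root face of $\m_i$ is exactly the mechanism that makes the $\widetilde\Quads$ constraint the ``right'' one, and the paper leaves this implicit.

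One remark on your ``Main obstacle'' paragraph: you are right to flag the possible mismatch between the canonical bi-coloring of $\q$ (determined by the re-rooting index $\ifrak$) and that of $\sm$ (determined by $\sm$'s own root), since $\psi$ is applied to $\sm$ while rightmostness is tested in $\q$. Your appeal to Remark~\ref{rem:submaps-and-rerooting} is the correct spirit, but note that the remark concerns how the \emph{set} of irreducible components transforms under re-rooting, not how a \emph{given} submap's rightmostness status changes; a fully rigorous patch would observe that $\q$ re-rooted at $\sm$'s root (call it $\q'$) shares $\sm$'s coloring, and then argue that rightmostness of $(\sm,\alpha)$ in $\q$ is equivalent to rightmostness (or leftmostness, if colors swap) in $\q'$, with the leftmost case handled by passing to the rightmost version as in the remark. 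The paper does not address this point either, so you are not missing anything relative to the original.
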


\begin{proof}
	First, it is justified in Section~\ref{subsec:prelim-submaps} that $\Detach$ is bijective with the domain of definition and image that are given there.
	Hence to prove the lemma, we only need to prove that the image of  $\CompRootedQ{n}{\ell}$ under $\Detach$ is the set $\ensQirred_\ell \times \DecSet_{n,\ell}\times \{1,\dots,4n\}$.
	Fix $4\leq \ell\leq n$ and let $(\q,\sm,\alpha)\in \smash{\CompRootedQ{n}{\ell}}$.
	We form $\Detach(\q;\sm,\alpha)=(\sm,\Decs=(\q_i)_{1\leq i\leq \ell},\ifrak)$.
	By definition of irreducible components, we have $\sm\in\ensQirred_\ell$.
	We also need to justify that $\Decs\in\DecSet_{n,\ell}$.
	By construction, we have $|\q|= \sum_{1\leq i\leq \ell}(|\q_i|-1)$, that is $\sum_i|\q_i|=n+\ell$.
	On the other hand, recall that irreducible components are required to be rightmost.
	From the definition of rightmost submaps, it follows easily that, on the (counter-clockwise) oriented boundary $\orBdry f$ of a face $f$ of $\sm$, the white-to-black edges are such that no other edge with same endpoints visits the interior of $f$---that is they induce \textit{solo edges} in the map that is obtained from the ``detaching operation'' applied to the face $f$.
	With the requirement we made that the rooting algorithm $\psi$ chooses a white-to-right edge to root the $(\q_i)_{1\leq i\leq \ell}$, we therefore deduce that each $\q_i$ is in fact in $\widetilde\Quads$.
	We have therefore verified that $(\sm,(\q_i)_{1\leq i\leq \ell},\ifrak)$ belongs to $\ensQirred_\ell \times \DecSet_{n,\ell}\times \{1,\dots,4n\}$.
	Similar considerations show that the elements of this set are sent into $\smash{\CompRootedQ{n}{\ell}}$ by the mapping $\Glue$.
	This concludes the proof.
\end{proof}

\subsection{Blocks and largest components}

In \cite{GaoWormald99}, Gao and Wormald studied several notions of ``components''---which we shall call \textit{blocks} here---and proved asymptotic results for the size of the largest such block in several models.
Roughly, in their approach, it is shown that a map $\m$ from some ensemble $\Acal$ can be built from a ``root-block'' $\b$ belonging to some ensemble $\Bcal$ in which we have ``substituted'' maps from $\Acal$ in place of the edges/faces/\textellipsis, and where $\m$ inherits the root edge of $\b$.
The blocks of $\m$ are then all the root-blocks that can be obtained upon starting with an other rooting of $\m$.
Let us describe how one defines the \textit{simple root-block of a general quadrangulation}, as well as the \textit{irreducible root-block of a simple quadrangulation}.

\subsubsection{Simple root-block}

The following construction is dual, via Tutte's ``angular'' bijection, to the decomposition of general maps as a collection of a map as a ``2-connected root-block'', in the corners of which some maps are grafted.
See the original work of Tutte \cite{Tutte63} for this decomposition, and \cite{FleuratSalvy24} for the equivalence with the construction below.

Given a \textit{general} quadrangulation $\q$, its \textit{simple root-block} $\SComp(\q)$ is obtained as follows: (i) embed $\q$ in the plane in such a way that the unbounded face lies to the right of the oriented root edge; and, (ii) for each maximal%
\footnote{
	A 2-cycle is \textit{maximal} under this embedding in the plane if it is not contained in the bounded side of another 2-cycle.
}
2-cycle in $\q$, empty its bounded side and collapse the 2-gonal face thus obtained into a single edge.
Note that root edge may have been ``fused'' with another edge in the process.

\subsubsection{Irreducible root-block}

We follow the approach of Mullin and Schellenberg \cite{MullinSchellenberg68}.
Given a \textit{simple} quadrangulation $\qsim$, its \textit{irreducible root-block} $\IrrComp(\qsim)$ is obtained as follows.
Embed $\qsim$ in the plane in such a way that the unbounded face lies to the right of the oriented root edge, or equivalently such that the oriented boundary $\orBdry f_0$ of the root face $f_0$ runs clockwise.
We then distinguish the two following cases:
\begin{enumerate}
	\item
		If there is a ``diagonal'', that is a path of length two in the strict interior of $\orBdry f_0$ joining two antipodal vertices of $\orBdry f_0$, then $\qsim$ will be said to have no root-block, which we write symbolically $\IrrComp(\qsim)=\emptyset$.
	\item
		Otherwise, the irreducible root-block $\IrrComp(\qsim)$ is obtained by emptying the bounded side of each maximal%
		\footnote{
			A 4-cycle $C$ is \textit{maximal} if it is not the boundary of the root face and if it is not contained in the bounded side of another such 4-cycle.
			In order to empty maximal 4-cycles, we need their bounded sizes to be non-intersecting.
			This is the case when there is no ``diagonal'', as the reader may verify, see \cite{MullinSchellenberg68} for an extensive discussion.
		}
		4-cycle of $\q$.
\end{enumerate}

\subsubsection{Relation with Definition~\ref{def:irred-components}}

The following lemma verifies that Definition~\ref{def:irred-components} is consistent with the above notions of blocks.

\begin{lemma}\label{lem:equiv-two-defs}
	Let $\q$ be a quadrangulation.
	\begin{enumerate}
		\item 
			For every irreducible component $(\sm,\alpha)$ of $\q$, if we let $\q'$ be $\q$ re-rooted at the oriented root edge of $\sm$, then we have $\sm=\IrrComp(\SComp(\q'))$.
		\item
			For every $\q'$ obtained by re-rooting $\q$ at some oriented edge, if $\IrrComp(\SComp(\q'))\neq\emptyset$, then $\IrrComp(\SComp(\q'))$ can be realized as an irreducible component $(\sm,\alpha)$ of $\q$.
	\end{enumerate}
\end{lemma}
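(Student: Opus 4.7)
The plan is to reconcile two a priori distinct descriptions of the same geometric object: the rightmost irreducible submaps (Definition~\ref{def:irred-components}), which are intrinsically maximal; and the irreducible root-blocks extracted by $\IrrComp \circ \SComp$, which are anchored at the root edge. The strategy is to show that in both (1) and (2), both constructions produce the same irreducible quadrangulation sitting in the same place inside $\q$.

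First I would establish a preliminary claim: for any quadrangulation $\q^\circ$, the simple quadrangulation $\SComp(\q^\circ)$ can be realized canonically as a submap of $\q^\circ$ by choosing, as representative of each 2-cycle equivalence class, the edge on the outer side of the corresponding maximal 2-cycle. Under this realization, every simple rightmost submap of $\q^\circ$ containing the root edge (and whose bi-coloring agrees with that of $\q^\circ$) lifts to a submap of $\SComp(\q^\circ)$. The reason: each edge $e$ of such a submap, paired with a parallel edge $e'$ of $\q^\circ$ in a 2-cycle $C$, has $e'$ on the local left of $\vec e$ (because $e$ is rightmost), so the bubble enclosed by $C$ lies on the left of $\vec e$ and hence on the bounded side of $C$. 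The outer edge, which is the one surviving the collapse, is therefore $e$ itself.

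\textbf{Proof of (1).} Let $(\sm,\alpha)$ be an irreducible component of $\q$ and let $\q'$ be $\q$ re-rooted at the root edge of $\sm$. If the canonical bi-colorings of $\q$ and $\q'$ agree, the preliminary claim directly lifts $\sm$ to a submap $\sm^\flat$ of $\SComp(\q')$; otherwise, Remark~\ref{rem:submaps-and-rerooting} first produces the ``rightmost version'' of $\sm$ with respect to $\q'$'s bi-coloring, which is isomorphic to $\sm$ as a rooted map and to which the preliminary claim then applies. In either case we obtain a rightmost irreducible submap $\sm^\flat$ of $\SComp(\q')$, isomorphic to $\sm$ as a rooted map, whose root face coincides with the root face of $\SComp(\q')$. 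It remains to identify $\sm^\flat$ with $\IrrComp(\SComp(\q'))$. Embedding $\SComp(\q')$ in the plane with the root face unbounded, the face boundaries of $\sm^\flat$ other than the root face are simple 4-cycles which, by the Separation Lemma~\ref{lem:sepration-lemma} applied inside $\sm^\flat$ together with the irreducibility of $\sm$, are precisely the maximal 4-cycles of $\SComp(\q')$; the absence of a diagonal across the root face also follows from irreducibility. Hence $\IrrComp(\SComp(\q'))$ is obtained from $\SComp(\q')$ by emptying the bounded side of each face of $\sm^\flat$ distinct from the root face, which returns exactly $\sm^\flat \cong \sm$.

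\textbf{Proof of (2).} Suppose $\IrrComp(\SComp(\q')) \neq \emptyset$, and call it $\sm^*$. By construction, $\sm^*$ is a rightmost irreducible submap of $\SComp(\q')$ whose face boundaries are maximal 4-cycles. Using the preliminary claim, $\SComp(\q')$ is a submap of $\q'$, so $\sm^*$ itself lifts to a rightmost irreducible submap of $\q'$; applying Remark~\ref{rem:submaps-and-rerooting} (to undo a possible color switch between $\q$ and $\q'$) yields an irreducible component $(\sm,\alpha)$ of $\q$ isomorphic to $\sm^*$ as a rooted map. The main obstacle throughout is concentrated in the preliminary claim, which requires carefully tracking the notion of ``outer side'' of 2-cycles and relating it to the rightmost property, together with the bookkeeping for color-switching upon re-rooting.
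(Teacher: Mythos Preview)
Your approach is essentially the same as the paper's, but you make the first step harder than it needs to be. Your preliminary claim tries to show that \emph{rightmost} edges survive the 2-cycle collapse by arguing that parallel edges lie ``on the left'' and hence on the bounded side. The implication ``left of $\vec e$ $\Rightarrow$ bounded side of $C$'' is only immediate for the root edge (where the root face is on the right by convention); for a general edge of $\sm$ the black-to-white orientation need not point ``away'' from the root face, so this step is not justified as written, and this is precisely where your color-switching bookkeeping enters.

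The paper sidesteps this entirely by using \emph{simplicity} of $\sm$ rather than rightmostness: since $\sm$ has no multiple edges, every 2-cycle of $\q'$ must lie inside some face of $\sm$ and can touch at most one edge of $\sm$; hence collapsing maximal 2-cycles leaves $\sm$ unchanged as a map and realizes it directly as a submap of $\SComp(\q')$ sharing the same root edge. No bi-coloring argument is needed in Part~(1). From there, both you and the paper identify the face boundaries of $\sm$ with the maximal 4-cycles of $\SComp(\q')$ via irreducibility and the separation lemma (more precisely Lemma~\ref{lem:irred-comp-vs-cycles}, since one needs that any other separating 4-cycle would have $\sm$ entirely on one side). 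Your invocation ``Separation Lemma applied inside $\sm^\flat$'' is a bit imprecise; what is really used is that any rightmost 4-cycle not bounding a face of $\sm$ would force $\sm$ to one side.

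For Part~(2), both arguments realize $\SComp(\q')$ as a submap of $\q'$ and then take the rightmost version; the paper does this explicitly without claiming that any particular realization is already rightmost, which is cleaner than asserting (as you do) that the lift is rightmost in $\q'$ before invoking Remark~\ref{rem:submaps-and-rerooting}.
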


\begin{proof}
	Let us begin with the first statement.
	Let $\sm$ be an irreducible component of $\q$, and let $\q'$ be $\q$ re-rooted at the oriented root edge of $\sm$.
	Since $\sm$ is simple, all the 2-cycles of $\q'$ are inside some face of $\sm$, and each one can only be incident to at most one edge of $\sm$.
	In particular, the operation of collapsing maximal 2-cycles in $\q'$ does not change $\sm$ as a map, but it realizes it as a submap of $\SComp(\q')$, where again $\sm$ has the same root edge as $\SComp(\q')$.
	Now, $\sm$ being irreducible implies that its faces are bordered by maximal 4-cycles of $\SComp(\q')$; and, using the ``separation'' Lemma~\ref{lem:sepration-lemma}, there can be no other separating 4-cycles.%
		\footnote{
			Please note that any cycle is trivially rightmost in a simple quadrangulation, so that we can apply the separation lemma.
		}
	In particular, emptying the maximal 4-cycles of $\SComp(\q')$ to get $\IrrComp(\SComp(\q'))$ in fact yields $\sm$, as needed.
	
	As regards the second statement, let $\q'$ be obtained from $\q$ by re-rooting at some oriented edge, and suppose that $\IrrComp(\SComp(\q'))\neq\emptyset$.
	Then $\IrrComp(\SComp(\q'))$ can clearly be identified with a unique submap of $\SComp(\q')$.
	On the other hand, $\SComp(\q')$ can be realized in a non-unique way as a submap of $\q'$, just by choosing for each $e\in \SComp(\q')$ an edge $\alpha(e)$ among those of $\q'$ with same endpoints which have been collapsed to form $e$.
	All in all, $\IrrComp(\SComp(\q'))$ can be realized as a submap of $\q'$, and we can assume that it is a rightmost one just by taking its ``rightmost version'' as in Remark~\ref{rem:submaps-and-rerooting}.
	This means that $\IrrComp(\SComp(\q'))$ can be realized as an irreducible component of $\q$, as needed.
\end{proof}

In particular, if $\q$ is a quadrangulation, then its irreducible components (as defined in Definition~\ref{def:irred-components}) are precisely the ``irreducible root-blocks of simple root-blocks of re-rooted versions of $\q$'', but with an explicit realization as submaps of $\q$.

\subsubsection{Largest blocks/components}

The formulation via \textit{root-blocks} is interesting in that it allows to write identities for related generating functions, which then allow to compute asymptotics for the size and number of largest components in random quadrangulations using analytic tools.
For the cases at hand, this has been performed by Gao and Wormald~\cite{GaoWormald99}; see also Banderier, Flajolet, Schaeffer and Soria \textit{et.~al.} \cite{BanderierFlajoletSchaefferSoria01} for a generalization to many models, as well as precise local limit results for component sizes.

For $\q$ a quadrangulation, we define the sizes of its largest simple/irreducible components as follows:
\begin{align*}
\lbs(\q)=\max_{\q'} |\SComp(\q')|
\qquad \text{and}\qquad
\lbirr(\q)=\max_{\q'} |\IrrComp(\SComp(\q'))|,
\end{align*}
where $\q'$ ranges over re-rootings of $\q$.
Note in particular that if $\qsim$ is a simple quadrangulation, then $\lbirr(\qsim)=\max_{(\qsim)'} |\IrrComp((\qsim)')|$.
Also, by Lemma~\ref{lem:equiv-two-defs}, for every quadrangulation $\q$ we have the alternative definition:
\begin{align*}
\lbirr(\q) = \max \left\{|\sm|\colon \text{$\sm$ irred.~component of $\q$}\right\}.
\end{align*}
The following follows%
	\footnote{
		To be precise, using~\cite[Prop.~2.11]{FleuratSalvy24}, the statement about $\lbs(Q_n)$ is equivalent to (1) in \cite[Thm.~2]{GaoWormald99}, which concerns 2-connected blocks of maps.
		The statement about $\lbirr(\Qsimple_n)$ corresponds to (6) in the same theorem since, as the authors recall, 3-connected blocks of 2-connected maps are dual to irreducible blocks of simple quadrangulations (a word of caution: the authors call \textit{simple quadrangulations} ``quadrangulations'', and they call \textit{irreducible quadrangulations} ``simple quadrangulations'').
	}
 from Theorem~2 of \cite{GaoWormald99}.

\begin{prop}[Gao and Wormald, 1999]\label{prop:Gao-Wormald}
	For $n\geq 2$, let $Q_n$ (resp.~$\Qsimple_n$) be a uniformly random general (reps.~simple) quadrangulation with $n$ faces.
	Then, for every $\epsilon>0$, we have the asymptotics
	\begin{align*}
	\lbs(Q_n)=\frac{n}{3}+o(n^{2/3+\epsilon})
	\qquad\text{and}\qquad
	\lbirr(\Qsimple_n)=\frac{n}{3}+o(n^{2/3+\epsilon}),
	\end{align*}
	in probability as $n\to\infty$.
\end{prop}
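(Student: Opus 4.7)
The plan is to reduce this statement to the results of Gao and Wormald~\cite{GaoWormald99} on the largest 2-connected block of a random map and on the largest 3-connected block of a random 2-connected map, by transporting everything through Tutte's classical \emph{angular bijection} between general maps with $n$ edges and quadrangulations with $n$ faces. Under this bijection, 2-connected (resp.~3-connected) sub-structures on the ``map side'' correspond to simple (resp.~irreducible) sub-structures on the ``quadrangulation side''. The first step is therefore to recall this bijection precisely and check that it respects sizes in the way we need: in \cite{GaoWormald99} the size parameter is the number of edges, while here we are measuring faces of quadrangulations, and edges of a map correspond bijectively to faces of the associated quadrangulation.

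Once the bijection is in place, the key verification is that the \emph{block decompositions} also match. Concretely, for a random map $M_n$ with $n$ edges, its 2-connected root-block decomposition expresses $M_n$ as a 2-connected map in whose corners further maps are grafted; this is exactly dual to the simple root-block decomposition $\SComp$ introduced above, which extracts the simple quadrangulation obtained after collapsing all maximal 2-cycles. Similarly, the 3-connected root-block of a 2-connected map is dual to the irreducible root-block $\IrrComp$ of a simple quadrangulation. This equivalence is the content of~\cite[Prop.~2.11]{FleuratSalvy24}, which I would invoke directly. Lemma~\ref{lem:equiv-two-defs} then ensures that the \emph{largest} simple (resp.~irreducible) \emph{component} of $\q$ in our sense coincides with the largest simple (resp.~irreducible) root-block over all re-rootings, so that the statistics $\lbs(Q_n)$ and $\lbirr(\Qsimple_n)$ indeed match the Gao--Wormald quantities.

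With these translations, items~(1) and (6) of \cite[Thm.~2]{GaoWormald99} give in probability, for every $\epsilon > 0$:
\begin{align*}
\lbs(Q_n) = \tfrac{n}{3} + o(n^{2/3+\epsilon}), \qquad \lbirr(\Qsimple_n) = \tfrac{n}{3} + o(n^{2/3+\epsilon}),
\end{align*}
which is exactly the desired conclusion. The only mild subtlety is to pay attention to whether the size parameter in Gao--Wormald refers to edges or to vertices/faces of the dual, and to match the convention that $Q_n$ has $n$ faces (so the underlying map has $n$ edges, making the translation clean).

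I expect the main obstacle to be purely bookkeeping: checking that the ``root-block'' operations on each side of the angular bijection commute with it and that re-rooting on one side corresponds to the appropriate re-rooting on the other. This is not conceptually hard, but it is where one could easily drop a factor or misalign sizes; the cleanest way to handle it is to cite \cite[Prop.~2.11]{FleuratSalvy24} once and for all, rather than redo the dictionary from scratch.
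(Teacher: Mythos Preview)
Your proposal is correct and matches the paper's own treatment essentially exactly: the paper does not give a standalone proof but simply cites \cite[Thm.~2]{GaoWormald99}, items (1) and (6), with a footnote invoking \cite[Prop.~2.11]{FleuratSalvy24} for the angular-bijection dictionary between 2-/3-connected blocks of maps and simple/irreducible blocks of quadrangulations. Your only superfluous step is invoking Lemma~\ref{lem:equiv-two-defs}, since $\lbs$ and $\lbirr$ are already \emph{defined} as maxima of root-block sizes over re-rootings, so no component-vs-block translation is needed here.
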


Gao and Wormald \cite{GaoWormald99} also prove in their Lemma~4 that the largest irreducible component in a uniform simple quadrangulation is \textit{unique} with high probability.
Using our ``separation lemma'', we can adapt their arguments to get the following.
For $n\geq 2$, we let $Q_n$ (resp.~$\Qsimple_n$) be a uniformly random general (reps.~simple) quadrangulation with $n$ faces.

\begin{lemma}\label{lem:uniqueness-largest-comp}
	With probability $1-o(1)$ as $n\to\infty$, the maximum $\lbirr(Q_n) = \max \left\{|\sm|\colon \text{$\sm$ irred.~component of $\q$}\right\}$ is attained for a unique irreducible component up to re-rooting equivalence.%
		\footnote{
			We recall that two submaps are \textit{re-rooting equivalent} if they have the same edge-sets.
		}
	Furthermore, for every $\epsilon>0$, the second largest irreducible component has $o(n^{2/3+\epsilon})$ faces in probability.
	The same goes when $Q_n$ is replaced by $\Qsimple_n$ in the preceding.
\end{lemma}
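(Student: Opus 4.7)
The plan is to reduce the statement to Gao and Wormald's Lemma~4 in \cite{GaoWormald99}, which already provides uniqueness of the largest ``block'' in their sense (with high probability) and a polynomial bound on the second largest, and then to translate between ``blocks'' and our notion of ``irreducible components'' via Lemma~\ref{lem:equiv-two-defs}. The separation lemma (Lemma~\ref{lem:sepration-lemma}) ensures that identification of irreducible components modulo re-rooting equivalence behaves as expected.

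\textbf{The simple case.} By Lemma~\ref{lem:equiv-two-defs}, the irreducible components of a simple quadrangulation $\qsim$, taken modulo re-rooting equivalence, correspond bijectively to the distinct non-empty irreducible root-blocks $\IrrComp((\qsim)')$ obtained as $(\qsim)'$ ranges over re-rootings of $\qsim$. Gao--Wormald's Lemma~4 applied to $\Qsimple_n$ yields uniqueness of the largest such root-block with high probability, and the $o(n^{2/3+\epsilon})$ bound for the second largest. The separation lemma guarantees that two non-equivalent irreducible components have distinct edge-sets, being separated by a 4-cycle, so the correspondence is indeed size-preserving and the bijection is well-defined. This settles the $\Qsimple_n$ case.

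\textbf{The general case.} We proceed in two stages. First, by Proposition~\ref{prop:Gao-Wormald} and Gao--Wormald's Lemma~4 for general quadrangulations, with high probability $Q_n$ has a unique largest simple block $\Scal_n$, of size $n/3 + o(n^{2/3+\epsilon})$, with every other simple block of size $o(n^{2/3+\epsilon})$. The substitution decomposition of general quadrangulations via their 2-cycles shows that, conditionally on $|\Scal_n| = m$, $\Scal_n$ is uniformly distributed over $\ensQsimple_m$. Second, the already-settled simple case applied to $\Scal_n$ transfers via the concentration of $|\Scal_n|$ in a window $n/3 + O(n^{2/3+\epsilon})$, yielding a unique largest irreducible component $\sm_\star$ of $\Scal_n$, of size $n/9 + o(n^{2/3+\epsilon})$, with all other irreducible components of $\Scal_n$ of size $o(n^{2/3+\epsilon})$. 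Finally, Lemma~\ref{lem:equiv-two-defs}(1) identifies every irreducible component of $Q_n$ with the irreducible root-block of the simple block of $Q_n$ containing its root edge; therefore it lies either in $\Scal_n$ (and is either re-rooting equivalent to $\sm_\star$ or of size $o(n^{2/3+\epsilon})$) or in a strictly smaller simple block (hence of size $o(n^{2/3+\epsilon})$).

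\textbf{Main obstacle.} The most delicate step is the conditioning argument in the general case: one must verify that $\Scal_n$ is genuinely uniformly distributed on $\ensQsimple_m$ given $|\Scal_n| = m$, and that the polynomial-order probability bounds of Gao--Wormald transfer uniformly over $m$ in the concentration window $n/3 \pm O(n^{2/3+\epsilon})$. Both points are routine given the substitution formalism relating simple and general quadrangulations, but they require some careful bookkeeping of the random re-rooting and of the polynomial dependence of the error terms.
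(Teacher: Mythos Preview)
Your route is valid but differs from the paper's for the general case. For $\Qsimple_n$, both you and the paper essentially defer to Gao--Wormald's counting argument (their Lemma~4), so there is no real difference there. For $Q_n$, however, the paper does \emph{not} pass through simple blocks. Instead it argues directly: if $\q\in\Quads_n$ has two non-equivalent irreducible components each of size $\geq n^{2/3+\epsilon}$, then the separation lemma (Lemma~\ref{lem:sepration-lemma}) provides a $4$-cycle between them; cutting along it and distinguishing a root on the cycle yields two rooted quadrangulations with sizes $k,\,n+2-k\geq n^{2/3+\epsilon}$, and using $|\Quads_k|\sim c\rho^k k^{-5/2}$ one bounds the count of such $\q$ by $\sum_k 4n\,|\Quads_k|\,|\Quads_{n+2-k}|=O(n^{-3\epsilon/2})\rho^n n^{-5/2}$. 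This is a three-line computation requiring only the separation lemma and Tutte's asymptotics.

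Your two-stage approach (unique largest simple block $\Scal_n$; conditional uniformity of $\Scal_n$ on $\ensQsimple_m$; apply the simple case inside $\Scal_n$; irreducible components in other simple blocks are trivially small) is also correct, and is in fact exactly how the paper establishes the \emph{next} result, Corollary~\ref{cor:largest-comp-irred}, on the size of $\lbirr(Q_n)$. The trade-off: your argument imports the conditional-uniformity statement and requires uniform control of the GW error bounds over a window of sizes $m=n/3+O(n^{2/3+\epsilon})$, which you correctly flag as the main obstacle; the paper's direct counting needs none of that for Lemma~\ref{lem:uniqueness-largest-comp} alone, and postpones the two-stage machinery to where it is genuinely needed. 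One minor caveat: GW's Lemma~4 as stated concerns irreducible blocks in simple quadrangulations, so invoking it verbatim ``for general quadrangulations'' (for simple blocks) is a slight overreach---the analogous statement holds by the same method, but is not literally their Lemma~4.
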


\begin{proof}
	We adapt the arguments of \cite[Lemma~4]{GaoWormald99} to the statement concerning $Q_n$---the one about $\Qsimple_n$ is dealt with similarly.
	Since $|\Quads_n|\sim c\rho^n n^{-5/2}$ for some $c,\rho>0$ (see \cite{Tutte63}), it suffices to show that, if we denote by $A_n$ the set of quadrangulations having at least two non-equivalent irreducible components with at least $n^{2/3+\epsilon}$ faces, then $|A_n|=o(\rho^n n^{-5/2})$.
	Let $\q\in A_n$.
	Two non-equivalent irreducible components are separated by a 4-cycle using the ``separation'' Lemma~\ref{lem:sepration-lemma}.
	If we (i) distinguish such a 4-cycle between the two largest components, and (ii) choose an oriented root-edge on it, (iii) cut $\q$ along the 4-cycle; then we obtain two rooted quadrangulations $\q_1$ and $\q_2$ whose number of faces are at least $ n^{2/3+\epsilon}$ and sum to $n+2$.
	To recover $\q$, one only needs to glue the root faces of $\q_1$ and $\q_2$, and choose the root edge among the $4n$ available oriented edges.
	Using that $|Q_k|\sim c \rho^k k^{-5/2}$, the preceding gives:
	\begin{multline*}
	|A_n|
		\leq 	\sum_{k=n^{2/3+\epsilon}}^{n-n^{2/3+\epsilon}}
				4n\cdot|\Quads_{k}|\cdot|\Quads_{n+2-k}|
		= O(1)n^{-3/2}\rho^n\int_{n^{2/3+\epsilon}}^\infty \frac{\diff t}{t^{5/2}}\\
		=O(n^{-3\epsilon/2}) \rho^n n^{-5/2}.
	\end{multline*}
	Hence $|A_n|=o(\rho^n n^{-5/2})$, which concludes the proof.
\end{proof}

We can now use Proposition~\ref{prop:Gao-Wormald} and Lemma~\ref{lem:uniqueness-largest-comp} to deduce asymptotics for the largest irreducible component in uniform general quadrangulations.
For $n\geq 2$, we still let $Q_n$ be a uniformly random general quadrangulation with $n$ faces.

\begin{cor}\label{cor:largest-comp-irred}
	For every $\epsilon>0$, we have the asymptotics
	\begin{align*}
	\lbirr(Q_n)=\frac{n}{9}+o(n^{2/3+\epsilon}),
	\end{align*}
	in probability as $n\to\infty$.
\end{cor}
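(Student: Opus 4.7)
The plan is to derive Corollary~\ref{cor:largest-comp-irred} from Proposition~\ref{prop:Gao-Wormald} in a two-stage cascade, by passing through the (unique) largest \emph{simple} block of $Q_n$ and applying the ``$n/3$'' asymptotics twice: once to go from the general quadrangulation to its largest simple block, and once inside that block.

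First, I would verify, by the same separation-and-counting argument used in the proof of Lemma~\ref{lem:uniqueness-largest-comp} (with $2$-cycles playing the role of $4$-cycles, and Proposition~\ref{prop:Gao-Wormald}'s $\lbs$ asymptotics replacing the $\lbirr$ ones), that with probability $1-o(1)$ the quadrangulation $Q_n$ admits a unique largest simple block $B_n$ up to re-rooting equivalence, with $|B_n|=n/3+o(n^{2/3+\epsilon})$ and with every other simple block of $Q_n$ having $o(n^{2/3+\epsilon})$ faces. The analogue of Lemma~\ref{lem:sepration-lemma} here is immediate since any two non-equivalent simple blocks of $Q_n$ are separated by a maximal $2$-cycle.

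Second, I would show that conditionally on $|B_n|=m$, the block $B_n$ is $o(1)$-close in total variation to $\rmUnif(\ensQsimple_m)$. This uses the standard bijective decomposition of general quadrangulations into a simple root-block together with general sub-quadrangulations substituted on the sides of its edges (this is the dual of Tutte's $2$-connected decomposition \cite{Tutte63}, cf.~\cite{FleuratSalvy24}): the number of general quadrangulations in $\Quads_n$ with a prescribed simple root-block $\qsim\in\ensQsimple_m$ depends on $\qsim$ only through $m$. Restricting to configurations in which this root-block is the \emph{unique largest} simple block removes only an $o(1)$ fraction of configurations by the first step, so conditional uniformity is preserved in the asymptotic regime.

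Third, conditionally on $|B_n|=m$, I would apply the second part of Proposition~\ref{prop:Gao-Wormald} to $B_n$, yielding $\lbirr(B_n)=m/3+o(m^{2/3+\epsilon})$ in probability; combined with $|B_n|=n/3+o(n^{2/3+\epsilon})$ this gives $\lbirr(B_n)=n/9+o(n^{2/3+\epsilon})$. To transfer this to $\lbirr(Q_n)$, I would use Lemma~\ref{lem:equiv-two-defs}: on the one hand, every irreducible root-block of a re-rooting of $B_n$ lifts to an irreducible component of $Q_n$ with the same number of faces, so $\lbirr(B_n)\leq\lbirr(Q_n)$; on the other hand, any irreducible component of $Q_n$ with more than $o(n^{2/3+\epsilon})$ faces is contained in some simple block with at least that many faces, which (by the first step) must be $B_n$. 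Hence $\lbirr(Q_n)=\lbirr(B_n)$ with probability $1-o(1)$, concluding the proof.

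The main technical obstacle will be the second step: spelling out the simple-root-block decomposition precisely enough to establish conditional uniformity of $B_n$, and checking that the ``unique largest'' restriction is asymptotically harmless. Once that is in place, the remaining steps are direct applications of Proposition~\ref{prop:Gao-Wormald} and the structural results already proved in Lemmas~\ref{lem:equiv-two-defs} and~\ref{lem:uniqueness-largest-comp}.
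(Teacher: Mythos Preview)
Your proposal is correct and follows essentially the same two-stage cascade as the paper's proof: condition on the unique largest simple block having size $n/3+o(n^{2/3+\epsilon})$ and being conditionally uniform, then apply Proposition~\ref{prop:Gao-Wormald} inside it, and finally argue that any large irreducible component must sit inside that simple block so that $\lbirr(Q_n)=\lbirr(B_n)$ with high probability. The paper phrases this via a re-rooting $Q'_n$ at an edge realizing $\lbs(Q_n)$ and an explicit event $\Ecal_n$, but the content is the same.
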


\begin{proof}
	We let $Q'_n$ be $Q_n$ re-rooted at a uniformly random oriented edge among those which realize the maximum $\lbs(\q)=\max_{\vec e} |\SComp(Q_n)|$.
	
	We claim that there is an event $\Ecal_n$ of probability $1-o(1)$ such that conditionally on $\Ecal_n$, the following holds: (i) the block $\SComp(Q'_n)$ is uniformly random in $\ensQsimple_{N_n}$ conditionally on its size $N_n:=\lbs(Q_n)$; and, (ii) with probability $1-o(1)$ conditionally on $\Ecal_n$, a largest irreducible block of $\SComp(Q'_n)$ is a largest irreducible block of $Q_n$, that is $\lbirr(Q_n)=\lbirr(\SComp(Q'_n))$.
	
	Given the claim, by (i) we have conditionally on $\Ecal_n$ and $N_n$ that the simple quadrangulation $\SComp(Q'_n)$ has the same distribution as $\Qsimple_{N_n}$, where $(\Qsimple_k,k\geq2)$ are uniformly random simple quadrangulations with $k$ faces sampled independently from previously defined random variables.
	Hence, using (ii) and Proposition~\ref{prop:Gao-Wormald}, conditionally on $\Ecal_n$, we have $\lbirr(Q_n)\smash{\overset{(\mathrm{d})}{=}}\lbirr(\Qsimple_{N_n})$, and for every $\epsilon>0$,
	\begin{align*}
	\lbirr(\Qsimple_{N_n})
		=\frac{\lbs(Q_n)}{3}+o(\lbs(Q_n)^{2/3+\epsilon})
		=\frac{n}{9}+o(n^{2/3+\epsilon}),
	\end{align*}
	in probability as $n\to\infty$.
	This proves the corollary.
	
	We now turn to our claim above.
	The event $\Ecal_n$ we consider is ``$Q_n$ has a unique largest simple component up to re-rooting equivalence, and all the other ones have size at most $n^{2/3+0.1}$'', where we define a \textit{simple component} to be a maximal rightmost simple quadrangular submap---there is a similar equivalence between simple blocks and simple components as the one for irreducible blocks/components in Lemma~\ref{lem:equiv-two-defs}.
	We omit the proof that  $\Ecal_n$ has probability $1-o(1)$, which can be proven along similar lines as Lemma~\ref{lem:uniqueness-largest-comp}; and that the resulting largest simple component is a uniformly distributed quadrangulation conditionally on its size---this appears with different definitions of components/blocks in \cite{AddarioBerryWen17} and \cite{FleuratSalvy24}, or one can adapt the proof given in Section~\ref{subsec:proof-lemma-summary-largest-irred} below that the largest irreducible component in $Q_n$ is uniform conditionally on its size and on being the unique largest.
	
	We now justify (ii) in the claim.
	It suffices to show that conditionally on the event $\Ecal_n$, with probability $1-o(1)$, any largest irreducible component of $Q_n$ is included in ``the'' unique largest \textit{simple} component of $Q_n$.
	The event that $\lbirr(Q_n)\geq n^{2/3+0.1}$ has probability $1-o(1)$, even after conditioning on $\Ecal_n$; so it suffices to show that conditionally on both events, the preceding statement regarding largest components holds.
	Hence, we now reason conditionally on both events.
		
	We let $\sm$ be a largest irreducible component of $Q_n$.
	In particular, it is a rightmost simple quadrangular submap, and thus contained in a maximal such submap, \textit{i.e.}~it is contained in a \textit{simple component} $\sm'$.
	Since on $\Ecal_n$, only \textit{the} unique largest simple component can have size more than $n^{2/3+0.1}$, and since $\sm$ has size $\lbirr(Q_n)\geq n^{2/3+0.1}$, we deduce that the preceding simple component $\sm'$ is actually \textit{the} unique largest simple component of $Q_n$.
\end{proof}

\subsection{Proof of Lemma~\ref{lem:summary-largest-comp}}
\label{subsec:proof-lemma-summary-largest-irred}

For $n\geq 2$, let $Q_n$ be a uniformly random general quadrangulation with $n$ faces.
We let $\Ecal'_n$ denote the event that:
\begin{enumerate}
	\item the maximum $\lbirr(Q_n) = \max \left\{|\sm|\colon \text{$\sm$ irred.~component of $\q$}\right\}$ is attained for a unique irreducible component up to re-rooting equivalence; and,
	\item the value $K'_n=\lbirr(Q_n)-\lfloor n/9\rfloor$ satisfies $|K'_n|\leq n^{2/3+0.1}$.
\end{enumerate}
By Corollary~\ref{cor:largest-comp-irred} and Lemma~\ref{lem:uniqueness-largest-comp}, $\Ecal'_n$ has probability $1-o(1)$ as $n\to\infty$.
For $n\geq 2$, we let $(\Qirrfrak_n,\Decs_n,\ifrak_n)$ be a sample of the conditional distribution with respect to $\Ecal'_n$ of the tuple $(\sm,\Decs,\ifrak)=\Psi(Q_n;\Smax,\alpha)$, where $(\Smax,\alpha)$ is a uniformly chosen largest irreducible component of $Q_n$---on the event $\Ecal'_n$, this amounts to fixing uniformly at random an oriented root edge for the ``unique irreducible component up to re-rooting equivalence''.
Lastly, we let $N_n=\lbirr(Q_n)=|\Qirrfrak_n|$, which we write as $N_n=\lfloor n/9\rfloor+K_n$ (notice in particular that $K_n$ has the law of $K'_n$ conditioned to $\Ecal'_n$).
Let us verify that $(N_n,\Qirrfrak_n,\Decs_n)$, $n\geq2$, thus defined satisfy the conclusions of Lemma~\ref{lem:summary-largest-comp}.

Observe that, apart from the irreducible components which are re-rooting equivalent to $\Smax$, the other irreducible components of $Q_n$ are irreducible components of some $\q_i$ using the ``separation lemma'' Lemma~\ref{lem:sepration-lemma}.
Hence the event $\Ecal'_n\cap\{\lbirr(Q_n)=\ell\}$ is precisely the event that $|\Smax|=\ell$ and that $\Decs\in\DecSet_{n,\ell}^{<\ell}$; where $\DecSet_{n,\ell}^{<\ell}$ corresponds to those collections $(\q_i)_i\in\DecSet_{n,\ell}$ for which we have $\lbirr(\q_i)<\ell$ for all $i$.
Using that $\Detach$ induces a bijection $\Detach\colon\CompRootedQ{n}{\ell}\longrightarrow \ensQirred_\ell \times \DecSet_{n,\ell}\times \{1,\dots,4n\}$ by Lemma~\ref{lem:bij-psi-irred}, we therefore conclude that, for $\ell$ in the support of $\lfloor n/9\rfloor+K_n$, conditionally given $\Ecal'_n\cap\{\lbirr(Q_n)=\ell\}$, the tuple $(\sm,\Decs,\ifrak)=\Psi(Q_n;\Smax,\alpha)$ is uniform on
\begin{align*}
\ensQirred_\ell\times \DecSet_{n,\ell}^{<\ell}\times \{1,\dots,4n\}.
\end{align*}
By construction, this means that, conditionally on $N_n=\ell$, the pair $(\Qirrfrak_n,\Decs_n)$ is uniformly distributed on $\ensQirred_\ell\times \DecSet_{n,\ell}^{<\ell}$.

The later directly gives us (ii) of Lemma~\ref{lem:summary-largest-comp}, and also gives us (iv) since for every $4\leq \ell\leq n$, the set $\DecSet_{n,\ell}^{<\ell}$, is invariant under permutation of coordinates.
Let us therefore verify that (i) and (iii) hold.
First, recall that $N_n=\lfloor n/9\rfloor+K_n$.
The bound $|K_n|\leq n^{2/3+0.1}$ holds by definition of the event $\Ecal'_n$ and the fact that $K_n$ is distributed as $K'_n$ conditioned to $\Ecal'_n$, and $\Qirrfrak_n$ is uniformly distributed on $\ensQirred_{N_n}$ conditionally on $N_n$ by the preceding paragraph.
This gives us (iii).
Lastly, observe that by construction, $\Glue(\Qirrfrak_n;\Decs_n,\ifrak_n)$ has the same distribution as $(Q_n;\Smax,\alpha)$ conditioned to the event $\Ecal'_n$.
Taking the first coordinates, this means that $Q'_n=\Gamma(\Qirrfrak_n;\Decs_n,\ifrak_n)$ is distributed as $Q_n$ conditioned to the event $\Ecal'_n$.
Since $\Ecal'_n$ has probability $1-o(1)$, we conclude that the distribution of $Q'_n$ is at vanishing total variation distance from $\rmUnif(\Quads_n)$.
This gives (i) and finishes the proof of Lemma~\ref{lem:summary-largest-comp}.\qed

\bigskip

Now that Lemma~\ref{lem:summary-largest-comp} has been proven, we shall adress the second key lemma used in the proof of Theorem~\ref{thm:scaling-limit-irred}---laid down in Section~\ref{sec:structure-proof}---namely Lemma~\ref{lem:cvg-after-projection}. This is the purpose of the next section.


\section{Step II: Bottlenecks and Hausdorff convergence}
\label{sec:bottlenecks-and-Hausdorff-cvg}

By the Theorem of Le~Gall and Miermont, that we recalled as Theorem~\ref{thm:Le-Gall--Miermont}, we know that uniformly random quadrangulations with a large number of faces converge to the Brownian sphere in the GHP sense, after a suitable normalization.
On the other hand, in Section~\ref{sec:largest-components} we have seen that the largest simple/irreducible component in uniformly random general/simple quadrangulations with a large number of faces gathers a proportion of the mass which is bounded away from $0$.
The purpose of this section is to lay down some general topological and measure-theoretic arguments which will allow to deduce that these largest components converge jointly to the (same) Brownian sphere as the quadrangulation they are obtained from.

\subsection{Bottlenecks and convergence of geodesic spaces}

Let us begin by recalling what a \textit{geodesic space} is.

\begin{defin}
	A compact metric space $(X,d)$ is said to be a \textit{geodesic space} if for every $x,y\in X$, there exists an isometry $p\colon[0,d(x,y)]\to X$ such that $p(0)=x$ and $p(d(x,y))=y$.
	Such an isometry is called a \textit{geodesic path} between $x$ and $y$.
\end{defin}
\noindent
It is well-known that an Hausdorff limit of compact geodesic spaces is still a geodesic space \cite[Thm.~7.5.1]{BuragoBuragoIvanov01}.
\vspace{\baselineskip}

Let $(Z,\delta)$ be a fixed compact metric space.
For $A\subset Z$ and $\epsilon>0$, we let $A^\epsilon=\{x\in Z\colon\delta(x,A)<\epsilon\}$ be the $\epsilon$-neighborhood of $A$ in $(Z,\delta)$.

\begin{lemma}\label{lem:intersection-neighborhood}
	Let $X$ and $Y$ be two compact subsets of $(Z,\delta)$.
	If $(X\cup Y,\delta)$ is a geodesic space, then $X^\epsilon\cap Y^\epsilon\subset (X\cap Y)^{2\epsilon}$ for every $\epsilon>0$.
\end{lemma}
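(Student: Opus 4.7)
The plan is as follows. Fix $\epsilon>0$ and take an arbitrary point $z\in X^\epsilon\cap Y^\epsilon$. By definition there exist $x\in X$ and $y\in Y$ with $\delta(z,x)<\epsilon$ and $\delta(z,y)<\epsilon$. By the triangle inequality we get $\delta(x,y)<2\epsilon$. The goal will be to exhibit a point of $X\cap Y$ within distance $2\epsilon$ of $z$, using the geodesic structure of $X\cup Y$ to connect $x$ to $y$.

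Since $x,y\in X\cup Y$ and the induced metric space $(X\cup Y,\delta)$ is geodesic, we can pick a geodesic path $p\colon[0,\delta(x,y)]\to X\cup Y$ with $p(0)=x$ and $p(\delta(x,y))=y$. The key observation is that the preimages $p^{-1}(X)$ and $p^{-1}(Y)$ are closed subsets of the interval $[0,\delta(x,y)]$ (because $X$ and $Y$ are closed in $Z$, hence in $X\cup Y$), their union is the whole interval (because the image of $p$ lies in $X\cup Y$), and each one is non-empty ($0\in p^{-1}(X)$, $\delta(x,y)\in p^{-1}(Y)$). Connectedness of the interval then forces these two closed sets to have non-empty intersection, so there exists $t^\star\in[0,\delta(x,y)]$ such that $p(t^\star)\in X\cap Y$.

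It only remains to check that $p(t^\star)$ is close enough to $z$. Since $p$ is a geodesic, $\delta(x,p(t^\star))=t^\star$ and $\delta(y,p(t^\star))=\delta(x,y)-t^\star$, so by the triangle inequality:
\begin{align*}
	\delta(z,p(t^\star))\leq\min\bigl(\delta(z,x)+t^\star,\,\delta(z,y)+\delta(x,y)-t^\star\bigr).
\end{align*}
Because $t^\star+(\delta(x,y)-t^\star)=\delta(x,y)<2\epsilon$, at least one of $t^\star$ and $\delta(x,y)-t^\star$ is strictly less than $\epsilon$. Combining with $\delta(z,x)<\epsilon$ and $\delta(z,y)<\epsilon$, we conclude $\delta(z,p(t^\star))<2\epsilon$ in either case, so $z\in(X\cap Y)^{2\epsilon}$.

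I do not expect any serious obstacle here: the only slightly subtle point is the connectedness argument producing $t^\star$, and it works precisely because the path is forced to stay within $X\cup Y$ by the geodesic hypothesis (without this, a path from $x$ to $y$ in the ambient $Z$ might entirely avoid $X\cap Y$, and the conclusion can indeed fail, for instance when $X$ and $Y$ are disjoint arcs in $Z$ joined via an ambient shortcut).
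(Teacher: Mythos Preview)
Your proof is correct and follows essentially the same approach as the paper: choose $x\in X$, $y\in Y$ close to $z$, join them by a geodesic in $X\cup Y$, use connectedness of the interval to find a point $p(t^\star)\in X\cap Y$ on the geodesic, and then bound $\delta(z,p(t^\star))$ via the triangle inequality through whichever of $x,y$ is closer to $p(t^\star)$. The only cosmetic difference is that the paper first shows the entire geodesic lies in $B(z,2\epsilon)$ and then locates the intersection point, whereas you locate the point first and check the distance afterwards.
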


\begin{proof}
	Let $\epsilon>0$ and let $z\in X^\epsilon\cap Y^\epsilon$.
	By definition, there exist $x\in X$ and $y\in Y$ such that $\delta(x,z)<\epsilon$ and $\delta(x,z)<\epsilon$.
	In particular, we have $\delta(x,y)<2\epsilon$ by the triangle inequality.
	Since $X\cup Y$ is a geodesic space, there exists a geodesic path $p\colon [0,\delta(x,y)]\to X\cup Y$ such that $p(0)=x$ and $p(\delta(x,y))=y$, as in Figure~\ref{fig:setup-lemma-intersection}.
	In particular, for every $t\in [0,\delta(x,y)]$, we have $\delta(p(t),\{x,y\})=\min(t,\delta(x,y)-t) <\epsilon$, using that $\delta(x,y)<2\epsilon$.
	Hence, $\delta(z,p(t))<\epsilon + \max(\delta(z,x),\delta(z,y))<2\epsilon$.
	This proves that $p$ takes its values in in $\ball(z,2\epsilon)$.
	Therefore, in order to prove that $z\in (X\cap Y)^{2\epsilon}$, it suffices to show that we can find $t\in[0,\delta(x,y)]$ such that $p(t)\in X\cap Y$.
	If that were not the case, then $[0,1]=p^{-1}(X)\sqcup p^{-1}(Y)$ would be a partition into closed sets of the connected space $[0,1]$, hence a contradiction. 
\end{proof}

\begin{figure}
	\begin{center}
		\includegraphics[scale=1.2]{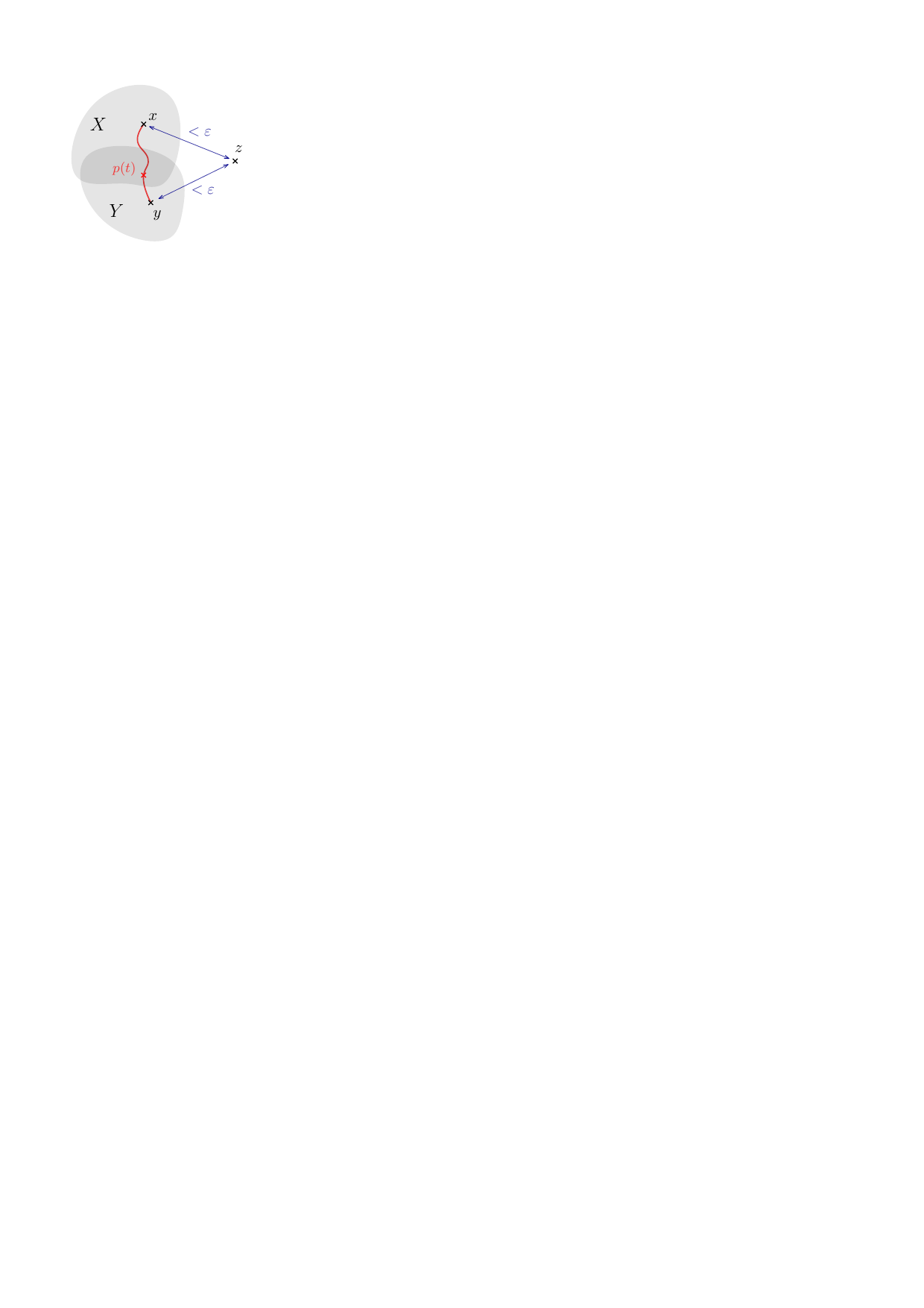}
	\end{center}
	\caption{Setup of the proof of Lemma~\ref{lem:intersection-neighborhood}.
		The geodesic path $p\colon [0,\delta(x,y)]\to X\cup Y$ such that $p(0)=x$ and $p(\delta(x,y))=y$ has been represented in red.}
	\label{fig:setup-lemma-intersection}
\end{figure}

\begin{lemma}\label{lem:Hausdorff-cvg-intersection-geod-spaces}
	Consider two converging sequences $X_n\to X_\infty$ and $Y_n\to Y_\infty$ in the $\delta$-Hausdorff metric.
	Suppose that $X_n\cup Y_n$ is a geodesic space for every $n\geq1$.
	Then $X_n\cap Y_n\to X_\infty\cap Y_\infty$ in the $\delta$-Hausdorff metric.
\end{lemma}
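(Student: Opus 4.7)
My plan is to prove the two inclusions $X_n \cap Y_n \subset (X_\infty \cap Y_\infty)^{\epsilon}$ and $X_\infty \cap Y_\infty \subset (X_n \cap Y_n)^{\epsilon}$ (for all $n$ large enough, given $\epsilon > 0$) by reducing each of them to an application of Lemma~\ref{lem:intersection-neighborhood}.

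To this end, I would set $\epsilon_n = \max\bigl(\dH(X_n, X_\infty), \dH(Y_n, Y_\infty)\bigr)$, so that $\epsilon_n \to 0$. From the definition of the Hausdorff distance, this gives the four inclusions $X_n \subset X_\infty^{\epsilon_n}$, $Y_n \subset Y_\infty^{\epsilon_n}$, $X_\infty \subset X_n^{\epsilon_n}$, and $Y_\infty \subset Y_n^{\epsilon_n}$. Intersecting the first two yields $X_n \cap Y_n \subset X_\infty^{\epsilon_n} \cap Y_\infty^{\epsilon_n}$, while intersecting the last two yields $X_\infty \cap Y_\infty \subset X_n^{\epsilon_n} \cap Y_n^{\epsilon_n}$. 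To upgrade each of these to a neighborhood of an intersection, I would invoke Lemma~\ref{lem:intersection-neighborhood}: this requires that $X_n \cup Y_n$ is geodesic (which is true by hypothesis) and that $X_\infty \cup Y_\infty$ is geodesic.

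The geodesicity of $X_\infty \cup Y_\infty$ is the only point worth spelling out: since $X_n \to X_\infty$ and $Y_n \to Y_\infty$ in the $\delta$-Hausdorff metric, one easily has $X_n \cup Y_n \to X_\infty \cup Y_\infty$ in the $\delta$-Hausdorff metric, and the classical fact that Hausdorff limits of compact geodesic spaces are geodesic (\cite[Thm.~7.5.1]{BuragoBuragoIvanov01}, as recalled at the start of the subsection) then delivers the required property of the limit.

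Combining these ingredients, Lemma~\ref{lem:intersection-neighborhood} gives $X_n \cap Y_n \subset (X_\infty \cap Y_\infty)^{2\epsilon_n}$ and $X_\infty \cap Y_\infty \subset (X_n \cap Y_n)^{2\epsilon_n}$, hence $\dH(X_n \cap Y_n, X_\infty \cap Y_\infty) \leq 2\epsilon_n \to 0$, which is the desired conclusion. I do not expect any substantive obstacle: the geodesic hypothesis is exactly what prevents pathological examples (two nearly tangent curves whose intersections jump discontinuously), and Lemma~\ref{lem:intersection-neighborhood} encapsulates this in the one-step form needed here. The only mild point to keep in mind is that the non-emptiness of the intersections (for the Hausdorff distance to be finite) is automatic: the very argument at the end of the proof of Lemma~\ref{lem:intersection-neighborhood} shows that whenever $X, Y$ are nonempty with $X \cup Y$ geodesic, $X \cap Y$ must also be nonempty.
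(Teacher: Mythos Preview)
Your proof is correct and follows essentially the same approach as the paper's: both directions are obtained by intersecting the neighborhood inclusions coming from Hausdorff convergence and then applying Lemma~\ref{lem:intersection-neighborhood}, with the geodesicity of $X_\infty\cup Y_\infty$ supplied by the stability of geodesic spaces under Hausdorff limits. The only cosmetic point is that since $A^\epsilon$ is defined with a strict inequality, the inclusion $X_n\subset X_\infty^{\epsilon_n}$ need not hold at $\epsilon_n=\dH(X_n,X_\infty)$ exactly---replacing $\epsilon_n$ by any $\epsilon_n'>\epsilon_n$ with $\epsilon_n'\to 0$ (or arguing with a fixed $\epsilon>0$ as the paper does) resolves this immediately.
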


\begin{proof}
	It is easily seen from the definition of Hausdorff convergence that $X_n\cup Y_n\to X_\infty\cup Y_\infty$.
	Let $\epsilon>0$.
	Since $X_n\to X_\infty$ and $Y_n\to Y_\infty$ in the $\delta$-Hausdorff metric, for $n$ large we have $X_n\cap Y_n\subset (X_\infty)^\epsilon\cap (Y_\infty)^\epsilon\subset (X_\infty\cap Y_\infty)^{2\epsilon}$, where the second inclusion comes from Lemma~\ref{lem:intersection-neighborhood}, using that $X_\infty\cup Y_\infty$ is a geodesic space, since it is an Hausdorff limit of the geodesic spaces $X_n\cup Y_n$, $n\geq1$.
	Similarly, we also deduce from the convergences $X_n\to X_\infty$ and $Y_n\to Y_\infty$ that for large $n$, we have $Z_\infty=X_\infty\cap Y_\infty\subset (X_n)^\epsilon\cap (Y_n)^\epsilon\subset (X_n\cap Y_n)^{2\epsilon}$, using Lemma~\ref{lem:intersection-neighborhood} and that $X_n\cup Y_n$ is a geodesic space.
	Hence for large $n$, we have $X_n\cap Y_n\subset (Z_\infty)^{2\epsilon}$ and $Z_\infty\subset (X_n\cap Y_n)^{2\epsilon}$, and the result follows.
\end{proof}

\begin{defin}
	A metric space $(X,d)$ is \textit{2-connected} if $X\setminus\{x_0\}$ is connected for every $x_0\in X$.
\end{defin}

\begin{lemma}\label{lem:bottlenecks-and-diam}
	Suppose that $X_n\cup Y_n\to Z_\infty$ in the $\delta$-Hausdorff metric, where $X_n,Y_n$ are compact subsets of $(Z,\delta)$ for each $n$.
	Assume that $X_n\cup Y_n$ is a geodesic space for every $n\geq1$.
	If $Z_\infty$ is 2-connected and if $\diam(X_n\cap Y_n)\to0$, then $\min(\diam(X_n),\diam(Y_n))\to 0$.
\end{lemma}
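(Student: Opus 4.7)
My plan is to argue by contradiction. Suppose that $\min(\diam(X_n),\diam(Y_n))$ does not converge to $0$. Then there is $\epsilon>0$ and a subsequence along which simultaneously $\diam(X_n)\geq\epsilon$ and $\diam(Y_n)\geq\epsilon$. Working along this subsequence, since $(Z,\delta)$ is compact and both $X_n$, $Y_n$ are compact subsets of it, Blaschke's selection theorem lets me extract a further subsequence along which $X_n\to X_\infty$ and $Y_n\to Y_\infty$ in the $\delta$-Hausdorff metric, for some compact subsets $X_\infty,Y_\infty\subset Z$. The Hausdorff limit of the unions is then $X_\infty\cup Y_\infty$, and since we also have $X_n\cup Y_n\to Z_\infty$, the uniqueness of Hausdorff limits forces
\begin{align*}
X_\infty\cup Y_\infty=Z_\infty.
\end{align*}
Moreover, the functional $\diam(\cdot)$ is continuous under Hausdorff convergence, so $\diam(X_\infty)\geq\epsilon$ and $\diam(Y_\infty)\geq\epsilon$; in particular both $X_\infty$ and $Y_\infty$ contain at least two points.

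Next, since $X_n\cup Y_n$ is a geodesic space for every $n$, Lemma~\ref{lem:Hausdorff-cvg-intersection-geod-spaces} applies along the extracted subsequence and gives the Hausdorff convergence $X_n\cap Y_n\to X_\infty\cap Y_\infty$. Combined with the hypothesis $\diam(X_n\cap Y_n)\to 0$, this forces $\diam(X_\infty\cap Y_\infty)=0$, so $X_\infty\cap Y_\infty$ is either empty or a single point $\{x_0\}$.

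I conclude by a short case analysis against the 2-connectedness of $Z_\infty$. If $X_\infty\cap Y_\infty=\emptyset$, then $Z_\infty=X_\infty\sqcup Y_\infty$ is a partition of $Z_\infty$ into two disjoint non-empty closed sets, contradicting the (connectedness implied by) 2-connectedness of $Z_\infty$. Otherwise $X_\infty\cap Y_\infty=\{x_0\}$, and then
\begin{align*}
Z_\infty\setminus\{x_0\}=\bigl(X_\infty\setminus\{x_0\}\bigr)\sqcup \bigl(Y_\infty\setminus\{x_0\}\bigr),
\end{align*}
a partition of $Z_\infty\setminus\{x_0\}$ into two disjoint sets, both non-empty (since $\diam(X_\infty),\diam(Y_\infty)\geq\epsilon>0$) and both closed in $Z_\infty\setminus\{x_0\}$ (as traces of closed subsets of $Z_\infty$). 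This contradicts the 2-connectedness of $Z_\infty$, and finishes the proof.

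I do not expect a serious obstacle; the only thing to watch is that the assumption $\diam(X_n),\diam(Y_n)\geq\epsilon$ is strong enough to survive passage to the Hausdorff limit and guarantee that $X_\infty\setminus\{x_0\}$ and $Y_\infty\setminus\{x_0\}$ are still non-empty, which is why the contradiction hypothesis was phrased as a lower bound on \emph{both} diameters rather than on the minimum alone.
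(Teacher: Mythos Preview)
Your proof is correct and follows essentially the same approach as the paper: extract a subsequence along which $X_n\to X_\infty$ and $Y_n\to Y_\infty$, use Lemma~\ref{lem:Hausdorff-cvg-intersection-geod-spaces} to conclude that $X_\infty\cap Y_\infty$ has diameter~$0$, and then derive a contradiction with the $2$-connectedness of $Z_\infty$. The only cosmetic difference is that the paper rules out the case $X_\infty\cap Y_\infty=\emptyset$ directly (noting that $X_n\cap Y_n\neq\emptyset$ since the geodesic space $X_n\cup Y_n$ is connected), whereas you handle it as a separate case; both are fine.
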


\begin{proof}
	We only need%
		\footnote{
			The general case follows, since given any subsequence of $(X_n,Y_n)_n$, we can extract further so that both coordinates converge in the Hausdorff sense, so that every subsequence of $(\min(\diam(X_n),\diam(Y_n)))_n$ has a subsequence converging to $0$, which gives that $\min(\diam(X_n),\diam(Y_n))\to 0$.
			We used that the set of compact subsets of a compact metric space is itself compact when equipped with the Hausdorff metric.
		}
	to consider the case where $X_n\to X_\infty$ and $Y_n\to Y_\infty$ in the $\delta$-Hausdorff metric, for some compact subsets $X_\infty,Y_\infty$ of $Z_\infty$.
	Then, $Z_\infty=X_\infty\cup Y_\infty$ since both sides are the $\delta$-Hausdorff limits of $(X_n\cup Y_n,n\geq1)$.
	We claim that $X_\infty\cap Y_\infty$ is a singleton $\{z_*\}$ for some $z_*\in Z_\infty$.
	If this claim is true, then $Z_\infty\setminus\{z_*\}=(X_\infty\setminus\{z_*\})\sqcup (Y_\infty\setminus\{z_*\})$ is a partition of $Z_\infty\setminus\{z_*\}$ into two closed%
	\footnote{
		The sets $(X_\infty\setminus\{z_*\})$ and $(Y_\infty\setminus\{z_*\})$ are closed in $Z_\infty\setminus\{z_*\}$, not in $Z_\infty$.
	}
	subsets, so that one of them must be empty since $Z_\infty\setminus\{z_*\}$ is connected, which is absurd since $Z_\infty$ is 2-connected.
	Hence, either $X_\infty$ is a singleton or $Y_\infty$ is, so that the result follows by contradiction
	
	We still have to prove our claim that $X_\infty\cap Y_\infty$ is a singleton.
	First, we have $X_n\cap Y_n\neq 0$ for every $n\geq1$, since  otherwise we would have $Z_n=X_n\sqcup Y_n$, thus contradicting that $Z_n$ is connected, as a geodesic space.
	Hence taking a subsequential limit in a sequence of elements of $X_n\cap Y_n$ respectively, we obtain an element $z_*\in X_\infty\cap Y_\infty$.
	Now by Lemma~\ref{lem:Hausdorff-cvg-intersection-geod-spaces}, we have $X_n\cap Y_n\to X_\infty\cap Y_\infty$ in the $\delta$-Hausdorff metric, so that $\diam(X_\infty\cap Y_\infty)=\lim_{n\to\infty}\diam(X_n\cap Y_n)=0$, and thus $X_\infty\cap Y_\infty$ is actually reduced to the singleton $\{z_*\}$, as claimed.
\end{proof}

\subsection{Bottlenecks and ``star-decompositions''}

Let $(Z,\delta)$ be a fixed compact metric space.
For $n\geq1$, we let $Z_n$ be a compact subspace of $(Z,\delta)$.
Suppose that for every $n\geq1$, we are given a ``star-decomposition''%
	\footnote{
		The name ``star-decomposition'' is motivated by the observation that when $Z_n$ is connected, the incidence graph of the collection $(X_n,Y^1_n,Y^2_n,\dots)$ is a star-graph, where the center of the star is the vertex associated to $X_n$.
	}
of $Z_n$ as follows
\begin{align*}
Z_n=X_n\cup\bigsqcup_{i\geq1} Y^{(i)}_n,
\end{align*}
where $X_n$ is a non-empty compact subset of $Z_n$, and where $(Y^{(i)}_n,i\geq1)$ is a collection of pairwise disjoint and possibly empty compact subsets of $Z_n$.
In this setting, we can refine Lemma~\ref{lem:bottlenecks-and-diam} as follows.

\begin{lemma}\label{lem:star-decomposition-with-2-connected-lim}
	Suppose that $Z_n\to Z_\infty$ in the $\delta$-Hausdorff metric, that each $Z_n$ is geodesic, and that $Z_\infty$ is 2-connected.
	If it holds that
	\begin{align*}
	\textstyle\sup_{i\geq0}\diam(X_n\cap Y^{(i)}_n)\to 0\qquad\text{and}\qquad\textstyle\liminf _n\diam(X_n)>0,
	\end{align*}
	then $\sup_{i_\geq1}\diam(Y^{(i)}_n)\to 0$, and thus $X_n\to Z_\infty$ in the $\delta$-Hausdorff metric.
\end{lemma}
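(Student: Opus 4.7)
My plan is to prove the two conclusions in sequence. First I would show that $\sup_{i\geq 1}\diam(Y^{(i)}_n)\to 0$, by contradiction using Lemma~\ref{lem:bottlenecks-and-diam} applied to a well-chosen \emph{two-piece} decomposition of $Z_n$; once this is established, the Hausdorff convergence $X_n\to Z_\infty$ will follow almost immediately.

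Suppose for contradiction that $\sup_{i\geq 1}\diam(Y^{(i)}_n)\not\to 0$. Extracting a subsequence, I can find $\epsilon>0$ and indices $i_n$ such that $\diam(Y^{(i_n)}_n)\geq \epsilon$ for all $n$. I then set
\begin{align*}
	A_n := Y^{(i_n)}_n, \qquad B_n := X_n \cup \bigcup_{j\neq i_n} Y^{(j)}_n,
\end{align*}
so that $A_n\cup B_n = Z_n$. Because the family $(Y^{(j)}_n)_{j\geq 1}$ is pairwise disjoint, one has $A_n\cap B_n = X_n\cap Y^{(i_n)}_n$, whose diameter tends to $0$ by hypothesis. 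The union $A_n\cup B_n = Z_n$ is geodesic by assumption and converges in the $\delta$-Hausdorff metric to the 2-connected space $Z_\infty$. Lemma~\ref{lem:bottlenecks-and-diam} therefore gives $\min(\diam A_n,\diam B_n)\to 0$. Since $\diam A_n\geq\epsilon$, this forces $\diam B_n\to 0$, and the inclusion $X_n\subset B_n$ implies $\diam X_n\to 0$, contradicting $\liminf_n\diam X_n>0$. This proves the first conclusion.

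For the convergence $X_n\to Z_\infty$, note that $Z_n$ being geodesic is in particular connected, and $X_n$ is non-empty while the $(Y^{(i)}_n)_{i\geq 1}$ are pairwise disjoint; hence every non-empty $Y^{(i)}_n$ must meet $X_n$, lest it be a clopen proper subset of $Z_n$. Picking $y^{(i)}_n\in X_n\cap Y^{(i)}_n$ when $Y^{(i)}_n\neq\emptyset$, I get $\delta(z,X_n)\leq\diam(Y^{(i)}_n)$ for every $z\in Y^{(i)}_n$, so $\sup_{z\in Z_n}\delta(z,X_n)\leq \sup_{i\geq 1}\diam(Y^{(i)}_n)\to 0$ by what precedes. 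Hence $\dH(X_n,Z_n)\to 0$, and combined with $Z_n\to Z_\infty$ the triangle inequality yields $X_n\to Z_\infty$ in the $\delta$-Hausdorff metric. The only genuine difficulty is choosing the right two-piece split to feed Lemma~\ref{lem:bottlenecks-and-diam}; the role of 2-connectedness is deferred entirely to that lemma, which prevents a single ``bad'' piece $Y^{(i_n)}_n$ from being macroscopic without simultaneously shrinking $X_n$.
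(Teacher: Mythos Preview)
Your proof is correct and follows essentially the same approach as the paper's: both argue by contradiction, isolate a single ``bad'' piece $Y^{(i_n)}_n$ with non-vanishing diameter, form the two-piece decomposition $Z_n = Y^{(i_n)}_n \cup \bigl(X_n\cup\bigcup_{j\neq i_n}Y^{(j)}_n\bigr)$, and invoke Lemma~\ref{lem:bottlenecks-and-diam} to force $\diam X_n\to 0$. Your second paragraph spells out the deduction $\dH(X_n,Z_n)\to 0$ that the paper leaves as ``easily verified''.
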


\begin{proof}
	It is easily verified from the definition of the  Hausdorff metric that the convergence $\sup_{i_\geq1}\diam(Y^{(i)}_n)\to 0$ implies $\dH(X_n,Z_n)\to 0$ and thus $X_n\to Z_\infty$ in the $\delta$-Hausdorff metric, so let us prove that we indeed have $\sup_{i_\geq1}\diam(Y^{(i)}_n)\to 0$.
	Suppose for contradiction that $\sup_{i_\geq1}\diam(Y^{(i)}_n)$ does not go to $0$ as $n\to \infty$.
	Then, after possibly taking a subsequence, we may assume that there exists a sequence $(i_n)_n$ such that $\liminf_n \diam(Y^{(i_n)}_n)>0$.
	We set for every $n\geq1$,
	\begin{align*}
	\widehat X^{(i_n)}_n=X_n\cup\bigsqcup_{i\neq i_n}Y^{(i)}_n.
	\end{align*}
	Note that for every $n\geq1$, we have $\widehat X^{(i_n)}_n\cup Y^{(i_n)}_n=Z_n$, which is a geodesic space, and the $\delta$-Hausdorff limit $Z_\infty$ is 2-connected.
	Also, since the $(Y^{(i)}_n,i\geq1)$ are pairwise disjoint, we have $\widehat X^{(i_n)}_n\cap Y^{(i_n)}_n=X_n\cap Y^{(i_n)}_n$, whose diameter goes to zero as $n\to\infty$ by the assumption that $\sup_{i\geq0}\diam(X_n\cap Y^{(i)}_n)\to 0$.
	Hence, we deduce from Lemma~\ref{lem:bottlenecks-and-diam} that  $\min(\diam(\widehat X^{(i_n)}_n),\diam(Y^{(i_n)}_n))\to 0$.
	This gives the desired contradiction, since we have $\liminf_n \diam(Y^{(i_n)}_n)>0$ by construction, and since we also have $\diam(\widehat X^{(i_n)}_n)\geq \diam(X_n)$ for every $n\geq1$, where $\liminf _n\diam(X_n)>0$ by assumption.
\end{proof}

The condition that $\liminf _n\diam(X_n)>0$ in the preceding lemma may sometimes be difficult to establish.
Instead, it may be easier to prove that the ``mass'' of $X_n$ does not vanish asymptotically.
We therefore adapt the preceding lemma as follows.

Again, for $n\geq1$, we let $Z_n=X_n\cup\bigsqcup_{i\geq1} Y^{(i)}_n$ be a compact subspace of $(Z,\delta)$, where   $X_n$ is a non-empty compact subset of $Z_n$, and where $\smash{(Y^{(i)}_n,i\geq1)}$ is a collection of pairwise disjoint and possibly empty compact subsets of $Z_n$.
On top of that, we let $\mu_n$ be a finite Borel measure on $Z_n$ for every $n\geq1$.

\begin{lemma}\label{lem:star-decomposition-with-diffuse-lim}
	Suppose that $Z_n\to Z_\infty$ in the $\delta$-Hausdorff metric, that each $Z_n$ is geodesic, and that $Z_\infty$ is 2-connected.
	Suppose additionally that $\mu_n\to\mu_\infty$ in the $\delta$-Prokhorov metric and that $\mu_\infty$ is diffuse.%
		\footnote{We recall that a Borel measure $\mu$ is \textit{diffuse} if every singleton has zero $\mu$-measure.}
	If it holds that
	\begin{align*}
	\textstyle\sup_{i\geq0}\diam(X_n\cap Y^{(i)}_n)\to 0\qquad\text{and}\qquad\textstyle\liminf _n\mu_n(X_n)>0,
	\end{align*}
	then $\sup_{i_\geq1}\diam(Y^{(i)}_n)\to 0$, and thus $X_n\to Z_\infty$ in the $\delta$-Hausdorff metric.
\end{lemma}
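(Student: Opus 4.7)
The plan is to reduce Lemma~\ref{lem:star-decomposition-with-diffuse-lim} to the previously established Lemma~\ref{lem:star-decomposition-with-2-connected-lim}, which delivers exactly the same conclusion under identical hypotheses \emph{except} that it assumes $\liminf_n \diam(X_n)>0$ in place of the mass-type lower bound $\liminf_n \mu_n(X_n)>0$. The entire substance of the proof therefore lies in upgrading the mass bound into a diameter bound, and this is precisely where diffuseness of $\mu_\infty$ will be decisive.

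I would proceed by contradiction. Suppose $\liminf_n \diam(X_n)=0$, so that along some subsequence $\diam(X_n)\to 0$. Pick $x_n\in X_n$; by compactness of the ambient space $(Z,\delta)$, pass to a further subsequence so that $x_n\to x_*\in Z$. Then $X_n \subset \overline{B}(x_*,r_n)$ for some $r_n\to 0$, hence for each fixed $r>0$ one has $X_n\subset \overline{B}(x_*,r)$ for all $n$ large along the subsequence, and consequently $\mu_n(\overline{B}(x_*,r))\geq \mu_n(X_n)\geq c/2$ for $n$ large, where $c:=\liminf_n \mu_n(X_n)>0$. Now the $\delta$-Prokhorov metric metrizes weak convergence of finite Borel measures (as recalled in Section~\ref{subsec:preliminaries-GHP}), so the Portmanteau theorem yields $\limsup_n \mu_n(F)\leq\mu_\infty(F)$ for every closed $F\subset Z$; applied to $F=\overline{B}(x_*,r)$ this gives $\mu_\infty(\overline{B}(x_*,r))\geq c/2$ for every $r>0$. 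Letting $r\downarrow 0$ and using continuity of the finite measure $\mu_\infty$ along the nested closed balls $\overline{B}(x_*,r)\downarrow\{x_*\}$ yields $\mu_\infty(\{x_*\})\geq c/2>0$, contradicting diffuseness of $\mu_\infty$.

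Once $\liminf_n \diam(X_n)>0$ is secured, all hypotheses of Lemma~\ref{lem:star-decomposition-with-2-connected-lim} are in force and that lemma delivers both $\sup_{i\geq 1}\diam(Y^{(i)}_n)\to 0$ and $X_n\to Z_\infty$ in the $\delta$-Hausdorff metric, which is exactly the conclusion sought. No step looks genuinely hard here; the only subtle point is to orient the Portmanteau inequality in the right direction (closed sets, $\limsup$), and one must note that the standard probability-measure formulation transfers to finite Borel measures because Prokhorov convergence of $\mu_n$ to $\mu_\infty$ forces the total masses $\mu_n(Z)$ to converge to $\mu_\infty(Z)$, so one can normalize and revert.
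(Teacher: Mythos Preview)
Your proof is correct and follows essentially the same approach as the paper: both reduce to Lemma~\ref{lem:star-decomposition-with-2-connected-lim} by showing $\liminf_n\diam(X_n)>0$ via contradiction, extracting a subsequence along which $X_n$ collapses to a point $x_*$, and using the Portmanteau inequality on closed balls together with continuity of measure to force $\mu_\infty(\{x_*\})>0$, contradicting diffuseness. The only cosmetic difference is that the paper extracts a Hausdorff-convergent subsequence of the sets $X_n$ (whose limit is then a singleton), whereas you pick points $x_n\in X_n$ and extract a convergent subsequence of those; these are equivalent moves here.
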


\begin{proof}
	By Lemma~\ref{lem:star-decomposition-with-2-connected-lim}, it is sufficient to prove that $\liminf_n\diam(X_n)>0$.
	Suppose for contradiction that this is not the case so that, for some subsequence $(X_{k_n})_n$, we have $\diam(X_{k_n})\to 0$.
	By extracting further a subsequence, we can assume that $X_{k_n}\to X_\infty$ in the $\delta$-Hausdorff metric, where $X_\infty$ is necessarily a singleton $X_\infty=\{x_\infty\}$, since $\diam(X_{k_n})\to 0$.
	Now, since $\mu_n\to\mu_\infty$ in the $\delta$-Prokhorov metric, we have by the Portmanteau theorem that for all $\epsilon>0$,
	\begin{align*}
	\mu_\infty\Bigl(\,\overline\ball^{(Z,\delta)}(x_\infty,\epsilon)\Bigr)
		\geq \limsup_{n\to\infty}\mu_{k_n}\Bigl(\,\overline\ball^{(Z,\delta)}(x_\infty,\epsilon)\Bigr).
	\end{align*}
	Since $X_{k_n}\to \{x_\infty\}$ in the $\delta$-Hausdorff metric, we have $X_{k_n}\subset\overline\ball^{(Z,\delta)}(x_\infty,\epsilon)$ for large $n$.
	Hence, by the last display, we have
	\begin{align*}
	\mu_\infty\Bigl(\,\overline\ball^{(Z,\delta)}(x_\infty,\epsilon)\Bigr)
		\geq \limsup_{n\to\infty}\mu_{k_n}(X_{k_n})
		\geq \liminf_{n\to\infty}\mu_n(X_n).
	\end{align*}
	Hence, by taking $\epsilon\to0$ and using the continuity of measure, we get that $\mu_\infty(\{x_\infty\})\geq \liminf_{n\to\infty}\mu_n(X_n)$.
	By assumption $\liminf_{n\to\infty}\mu_n(X_n)>0$, which gives is a contradiction since $\mu_\infty$ is assumed to be diffuse.
\end{proof}

\subsection{Discrete approximations of the measures}
\label{subsec:Discrete-approximation-measures}

We come back to the setting of the last section: we let $(Z,\delta)$ be a compact space; for $n\geq1$, we let $Z_n=X_n\cup\bigsqcup_{i\geq1} \smash{Y^{(i)}_n}$ be a compact subspace of $(Z,\delta)$, where $X_n$ is a non-empty compact subset of $Z_n$, and where $\smash{(Y^{(i)}_n,i\geq1)}$ is a collection of pairwise disjoint and possibly empty compact subsets of $Z_n$; and lastly, we let $\mu_n$ be a finite Borel measure on $Z_n$ for every $n\geq1$.

Under the assumptions of Lemma~\ref{lem:star-decomposition-with-diffuse-lim}, said lemma tells us that the $Y^{(i)}_n$, $n,i\geq1$, have vanishing diameter as $n\to\infty$.
Therefore, it makes sense to approximate their contribution to the measure $\mu_n$ by a discrete measure.
Suppose that we have fixed some $\smash{y_n^{(i)}\in Y^{(i)}_n}$ for every $n,i\geq1$.
Let us write $\gamma^{(i)}_n=\mu_n\bigl(Y^{(i)}_n\setminus X_n\bigr)$, and denote by $\mu^X_n=\mu_n(\cdot\cap X_n)$ the measure $\mu_n$ restricted to $X_n$.
This allows to define for all $n\geq1$ the measure
\begin{align}\label{eq:star-decomp-discrete-approx-expr}
\widetilde\mu_n(\diff x)=\mu^X_n(\diff x)+\sum_{i\geq 1}\gamma^{(i)}_n\cdot\delta_{y^{(i)}_n}(\diff x),
\end{align}
which will serve to approximate $\mu_n$.

\begin{lemma}\label{lem:star-decomp-discrete-approx}
	In the setting of Lemma~\ref{lem:star-decomposition-with-diffuse-lim} and with the preceding notation, we have $\dP(\widetilde\mu_n,\mu_n)\to 0$ as $n\to\infty$, and in particular $\widetilde\mu_n\to\mu_\infty$ in the $\delta$-Prokhorov metric.
\end{lemma}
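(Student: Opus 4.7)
The plan is to deduce the bound directly from the uniform diameter control provided by Lemma~\ref{lem:star-decomposition-with-diffuse-lim}, which under the standing hypotheses guarantees that $D_n := \sup_{i\geq1}\diam(Y^{(i)}_n)\to 0$. Once this is in hand, the Prokhorov statement is essentially a bookkeeping exercise: $\widetilde\mu_n$ is obtained from $\mu_n$ by pushing each portion of mass living on $Y^{(i)}_n\setminus X_n$ onto the single point $y^{(i)}_n\in Y^{(i)}_n$, a displacement of at most $D_n$.

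More concretely, I would fix $\epsilon>0$, choose $n$ so large that $D_n<\epsilon$, and then verify the two Prokhorov-style inequalities $\mu_n(A)\leq\widetilde\mu_n(A^\epsilon)$ and $\widetilde\mu_n(A)\leq\mu_n(A^\epsilon)$ for an arbitrary Borel subset $A$ of $Z$. Both start from the decomposition $\mu_n=\mu^X_n+\sum_i\mu_n(\,\cdot\cap(Y^{(i)}_n\setminus X_n))$ and the definition~\eqref{eq:star-decomp-discrete-approx-expr}. For the first inequality, whenever $A\cap(Y^{(i)}_n\setminus X_n)\neq\emptyset$ the point $y^{(i)}_n\in Y^{(i)}_n$ lies within $D_n<\epsilon$ of $A$, so $y^{(i)}_n\in A^\epsilon$ and thus $\mu_n(A\cap(Y^{(i)}_n\setminus X_n))\leq\gamma^{(i)}_n\indic{y^{(i)}_n\in A^\epsilon}$; summing over $i$ (they contribute to disjoint atoms of $\widetilde\mu_n$) and using $\mu^X_n(A)\leq\mu^X_n(A^\epsilon)$ yields the claim. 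For the reverse, if $y^{(i)}_n\in A$ then $Y^{(i)}_n\subset A^{D_n}\subset A^\epsilon$, so $\gamma^{(i)}_n\leq\mu_n\bigl(A^\epsilon\cap(Y^{(i)}_n\setminus X_n)\bigr)$; summing again produces $\widetilde\mu_n(A)\leq\mu_n(A^\epsilon)$.

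This gives $\dP(\widetilde\mu_n,\mu_n)\leq\epsilon$ for all sufficiently large $n$, hence $\dP(\widetilde\mu_n,\mu_n)\to0$. The ``in particular'' clause then follows from the triangle inequality together with the assumed convergence $\dP(\mu_n,\mu_\infty)\to0$. There is no real obstacle here: the only subtle point is the disjointness of the $(Y^{(i)}_n)_{i\geq1}$, which ensures the $y^{(i)}_n$ are distinct and hence the atoms $\gamma^{(i)}_n\cdot\delta_{y^{(i)}_n}$ add up without overlap, so that the inequalities chain cleanly; everything else is a direct application of the diameter bound furnished by the previous lemma.
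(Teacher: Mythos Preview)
Your proof is correct and follows essentially the same approach as the paper: invoke the diameter control $\sup_i\diam(Y^{(i)}_n)\to 0$ from Lemma~\ref{lem:star-decomposition-with-diffuse-lim}, then verify the two Prokhorov inequalities by decomposing $\mu_n$ and $\widetilde\mu_n$ along $X_n$ and the $Y^{(i)}_n\setminus X_n$. One small remark: the distinctness of the $y^{(i)}_n$ is not actually needed---the identity $\widetilde\mu_n(A)=\mu^X_n(A)+\sum_i\gamma^{(i)}_n\indic{\{y^{(i)}_n\in A\}}$ holds straight from the definition of $\widetilde\mu_n$ as a sum, regardless of whether the atoms coincide.
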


\begin{proof}
	For $n,i\geq1$, we let $\widetilde Y^{(i)}_n=Y^{(i)}_n\setminus X_n$, in order to lighten notation.
	Let $\epsilon>0$.
	Fix $n$ large enough such that $\sup_{i_\geq1}\diam(Y^{(i)}_n)\leq \epsilon$ using Lemma~\ref{lem:star-decomposition-with-diffuse-lim}.
	Now given $A\subset Z_n$ a Borel subset, we have by $\sigma$-additivity,
	\begin{align}\label{eq:lem:star-decomp-discrete-approx-1}
	\mu_n(A^{\epsilon})
	=		\mu^X_n(A^\epsilon)+\sum_{i\geq1}\mu_n\bigl(A^\epsilon\cap\widetilde Y^{(i)}_n\bigr)	
	&\geq	\mu^X_n(A)+\sum_{i\geq1}\gamma^{(i)}_n\indic{\{\widetilde Y^{(i)}_n\subset A^{\epsilon}\}}\nonumber	\\
	&\geq	\mu^X_n(A)+\sum_{i\geq1}\gamma^{(i)}_n\indic{\{y^{(i)}_n\in A\}}\nonumber\\
	&= \widetilde\mu_n(A),
	\end{align}
	where the last inequality uses that $\widetilde Y^{(i)}_n$ is included in the $\epsilon$-neighborhood of $y^{(i)}_n$ for every $i\geq1$, since $\sup_{i_\geq1}\diam(Y^{(i)}_n)\leq \epsilon$.
	On the other hand, we have by definition that $\gamma^{(i)}_n=\mu_n(\widetilde Y^{(i)}_n)$, which yields:
	\begin{align*}
	\widetilde\mu_n(A^{\epsilon})
	&=		\mu^X_n(A^\epsilon)+\sum_{i\geq1}\gamma^{(i)}_n\indic{\{y^{(i)}_n\in A^{\epsilon}\}}	\\
	&\geq 	\mu^X_n(A)+\sum_{i\geq1}\mu_n\bigl(A\cap\widetilde Y^{(i)}_n\bigr)\indic{\{y^{(i)}_n\in A^{\epsilon}\}}.
	\end{align*}
	Now for $i\geq1$, we either have $A\cap\widetilde Y^{(i)}_n=\emptyset$ so that the corresponding term in the last sum is zero, or $A\cap\widetilde Y^{(i)}_n\neq\emptyset$ which implies $y^{(i)}_n\in A^{\epsilon}$ since $\sup_{i_\geq1}\diam(Y^{(i)}_n)\leq \epsilon$.
	Hence the indicator in the last displayed sum is redundant, and the last display reduces to
	\begin{align}\label{eq:lem:star-decomp-discrete-approx-2}
	\widetilde\mu_n(A^{\epsilon})
	\geq \mu^X_n(A)+\sum_{i\geq1}\mu_n\bigl(A\cap\widetilde Y^{(i)}_n\bigr)
	= \mu_n(A).
	\end{align}
	The combination of \eqref{eq:lem:star-decomp-discrete-approx-1} and \eqref{eq:lem:star-decomp-discrete-approx-2} gives by definition of the Prokhorov distance that $\dP(\mu_n,\widetilde\mu_n)\leq \epsilon$.
	Since this holds for all large enough $n$, the result follows.
\end{proof}
\begin{figure}
	\begin{center}
		\includegraphics[scale=1]{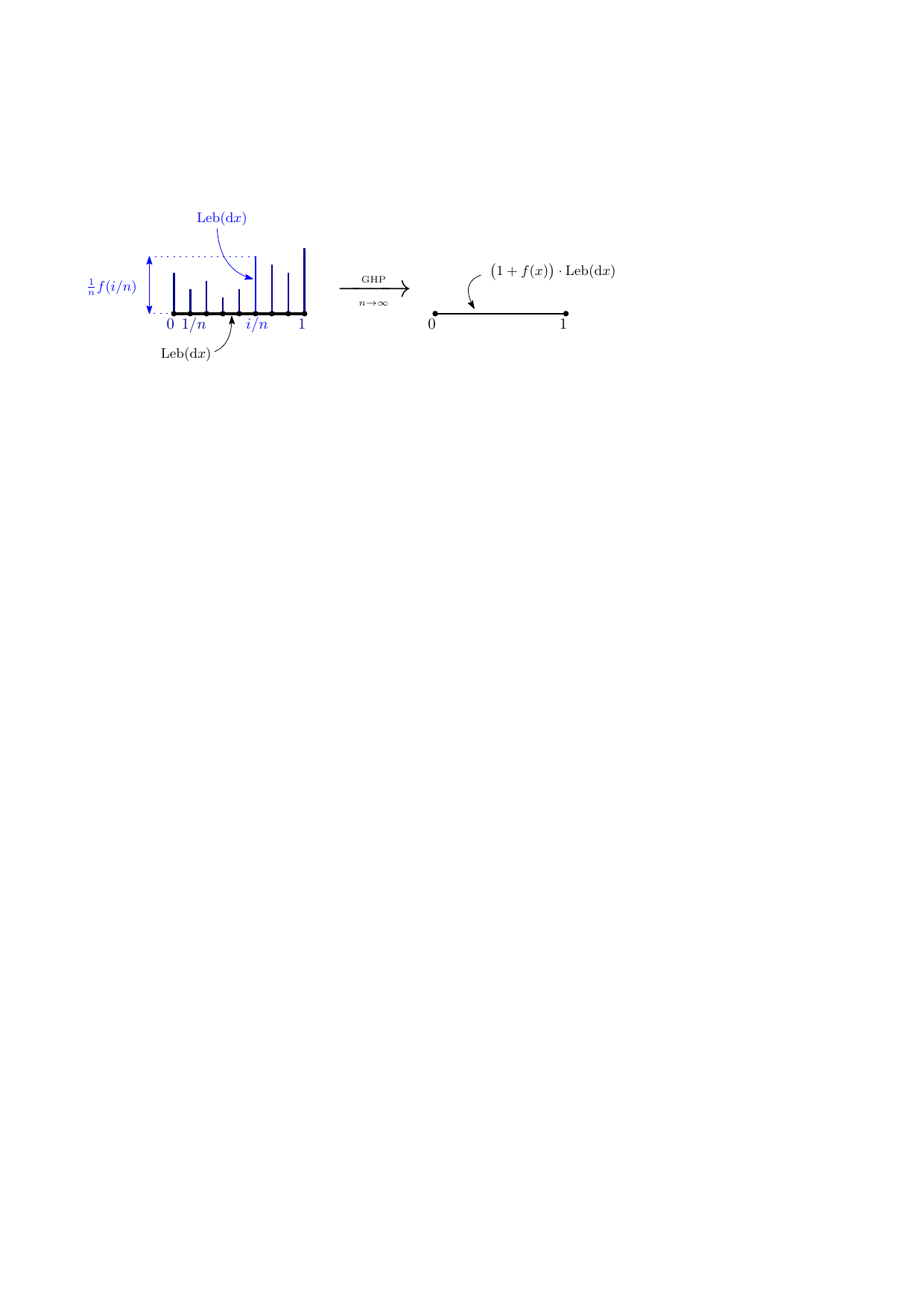}
	\end{center}
	\caption{%
		The ``comb'' counter-example for Remark~\ref{rem:comb-counter-example}.
		Fix any continuous function $f\colon[0,1]\to \R_+$.
		We let $Z_n\subset\R^2$ be defined by $Z_n=X_n\cup\bigsqcup_i Y^{(i)}_n$, where $X_n=[0,1]\times\{0\}$ is equipped with its Lebesgue measure, and for every $n$, the segment $Y^{(i)}_n=\{i/n\}\times[0,\frac 1n f(i/n)]$ is also equipped with its Lebesgue measure.
		Then we replace the induced metric on $Z_n\subset\R^2$ by its \textit{length metric} $d_n$ induced by the length of shortest paths \textit{inside} $Z_n$, so as to turn $Z_n$ into a geodesic space.
		It is easily seen that $(Z_n,d_n,\mu_n)$ converges in the GHP metric to $[0,1]$ with its usual distance, and equipped with the measure $(1+f(x))\mathrm{Leb}(\diff x)$.
		Such a convergence can be made to hold in the Hausdorff+Prokhorov sense inside some compact space $(Z,\delta)$ by Proposition~\ref{prop:GHP-common-embedding}
		}.
	\label{fig:comb-counter-example}
\end{figure}
As a first step towards controlling the discrete part in~\eqref{eq:star-decomp-discrete-approx-expr}, we prove the following lemma.

\begin{lemma}\label{lem:star-decomposition-diffuse-lim-bis}
	In the setting of Lemma~\ref{lem:star-decomposition-with-diffuse-lim}, we have $\sup_{i_\geq1}\mu_n(Y^{(i)}_n)\to 0$, and therefore also $\smash{\sup_{i\geq1}\gamma^{(i)}_n\to 0}$, as $n\to\infty$.
\end{lemma}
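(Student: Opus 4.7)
The plan is to argue by contradiction using the key input that $\mu_\infty$ is diffuse, together with the fact—already established by Lemma~\ref{lem:star-decomposition-with-diffuse-lim}—that $\sup_{i\geq1}\diam(Y^{(i)}_n)\to0$. Once the first claim $\sup_i\mu_n(Y^{(i)}_n)\to0$ is obtained, the second one about $\gamma^{(i)}_n$ is immediate since $\gamma^{(i)}_n=\mu_n(Y^{(i)}_n\setminus X_n)\leq \mu_n(Y^{(i)}_n)$.

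Suppose for contradiction that $\sup_{i\geq1}\mu_n(Y^{(i)}_n)$ does not tend to $0$. Then up to extracting a subsequence we can find $\delta>0$ and indices $i_n\geq1$ with $\mu_n(Y^{(i_n)}_n)\geq \delta$ for all $n$. Pick any $y_n\in Y^{(i_n)}_n\subset Z$; by compactness of $(Z,\delta)$ and a further extraction, $y_n\to y_\infty$ in $Z$. Note $y_\infty\in Z_\infty$ because $Z_n\to Z_\infty$ in the $\delta$-Hausdorff metric. By Lemma~\ref{lem:star-decomposition-with-diffuse-lim} we have $\diam(Y^{(i_n)}_n)\to0$, hence for every $\epsilon>0$ and all $n$ large enough, $Y^{(i_n)}_n\subset \overline{\ball}^{(Z,\delta)}(y_\infty,\epsilon)$.

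Using this inclusion and the Portmanteau theorem applied to the $\delta$-Prokhorov convergence $\mu_n\to\mu_\infty$, for every $\epsilon>0$ we obtain
\begin{align*}
\mu_\infty\Bigl(\,\overline{\ball}^{(Z,\delta)}(y_\infty,\epsilon)\Bigr)
\geq \limsup_{n\to\infty}\mu_n\Bigl(\,\overline{\ball}^{(Z,\delta)}(y_\infty,\epsilon)\Bigr)
\geq \limsup_{n\to\infty}\mu_n\bigl(Y^{(i_n)}_n\bigr)\geq \delta.
\end{align*}
Letting $\epsilon\downarrow 0$ and using continuity of measure along the decreasing family of closed balls, we conclude $\mu_\infty(\{y_\infty\})\geq \delta>0$, contradicting the assumption that $\mu_\infty$ is diffuse. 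This proves $\sup_{i\geq1}\mu_n(Y^{(i)}_n)\to 0$, and the bound $\gamma^{(i)}_n\leq\mu_n(Y^{(i)}_n)$ finishes the proof.

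The whole argument is essentially a one-step reduction to Lemma~\ref{lem:star-decomposition-with-diffuse-lim}: there is no real obstacle beyond making sure that the ``pieces of vanishing diameter yet non-vanishing mass'' collapse onto a single point to which Portmanteau and diffuseness can be applied. The only mild care required is to work inside the common compact space $(Z,\delta)$ provided by the ambient embedding so that the extraction $y_n\to y_\infty$ makes sense.
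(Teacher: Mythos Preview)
Your proof is correct and follows exactly the approach the paper sketches, namely reproducing the Portmanteau-plus-diffuseness argument from the proof of Lemma~\ref{lem:star-decomposition-with-diffuse-lim} with $Y^{(i_n)}_n$ playing the role of $X_{k_n}$. One minor point: you use $\delta$ both for the ambient metric on $Z$ and for your positive constant, so it would be cleaner to rename the latter (say $\eta>0$).
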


\begin{proof}
	If that is not the case, then using that $\sup_{i_\geq1}\diam(Y^{(i)}_n)\to 0$ by Lemma~\ref{lem:star-decomposition-with-diffuse-lim}, an argument along the same lines as the one given in the proof of this lemma shows that $\mu_\infty$ is not diffuse, leading to a contradiction.
\end{proof}

\begin{rem}\label{rem:comb-counter-example}
	Note that even when $\sup_{i_\geq1}\mu_n(Y^{(i)}_n)\to 0$, there is still a lot of liberty in what can happen to the measures $\smash{\mu^X_n}$ and in particular they can be very different from $\mu_\infty$ as $n\to\infty$.
	See for instance the ``comb'' example presented in Figure~\ref{fig:comb-counter-example}, in which $X_\infty=[0,1]$, with $\mu_\infty(\diff x)=(1+f(x))\cdot\mathrm{Leb}(\diff x)$ but $\mu^X_n\to \mathrm{Leb}(\diff x)$.
\end{rem}

\subsection{Proof of Lemma~\ref{lem:cvg-after-projection}}
\label{subsec:proof-cvg-after-proj}

We place ourselves in the setting of Lemma~\ref{lem:summary-largest-comp}.
That is, we have random integers $(N_n)_{n\geq 4}$, and random tuples
\begin{align}\label{eq:recalling-coupling-largest-comp}
(\Qirrfrak_n,\Decs_n,\ifrak_n)
\qquad\text{in}\qquad\ensQirred_{N_n}\times\DecSet_{n,N_n}\times\{1,\dots,4n\}
\end{align}
for $n\geq 4$, such that:
(i) the law of $Q'_n:=\Gamma(\Qirrfrak_n;\Decs_n,\ifrak_n)$ is at $o(1)$ total variation distance from $\rmUnif(\Quads_n)$ as $n\to\infty$; (ii)
$\Qirrfrak_n$ is uniformly distributed in $\smash{\ensQirred_{N_n}}$ conditionally on $N_n$; (iii) we have the deterministic bound $|K_n|\leq n^{2/3+0.1}$ where  $K_n=N_n-\lfloor n/9\rfloor$; and, (iv)  the collection $\Decs_n=(\q_1,\dots,\q_{N_n})\in\DecSet_{n,N_n}$ is exchangeable conditionally on $N_n$.

Since the $Q'_n$ is at vanishing total variation distance from $\rmUnif(\Quads_n)$ when $n\to\infty$, the result of Le~Gall~\cite{LeGall13} and Miermont~\cite{Miermont13} recalled as Theorem~\ref{thm:Le-Gall--Miermont} still holds when replacing $Q_n$ by $Q'_n$ in its statement.
In particular, using the Skorokhod representation theorem, we can assume that we have a coupling of the random quadrangulations $(Q'_n,n\geq 2)$ and of an instance $(S,d,\mu)$ of the Brownian sphere, such that the convergence
\begin{align}\label{eq:almost-sure-cvg-Q-prime}
\left(\vertices(Q'_n),\,\Bigl(\frac{9}{8n}\Bigr)^{1/4}\cdot d_{Q'_n},\,\frac{1}{n+2}\cdot\mu_{Q'_n}\right)\xrightarrow[n\rightarrow\infty]{}(S,d,\mu)
\end{align}
holds in the almost sure sense with respect to the GHP metric.
If, for every $n$, we furthermore re-sample the random variables $(N_n,\Qirrfrak_n,\Decs_n)$ conditionally given the coupling $(Q'_n,n\geq 2)$, then we can assume that all the random variables $((Q'_n,N_n,\Qirrfrak_n,\Decs_n), n\geq 2)$ are coupled in such a way that (i) the preceding almost sure convergence holds and (ii) the conclusions of Lemma~\ref{lem:summary-largest-comp} listed after \eqref{eq:recalling-coupling-largest-comp} are satisfied.

We recall that the Brownian sphere $(S,d,\mu)$ is almost surely homeomorphic to the two-dimensional sphere \cite{LeGallPaulin08,Miermont08}, and in particular 2-connected.
Furthermore, its mass measure $\mu$ is almost surely diffuse by \cite[Thm.~3]{Miermont09}.
We will therefore reason deterministically on the almost-sure event that \eqref{eq:almost-sure-cvg-Q-prime} holds, that $Q'_n=\Gamma(\Qirrfrak_n;\Decs_n,\ifrak_n)$ for every $n$, and that $(S,d,\mu)$ is 2-connected with diffuse measure.

We recall that $\Gamma$ corresponds to the first component of the mapping $\Glue$, so let us write $\Glue(\Qirrfrak_n;\Decs_n,\ifrak_n)=(Q'_n,\Qirrfrak_n,\alpha_n)$.
In particular, $(\Qirrfrak_n,\alpha_n)$ is a submap of $Q'_n$.
It is an easy exercise to verify that \eqref{eq:almost-sure-cvg-Q-prime} also holds upon replacing its left-hand side by the \textit{metric graph} of $Q'_n$ similarly renormalized, where this metric graph is obtained by realizing geometrically each edge using a unit segment with unit mass joining its endpoints.
Clearly, this turns the left-hand side of \eqref{eq:almost-sure-cvg-Q-prime} into a geodesic space, and it converges in the GHP sense to $(S,d,\mu)$ which is \textit{2-connected} with \textit{diffuse} measure.
Using (a GHP variant of) Gromov's embedding theorem, for instance Proposition~\ref{prop:GHP-common-embedding}, we can assume that the latter convergence holds in the Hausdorff--Prokhorov sense inside some compact metric space $(Z,\delta)$.

This is almost the setting of Lemma~\ref{lem:star-decomposition-with-diffuse-lim}.
We let $Z_n$ be the metric graph of $Q'_n$ with distances renormalized as in the left-hand side of \eqref{eq:almost-sure-cvg-Q-prime}, which is therefore a geodesic compact subset of $(Z,\delta)$, and we let $\mu_n$ be the image measure of $\frac{1}{n+2}\mu_{Q'_n}$ under the natural mapping $V(Q'_n)\to Z_n$.
Then, we let $X_n$ be the subset of $Z_n$ corresponding to $\Qirrfrak_n$.
More precisely, recall that $(\Qirrfrak_n,\alpha_n)$ is a submap of $Q'_n$, so that its metric graph naturally corresponds to a subset of the metric graph $Z_n$ of $Q'_n$.

We now make an innocent but crucially important observation: $X_n$  equipped with the distance induced by that of $Z_n$ is actually \textit{isometric} to the metric graph of $
\Qirrfrak_n$ with distances rescaled by $\bigl({9}/{8n}\bigr)^{1/4}$.
Equivalently, the graph distance on the vertices of $\Qirrfrak_n$ is precisely the distance induced on $V(\Qirrfrak_n)$ by the graph distance of $Q'_n$.
This follows from the observation that given a quadrangulation $\q$ and one of its irreducible components $\sm$, there is no strict shortcut between two points of $\sm$ using edges of $E(\q)\setminus E(\sm)$.
Indeed, the antipodal vertices on a 4-cycle cannot be at distance $0$, for otherwise the 4-cycle is not simple; neither at distance $1$, since a quadrangulation is bipartite.

Now, since $\Qirrfrak_n$ is a quadrangulation and a submap of $Q'_n$, the connected components $(Y_n^{(i)})_i$ of $Z_n\setminus X_n$ each intersect $X_n$ at points which correspond (under the metric graph construction) to vertices of $Q'_n$ on the boundary of a face of $\Qirrfrak_n$, and said boundary is a 4-cycle.
Hence, in view of how distances are renormalized in the definition of $Z_n$, we deduce that $\sup_i \diam(X_n\cap Y_n^{(i)})\leq 4 \cdot (9/8n)^{1/4}\to 0$.
Lastly, by construction $\mu_n(X_n)$ equals $(N_n+2)/(n+2)$, where $N_n+2$ is the number of vertices of $\Qirrfrak_n$.
Under our coupling, we have $\liminf_n \mu_n(X_n)=\liminf N_n/n=1/9$ almost surely, which is positive.

All in all, the assumptions of Lemma~\ref{lem:star-decomposition-with-diffuse-lim} have all been checked, allowing us to deduce that $(X_n)_n$ converges almost surely to the Brownian sphere $Z_\infty=\lim_n Z_n$ in the Hausdorff sense.
Also, using Lemma~\ref{lem:star-decomp-discrete-approx}, we deduce that the image of the measure $\frac{1}{n+2}\cdot(\mu_{\Qirrfrak_n}+\boldnu_{\Decs_n})$ (defined in the statement of Lemma~\ref{lem:cvg-after-projection}), under the metric graph construction and the embedding in $(Z,\delta)$, converges as $n\to\infty$ to the measure $\mu_\infty=\lim_n\mu_n$ carried by the Brownian sphere $Z_\infty$, almost surely in the Prokhorov sense.
Also, by Lemma~\ref{lem:star-decomposition-diffuse-lim-bis}, we have that $\sup_x\frac{1}{n+2}\cdot \boldnu_{\Decs_n}(x)\to 0$ almost surely.

We can now conclude.
By the preceding, abstracting away from the embedding into $(Z,\delta)$, we get that the metric graph version of $\Qirrfrak_n$ (with distances renormalized by $(9/8n)^{1/4}$), equipped with the measure $\frac{1}{n+2}\cdot(\mu_{\Qirrfrak_n}+\boldnu_{\Decs_n})$, converges almost surely to $(S,d,\mu)$ in the above coupling.
Again, one easily switches between the metric graph version and the vertex set equipped with the graph distance, which gives us the convergence
\begin{align*}
\left(\vertices(\Qirrfrak_n),\,\left(\frac{9}{8n}\right)^{1/4}\cdot d_{\Qirrfrak_n},\,\frac{1}{n+2}\cdot(\mu_{\Qirrfrak_n}+\boldnu_{\Decs_n})\right)\xrightarrow[n\rightarrow\infty]{\distribGHP}(S,d,\mu),
\end{align*}
as needed.
To complete the proof of Lemma~\ref{lem:cvg-after-projection}, note that we have already justified that $\sup_x\frac{1}{n+2}\cdot \boldnu_{\Decs_n}(x)\to 0$ almost surely in the above coupling, and note also that the exchangeability statement about the measures $\boldnu_{\Decs_n}$, $n\geq2$, follows from the exchangeability of each $\Decs_n$ conditionally on $N_n$, as given by Lemma~\ref{lem:summary-largest-comp}.
Hence the proof of Lemma~\ref{lem:cvg-after-projection} is complete.\qed

In the next section, we continue our proof of the key lemmas presented in Section~\ref{sec:structure-proof} and establish Lemma~\ref{lem:cvg-after-concentration}.


\section{Step III: Exchangeability and Prokhorov convergence}
\label{sec:exch-and-Prokhorov}

This section is a continuation of Section~\ref{sec:bottlenecks-and-Hausdorff-cvg}.
Our aim is now to gain control on the measure part in Lemma~\ref{lem:cvg-after-projection}, so as to replace $\frac{1}{n+2}\cdot(\mu_{\Qirrfrak_n}+\boldnu_{\Decs_n})$ by a constant multiple of $\frac{1}{n+2}\cdot\mu_{\Qirrfrak_n}$.
Unfortunately, deterministic topological/measure-theoretic arguments are not sufficient in general for that purpose, see Remark~\ref{rem:comb-counter-example}.
Using ideas of Addario-Berry and Wen \cite{AddarioBerryWen17}, it is possible to circumvent this difficulty using the \textit{exchangeability} property that our random measure $\boldnu_{\Decs_n}$ satisfies.
We revisit their argument in a general context, and propose a replacement for a concentration bound that seems to contain an error, see Remark 6.4. This is then applied to prove Lemma~\ref{lem:cvg-after-concentration}.

\subsection{Revisiting the Addario-Berry--Wen exchangeability argument}
A random collection indexed by a finite set is said to be \textit{exchangeable} if its distribution is invariant under permutation of the coordinates. For instance, the random vector $(\xi_1,\dots,\xi_n)$ is exchangeable if for every permutation $\sigma\in\perm_n$, the vector $(\xi_{\sigma(1)},\dots,\xi_{\sigma(n)})$ has the same distribution as $(\xi_1,\dots,\xi_n)$.

We let $(Z,\delta)$ be a compact space, and we consider a sequence $(D_n,n\geq1)$ of finite subsets of $Z$.
Suppose that $(\boldnu_n,n\geq1)$ are \textit{random} measures supported on $D_n$ respectively.
We shall prove the following.

\begin{prop}\label{prop:Add-Wen-exch-argument}
	Assume that the collection $(\boldnu_n(x))_{x\in D_n}$ is exchangeable
	for every $n\geq1$.
	If $\max_{x\in D_n} \boldnu_n(x) \to 0$ in probability, then
	\begin{align*}
	\dP\Bigl(\boldnu_n,|\boldnu_n|\cdot\rmUnif_{D_n}\Bigr)\xrightarrow[n\to\infty]{}0
	\end{align*}
	in probability.
\end{prop}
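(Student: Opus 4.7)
The plan is to adapt the Addario-Berry--Wen strategy: discretize $(Z,\delta)$ at a small scale and use exchangeability to compare the mass of $\boldnu_n$ on each piece with the corresponding fraction of the total mass $|\boldnu_n|$. Fix $\epsilon>0$. By compactness of $(Z,\delta)$, choose a finite Borel partition $Z=\bigsqcup_{j=1}^{k}P_j$ with $\diam(P_j)<\epsilon$ for every $j$; the integer $k$ depends on $\epsilon$ but not on $n$. Writing $N_n=|D_n|$, $m_{n,j}=|D_n\cap P_j|$ and $S_{n,j}=\boldnu_n(P_j)$, the whole proof reduces to showing that the discrepancy
\begin{align*}
\Delta_n := \sum_{j=1}^{k}\Bigl|S_{n,j} - \tfrac{m_{n,j}}{N_n}\cdot|\boldnu_n|\Bigr|
\end{align*}
tends to zero in probability.

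The key step is a variance computation. Let $\Gcal$ be the $\sigma$-field generated by $(N_n, m_{n,1},\ldots,m_{n,k})$ and by the unordered multiset $\{\boldnu_n(x):x\in D_n\}$. By exchangeability, the weights are assigned to the points of $D_n$ by a uniform random permutation, so conditionally on $\Gcal$, each $S_{n,j}$ has the distribution of the sum of $m_{n,j}$ weights drawn uniformly without replacement from the full multiset. The standard formulas yield $\condExpect{S_{n,j}}{\Gcal}=(m_{n,j}/N_n)|\boldnu_n|$ together with
\begin{align*}
\Var(S_{n,j}\mid\Gcal)\;\leq\;m_{n,j}\cdot\frac{1}{N_n}\sum_{x\in D_n}\boldnu_n(x)^{2}\;\leq\;m_{n,j}\cdot\max_x\boldnu_n(x)\cdot\frac{|\boldnu_n|}{N_n},
\end{align*}
and then Jensen's inequality together with Cauchy--Schwarz (using $\sum_j\sqrt{m_{n,j}}\leq\sqrt{kN_n}$) give
\begin{align*}
\condExpect{\Delta_n}{\Gcal}\;\leq\;\sum_{j=1}^{k}\sqrt{\Var(S_{n,j}\mid\Gcal)}\;\leq\;\sqrt{k\cdot\max_x\boldnu_n(x)\cdot|\boldnu_n|}\,.
\end{align*}

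Provided $(|\boldnu_n|)_{n\geq1}$ is tight---which is the setting of the intended application, where $|\boldnu_n|\leq 1$---the right-hand side tends to zero in probability by the hypothesis on $\max_x\boldnu_n(x)$, and hence so does $\Delta_n$ by Markov's inequality. To conclude, for any Borel set $A\subset Z$, let $J_A=\{j:P_j\cap A\neq\emptyset\}$; since $\diam(P_j)<\epsilon$ we have $\bigcup_{j\in J_A}P_j\subset A^\epsilon$, and therefore
\begin{align*}
\boldnu_n(A)\;\leq\;\sum_{j\in J_A}S_{n,j}\;\leq\;|\boldnu_n|\cdot\rmUnif_{D_n}(A^\epsilon)+\Delta_n.
\end{align*}
The reverse inequality $|\boldnu_n|\cdot\rmUnif_{D_n}(A)\leq\boldnu_n(A^\epsilon)+\Delta_n$ follows by the same partition argument with the roles of the two measures swapped. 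Thus $\dP(\boldnu_n,|\boldnu_n|\cdot\rmUnif_{D_n})\leq\max(\epsilon,\Delta_n)$, which is at most $\epsilon$ with probability tending to one; since $\epsilon>0$ was arbitrary, the proposition follows.

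The main obstacle is the concentration step: the variance bound must interact well with the randomness of the multiset of weights and remain meaningful in the regime where $\max_x\boldnu_n(x)\to 0$. The elementary second-moment computation above bypasses the need for any sharper exchangeable concentration inequality, which is presumably the source of the error in the original Addario-Berry--Wen proof flagged by the author.
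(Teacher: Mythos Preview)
Your proof is correct and follows essentially the same approach as the paper: discretize $(Z,\delta)$ at scale $\epsilon$, bound the Prokhorov distance by the $\ell^1$-discrepancy on the partition (the paper's coarse-graining Lemma~\ref{lem:coarse-graining-and-dP}), and control that discrepancy by a second-moment computation exploiting exchangeability. The only difference is packaging---you condition on the multiset of weights and invoke the sampling-without-replacement variance formula directly, whereas the paper isolates this as a separate variance lemma (Proposition~\ref{prop:variance-bound} and its corollary) and then applies Chebyshev with a union bound; your explicit flagging of the tightness assumption on $|\boldnu_n|$ is likewise matched by the paper, which tacitly assumes probability measures at the final step.
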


The remainder of this section is dedicated to the proof of Proposition~\ref{prop:Add-Wen-exch-argument}.
We first need the following lemma, which tells that the Prokhorov distance between two measures can be bounded by the total variation distance between suitable ``coarse-grained'' versions of the measures.

\begin{lemma}\label{lem:coarse-graining-and-dP}
	Let $\epsilon>0$, and consider a finite partition $Z=\bigsqcup_{i=1}^{K_\epsilon} Z_{i,\epsilon}$ into Borel subsets of the compact space $Z$, such that $\max_i \diam(Z_{i,\epsilon})<\epsilon$.
	Then for any two finite Borel measures $\mu,\nu$ on $Z$, we have:
	\begin{align}
	\dP(\mu,\nu)\leq \epsilon+\sum_{1\leq i\leq K_\epsilon}|\mu(Z_{i,\epsilon})-\nu(Z_{i,\epsilon})|.
	\end{align}
\end{lemma}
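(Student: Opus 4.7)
The plan is to work directly from the definition of the Prokhorov distance. Write $\delta = \epsilon + \sum_{i=1}^{K_\epsilon} |\mu(Z_{i,\epsilon})-\nu(Z_{i,\epsilon})|$; I want to show that for every Borel set $A \subset Z$ we have both $\mu(A) \leq \nu(A^\delta) + \delta$ and $\nu(A) \leq \mu(A^\delta) + \delta$, which by the definition of $\dP$ immediately yields $\dP(\mu,\nu) \leq \delta$. By symmetry it is enough to prove the first inequality.

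The key observation is the following: fix a Borel set $A$ and let $I(A) = \{i \in \{1,\dots,K_\epsilon\} : Z_{i,\epsilon} \cap A \neq \emptyset\}$. Because $\diam(Z_{i,\epsilon}) < \epsilon$, every point $z \in Z_{i,\epsilon}$ with $i \in I(A)$ lies at $\delta$-distance strictly less than $\epsilon$ from some point of $A$, and therefore $Z_{i,\epsilon} \subset A^\epsilon$ for each $i \in I(A)$. Since the $(Z_{i,\epsilon})_i$ form a partition of $Z$, this in particular gives
\begin{align*}
\mu(A) \;\leq\; \sum_{i \in I(A)} \mu(Z_{i,\epsilon})
\qquad\text{and}\qquad
\sum_{i \in I(A)} \nu(Z_{i,\epsilon}) \;=\; \nu\!\Bigl(\bigsqcup_{i \in I(A)} Z_{i,\epsilon}\Bigr) \;\leq\; \nu(A^\epsilon).
\end{align*}

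Combining these two bounds with the trivial inequality $\sum_{i \in I(A)} \mu(Z_{i,\epsilon}) \leq \sum_{i \in I(A)} \nu(Z_{i,\epsilon}) + \sum_i |\mu(Z_{i,\epsilon}) - \nu(Z_{i,\epsilon})|$ yields
\begin{align*}
\mu(A) \;\leq\; \nu(A^\epsilon) + \sum_{i=1}^{K_\epsilon} |\mu(Z_{i,\epsilon}) - \nu(Z_{i,\epsilon})| \;\leq\; \nu(A^\delta) + \delta,
\end{align*}
where in the last step we used $\epsilon \leq \delta$ (so $A^\epsilon \subset A^\delta$ and $\nu(A^\epsilon) \leq \nu(A^\delta)$) and the definition of $\delta$. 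The symmetric inequality is obtained by swapping the roles of $\mu$ and $\nu$ in the argument. Since there are no real obstacles here beyond carefully unpacking the definitions, the main thing to get right is the direction of inclusions and the passage from the ``$\epsilon$-enlargement'' to the ``$\delta$-enlargement'' so that $\delta$ plays simultaneously the role of the enlargement parameter and the additive error in the Prokhorov inequality.
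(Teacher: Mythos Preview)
Your proof is correct and follows essentially the same approach as the paper's: both arguments use the key observation that $Z_{i,\epsilon}\cap A\neq\emptyset$ implies $Z_{i,\epsilon}\subset A^\epsilon$, and then pass between $\mu$ and $\nu$ at the cost of $\sum_i|\mu(Z_{i,\epsilon})-\nu(Z_{i,\epsilon})|$. The only cosmetic difference is that the paper bounds $\mu(A^\epsilon)$ from below to get $\nu(A)\leq\mu(A^\epsilon)+\Delta$, whereas you bound $\mu(A)$ from above to get $\mu(A)\leq\nu(A^\epsilon)+\Delta$; and you spell out the final passage from $A^\epsilon$ to $A^\delta$ explicitly. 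One small remark: you use $\delta$ both for the quantity $\epsilon+\sum_i|\mu(Z_{i,\epsilon})-\nu(Z_{i,\epsilon})|$ and (in ``at $\delta$-distance'') apparently for the ambient metric, which is a minor notational clash worth cleaning up.
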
 

\begin{proof}
	For all Borel subset $A\subset Z$, using that the $(Z_{i,\epsilon})_i$ partition $Z$ and that $\max_i \diam(Z_{i,\epsilon})<\epsilon$, we have:
	\begin{align*}
	\mu(A^{\epsilon})
		=		\sum_i \mu(A^{\epsilon}\cap Z_{i,\epsilon})
		&\geq 	\sum_i \mu(Z_{i,\epsilon})\indic{\{ Z_{i,\epsilon}\subset A^{\epsilon}\}}
		\geq	\sum_i \mu(Z_{i,\epsilon})\indic{\{A\cap Z_{i,\epsilon}\neq\emptyset\}}.
	\end{align*}
	If we let $\Delta=\sum_i|\mu(Z_{i,\epsilon})-\nu(Z_{i,\epsilon})|$, then we can lower bound further by:
	\begin{align*}
	\left(\sum_i \nu(Z_{i,\epsilon})\indic{\{A\cap Z_{i,\epsilon}\neq\emptyset\}}\right)-\Delta
		&\geq	\left(\sum_i \nu(A\cap Z_{i,\epsilon})\indic{\{A\cap Z_{i,\epsilon}\neq\emptyset\}}\right)-\Delta\\
		&= 	\nu(A)-\Delta.
	\end{align*}
	Hence, $\mu(A^{\epsilon})\geq \nu(A)-\Delta$, and also $\nu(A^{\epsilon})\geq \mu(A)-\Delta$ by symmetry. 
	Since this holds for all $A$, we get $\dP(\mu,\nu)\leq \epsilon +\Delta$.
\end{proof}

Next, we will need the following concentration result which is proven in Section~\ref{subsec:variance-inequ-exch}.
It is used as a replacement for Addario-Berry and Wen's Lemma~5.3 in \cite{AddarioBerryWen17}, see Remark~\ref{rem:problem-Addario-Berry-Wen}.

\begin{lemma}
\label{lem:reproduced-cor:concentration-exch-proba-measure}
	Let $X$ be a non-empty finite set and let $\boldnu$ be a random probability measure on $X$ such that the collection $(\boldnu(x))_{x\in X}$ is exchangeable.
	Then the inequality
	\begin{align*}
	\Var\Bigl(\boldnu(A)\Bigr)\leq \frac{|A|}{|X|}\cdot\Expect{\max_{x\in X}\boldnu(x)}
	\end{align*}
	holds for all $A\subset X$.
\end{lemma}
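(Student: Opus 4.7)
The plan is to exploit exchangeability to reduce $\Var(\boldnu(A))$ to a single ``per-coordinate'' second moment, then bound that moment by the expected maximum using $\sum_x \boldnu(x)^2 \leq (\max_x \boldnu(x))\cdot\sum_x \boldnu(x)=\max_x\boldnu(x)$. Write $n=|X|$, $k=|A|$, and pick distinct points $x_1,x_2\in X$. By exchangeability, all marginals $\boldnu(x)$ share a common variance $\sigma^2:=\Var(\boldnu(x_1))$ and all pairs share a common covariance $c:=\Cov(\boldnu(x_1),\boldnu(x_2))$; moreover $\E[\boldnu(x)]=1/n$ for every $x$ since $\sum_x\boldnu(x)=1$.

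Next, I would compute $\Var(\boldnu(A))$ via bilinearity. Since $\sum_{x\in X}\boldnu(x)=1$ is deterministic, its variance vanishes, giving $0=n\sigma^2+n(n-1)c$, hence $c=-\sigma^2/(n-1)$. Therefore
\begin{align*}
\Var(\boldnu(A))\;=\;k\sigma^2+k(k-1)c\;=\;k\sigma^2\cdot\frac{n-k}{n-1}\;\leq\; k\sigma^2.
\end{align*}

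It remains to bound $\sigma^2$. Using again that all the $\boldnu(x)^2$ share a common expectation, I would write
\begin{align*}
\sigma^2\;\leq\;\E[\boldnu(x_1)^2]\;=\;\frac{1}{n}\,\E\!\left[\sum_{x\in X}\boldnu(x)^2\right]\;\leq\;\frac{1}{n}\,\E\!\left[\max_{x\in X}\boldnu(x)\cdot\sum_{x\in X}\boldnu(x)\right]\;=\;\frac{1}{n}\,\E\!\left[\max_{x\in X}\boldnu(x)\right],
\end{align*}
using $\sum_x\boldnu(x)=1$ in the last step. Combining the two displays yields $\Var(\boldnu(A))\leq (k/n)\,\E[\max_{x}\boldnu(x)]$, which is exactly the claim.

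There is no real obstacle here: the only ingredients are the exchangeability-induced symmetry of the covariance matrix, the constraint that the measure is a \emph{probability} measure (used both to pin down the mean and to derive $c$ from $\Var(1)=0$), and the elementary bound $\sum_x\boldnu(x)^2\leq\max_x\boldnu(x)$. The factor $(n-k)/(n-1)\leq 1$ is discarded for simplicity, but note that keeping it would give the slightly sharper $\frac{|A|(|X|-|A|)}{|X|(|X|-1)}\E[\max_x\boldnu(x)]$, which is the natural ``hypergeometric'' shape one expects from exchangeable sampling on $X$.
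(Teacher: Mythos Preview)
Your proof is correct. The key ingredients---exchangeability forcing a common variance and covariance, the constraint $\sum_x\boldnu(x)=1$ pinning down the covariance as negative, and the bound $\sum_x\boldnu(x)^2\leq\max_x\boldnu(x)$---are exactly the ones the paper uses. The only difference is in packaging: the paper first proves a more general variance bound (its Proposition~\ref{prop:variance-bound}) for arbitrary exchangeable vectors $(\xi_1,\dots,\xi_n)$, via the conditional variance decomposition with respect to the empirical distribution $\pclass{\boldxi}$ and a conditional negative-covariance lemma, and then specializes to the probability-measure case. Your argument is a more direct and elementary path to the specific lemma: you read off the exact covariance $c=-\sigma^2/(n-1)$ from $\Var(1)=0$ and get the closed form $\Var(\boldnu(A))=k\sigma^2(n-k)/(n-1)$ in one line. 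The paper's route buys the extra generality of Proposition~\ref{prop:variance-bound} (which covers exchangeable vectors whose sum is not deterministic), while yours is shorter and even exhibits the sharper factor $(n-k)/(n-1)$ explicitly. One minor quibble: your choice of distinct $x_1,x_2$ and the formula for $c$ tacitly assume $n\geq 2$; the case $n=1$ is trivial but should be mentioned.
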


We can now prove Proposition~\ref{prop:Add-Wen-exch-argument}.

\begin{proof}[Proof of Proposition~\ref{prop:Add-Wen-exch-argument}]
	Let $\epsilon>0$.
	Since $Z$ is a compact space, one can easily construct a finite partition $Z=\bigsqcup_{i=1}^{K_\epsilon} Z_{i,\epsilon}$ into Borel subsets, in such a way that $\max_i \diam(Z_{i,\epsilon})<\epsilon$.
	Fix $n\geq1$.
	By Lemma~\ref{lem:coarse-graining-and-dP} we have:
	\begin{align*}
	\dP\Bigl(\boldnu_n,|\boldnu_n|\cdot\rmUnif_{D_n}\Bigr)
		&\leq \epsilon+\sum_{1\leq i\leq K_\epsilon}\Bigl|\boldnu_n(Z_{i,\epsilon})-|\boldnu_n|\cdot\rmUnif_{D_n}(Z_{i,\epsilon})\Bigr|\\
		&=	\epsilon+\sum_{1\leq i\leq K_\epsilon}\Bigl|\boldnu_n(Z_{i,\epsilon})-\Expect{\boldnu_n(Z_{i,\epsilon})}\Bigr|,
	\end{align*}
	The last equality can be justified as follows: from the exchangeability of the collection $(\boldnu_n(x))_{x\in D_n}$, we have $$\Expect{|\boldnu_n|}=\Expect{\sum_{x\in D_n} \boldnu_n(x)}=|D_n|\cdot\Expect{\boldnu(x_0)}$$ for every $x_0\in D_n$, and therefore $\Expect{\boldnu_n(Z_{i,\epsilon})}=\Expect{\sum_{x\in Z_{i,\epsilon}}\boldnu_n(x)}=|Z_{i,\epsilon}|\cdot|\boldnu_n|/|D_n|$, that is $|\boldnu_n|\cdot\rmUnif_{D_n}(Z_{i,\epsilon})$.
	In particular, from the last display and Chebychev's inequality we deduce the bound:
	\begin{multline*}
	\Prob{\dP\Bigl(\boldnu_n,|\boldnu_n|\cdot\rmUnif_{D_n}\Bigr)\geq 2\epsilon}\\
	\begin{aligned}[t]
		&\leq	\Prob{ \sum_{1\leq i\leq K_\epsilon}\Bigl|\boldnu_n(Z_{i,\epsilon})-\Expect{\boldnu_n(Z_{i,\epsilon})}\Bigr|\geq\epsilon }\\
		&\leq 	\sum_{1\leq i\leq K_\epsilon}\Prob{ \Bigl|\boldnu_n(Z_{i,\epsilon})-\Expect{\boldnu_n(Z_{i,\epsilon})}\Bigr|\geq\epsilon/K_\epsilon }\\
		&\leq	\sum_{1\leq i\leq K_\epsilon} (\epsilon/K_\epsilon)^{-2}\cdot\Var\Bigl(\boldnu_n(Z_{i,\epsilon})\Bigr).
	\end{aligned}
	\end{multline*}
	Hence, using Lemma~\ref{lem:reproduced-cor:concentration-exch-proba-measure} to bound the above variances, we finally obtain:
	\begin{align*}
	\Prob{\dP\Bigl(\boldnu_n,|\boldnu_n|\cdot\rmUnif_{D_n}\Bigr)\geq 2\epsilon}
		\leq K_\epsilon \cdot(\epsilon/K_\epsilon)^{-2} \cdot\Expect{\max_{x\in D_n} \boldnu_n(x)}.
	\end{align*}
	We have by assumption $\max_{x\in D_n} \boldnu_n(x)\to 0$ in probability.
	We also trivially have $\max_{x\in D_n} \boldnu_n(x)\leq 1$ since $\boldnu_n$ is a probability measure, so that the convergence $\max_{x\in D_n} \boldnu_n(x)\to 0$ holds in the $L^1$ sense.
	Hence, the right-hand side in the last display goes to zero as $n\to\infty$ and the result follows.
\end{proof}

\begin{rem}\label{rem:problem-Addario-Berry-Wen}
	Instead of the variance estimate in Lemma~\ref{lem:reproduced-cor:concentration-exch-proba-measure}, Addario-Berry and Wen use the concentration bound \cite[Lemma~5.3]{AddarioBerryWen17}, which seems to contain an error.
	In substance, the claimed concentration bound amounts to saying%
		\footnote{
			The bound \eqref{eq:bound-add-wen} is a re-writing (with different notation) of the bound at the end of \cite[p.~1908]{AddarioBerryWen17}.
		}
	that if we are given an exchangeable non-negative random vector $\boldX=(X_1,\dots,X_n)$ with $\lnorm2\boldX>0$, then for all $1\leq k\leq n$ and all $t>0$ we have
	\begin{align}\label{eq:bound-add-wen}
	\condProb{\Bigl|\sum_{i=1}^k (X_i-\Expect{X_i})\Bigr|>2t}{\lnorm2\boldX}\leq 2\exp\left(-\frac{2 t^2}{\lnorm2\boldX^2}\right).
	\end{align}
	As a counter-example to this inequality, let $Y$ be an arbitrary non-constant random variable on the positive reals.
	For $n\geq1$, if we consider the exchangeable vector $(X_1,\dots,X_n)=(Y,Y,\dots,Y)$, then $\lnorm2\boldX=\sqrt n Y$.
	Hence the left-hand side in the last display is the indicator of the event $\{k\cdot|Y-\Expect{Y}|>2t\}$, while the right-hand side equals $2\exp(-2t^2/(nY^2))$.
	For every $\epsilon>0$ and $n\geq1$, upon setting $k=n$ and $t=\epsilon n$ in \eqref{eq:bound-add-wen} we would thus get:
	\begin{align*}
	\indic{\{|Y-\Expect{Y}|>2\epsilon\}}\leq 2\exp\left(-{2n\epsilon^2}/{Y^2}\right).
	\end{align*}
	The latter goes to zero as $n\to\infty$ so that $Y$ is almost surely equal to its mean, hence a contradiction.
\end{rem}

\subsection{A variance bound for exchangeable random vectors}
\label{subsec:variance-inequ-exch}

For $n\geq1$ and $\y=(y_1,\dots,y_n)\in\R^n$, we let:
\begin{align*}
\lnorm{1}{\y}=\sum_{i=1}^n y_i,
\qquad
\lnorm{2}{\y}=\biggl(\sum_{i=1}^n y_i^2\biggr)^{1/2},
\qquad
\lnorm{\infty}{\y}=\max_{i=1,\dots,n}y_i.
\end{align*}
Our goal in this section is to prove Lemma~\ref{lem:reproduced-cor:concentration-exch-proba-measure}.
We will prove in fact the following more general statement.

\begin{prop}\label{prop:variance-bound}
	Let $n\geq1$, and let $\boldxi=(\xi_1,\xi_2,\dots,\xi_n)\in\R^n$ be an exchangeable random vector such that $\Expect{\lnorm2\boldxi^2}<\infty$.
	We let $S_k=\xi_1+\dots+\xi_k$ for $1\leq k\leq n$.
	Then for all $1\leq k\leq n$, we have:
	\begin{align*}
	\Var(S_k)\leq \frac{k}{n}\cdot \Expect{\lnorm2\boldxi^2\Bigm.\!}
	+\frac{k^2}{n^2}\cdot\Var(S_n).
	\end{align*}
\end{prop}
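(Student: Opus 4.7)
My plan is to use the classical trick of reducing exchangeability to a ``simple random sample'' via an auxiliary uniform permutation, then apply the law of total variance. The computation will be short once set up correctly.

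Concretely, I will introduce a uniformly random permutation $\sigma\in\perm_n$ independent of $\boldxi$, and set $T_k=\xi_{\sigma(1)}+\dots+\xi_{\sigma(k)}$. By exchangeability of $\boldxi$, the pair $(\boldxi,\sigma)$ induces the same distribution on $T_k$ as on $S_k$, so $\Var(S_k)=\Var(T_k)$, and it suffices to bound $\Var(T_k)$. The point of rewriting $S_k$ as $T_k$ is that, conditionally on $\boldxi$, the variable $T_k$ is a \emph{sampling without replacement} of size $k$ from a fixed population of size $n$, which has an explicit mean and variance.

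Next, I will apply the law of total variance conditioning on $\boldxi$:
\begin{align*}
\Var(T_k)=\Expect{\Var(T_k\mid\boldxi)}+\Var\!\bigl(\Expect{T_k\mid\boldxi}\bigr).
\end{align*}
Conditionally on $\boldxi$, each $\xi_{\sigma(j)}$ has conditional mean $S_n/n$, so $\Expect{T_k\mid\boldxi}=(k/n)S_n$, giving $\Var(\Expect{T_k\mid\boldxi})=(k/n)^2\Var(S_n)$, which is exactly the second term in the target bound. For the first term, a direct computation with the sampling-without-replacement covariances $\Prob{\sigma^{-1}(i)\leq k,\,\sigma^{-1}(j)\leq k}=k(k-1)/(n(n-1))$ for $i\neq j$ yields the identity
\begin{align*}
\Var(T_k\mid\boldxi)=\frac{k(n-k)}{n^2(n-1)}\,\bigl(n\,\lnorm2\boldxi^2-S_n^2\bigr).
\end{align*}

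The last step is a pointwise bound on this conditional variance. By the Cauchy--Schwarz inequality $S_n^2\leq n\,\lnorm2\boldxi^2$, so the parenthesis is non-negative and at most $n\,\lnorm2\boldxi^2$; together with $(n-k)/(n-1)\leq 1$, this gives $\Var(T_k\mid\boldxi)\leq (k/n)\,\lnorm2\boldxi^2$. Taking expectations and summing the two contributions in the law of total variance yields exactly the claimed inequality. I do not foresee a genuine obstacle: the only slight subtlety is remembering to pass from $S_k$ to $T_k$ via the independent permutation, since without that step the law of total variance conditioned on $\boldxi$ would be trivial ($S_k$ is measurable with respect to $\boldxi$) and would not produce the desired separation between $\lnorm2\boldxi^2$ and $\Var(S_n)$.
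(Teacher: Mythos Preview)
Your proof is correct and follows essentially the same approach as the paper: both apply the law of total variance after conditioning on the multiset of values of $\boldxi$, identify the outer term as $(k/n)^2\Var(S_n)$, and bound the inner conditional variance by $(k/n)\lnorm2\boldxi^2$ via Cauchy--Schwarz. The only cosmetic difference is that the paper conditions directly on the permutation class $\pclass{\boldxi}$ and reaches the bound by dropping the (provably negative) off-diagonal conditional covariances, whereas you introduce an auxiliary independent permutation and compute the exact sampling-without-replacement variance before bounding it; these are two presentations of the same argument.
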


Before turning to the proof of Proposition~\ref{prop:variance-bound}, let us show that Lemma~\ref{lem:reproduced-cor:concentration-exch-proba-measure} indeed follows from it.

\begin{proof}[Proof of Lemma~\ref{lem:reproduced-cor:concentration-exch-proba-measure}.]
	Let $A\subseteq X$, say $A\neq\emptyset$ to avoid trivialities, and set $k=|A|$ and $n=|X|$.
	We enumerate the vertices of $X$ as $(x_1,\dots,x_n)$, in such a way that $A=\{x_1,\dots,x_k\}$.
	If we set $\xi_\ell=\boldnu(x_\ell)$ for $1\leq\ell\leq n$, then by construction the vector $\boldxi=(\xi_1,\dots,\xi_n)$ is exchangeable and $\boldnu(\{x_1,\dots,x_\ell\})$ equals $S_\ell=\xi_1+\dots+\xi_\ell$ for every $1\leq\ell\leq n$.
	Hence, by Proposition~\ref{prop:variance-bound}, we have:
	\begin{align*}
	\Var(\boldnu(A))
		=		\Var(S_k)
		\leq 	\frac{k}{n}\cdot \Expect{\lnorm2\boldxi^2\Bigm.\!}+\Var(S_n)
		&\leq	\frac{k}{n}\cdot\Expect{\lnorm\infty\boldxi\Bigm.\!}+0\\
		&=		\frac{|A|}{|X|}\cdot\Expect{\max_{x\in X}\boldnu(x)},
	\end{align*}
	where we used that $\boldnu$ is a probability measure, so that on the one hand $\lnorm2\boldxi^2\leq \lnorm1\boldxi\cdot\lnorm\infty\boldxi=1\cdot\lnorm\infty\boldxi$, and on the other hand $\Var(S_n)=0$ since $S_n=1$ almost surely.
	Since $A$ is an arbitrary subset of $X$, the result  follows.
\end{proof}

The remainder of Section~\ref{subsec:variance-inequ-exch} is dedicated to the proof of Proposition~\ref{prop:variance-bound}.

We recall that given $(\Omega,\Fcal,\P)$ a probability space and $\Gcal\subset\Fcal$ a $\sigma$-field, the conditional covariance of two random variables $X,Y\in L^2(\Omega,\Fcal,\P)$ is the random variable $\Cov(X,Y|\Gcal)\in L^1(\Omega,\Gcal,\P)$ given by
\begin{align*}
\Cov(X,Y|\Gcal)
	&=	\condExpect{
			\bigl(X-\condExpect{X}{\Gcal}\bigr)\cdot\bigl(Y-\condExpect{Y}{\Gcal}\bigr)	\Big.\!
		}{\Gcal}\\
	&=	\condExpect{X\cdot Y}{\Gcal}-\condExpect{X}{\Gcal}\cdot\condExpect{Y}{\Gcal}.
\end{align*}
The conditional covariance is linked with the unconditional covariance \textit{via}:
\begin{align}\label{eq:cov-vs-conditional-cov}
\Cov(X,Y)=\Expect{\Cov(X,Y|\Gcal)\Bigm.\!} + \Cov\Bigl(\condExpect{X}{\Gcal},\condExpect{Y}{\Gcal}\Bigr).
\end{align}
In the following lemma, we make the simple observation that when conditioning with respect to the ``empirical distribution'' of an exchangeable vector $\boldxi$, conditional expectations can be expressed as ``empirical expectations''.
More precisely, given $\boldxi=(\xi_1,\dots,\xi_n)\in\R^n$, we denote by $\pclass{\boldxi}$ the class of $\boldxi$ modulo the action of $\perm_n$ by permutation of coordinates.
Note that for $\boldxi\in\R^n$, the class $\pclass{\boldxi}$ depends only on the empirical distribution $\sum_i \delta_{\xi_i}(\diff y)$ of $\boldxi$.
Then we have the following.

\begin{lemma}\label{lem:expr-expect-exch}
	Let $n\geq1$, and let $\boldxi=(\xi_1,\xi_2,\dots,\xi_n)\in\R^n$ be an exchangeable random vector.
	Then, for any measurable functions $f\colon \R\to \R$ and $g\colon\R^2\to\R$ such that $f(\xi_1)$ and $g(\xi_1,\xi_2)$ are integrable, we have:
	\begin{align*}
	\condExpect{f(\xi_k)}{\pclass{\boldxi}}
		&=\frac{1}{n}\sum_i f(\xi_i),	& k&\in\{1,\dots,n\}\\
	\condExpect{g(\xi_k,\xi_\ell)}{\pclass{\boldxi}}
		&=\frac{1}{n(n-1)}\sum_{i\neq j} g(\xi_i, \xi_j), &  k,\ell&\in\{1,\dots,n\}, \quad k\neq\ell.
	\end{align*}
\end{lemma}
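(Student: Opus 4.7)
The plan is to exploit the fact that the equivalence class $\pclass{\boldxi}$ is invariant under permutations of coordinates: since $\pclass{\sigma\cdot\boldxi}=\pclass{\boldxi}$ for every $\sigma\in\perm_n$, and since exchangeability tells us that $\sigma\cdot\boldxi$ has the same joint law as $\boldxi$, the conditional distribution of $\boldxi$ given $\pclass{\boldxi}$ should be the same as the conditional distribution of $\sigma\cdot\boldxi$ given $\pclass{\boldxi}$. Averaging such invariant conditional expectations over $\sigma$ then forces the conditional expectation to coincide with an empirical mean, which itself is $\pclass{\boldxi}$-measurable.

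The first step is to show that for any bounded measurable $h\colon\R^n\to\R$ and any $\sigma\in\perm_n$, we have $\condExpect{h(\boldxi)}{\pclass{\boldxi}}=\condExpect{h(\sigma\cdot\boldxi)}{\pclass{\boldxi}}$ almost surely. To see this, let $G$ be any bounded measurable function on the quotient $\R^n/\perm_n$. Using the identity $\pclass{\sigma\cdot\boldxi}=\pclass{\boldxi}$ together with the fact that $\sigma\cdot\boldxi\overset{(d)}{=}\boldxi$, one computes
\begin{align*}
\Expect{h(\boldxi)\,G(\pclass{\boldxi})}
=\Expect{h(\sigma\cdot\boldxi)\,G(\pclass{\sigma\cdot\boldxi})}
=\Expect{h(\sigma\cdot\boldxi)\,G(\pclass{\boldxi})},
\end{align*}
which, by the defining property of conditional expectation, gives the claimed identity.

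The second step is to average over $\sigma\in\perm_n$: by linearity,
\begin{align*}
\condExpect{h(\boldxi)}{\pclass{\boldxi}}
=\condExpect{\frac{1}{n!}\sum_{\sigma\in\perm_n}h(\sigma\cdot\boldxi)}{\pclass{\boldxi}}.
\end{align*}
The inner random variable is invariant under permutations of the coordinates of $\boldxi$, hence $\pclass{\boldxi}$-measurable, so the outer conditional expectation disappears and we conclude that $\condExpect{h(\boldxi)}{\pclass{\boldxi}}=\frac{1}{n!}\sum_{\sigma}h(\sigma\cdot\boldxi)$. Taking $h(\boldxi)=f(\xi_k)$, each term $f(\xi_i)$ appears $(n-1)!$ times (one for each $\sigma$ with $\sigma(k)=i$), yielding the first identity $\frac{1}{n}\sum_i f(\xi_i)$. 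Taking $h(\boldxi)=g(\xi_k,\xi_\ell)$ with $k\neq\ell$, each pair $g(\xi_i,\xi_j)$ with $i\neq j$ appears $(n-2)!$ times, yielding $\frac{1}{n(n-1)}\sum_{i\neq j}g(\xi_i,\xi_j)$.

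There is no real obstacle here; the only minor point is the integrability condition, since strictly speaking the identity in the first step is first established for bounded $h$ and must then be extended to $h(\boldxi)=f(\xi_k)$ or $h(\boldxi)=g(\xi_k,\xi_\ell)$. But the integrability of $f(\xi_1)$ (resp.\ $g(\xi_1,\xi_2)$) together with exchangeability ensures that every $f(\xi_i)$ (resp.\ $g(\xi_i,\xi_j)$) is integrable, and a standard truncation argument combined with dominated convergence carries the identity from bounded to integrable $h$.
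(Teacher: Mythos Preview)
Your proof is correct and follows essentially the same approach as the paper: both arguments use that conditioning on $\pclass{\boldxi}$ preserves exchangeability (you verify this explicitly by testing against $G(\pclass{\boldxi})$, the paper states it as a consequence of the definition), then average over permutations and note that the resulting symmetric sum is $\pclass{\boldxi}$-measurable so the conditional expectation drops. The only cosmetic difference is that you average over all of $\perm_n$ whereas the paper averages directly over the $n$ indices (resp.\ $n(n-1)$ ordered pairs), which amounts to the same computation.
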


\begin{proof}
	We fix $k,\ell\in\{1,\dots,n\}$ such that $k\neq \ell$.
	By definition of $\boldxi\mapsto\pclass{\boldxi}$, conditionally on $\pclass{\boldxi}$, the vector $\boldxi$ is still exchangeable.
	Hence, conditionally on $\pclass{\boldxi}$, the pair $(\xi_k,\xi_\ell)$ has the same distribution as $(\xi_i,\xi_j)$ for every pair $(i,j)\in\{1,\dots,n\}^2$ with $i\neq j$, so that:
	\begin{align*}
	\condExpect{g(\xi_k,\xi_\ell)\Bigm.\!}{\pclass{\boldxi}}
	&=\frac{1}{n(n-1)}\sum_{i\neq j}	\condExpect{g(\xi_i,\xi_j)\Bigm.\!}{\pclass{\boldxi}}\\
	&=\frac{1}{n(n-1)} \condExpect{\sum_{i\neq j} g(\xi_i,\xi_j)}{\pclass{\boldxi}}\\
	&=\frac{1}{n(n-1)} \sum_{i\neq j} g(\xi_i,\xi_j),
	\end{align*}
	where the last equality uses that $\sum_{i\neq j} g(\xi_i,\xi_j)$ is a $\pclass{\boldxi}$-measurable random variable since it is defined as a permutation-invariant function of $(\xi_1,\dots,\xi_n)$.
	Similarly, conditionally on $\pclass{\boldxi}$, the random variable $\xi_k$ has the same distribution as $\xi_i$ for every $1\leq i\leq n$, so that:
	\begin{align*}
	\condExpect{f(\xi_k)}{\pclass{\boldxi}}
		=\frac{1}{n}\sum_i\condExpect{f(\xi_i)}{\pclass{\boldxi}}
		=\frac{1}{n}\condExpect{\sum_i f(\xi_i)}{\pclass{\boldxi}}
		=\frac{1}{n}\sum_i f(\xi_i),
	\end{align*}
	which concludes the proof.
\end{proof}

In particular, we obtain as an easy corollary the following negative dependence statement. 

\begin{cor}\label{cor:neg-cor-exch-given-values}
	Let $n\geq1$, and let $\boldxi=(\xi_1,\xi_2,\dots,\xi_n)\in\R^n$ be an exchangeable random vector such that $\xi_1$ possesses a second moment.
	Then, we have almost surely:
	\begin{align*}
	\Cov\Bigl(\xi_k,\xi_\ell\bigm|\pclass{\boldxi}\Bigr)\leq 0, &&  k,\ell\in\{1,\dots,n\}, \quad k\neq\ell.
	\end{align*}
\end{cor}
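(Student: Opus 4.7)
The plan is to directly expand the conditional covariance using Lemma~\ref{lem:expr-expect-exch}, and then reduce the problem to an application of Cauchy--Schwarz.

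More precisely, for fixed $k\neq\ell$, I would write
\begin{align*}
\Cov\Bigl(\xi_k,\xi_\ell\bigm|\pclass{\boldxi}\Bigr)
=\condExpect{\xi_k\xi_\ell}{\pclass{\boldxi}}-\condExpect{\xi_k}{\pclass{\boldxi}}\cdot\condExpect{\xi_\ell}{\pclass{\boldxi}},
\end{align*}
and apply Lemma~\ref{lem:expr-expect-exch} to each of the three conditional expectations (with $f(x)=x$ and $g(x,y)=xy$). Setting $S=\sum_i\xi_i$ and $T=\sum_i\xi_i^2$, the right-hand side becomes
\begin{align*}
\frac{1}{n(n-1)}\sum_{i\neq j}\xi_i\xi_j-\frac{1}{n^2}S^2
=\frac{S^2-T}{n(n-1)}-\frac{S^2}{n^2}
=\frac{S^2-nT}{n^2(n-1)},
\end{align*}
using that $\sum_{i\neq j}\xi_i\xi_j=S^2-T$.

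To conclude, I would invoke the Cauchy--Schwarz inequality in the form $S^2=\bigl(\sum_i\xi_i\bigr)^2\leq n\sum_i\xi_i^2=nT$, so the numerator above is nonpositive almost surely. This yields the claimed inequality. There is no real obstacle here: the whole argument is a short computation, the only substantive input being the ``empirical expectation'' formulas of Lemma~\ref{lem:expr-expect-exch}, which themselves encode all the exchangeability that is needed.
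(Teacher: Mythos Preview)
Your proof is correct and follows essentially the same approach as the paper: both expand the conditional covariance via Lemma~\ref{lem:expr-expect-exch} and conclude with the Cauchy--Schwarz inequality $S^2\le nT$. The only difference is cosmetic bookkeeping---you collect everything into the single fraction $\frac{S^2-nT}{n^2(n-1)}$, whereas the paper keeps two terms and bounds $\sum_{i,j}\xi_i\xi_j\le n\sum_i\xi_i^2$ directly.
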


\begin{proof}
	We fix $k,\ell\in\{1,\dots,n\}$ such that $k\neq \ell$.
	By Lemma~\ref{lem:expr-expect-exch}, we obtain:
	\begin{align*}
	\Cov\bigl(\xi_k,\xi_\ell\bigm|\pclass{\boldxi}\bigr)
		&=	\condExpect{\xi_k \cdot \xi_\ell}{\pclass{\boldxi}} - 	\condExpect{\xi_k}{\pclass{\boldxi}}\cdot\condExpect{\xi_\ell}{\pclass{\boldxi}}\\
		&=	\frac{1}{n(n-1)}\sum_{i\neq j}\xi_i \xi_j- \frac{1}{n^2}\sum_{i,j}\xi_i \xi_j\\
		&=	\left( \frac{1}{n(n-1)}	- \frac{1}{n^2} \right)
		\sum_{i,j} \xi_i \xi_j
		-\frac{1}{n(n-1)}\sum_i \xi_i^2\\
		&\leq
			\left( \frac{1}{n(n-1)}	- \frac{1}{n^2} \right)
			n\sum_i \xi_i^2
			-\frac{1}{n(n-1)}\sum_i \xi_i^2\\
		&=0,
	\end{align*}
	using that $\sum_i \xi_i\leq \sqrt n\sqrt{\sum_i \xi_i^2}$ by the Cauchy--Schwarz inequality.
\end{proof}

\begin{rem}
	Let us consider the case where $\boldxi=(\xi_1,\xi_2,\dots,\xi_n)\in\R^n$ is an exchangeable random vector taking only a discrete set of values. 
	Then for $\y=(y_1,\dots,y_n)$ in the support of its distribution, conditionally on $\pclass{\boldxi}=\pclass{\y}$, the vector $\boldxi$ has the same distribution as $(Y_1,\dots,Y_n)=(y_{\boldsigma(1)},\dots,y_{\boldsigma(n)})$, where $\boldsigma$ is uniformly sampled in $\perm_n$.
	A distribution of this form is known to be \textit{negatively associated}, see Joag-Dev and Proschan's celebrated work \cite{Joag-DevProschan83}, and more precisely their Theorem~2.11.
	\textit{Negative association} is a strong ``negative dependence'' property.
	For the vector $(Y_1,\dots,Y_n)$, it means that for every partition $\{1,\dots,n\}=I\sqcup J$ of the index set and for every coordinate-wise increasing functions $f\colon \R^I\to\R$ and $g\colon \R^J\to\R$, we have $\Cov(f(Y_i,i\in I),g(Y_j,j\in J))\leq 0$.
\end{rem}

We are now in a position to prove the main statement of this section.

\begin{proof}[Proof of Proposition~\ref{prop:variance-bound}.]
	Let $1\leq k\leq n$.
	By \eqref{eq:cov-vs-conditional-cov}, we have:
	\begin{align}\label{eq:expr-var-vs-conditional-var-proof-bound}
	\Var(S_k)
		=\Expect{\Var\bigl(S_k\bigm|\pclass{\boldxi}\bigr)}
		+\Var\Bigl(\condExpect{S_k}{\pclass\boldxi}\Bigr).
	\end{align}
	Let us begin by treating the first term.
	By Corollary~\ref{cor:neg-cor-exch-given-values}, we have
	\begin{align*}
	\Var\bigl(S_k\bigm|\pclass{\boldxi}\bigr)
		&=\sum_{i=1}^k \Var\bigl(\xi_i\bigm|\pclass{\boldxi}\bigr)
		+\sum_{i,j\leq k\,i\neq j}\Cov\bigl(\xi_i,\xi_j\bigm|\pclass{\boldxi}\bigr)\\
		&\leq\sum_{i=1}^k \Var\bigl(\xi_i\bigm|\pclass{\boldxi}\bigr)\\
		&\leq \sum_{i=1}^k \condExpect{\xi_i^2}{\pclass\boldxi}.
	\end{align*}
	Using Lemma~\ref{lem:expr-expect-exch}, this gives that:
	\begin{align}\label{eq:proof-variance-bound-exch-1}
	\Var\bigl(S_k\bigm|\pclass{\boldxi}\bigr)
		\leq \sum_{i=1}^k\left(\frac 1 n\sum_i \xi_i^2\right)
		= \frac k n \cdot\lnorm2\boldxi^2.
	\end{align}
	On the other hand, by Lemma~\ref{lem:expr-expect-exch}, we also have:
	\begin{align}\label{eq:proof-variance-bound-exch-2}
	\Var\Bigl(\condExpect{S_k}{\pclass\boldxi}\Bigr)
		= \Var\Bigl(\sum_{i=1}^k\condExpect{\xi_i}{\pclass\boldxi}\Bigr)
		&= \Var\Bigl(k\cdot\frac 1 n \sum_{i=1}^n \xi_i\Bigr)\nonumber\\
		&=\frac{k^2}{n^2} \cdot\Var(S_n).
	\end{align}
	The result follows by combining \eqref{eq:expr-var-vs-conditional-var-proof-bound} with \eqref{eq:proof-variance-bound-exch-1} and \eqref{eq:proof-variance-bound-exch-2}.	
\end{proof}

\subsection{Proof of Lemma~\ref{lem:cvg-after-concentration}}
\label{subsec:proof-cvg-after-concentration}

We place ourselves in the setting of Lemma~\ref{lem:cvg-after-projection}.
In particular, with the notation used there, we recall that $\boldnu_{\Decs_n}(\diff x)=\sum_{j} \bigl(|V(\q_j)|-4\bigr)\cdot \delta_{v_j}(\diff x)$,
where $v_j$ is a uniformly chosen vertex incident to the $j$-th face of $\Qirrfrak_n$ in $<_{\Qirrfrak_n}$-order.
The former lemma gives that
\begin{align}\label{eq:proof-after-concentration-1}
\left(\vertices(\Qirrfrak_n),\,\left(\frac{9}{8n}\right)^{1/4}\cdot d_{\Qirrfrak_n},\,\frac{1}{n+2}\cdot(\mu_{\Qirrfrak_n}+\boldnu_{\Decs_n})\right)\xrightarrow[n\rightarrow\infty]{\distribGHP}(S,d,\mu),
\end{align}
and that the collection $(\boldnu_{\Decs_n}(v_j),1\leq j\leq N_n)$ is exchangeable conditionally on $N_n$ and its supremum converges to $0$ in probability as $n\to\infty$.
By the Skorokhod representation theorem, both the convergence \eqref{eq:proof-after-concentration-1} and the convergence $\to 0$ can be strengthened to hold in the almost sure sense, up to properly coupling the involved random variables.

From now on, we reason conditionally on the sequences $(\Qirrfrak_n,n\geq2)$ and $(\sup_x \boldnu_{\Decs_n}(x),n\geq 2)$, coupled as described precedingly.
Using an embedding theorem akin to Proposition~\ref{prop:GHP-common-embedding}, we can assume that $(\vertices(\Qirrfrak_n),\,\left(\frac{9}{8n}\right)^{1/4}\cdot d_{\Qirrfrak_n})$ converges to $(S,d)$, in the Hausdorff distance inside some compact space $(Z,\delta)$.
Crucially, this compact $(Z,\delta)$, and the embedding of the $(\vertices(\Qirrfrak_n),\,\left(\frac{9}{8n}\right)^{1/4}\cdot d_{\Qirrfrak_n})$, $n\geq2$ into it, can be chosen as a deterministic function of $(\vertices(\Qirrfrak_n),\,\left(\frac{9}{8n}\right)^{1/4}\cdot d_{\Qirrfrak_n})$, $n\geq2$, only; that is in a way which does not depend on the measures $(\boldnu_{\Decs_n})_n$.
One (artificial) way to enforce this is to apply Proposition~\ref{prop:GHP-common-embedding} to the convergence
\begin{align*}
\left(\vertices(\Qirrfrak_n),\,\left(\frac{9}{8n}\right)^{1/4}\cdot d_{\Qirrfrak_n},\,0\right)\xrightarrow[n\rightarrow\infty]{\distribGHP}(S,d,0).
\end{align*}
This allows to see the measures $(\boldnu_{\Decs_n})_n$ as random measures on the compact space $(Z,\delta)$, such that the collection $(\boldnu_{\Decs_n}(v_j),1\leq j\leq N_n)$ is exchangeable%
	\footnote{
		We recall that we are reasoning conditionally on $(\Qirrfrak_n,n\geq2)$ and $(\sup_x \boldnu_{\Decs_n}(x),n\geq 2)$.
	}
and satisfies $\sup_x \frac{1}{n+2}\boldnu_{\Decs_n}(x)\to 0$ almost surely.

In particular, by Proposition~\ref{prop:Add-Wen-exch-argument}, if we set $D_n=\{v_1,\dots,v_{N_n}\}$, then
\begin{align*}
\dP\Bigl(\tfrac{1}{n+2}\cdot\boldnu_{\Decs_n},\tfrac{|\boldnu_{\Decs_n}|}{n+2}\cdot\rmUnif_{D_n}\Bigr)\xrightarrow[n\to\infty]{}0.
\end{align*}
Now by construction, the mass $|\boldnu_{\Decs_n}|$ counts those vertices among the $n+2$ ones of $Q'_n$ (defined in Lemma~\ref{lem:summary-largest-comp}) which are not incident to its unique largest component.
Hence $|\boldnu_{\Decs_n}|=(n+2)-(|\Qirrfrak_n|+2)$, which is asymptotically equivalent to $8n/9$ by the bound on $K_n$ in Lemma~\ref{lem:summary-largest-comp}.
In particular, we can turn the last display into
\begin{align}\label{eq:proof-after-concentration-2}
\dP\Bigl(\tfrac{1}{n+2}\cdot\boldnu_{\Decs_n},\tfrac{8}{9}\cdot\rmUnif_{D_n}\Bigr)\xrightarrow[n\to\infty]{}0.
\end{align}
Now $\rmUnif_{D_n}$ is the uniform measure on the set $\{v_1,\dots,v_{N_n}\}$ which contains one vertex incident to each face.
That is, it projects the uniform measure on the face-set of $\Qirrfrak_n$ to vertices incident to each face.
Since the faces have diameter $O(n^{-1/4})$ under $\left(\frac{9}{8n}\right)^{1/4}\cdot d_{\Qirrfrak_n}$, it is easily seen that $\rmUnif_{D_n}$ is an approximation of the measure which instead spreads the uniform measure on the face-set of $\Qirrfrak_n$ evenly on the vertices incident to each face, namely the degree-biased measure $\mubiased_n$ on $V(\Qirrfrak_n)$ which gives mass $\deg(v)/2|E(\Qirrfrak_n)|$ to a vertex $v$.
Using a deterministic comparison, due to Addario-Berry and Wen~\cite[Lemma~5.1]{AddarioBerryWen17}, between the degree-biased measure and the uniform measure on vertices of any quadrangulation, we therefore conclude that $\mubiased_n$ is asymptotically close in the Prokhorov metric to the uniform measure on $V(\Qirrfrak_n)$, which is itself close to $\frac{9}{n}\mu_{\Qirrfrak_n}$ since $V(\Qirrfrak_n)\sim n/9$ in probability.
The conclusion of this whole discussion is that:
\begin{align}\label{eq:proof-after-concentration-3}
\dP\Bigl(\tfrac{8}{9}\cdot\rmUnif_{D_n},\frac{8}{n+2}\mu_{\Qirrfrak_n}\cdot\Bigr)\xrightarrow[n\to\infty]{}0.
\end{align}
Wrapping up, \eqref{eq:proof-after-concentration-1}, \eqref{eq:proof-after-concentration-2}, and \eqref{eq:proof-after-concentration-3} together yield the convergence
\begin{align*}
\left(\vertices(\Qirrfrak_n),\,\left(\frac{9}{8n}\right)^{1/4}\cdot d_{\Qirrfrak_n},\,\frac{9}{n+2}\cdot\mu_{\Qirrfrak_n}\right)\xrightarrow[n\rightarrow\infty]{\distribGHP}(S,d,\mu),
\end{align*}
where $(S,d,\mu)$ is the Brownian sphere.\qed

We now continue our proof of the key lemmas stated in Section~\ref{sec:structure-proof}, with a shift of focus towards verifying the \textit{Tauberian assumption} needed to apply Theorem~\ref{thm:GHP-Tauberian-thm}  with the convergence in the last display. The next two sections accomplish this, by first leveraging (in the next section) Addario-Berry's increasing couplings for irreducible of the hexagon~\cite{Addario-Berry14}, and then in Section~\ref{sec:removing-hexagon} by ``removing the hexagon''.


\section{Step IV: Growing irreducible quadrangulations of the hexagon}
\label{sec:growing-irred-of-hexagon}

Our aim in this section is to show the stochastic increase, with respect to the partial order $\orderGHP$, of uniformly random irreducible quadrangulations \textit{of the hexagon} with $n$ faces---or more precisely of the associated measure metric spaces obtained by equipping the vertex-set with the graph distance and the counting measure. We recall that quadrangulations of the hexagon are defined in Section~\ref{subsec:step-growing-irred-hex}.

\subsection{Surjective graph homomorphisms and GHP ordering}
\label{subsec:graph-hom-and-GHP-order}

We represent (non-oriented multi-)graphs as pairs $G=(V(G),E(G))$, where the vertex-set $V(G)$ is a finite set, the edge-set $E(G)$ is a multiset%
	\footnote{
		For us, a multiset is a set $X$, together with an (implicit) function $m\colon X\to \{1,2,\dots\}$ assigning a multiplicity to each element.
	}
of 1-~or 2-element subsets of $V(G)$.
In particular, this definition allows loops and multiple edges.

Given two graphs $G,G'$, we recall that a \textit{graph homomorphism} $G\to G'$ is a mapping $\phi\colon V(G)\rightarrow V(G')$ such that if $e=\{u,v\}\in E(G)$, then $\{\phi(u),\phi(v)\}\in E(G')$.
The latter edge $\{\phi(u),\phi(v)\}$ is denoted by $\phi(e)$.

Given a multi-graph $G=(V,E)$, we denote by $\X(G)=(V(G),d_G,\mu_G)$ the measured metric space obtained by endowing $V(G)$ with the graph distance $d_G$ and the counting measure $\mu_G$ which assigns unit mass to every vertex.

\begin{lemma}\label{lem:graph-hom-and-GHP-order}
	Consider two finite graphs $G,G'$.
	If there exists a surjective graph homomorphism $G'\to G$, then $\X(G)\orderGHP\X(G')$.
\end{lemma}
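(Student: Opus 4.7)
The plan is to unwind both definitions and check that the given surjective graph homomorphism $\phi \colon G' \to G$ itself witnesses the inequality $\X(G) \orderGHP \X(G')$. Concretely, viewing $\phi$ as a map from the (finite, discrete) metric space $(V(G'), d_{G'})$ to $(V(G), d_G)$, I need to verify the three clauses from Definition~\ref{def:order-GHP}: surjectivity (given by hypothesis), the 1-Lipschitz bound \eqref{eq:def-GHP-order-1}, and the measure-contracting bound \eqref{eq:def-GHP-order-2}.

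For the 1-Lipschitz property, fix $x', y' \in V(G')$ and take a geodesic path $x' = v_0, v_1, \dots, v_k = y'$ in $G'$ of length $k = d_{G'}(x', y')$. Since $\phi$ is a graph homomorphism, $\{\phi(v_{i-1}), \phi(v_i)\} \in E(G)$ for every $1 \leq i \leq k$, so $\phi(v_0), \dots, \phi(v_k)$ is a walk from $\phi(x')$ to $\phi(y')$ in $G$ of length $k$, giving $d_G(\phi(x'), \phi(y')) \leq k = d_{G'}(x', y')$. For the measure-contracting property, the Borel $\sigma$-algebra of the discrete space $V(G)$ is its full power set, and for any $A \subset V(G)$ we have $\mu_G(A) = |A|$ while $\mu_{G'}(\phi^{-1}(A)) = |\phi^{-1}(A)|$. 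Surjectivity of $\phi$ supplies, for each $v \in A$, at least one preimage in $\phi^{-1}(\{v\}) \subset \phi^{-1}(A)$, and these fibers are pairwise disjoint, so $|\phi^{-1}(A)| \geq |A|$, as required.

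There is no real obstacle here; the lemma is essentially a direct translation of the combinatorial hypothesis into the analytic language of Definition~\ref{def:order-GHP}, and the only subtlety worth flagging is that one must invoke surjectivity of $\phi$ \emph{twice}: once to satisfy the surjectivity clause of the definition, and once (through the fiberwise argument) to obtain the measure inequality.
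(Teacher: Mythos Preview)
Your proof is correct and follows essentially the same approach as the paper: both take the given surjective graph homomorphism $\phi$ itself as the witness, verify the 1-Lipschitz bound by pushing a shortest path in $G'$ forward to a walk in $G$, and verify the measure-contracting bound from surjectivity via nonempty fibers. The only cosmetic difference is that the paper checks the measure inequality on singletons and extends by additivity, whereas you check it directly on arbitrary subsets.
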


\begin{proof}
	Let $\phi\colon V(G')\to V(G)$ be a surjective graph homomorphism $G'\to G$.
	In particular, if $(e'_1,\dots,e'_\ell)$ is a path of length $\ell$ in $G'$ between some $u',v'\in V(G')$, then the path $(\phi(e'_1),\dots,\phi(e'_\ell))$ joins $\phi(u')$ and $\phi(v')$ in $G$.
	Hence, we have $d_{G}(\phi(u'),\phi(v'))\leq d_{G'}(u',v')$.
	Also for every $v\in V(G)$, we have $\mu_{G}(v)=1\leq |\phi^{-1}(v)|=\mu_{G'}(\phi^{-1}(v))$, where the inequality follows from the surjectivity of $\phi$.
	All in all, we have proven that $\phi\colon V(G')\to V(G)$ is a surjective mapping which is 1-Lipschitz and measure-contracting.
	By definition, this means that $\X(G)\orderGHP\X(G')$.
\end{proof}

\begin{rem}
	We leave to the reader to check that, in fact, given two finite graphs $G,G'$, we have $\X(G)\orderGHP\X(G')$ if and only if there exists a surjective graph homomorphism $G'\to \Loop(G)$, where $\Loop(G)$ is the graph obtained from $G$ by adding a loop at every vertex, that is by adding to $E(G)$ the edge $\{v,v\}$ for every $v\in V(G)$.
\end{rem}

\begin{figure}
	\begin{center}
		\includegraphics[page=1,scale=.6]{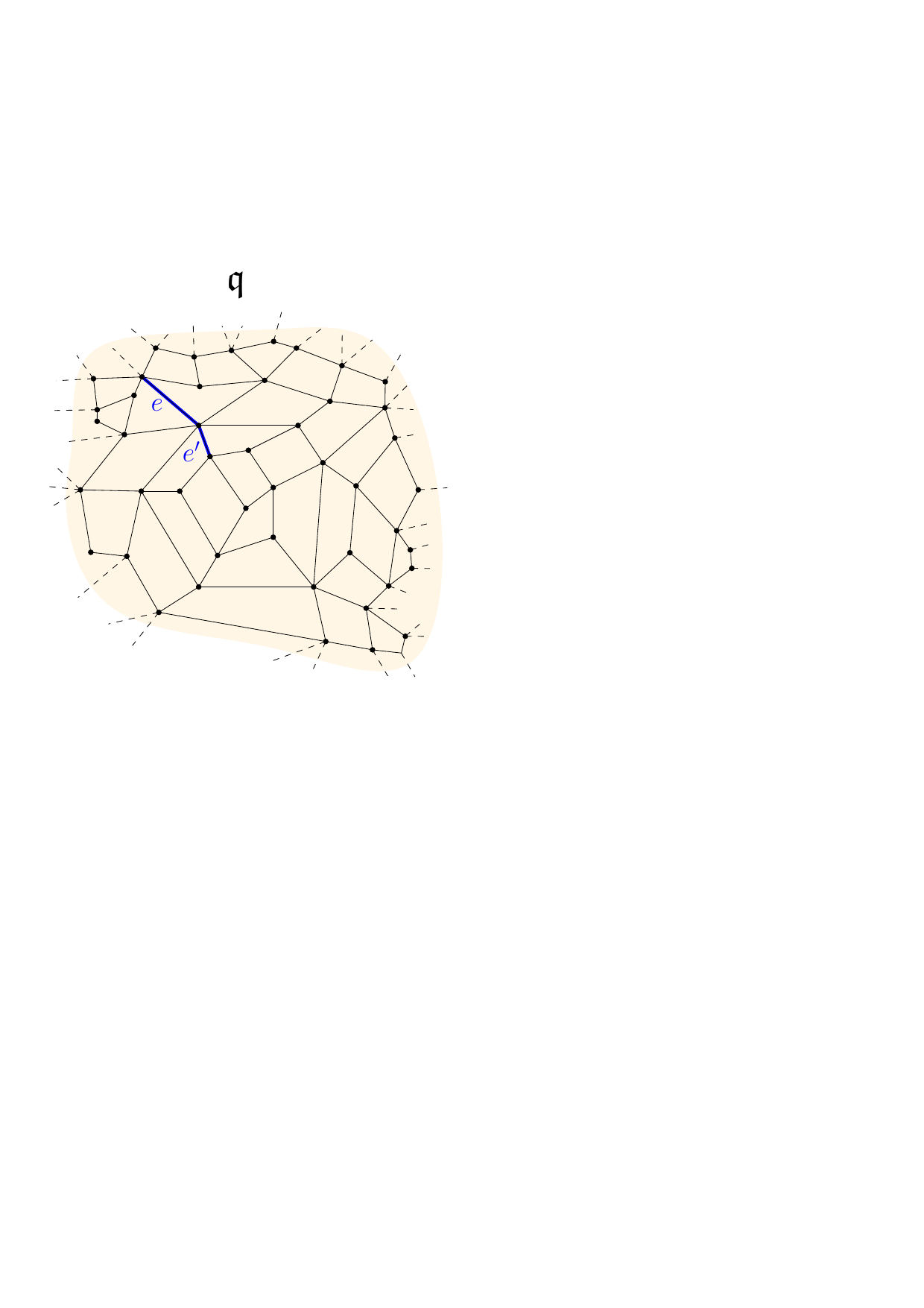}
		\hspace{2em}
		\includegraphics[page=2,scale=.6]{figures/ouverture_face}
	\end{center}
	\caption{Depiction of how $\open(\q,e,e')$ is constructed, starting from a quadrangulation $\q$ and two edges $e,e'$ sharing one endpoint.}
	\label{fig:face-opening}
\end{figure}

\subsection{Addario-Berry's coupling}

Addario-Berry proves~\cite{Addario-Berry14}, that there exists a coupling $(\incrQirrhex_n,n\geq3)$ of the uniform distributions on $(\ensQirredhex_{n}, n\geq 3)$ respectively, in such a way that for every $n\geq3$, the quadrangulation of the hexagon $\incrQirrhex_{n+1}$ is obtained from $\incrQirrhex_n$ by ``opening a face''.

More precisely, given a planar map $\m$, and two distinct non-loop edges $e,e'\in E(\q)$ which share exactly one of their endpoints, we let $\open(\m,e,e')$ denote the planar map obtained by slitting along the path formed by $e$ and $e'$ in $\q$ and then filling the created hole with a quadrangular face.
This operation is depicted in \ref{fig:face-opening} in a case where $\m$ is a quadrangulation.
Then Addario-Berry's coupling result can be formulated as follows.

\begin{prop}[Section~3.2 in \cite{Addario-Berry14}]\label{prop:Addario-Berry-coupling-irred}
	There exists a coupling $(\incrQirrhex_n,n\geq3)$ of the uniform distributions on $(\ensQirredhex_{n}, n\geq 3)$ respectively, such that for every $n\geq3$, we have $\incrQirrhex_{n+1}=\open(\incrQirrhex_n,e,e')$ for some $e,e'\in E(\incrQirrhex_n)$ sharing one endpoint.
\end{prop}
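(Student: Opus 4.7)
The plan is to follow the argument sketched in the introduction and in \cite{Addario-Berry14}, combining two ingredients: the bijection of Fusy, Poulalhon and Schaeffer \cite{FusySchaefferPoulalhon08} between binary trees and irreducible quadrangulations of the hexagon, and the increasing coupling of uniform binary trees due to Luczak and Winkler \cite{LuczakWinkler04}.

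First, I would recall the Fusy--Poulalhon--Schaeffer bijection, call it $\Phi$, which sends binary trees with $n$ internal nodes to irreducible quadrangulations of the hexagon with a prescribed number of faces that grows linearly in $n$. Since it is a bijection and both sets are being counted by size, $\Phi$ transports the uniform distribution on binary trees to the uniform distribution on $\ensQirredhex_{n}$ for the appropriate size parameter; this is what allows us to translate distributional statements on one side into the other.

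Second, I would invoke Luczak--Winkler's coupling: there exists a coupling $(T_n, n \geq 1)$ of uniformly random binary trees with $n$ internal nodes respectively, such that for every $n \geq 1$, the tree $T_{n+1}$ is obtained from $T_n$ by the insertion of a single leaf at a (random) position. Setting $\incrQirrhex_n := \Phi(T_n)$ then produces a coupling of uniform irreducible quadrangulations of the hexagon, and by construction $\incrQirrhex_n$ and $\incrQirrhex_{n+1}$ differ by the image under $\Phi$ of a single leaf insertion.

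The crucial step, and the main obstacle, is to check that a single leaf insertion in the binary tree corresponds, under $\Phi$, to a face-opening operation $\open(\incrQirrhex_n, e, e')$ for some edges $e, e'$ sharing one endpoint. This is exactly the combinatorial content of \cite[Section 3.2]{Addario-Berry14}: one must analyze how $\Phi$ reacts to a local modification of the tree and verify that the extra quadrangular face appearing in $\incrQirrhex_{n+1}$ is inserted precisely by slitting along two adjacent edges and filling in the quadrangle, as in Figure~\ref{fig:face-opening}. Once this local correspondence is established, the proposition follows immediately by induction on $n$. Since the proposition is essentially a repackaging of Addario-Berry's result in the language of the $\open$ operation used in this paper, the proof would at this point reduce to quoting \cite[Section 3.2]{Addario-Berry14} and identifying its ``face-opening'' step with our $\open(\cdot, e, e')$.
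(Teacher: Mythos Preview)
Your proposal is correct and matches the paper's treatment: the paper does not prove this proposition at all but simply cites it as Section~3.2 of \cite{Addario-Berry14}, noting (in the introduction) exactly the two ingredients you identify, namely the Luczak--Winkler increasing coupling for binary trees and the Fusy--Poulalhon--Schaeffer bijection. Your sketch of how these combine, and your identification of the local correspondence between leaf insertion and face-opening as the crux, is precisely the content of the cited reference.
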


The crucial observation is that performing a face-opening on a planar map $\m$ yields a ``bigger'' planar map with respect to the partial order $\orderGHP$.
More precisely, if we denote by $\X(\m)$  the metric measure space associated to the underlying graph of a planar map $\m$, as in Section~\ref{subsec:graph-hom-and-GHP-order}, then we have the following lemma, which applies in particular to Addario-Berry's coupling in Proposition~\ref{prop:Addario-Berry-coupling-irred}.

\begin{lemma}\label{lem:opening-face-gives-GHP-increase}
	Let $\m$ be a planar map and $e$, $e'$ be two non-loop edges of $\m$ sharing exactly one endpoint.
	Then, $\X(\m)\orderGHP\X(\open(\m,e,e'))$.
\end{lemma}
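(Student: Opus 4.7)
The plan is to apply Lemma~\ref{lem:graph-hom-and-GHP-order}: it suffices to exhibit a surjective graph homomorphism from the underlying graph of $\open(\m,e,e')$ onto that of $\m$.

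First I would nail down the combinatorial effect of the face-opening. Write $e = \{u,v\}$ and $e' = \{v,w\}$ for the shared endpoint $v$; since $e$ and $e'$ share \emph{exactly} one endpoint, $u \neq w$. Slitting $\m$ along the two-edge path $u - v - w$ opens up a topological disk whose boundary visits the vertices $u, v, w$ on one side of the slit and $u, w$ together with a ``duplicate'' of $v$ on the other side; the operation then fills that disk with a quadrangular face. At the underlying graph level, this means that $V(\open(\m,e,e')) = V(\m) \sqcup \{v'\}$ and $E(\open(\m,e,e')) = E(\m) \sqcup \{\,\{u,v'\},\{v',w\}\,\}$, the pre-existing vertices and edges of $\m$ being preserved. (As a sanity check: one adds one face, one vertex and two edges, which preserves the Euler characteristic.)

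Then I would define $\phi \colon V(\open(\m,e,e')) \to V(\m)$ by $\phi(x) = x$ for $x \in V(\m)$ and $\phi(v') = v$; surjectivity is immediate. To verify that $\phi$ is a graph homomorphism, the edges of $E(\m) \subset E(\open(\m,e,e'))$ are sent to themselves, while $\phi(\{u,v'\}) = \{u,v\} = e \in E(\m)$ and $\phi(\{v',w\}) = \{v,w\} = e' \in E(\m)$. Hence $\phi$ is a surjective graph homomorphism $\open(\m,e,e') \to \m$, and Lemma~\ref{lem:graph-hom-and-GHP-order} gives $\X(\m) \orderGHP \X(\open(\m,e,e'))$.

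The only substantive point is identifying correctly what face-opening does on the underlying graph (which vertex is duplicated, which edges are added); once this is settled, the rest is a two-line verification. No real obstacle is expected.
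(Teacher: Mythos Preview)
Your proof is correct and follows the same approach as the paper: exhibit the ``face-closing'' map $\phi$ collapsing $v'$ onto $v$ as a surjective graph homomorphism $\open(\m,e,e')\to\m$, then invoke Lemma~\ref{lem:graph-hom-and-GHP-order}. One minor imprecision: your description $E(\open(\m,e,e')) = E(\m) \sqcup \{\{u,v'\},\{v',w\}\}$ glosses over the fact that some edges of $\m$ originally incident to $v$ may be reattached to $v'$ after the slit, but this does not affect the argument since $\phi$ sends any such edge back to its original in $E(\m)$.
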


\begin{proof}
	The operation of closing the newly opened face in $\open(\m,e,e')$ clearly defines a surjective graph homomorphism from the underlying graph of the planar map $\open(\m,e,e')$ to that of $\m$.
	Hence the result follows by Lemma~\ref{lem:graph-hom-and-GHP-order}.
\end{proof}

The last remaining step in our proof Theorem~\ref{thm:scaling-limit-irred}, described in Section~\ref{sec:structure-proof}, is to ``remove the hexagon''. This is performed by Lemma~\ref{lem:comparison-irred-vs-hexagon}, which is equivalent to Proposition~\ref{prop:dTV-coupling-irred}, which we prove in the next section.


\section[Step V: Removing the hexagon]{Step V: Removing the hexagon}
\label{sec:removing-hexagon}

In this section, we show that up to appropriately filling the hexagonal ``hole'' in a uniformly sampled irreducible quadrangulation of the hexagon (with fixed size) and re-rooting at a uniformly random oriented edge, the result is a uniformly sampled irreducible quadrangulation without hexagonal face (with different fixed size).
This allows to use Addario--Berry's increasing coupling for uniform irreducible quadrangulations of the hexagon.

\begin{figure}
	\begin{center}
		\includegraphics[page=2,scale=.8]{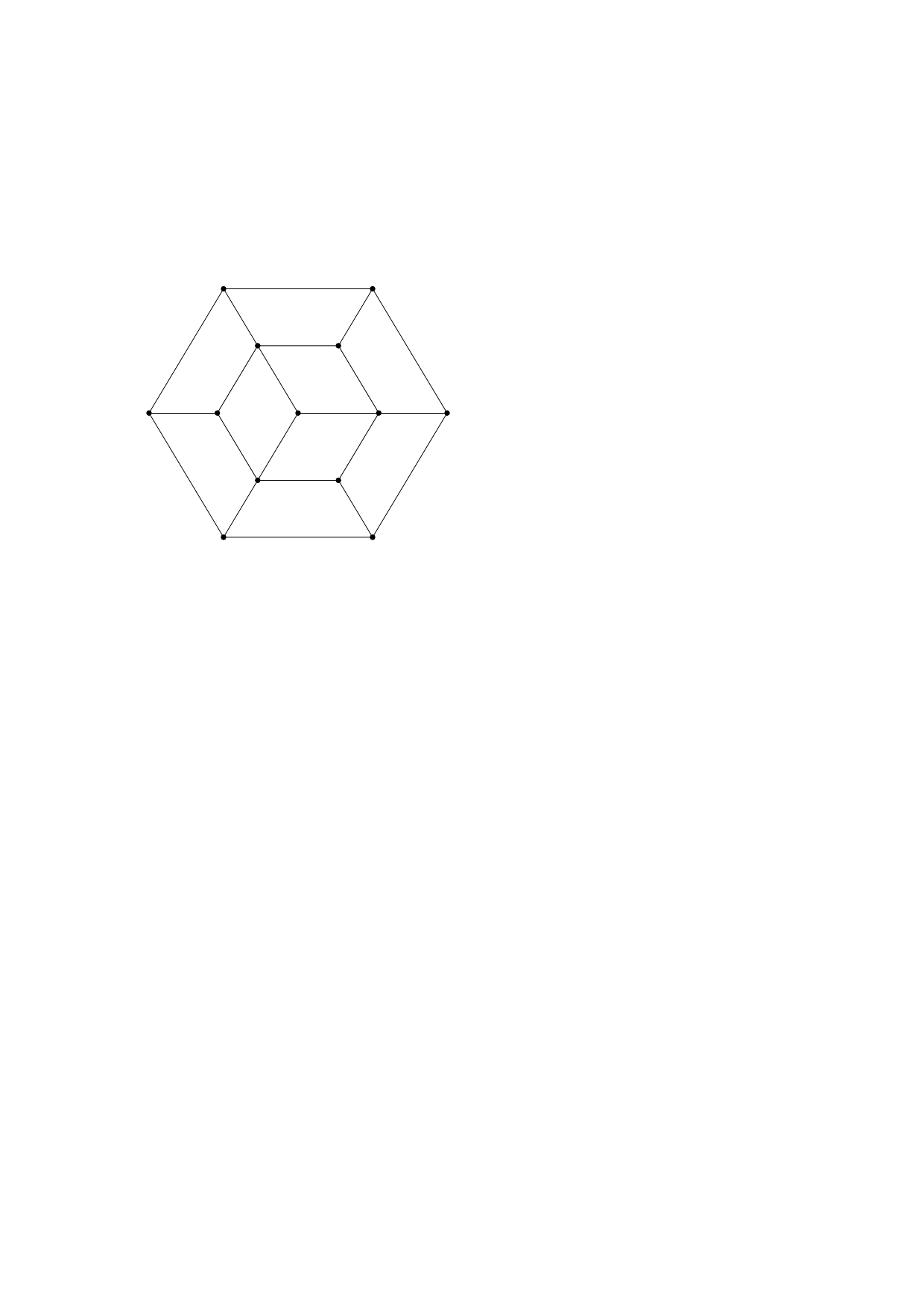}
	\end{center}
	\caption{The ``pattern'' $\pat$, which is a specific irreducible quadrangulation of a hexagon which has 9 interior faces.}
	\label{fig:the-pattern}
\end{figure}

\subsection{Construction of a coupling}
We consider the following construction, which is illustrated in Figure~\ref{fig:filling-pattern}.

\begin{figure}
	\begin{center}
		\includegraphics[scale=.7]{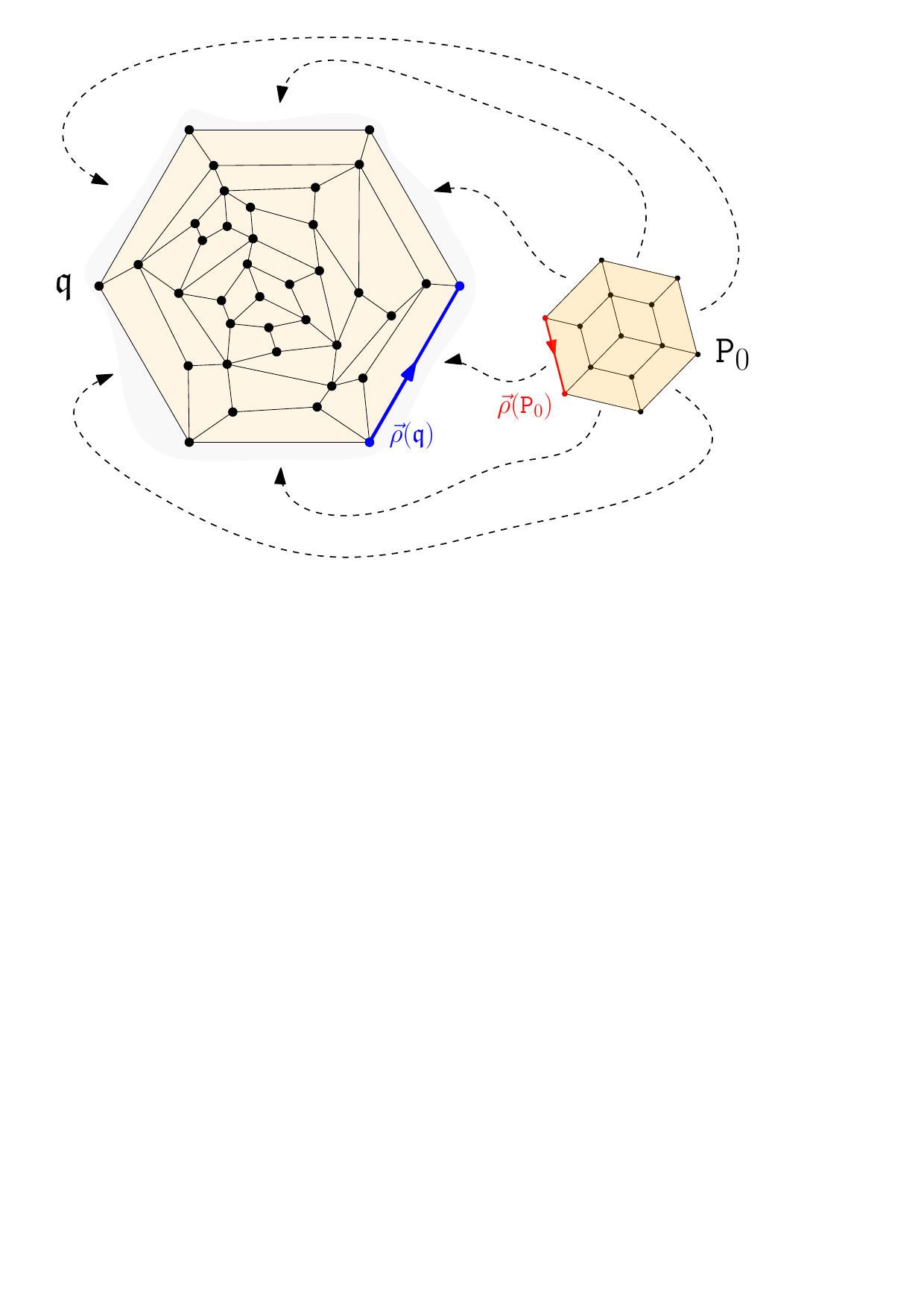}
	\end{center}
	\caption{A depiction of the operation of filling the hexagonal face in an irreducible quadrangulation of the hexagon denoted by $\q$, as in Construction~\ref{constr:coupling-irred}.
		This amounts to gluing the two hexagonal faces so that the root edges of $\q$ and of the pattern coincide with a reversed orientation.}
	\label{fig:filling-pattern}
\end{figure}

\begin{constr}\label{constr:coupling-irred}
	Let $n\geq11$ and consider $\Qirrhex_{n-9}$ a uniformly random element of $\ensQirredhex_{n-9}$.
	Then perform the following:
	\begin{enumerate}
		\item 
			Fill the hexagonal face with the quadrangulation of the hexagon $\pat$ depicted in Figure~\ref{fig:the-pattern}, so that the root edges of $\Qirrhex_{n-9}$ and of $\pat$ are glued together, with a reversed orientation.
			This operation is depicted in Figure~\ref{fig:filling-pattern}.
		\item
			The resulting \textit{quadrangulation} is denoted by $\Qfilled_n$.
			Instead of inheriting the edge-rooting of $\Qirrhex_{n-9}$, the quadrangulation $\Qfilled_n$ is re-rooted at a uniformly random oriented edge of $\Qfilled_n$.
		\item
			We use the shorthand notation $\vec \ehex$ for the root edge of $\Qirrhex_{n-9}$ with its orientation reversed, which by construction is also an oriented edge of $\Qfilled_n$.
	\end{enumerate}
	This construction yields a random tuple $(\Qirrhex_{n-9},\Qfilled_n,\vec \ehex)$.
\end{constr}

The main result of this section is the following proposition.

\begin{prop}\label{prop:dTV-coupling-irred}
	Let $\Qirr_n$ be uniformly random in $\ensQirred_n$.
	Then, we have:
	\begin{align*}
	\dTV\left(\Law(\Qfilled_n),\Law(\Qirr_n)\right)\xrightarrow[n\to\infty]{}0.
	\end{align*}
\end{prop}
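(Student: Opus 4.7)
The plan is to identify the distribution of $\Qfilled_n$ explicitly and reduce the total-variation bound to a concentration statement for the number of ``pattern occurrences'' of $\pat$ in a uniform irreducible quadrangulation.

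First I would verify that the construction always yields an element of $\ensQirred_n$: no multi-edge can appear, and every $4$-cycle still bounds a face, because $\pat$ is chosen precisely so that its nine-face ``buffer'' prevents any bad cycle or parallel edge from crossing the hexagonal seam. Let $\occPat(\q)$ denote the number of ``removable copies'' of $\pat$ in $\q\in\ensQirred_n$---embeddings whose bounding hexagon separates a copy of $\pat$ (with its root matched to that of $\q$ in the prescribed way) from a complement which is an irreducible quadrangulation of the hexagon with $n-9$ faces. Counting pre-images of each $\q$ under the construction gives
\begin{align*}
\Prob{\Qfilled_n=\q}=\frac{\occPat(\q)}{4n\cdot|\ensQirredhex_{n-9}|},
\end{align*}
and summing yields the total-mass identity $\sum_\q\occPat(\q)=4n|\ensQirredhex_{n-9}|$, so that with $\mu_n:=\E[\occPat(\Qirr_n)]=4n|\ensQirredhex_{n-9}|/|\ensQirred_n|$ we have
\begin{align*}
\dTV\bigl(\Law(\Qfilled_n),\Law(\Qirr_n)\bigr)=\tfrac{1}{2}\,\E\!\left[\left|\frac{\occPat(\Qirr_n)}{\mu_n}-1\right|\right].
\end{align*}
Thus everything reduces to showing that $\occPat(\Qirr_n)/\mu_n \to 1$ in $L^1$.

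The classical Tutte-type enumeration asymptotics give both $|\ensQirred_n|$ and $|\ensQirredhex_n|$ of the form $c\rho^n n^{-5/2}(1+o(1))$ with a common exponential growth rate $\rho$ (the hexagon adding only a bounded local feature). Hence $\mu_n \sim c_\pat\cdot n$ for some explicit $c_\pat>0$, so $\mu_n=\Theta(n)$. Because $\occPat$ is deterministically bounded by $Cn$ (each occurrence uses nine distinct interior faces), the ratio $\occPat(\Qirr_n)/\mu_n$ is uniformly bounded, so $L^1$ convergence follows from convergence in probability, which by Chebyshev is implied by $\Var(\occPat(\Qirr_n))=o(n^2)$.

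The main technical step is therefore the second-moment estimate $\Var(\occPat(\Qirr_n))=o(n^2)$ (in fact $O(n)$ is expected). I would write $\E[\occPat(\occPat-1)]$ as the expected number of ordered pairs $(\sigma_1,\sigma_2)$ of distinct $\pat$-occurrences in $\Qirr_n$ and split according to the relative position of the two bounding hexagons. \emph{Well-separated} pairs parameterize doubly-hexagonal irreducible quadrangulations of size $n-18$ decorated with two glued copies of $\pat$; the corresponding enumeration series carries an extra factor of $n$ compared to the singly-marked one (as is standard for doubly-pointed planar maps), and its asymptotic expansion yields a contribution matching $\mu_n^2$ up to a remainder of order $O(n)$. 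Configurations where the two hexagons share vertices, edges, or faces form a combinatorially lower-dimensional class whose contribution is straightforwardly $O(n)$. The main obstacle is precisely this cancellation $\E[\occPat(\occPat-1)]-\mu_n^2=o(n^2)$, which requires comparing the leading-order asymptotics of two families of generating series (for doubly- and singly-marked irreducible quadrangulations of the hexagon) at the precision where their leading terms agree, and this is where the detailed Tutte-type combinatorics fully enter. Once in hand, Cauchy--Schwarz finishes the proof:
\begin{align*}
\E\!\left[\left|\frac{\occPat(\Qirr_n)}{\mu_n}-1\right|\right]\leq\frac{\sqrt{\Var(\occPat(\Qirr_n))}}{\mu_n}\xrightarrow[n\to\infty]{}0.
\end{align*}
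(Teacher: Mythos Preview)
Your setup is correct and essentially the same as the paper's: the density formula $\Prob{\Qfilled_n=\q}=\occPat(\q)/(4n\,|\ensQirredhex_{n-9}|)$, the identity $\mu_n=\E[\occPat(\Qirr_n)]=4n\,|\ensQirredhex_{n-9}|/|\ensQirred_n|$, and the reduction of the total-variation distance to $\E\bigl[\bigl|\occPat(\Qirr_n)/\mu_n-1\bigr|\bigr]\to0$ all appear (in slightly different notation) in the paper's Proposition~\ref{prop:comparison-coupling-irred} and Corollary~\ref{cor:bias-coupling-irred}. The difference lies entirely in how the second moment is controlled.

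You propose to compute $\E[\occPat(\occPat-1)]$ directly by enumerating ordered pairs of occurrences, splitting into disjoint versus overlapping pairs, and matching the leading asymptotics of a ``doubly-hexagonal'' enumeration against $\mu_n^2$. This is a legitimate route, and your observation that overlapping pairs are lower-dimensional is correct (in fact, the paper notes that two unrooted copies of $\pat$ in an irreducible quadrangulation are either identical or disjoint, so the overlap contribution is exactly $6$ times the number of unrooted copies). However, the step you yourself flag as the ``main obstacle''---showing that the disjoint-pair count matches $\mu_n^2$ to precision $o(n^2)$---requires precise asymptotics for a new combinatorial class (irreducible quadrangulations with two marked hexagonal holes) that you have not actually established.

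The paper bypasses this entirely with a size-biasing bootstrap. Writing $N(\q)=|\occPat(\q)|$, one has
\[
\E[N(\Qirr_n)^2]=\E[N(\Qirr_n)]\cdot\E[N(\Qfilled_n)],
\]
because biasing $\Qirr_n$ by $N$ gives exactly $\Qfilled_n$. Then a short combinatorial lemma (Lemma~\ref{lem:number-occ-after-removal}) gives $N(\Qfilled_n)=N(\Qirrhex_{n-9})+3$, reducing everything to the \emph{first} moment $\E[N(\Qirrhex_{n-9})]$. Both first moments $\E[N(\Qirr_n)]$ and $\E[N(\Qirrhex_{n-9})]$ are then computed to leading order \emph{via} Addario-Berry's local-limit results \cite{Addario-Berry14} (Lemma~\ref{lem:local-lim-irred}), which directly give $\P[\vec U_n\in\occPat(\Qirr_n)]\to c$ and $\P[\vec U'_n\in\occPat(\Qirrhex_{n-9})]\to c$ for the same $c>0$. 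This yields $\E[N^2]\sim(\E[N])^2$ without ever touching doubly-marked objects or generating functions. Your enumeration route would work but is strictly more laborious; the paper's probabilistic recursion is the cleaner mechanism.
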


The remainder of Section~\ref{sec:removing-hexagon} is dedicated to the proof of Proposition~\ref{prop:dTV-coupling-irred}, which is obtained at the very end of Section~\ref{subsec:controlling-bias-coupling-irred}.
Heuristically, the possible bias that $\Qfilled_n$ may have is controlled by how many occurences of $\pat$, as a submap, there may be in $\Qfilled_n$, and this number of occurences will be shown to be concentrated.

\subsection{Pattern insertion/removal and irreducibility}

Let us first make precise what we mean by an \textit{occurence} of some pattern in a planar map.
Consider a fixed planar map $\m_0$, that we call the \textit{pattern}, whose root face (the one to the right of the oriented root edge) has a simple boundary.

\begin{defin}[Occurrence of a pattern]
	Given a planar map $\m$ and an oriented edge $\vec e$ of $\m$, we say that there is an occurrence of $\m_0$ at $\vec e$ in $\m$ if there exists a (necessarily unique) cyclic simple path $\gamma_{\vec e}$ in $\m$ containing $\vec e$ such that the submap rooted at $\vec e$ enclosed by $\gamma_{\vec e}$ to the \textit{left} of $\vec e$, is precisely $\m_0$.
	We denote by $\Occ_{\m_0}(\m)$ the set of oriented edges $\vec e$ of $\m$ such that there is an occurrence of $\m_0$ at $\vec e$ in $\m$.	
\end{defin}

Given an occurrence of a pattern, we may \textit{remove} this occurrence, thus leaving a face whose degree is the length of the boundary of said pattern.
This is the reverse operation to the operation of \textit{filling with some pattern} used in Construction~\ref{constr:coupling-irred}.

\begin{defin}[Removing an occurence]
	Given $\vec e\in\Occ_{\m_0}(\m)$, we denote by $\remove_{\m_0}(\m;\vec e)$ the map obtained form $\m$ by removing the occurence of $\m_0$ corresponding to $\vec e$.
	More formally, $\remove_{\m_0}(\m;\vec e)$ is the submap of $\m$ enclosed by $\gamma_{\vec e}$ to the \textit{right} of $\vec e$, and its oriented root edge is chosen to be the edge $\rev(\vec e)$, which we define to be $\vec e$ with a reversed orientation.
\end{defin}

\begin{figure}
	\centering
	\resizebox{\textwidth}{!}{%
	\begin{tabular}{ c c c }
		\includegraphics[page=1,scale=.3]{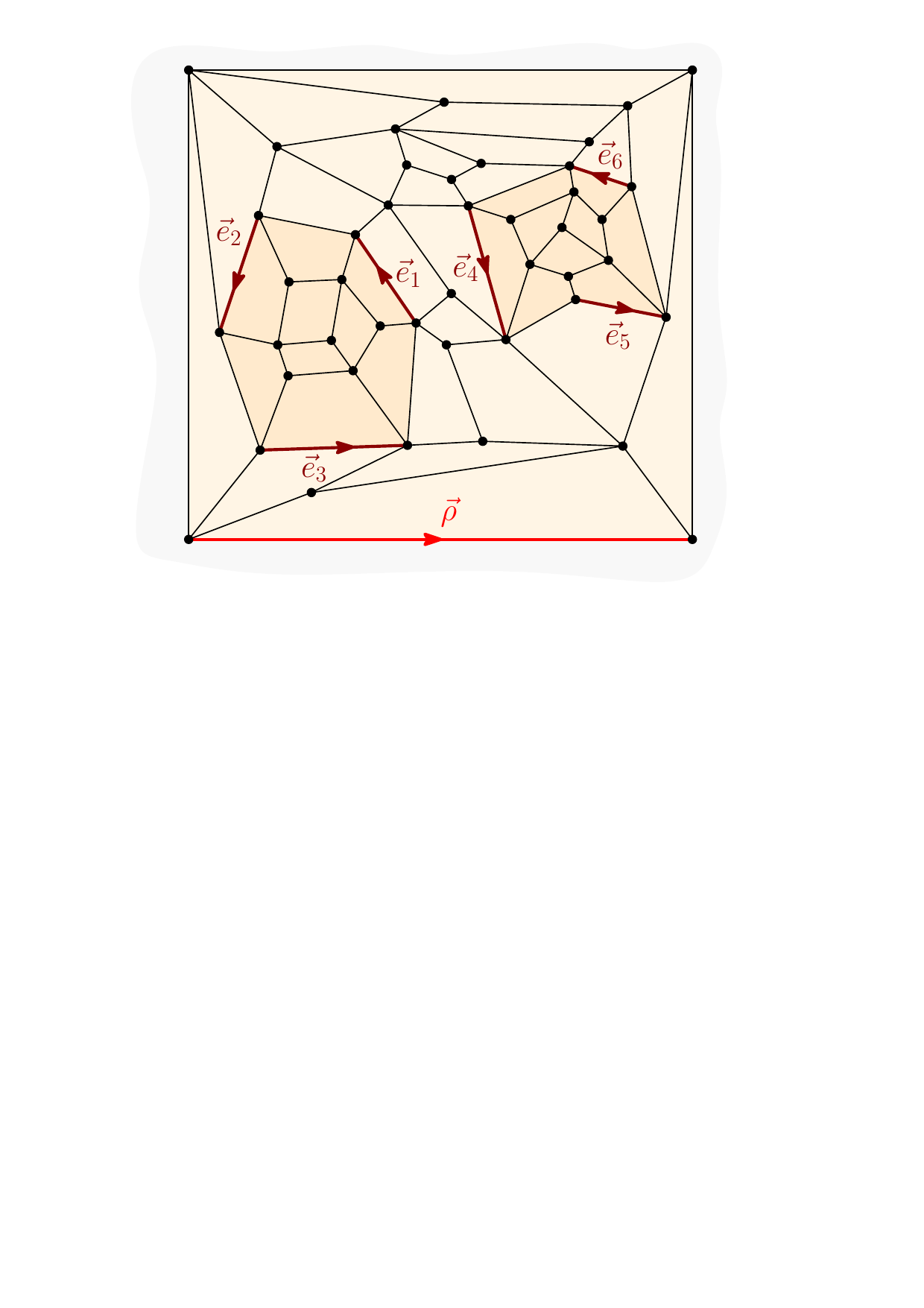}
		&&
		\includegraphics[page=2,scale=.3]{figures/remove_pattern.pdf}
	\end{tabular}
	}
	\includegraphics[page=3,scale=.55]{figures/remove_pattern.pdf}
	\caption{\textit{On the top left}, an irreducible quadrangulation $(\q,\vec\rho)$ together with the oriented edges $(\vec e_i)_{1\leq i\leq 6}$ which are in $\occPat(\q)$.
		Notice how each unrooted copy of $\pat$ comes with three distinct oriented edges belonging to $\occPat(\q)$.
		\textit{On the top right}, the irreducible quadrangulation of the hexagon $\rempat(\q;\vec e_1)$. \textit{At the bottom}, the same map $\rempat(\q;\vec e_1)$ with its root face drawn as the unbounded face.}
	\label{fig:remove-pattern}
\end{figure}

\subsubsection{The importance of the pattern $\pat$}

From now on, the \textit{pattern} will be the quadrangulation of the hexagon $\pat$, depicted in Figure~\ref{fig:the-pattern}.
We could have made other choices for $\pat$, as long as the following proposition is satisfied, which says that $\pat$ does not affect irreducibility upon insertion or removal.

\begin{lemma}\label{lem:pattern-vs-irreducibility}
	Let $\q$ be a quadrangulation and let $\vec e\in\occPat(\q)$.
	Then $\q$ is an irreducible quadrangulation if and only if $\rempat(\q;\vec e)$ is an irreducible quadrangulation of the hexagon.
\end{lemma}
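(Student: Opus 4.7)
The approach is to check both directions of the equivalence by verifying, in each case, the three defining conditions of irreducibility: the face-degree constraint, the absence of multiple edges, and the requirement that every $4$-cycle bound a face. The starting point is the observation that for $\vec e \in \occPat(\q)$, the submaps $\pat$ and $\rempat(\q;\vec e)$ of $\q$ overlap exactly along $\gamma_{\vec e}$: $V(\pat) \cap V(\rempat(\q;\vec e)) = V(\gamma_{\vec e})$ and $E(\pat) \cap E(\rempat(\q;\vec e)) = E(\gamma_{\vec e})$.

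For the forward direction, assume $\q$ is irreducible. The root face of $\rempat(\q;\vec e)$ is bounded by $\gamma_{\vec e}$ and has degree $6$, its other faces are quadrangular faces of $\q$, and any multiple edge in $\rempat(\q;\vec e)$ would be one in $\q$. If $C$ is a $4$-cycle in $\rempat(\q;\vec e)$, irreducibility of $\q$ yields a face $f$ of $\q$ bounded by $C$; if $f$ were on the $\pat$-side of $\gamma_{\vec e}$ then $C$ would be a cycle of $\pat$ and its edges would lie in $E(\pat) \cap E(\rempat(\q;\vec e)) = E(\gamma_{\vec e})$. This is impossible since the simple $6$-cycle $\gamma_{\vec e}$ has no $4$-cycle subgraph, so $f$ is a face of $\rempat(\q;\vec e)$ as required.

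For the reverse direction, assume $\rempat(\q;\vec e)$ is an irreducible quadrangulation of the hexagon. Combining the $9$ quadrangular faces of $\pat$ with the non-hexagonal faces of $\rempat(\q;\vec e)$ yields a quadrangulation $\q$ with at least $9$ faces. The remaining conditions are handled by case analysis: any pair of parallel edges or any $4$-cycle of $\q$ that lies entirely on one side of $\gamma_{\vec e}$ is controlled by irreducibility of $\pat$ or $\rempat(\q;\vec e)$ (noting that a $4$-cycle inside $\rempat(\q;\vec e)$ cannot bound its hexagonal face because the latter has degree $6 \ne 4$). There remain two ``cross-boundary'' scenarios: a pair of parallel edges consisting of an interior edge of $\pat$ and an interior edge of $\rempat(\q;\vec e)$ with common endpoints forced onto $\gamma_{\vec e}$; and, by a short bipartite argument, a $4$-cycle of $\q$ consisting of one interior vertex of $\pat$ and one interior vertex of $\rempat(\q;\vec e)$ together with their two common neighbors on $\gamma_{\vec e}$.

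The main obstacle is ruling out these two cross-boundary configurations, which I will do by direct inspection of the explicit $9$-face pattern in Figure~\ref{fig:the-pattern}: one checks that $\pat$ has no interior edge joining two vertices of its hexagonal boundary, and that no interior vertex of $\pat$ is adjacent to two vertices of its hexagonal boundary. Both are finite combinatorial verifications on a single small map, and they presumably constitute the reason for the specific choice of the pattern $\pat$.
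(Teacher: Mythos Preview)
Your forward direction is fine, but the cross-boundary case analysis in the reverse direction is incomplete. You assert that any cross-boundary $4$-cycle must have the form $u,w_1,v,w_2$ with $u$ interior to $\pat$, $v$ interior to $\rempat(\q;\vec e)$, and $w_1,w_2\in\gamma_{\vec e}$. This is not forced: nothing guarantees an interior vertex of $\rempat(\q;\vec e)$, because you have no analogue of your property (B) for $\rempat(\q;\vec e)$. Concretely, a $4$-cycle can take the shape
\[
w_1 \;-\; a \;-\; b \;-\; w_2 \;-\; w_1,
\]
with $w_1,w_2\in\gamma_{\vec e}$, with $a,b$ interior vertices of $\pat$, and with the closing edge $w_2w_1$ an interior edge of $\rempat(\q;\vec e)$. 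Here each of $a,b$ has only \emph{one} boundary neighbour on the cycle, so your condition ``no interior vertex of $\pat$ is adjacent to two boundary vertices'' does not exclude it; and the closing edge is not in $\pat$, so your condition ``no interior $\pat$-edge between boundary vertices'' is irrelevant. (A short check shows your two properties of $\pat$ do kill all the \emph{other} cross-boundary shapes---three or four boundary vertices, or the $(2,2)$ split---so this $(3,1)$ split is the unique leftover.)

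This leftover is exactly the configuration the paper isolates. The paper's argument rests on a stronger property of $\pat$ (stated in the footnote): any path in $\pat$ between two distinct boundary vertices, other than a single hexagon edge, has length at least $3$, with equality only when the two vertices are hexagon-adjacent. Your two properties amount to ruling out such paths of length $1$ and $2$, but not the length-$3$ paths between \emph{opposite} hexagon vertices. With the stronger property, the $(3,1)$ cycle above forces $w_1,w_2$ to be hexagon-adjacent, whence the interior edge $w_2w_1$ of $\rempat(\q;\vec e)$ is parallel to the hexagon edge $w_1w_2$, contradicting simplicity of $\rempat(\q;\vec e)$. To repair your proof, you must either verify this stronger distance property of $\pat$ by inspection, or at minimum add the check that no length-$3$ path through the interior of $\pat$ joins opposite hexagon vertices.
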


\begin{proof}
	First, it is clear that $\q$ is simple if and only if $\rempat(\q;\vec e)$ is simple.
	Hence we only need to consider the case where $\q$ is simple, which we now assume.
	We let $\q_0$ be the submap of $\q$ isomorphic to $\pat$ which is rooted at $\vec e$.
	By definition, the quadrangulation of the hexagon $\q'=\rempat(\q;\vec e)$ is the submap of $\q$ spanned by the faces of $\q$ which do not belong to $\q_0$.
	We observe that every (non-oriented) simple 4-cycle in $\q$ takes one of the following forms:
	\begin{enumerate}
		\item
		It does not use inner edges of $\q_0$.
		Hence it is a separating 4-cycle in $\q$ if and only if it is a separating 4-cycle in $\q'$. 
		\item
		It uses only edges belonging to $\q_0$.
		Hence it is not separating since there is no separating 4-cycle in $\pat$.
		\item
		It uses at least one inner edge of $\q_0$ and one edge outside of it.
		Then it is necessarily%
		\footnote{
			Indeed, notice that $\pat$ has the property that every path joining two distinct vertices incident to the hexagonal face has length at least $3$, and the inequality is strict unless said vertices are neighbors.
		}
		of the following form: start at some $v_1$ on the boundary of $\q_0$, use the unique path of length 3 in $\q_0$ towards the clockwise neighbor $v_2$ of $v_1$, and close the $4$-cycle using an edge between $v_2$ and $v_1$ which does not belong to $\q_0$.
	\end{enumerate}
	If the quadrangulation $\q$ we started from is simple, then the last case never occurs since there is then a unique edge between $v_2$ and $v_1$, namely the edge which figures in $\q_0$.
	Hence, using the first two cases above, we see that $\q$ is irreducible if and only if $\q'=\rempat(\q;\vec e)$ is irreducible.
\end{proof}

\begin{cor}
	For every $n\geq11$, the random quadrangulation $\Qfilled_n$ in the Construction~\ref{constr:coupling-irred} is irreducible, and therefore takes values in $\ensQirred_n$.
\end{cor}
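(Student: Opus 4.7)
The plan is to apply Lemma~\ref{lem:pattern-vs-irreducibility} in the reverse direction to the one stated. First I would check that $\Qfilled_n$ is indeed a quadrangulation with $n$ faces: by construction it inherits the $n-9$ quadrangular faces of $\Qirrhex_{n-9}$ together with the $9$ quadrangular faces of the inserted copy of $\pat$, while the hexagonal face of $\Qirrhex_{n-9}$ and the hexagonal face of $\pat$ are identified with each other and disappear. Since $n \geq 11 \geq 4$, the candidate $\Qfilled_n$ already passes the ``at least four faces'' condition required for membership in $\ensQirred_n$.

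The core step is to identify a single canonical occurrence of $\pat$ in $\Qfilled_n$ and read off its removal. I would verify that the distinguished oriented edge $\vec\ehex$, which by definition is obtained by reversing the root edge of $\Qirrhex_{n-9}$, belongs to $\occPat(\Qfilled_n)$. The hexagonal face of $\Qirrhex_{n-9}$ sits to the right of its root edge, hence to the left of $\vec\ehex$, and after gluing with reversed orientation it is exactly the interior of the inserted copy of $\pat$ that lies to the left of $\vec\ehex$, enclosed by the 6-cycle along which the gluing took place. Thus $\vec\ehex \in \occPat(\Qfilled_n)$, and by the very definition of $\rempat$, the map $\rempat(\Qfilled_n;\vec\ehex)$ is the submap of $\Qfilled_n$ to the right of $\vec\ehex$ rooted at $\rev(\vec\ehex)$, which is tautologically $\Qirrhex_{n-9}$ with its original rooting.

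Since $\Qirrhex_{n-9} \in \ensQirredhex_{n-9}$ is by hypothesis an irreducible quadrangulation of the hexagon, Lemma~\ref{lem:pattern-vs-irreducibility} immediately yields that $\Qfilled_n$ is an irreducible quadrangulation. The final re-rooting at a uniformly random oriented edge prescribed in step~(2) of Construction~\ref{constr:coupling-irred} plays no role here: irreducibility (at least four faces, no multiple edges, all 4-cycles bound a face) depends only on the underlying unrooted embedded map and is preserved under re-rooting. Hence $\Qfilled_n \in \ensQirred_n$, as asserted.

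There is no real obstacle; the statement is essentially a bookkeeping corollary of Lemma~\ref{lem:pattern-vs-irreducibility}. The only point that deserves a bit of care is the tracking of orientations through the gluing, in order to confirm that the copy of $\pat$ genuinely appears to the \emph{left} of $\vec\ehex$ (as dictated by the reversed-orientation gluing rule), so that $\vec\ehex$ lies in $\occPat(\Qfilled_n)$ rather than an unrelated oriented edge.
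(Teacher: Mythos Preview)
Your proof is correct and follows essentially the same approach as the paper: identify that $\vec\ehex\in\occPat(\Qfilled_n)$ with $\rempat(\Qfilled_n;\vec\ehex)=\Qirrhex_{n-9}$, then apply Lemma~\ref{lem:pattern-vs-irreducibility}. You are simply more explicit than the paper about orientation-tracking and the invariance of irreducibility under re-rooting, which the paper absorbs into the phrase ``by construction''.
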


\begin{proof}
	By construction, we have $\Qirrhex_{n-9}=\rempat(\Qfilled_n;\vec \ehex)$, where the edge $\vec \ehex$ is defined in Construction~\ref{constr:coupling-irred}.
	Since $\Qirrhex_{n-9}$ is irreducible, the result follows by Lemma~\ref{lem:pattern-vs-irreducibility}, and by observing that removing an occurence of $\pat$ in a quadrangulation decreases by $9$ the number of quadrangular faces, since $\pat$ has 9 quadrangular faces.
\end{proof}

\subsection{Controlling the introduced bias}
\label{subsec:controlling-bias-coupling-irred}

Given a quadrangulation $\q$, which may be a quadrangulation of the hexagon, we define $N(\q)$ to be the number of occurrences of $\pat$ in $\q$, that is $N(\q)=\card{\occPat(\q)}$.

\begin{rem}\label{rem:couting-occ-unrooted-vs-rooted}
	Alternatively, $N(\q)$ is three times the number of unrooted submaps of $\q$ which are isomorphic to the unrooted version of $\pat$.
	Indeed, each such unrooted submaps comes with three distinct possible rootings which make them isomorphic as rooted maps to $\pat$, see for instance Figure~\ref{fig:remove-pattern}.
\end{rem}

\begin{prop}\label{prop:comparison-coupling-irred}
	Let $n\geq11$ and let $(\Qirrhex_{n-9},\Qfilled_n,\vec \ehex)$ be as given by Construction~\ref{constr:coupling-irred}.
	Consider $\Qirr_n$ uniformly random in $\in\ensQirred_n$ and let $\vec U_n$ be a uniformly random oriented edge of $\Qirr_n$.
	Then there is a non-zero probability that $\vec U_n\in\occPat(\Qirr_n)$, and for every $\R$-valued function $\phi$ on the set of quadrangulations with a distinguished oriented edge, we have:
	\begin{align*}
	\Expect{\phi(\Qfilled_n,\vec \ehex)}
		=\condExpect{\phi(\Qirr_n,\vec U_n)}{\vec U_n\in\occPat(\Qirr_n)}.
	\end{align*}
\end{prop}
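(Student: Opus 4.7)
The plan is to reduce the statement to a combinatorial bijection, then read off the distributional identity by comparing uniform measures. Let $\Acal_n = \{(\q, \vec e): \q\in\ensQirred_n, \vec e\in\occPat(\q)\}$, and let $\mathcal{T}_n$ denote the set of pairs $(\q', \vec f)$ where $\q'\in\ensQirredhex_{n-9}$ and $\vec f$ is an oriented edge of the quadrangulation obtained by filling the hexagonal face of $\q'$ with $\pat$ (as in Construction~\ref{constr:coupling-irred}). Since the filled map has $n$ faces, $|\mathcal{T}_n| = 4n \cdot |\ensQirredhex_{n-9}|$.

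The first step is to construct a bijection $\mathcal{T}_n \longleftrightarrow \Acal_n$. To $(\q', \vec f) \in \mathcal{T}_n$ I associate the pair $(\widetilde{\q}, \vec e)$ where $\widetilde{\q}$ is the filling of $\q'$ by $\pat$ re-rooted at $\vec f$, and $\vec e = \vec\ehex$ is the oriented edge of $\widetilde{\q}$ corresponding to the reversed root of $\q'$. By construction the inserted copy of $\pat$ realizes an occurrence of $\pat$ at $\vec e$, and Lemma~\ref{lem:pattern-vs-irreducibility} ensures $\widetilde{\q}\in\ensQirred_n$, so $(\widetilde{\q}, \vec e)\in\Acal_n$. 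Conversely, to $(\widetilde{\q}, \vec e)\in\Acal_n$ I associate $\q' = \rempat(\widetilde{\q}; \vec e)$ and $\vec f$ the root edge of $\widetilde{\q}$; Lemma~\ref{lem:pattern-vs-irreducibility} yields $\q'\in\ensQirredhex_{n-9}$, and tracking roots through the insertion/removal shows these two operations are inverse. This gives $|\Acal_n| = 4n \cdot |\ensQirredhex_{n-9}|$, and in particular $|\Acal_n|>0$ for $n$ large enough, yielding the positivity claim $\Prob{\vec U_n\in\occPat(\Qirr_n)}>0$.

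The second step is to compute both laws and check they coincide. By Construction~\ref{constr:coupling-irred}, the law of $(\Qfilled_n, \vec\ehex)$ is supported on $\Acal_n$ and puts mass $\frac{1}{|\ensQirredhex_{n-9}|}\cdot\frac{1}{4n}$ on each of its pairs, because its sampling mechanism is precisely the forward direction of the bijection (uniform $\q'$, then uniform re-rooting). On the other hand, the unconditional law of $(\Qirr_n, \vec U_n)$ is uniform on pairs $(\q, \vec e)$ with $\q\in\ensQirred_n$ and $\vec e$ any oriented edge of $\q$, so conditioning on $\{\vec U_n\in\occPat(\Qirr_n)\}$ yields the uniform law on $\Acal_n$, i.e.\ mass $1/|\Acal_n|$ per pair. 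The count $|\Acal_n| = 4n\cdot|\ensQirredhex_{n-9}|$ shows the two measures agree, and the claimed equality of expectations follows. The main subtlety is purely bookkeeping---keeping track of which oriented edge plays which role through re-rooting and pattern insertion/removal---with no probabilistic input beyond counting; the probabilistic content of Proposition~\ref{prop:dTV-coupling-irred} itself will enter later through a concentration argument for $N(\Qirr_n)$.
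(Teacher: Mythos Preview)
Your proof is correct and follows essentially the same approach as the paper: both arguments show that $(\Qfilled_n,\vec\ehex)$ and the conditional law of $(\Qirr_n,\vec U_n)$ given $\{\vec U_n\in\occPat(\Qirr_n)\}$ are uniform on the set $\Acal_n$, with the key input being Lemma~\ref{lem:pattern-vs-irreducibility} to guarantee that pattern insertion and removal preserve irreducibility. One small quibble: you write ``$|\Acal_n|>0$ for $n$ large enough'', but the statement is for all $n\geq 11$, and the nonemptiness of $\ensQirredhex_{n-9}$ holds throughout this range.
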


\begin{proof}
	The assertion that $\smash{\Prob{\vec U_n\in\occPat(\Qirr_n)}\neq0}$ is clear.
	Now for $\q\in\ensQirred_n$ and $\vec e\in\occPat(\q)$, we can recover $\q$ from $\q'=\rempat(\q;\vec e)$ by filling its hexagonal face with $\pat$ so that $\vec \rho(\pat)$ is identified with $\rev(\vec e)$, and by choosing the correct oriented root edge for the obtained quadrangulation.
	Since by Construction~\ref{constr:coupling-irred}, we have $\vec\ehex\in\occPat(\Qfilled_n)$ and $\smash{\Qirrhex_{n-9}=\rempat(\Qfilled_n,\vec\ehex)}$, the preceding discussion entails that for $\q\in\ensQirred_n$ and $\vec e\in\occPat(\q)$, we have:
	\begin{align*}
	\Prob{%
		\Qfilled_n=\q,\vec\ehex=\vec e
	}
	&= \Prob{%
		\Qirrhex_{n-9}=\rempat(\q;\vec e),\,
		\vec\rho(\Qfilled_n)=\vec \rho(\q)
	}\nonumber\\
	&= \frac{1}{4n\cdot|{\ensQirredhex_{n-9}}|},
	\end{align*}
	where we used the fact that $\Qirrhex_{n-9}$ is uniform in $\ensQirredhex_{n-9}$ and that $\Qfilled_n$ is uniformly re-rooted at the end of Construction~\ref{constr:coupling-irred}, at one of its $4n$ oriented edges (note that $\vec\rho(\Qfilled_n)$ can be an edge of the copy of $\pat$ we used to fill the hexagon).
	
	Hence, the pair $(\Qfilled_n,\vec\ehex)$ is uniformly distributed on the set of pairs $(\q,\vec e)$ where $\q\in\ensQirred_n$ and $\vec e\in\occPat(\q)$.
	In order to conclude, we only need to show that the same goes for the pair $(\Qirr_n,\vec U_n)$ conditioned to the event $\{\vec U_n\in\occPat(\Qirr_n)\}$, which we now do.
	For $\q\in\ensQirred_n$ and $\vec e\in\occPat(\q)$,	we have
	\begin{multline*}
	\condProb{ \Qirr_n=\q,\vec U_n=\vec e	}{\vec U_n\in\occPat(\Qirr_n)}
		=	\frac{%
				\Prob{ \Qirr_n=\q,\vec U_n=\vec e}
			}{%
				\Prob{\vec U_n\in\occPat(\Qirr_n)}
			}\\
		=	\frac{%
			1
		}{%
			4n\cdot|\ensQirred_n|\cdot\Prob{\vec U_n\in\occPat(\Qirr_n)}
		},
	\end{multline*}
	which does not depend on the pair $(\q,\vec e)$, as needed.
\end{proof}

The preceding proposition allows to express the distribution of $\Qfilled_n$ in terms of that of $\Qirr_n$, as follows.

\begin{cor}\label{cor:bias-coupling-irred}
	Let $n\geq11$ and let $\Qfilled_n$ be as given by Construction~\ref{constr:coupling-irred}.
	Let also $\Qirr_n$ be uniformly random in $\in\ensQirred_n$.
	For every mapping $\phi\colon\ensQirred_n\to \R$, we have the identity
	\begin{align*}
	\Expect{\phi(\Qfilled_n)}
		=	\Expect{\phi(\Qirr_n)\beta_n(\Qirr_n)},
	\end{align*}
	where the factor $\beta_n(\q)$ is defined for $\q\in\ensQirred_n$ by:
	\begin{align}\label{eq:expression-bias-coupling-irred}
	\beta_n(\q)=\frac{N(\q)}{\Expect{N(\Qirr_n)}}.
	\end{align}
\end{cor}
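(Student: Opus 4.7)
The plan is to reduce the statement directly to Proposition~\ref{prop:comparison-coupling-irred} by applying it to a function that depends only on the quadrangulation and not on the distinguished oriented edge. Given $\phi\colon\ensQirred_n\to\R$, I would define $\tilde\phi(\q,\vec e)=\phi(\q)$ for any quadrangulation $\q$ with distinguished oriented edge $\vec e$, and then invoke Proposition~\ref{prop:comparison-coupling-irred} to get
\begin{align*}
\Expect{\phi(\Qfilled_n)}
    =\Expect{\tilde\phi(\Qfilled_n,\vec\ehex)}
    =\condExpect{\phi(\Qirr_n)}{\vec U_n\in\occPat(\Qirr_n)}.
\end{align*}

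The remaining task is to rewrite the conditional expectation on the right-hand side as an unconditional expectation against the weight $\beta_n(\Qirr_n)$. Since $\vec U_n$ is uniformly distributed among the $4n$ oriented edges of $\Qirr_n$ and independent from $\Qirr_n$ conditionally on $\Qirr_n$, I would compute
\begin{align*}
\condProb{\vec U_n\in\occPat(\Qirr_n)}{\Qirr_n}=\frac{\card{\occPat(\Qirr_n)}}{4n}=\frac{N(\Qirr_n)}{4n},
\end{align*}
and therefore $\Prob{\vec U_n\in\occPat(\Qirr_n)}=\Expect{N(\Qirr_n)}/(4n)$ by taking expectations. Applying Bayes' formula then yields
\begin{align*}
\condExpect{\phi(\Qirr_n)}{\vec U_n\in\occPat(\Qirr_n)}
=\frac{\Expect{\phi(\Qirr_n)\,\indic{\{\vec U_n\in\occPat(\Qirr_n)\}}}}{\Prob{\vec U_n\in\occPat(\Qirr_n)}}
=\frac{\Expect{\phi(\Qirr_n)\cdot N(\Qirr_n)/(4n)}}{\Expect{N(\Qirr_n)}/(4n)},
\end{align*}
which simplifies to $\Expect{\phi(\Qirr_n)\,\beta_n(\Qirr_n)}$ with $\beta_n(\q)=N(\q)/\Expect{N(\Qirr_n)}$, as wanted.

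There is no real obstacle: once Proposition~\ref{prop:comparison-coupling-irred} is granted, this corollary is a one-line manipulation of a conditional expectation. The only point requiring care is to justify that $\Prob{\vec U_n\in\occPat(\Qirr_n)}>0$ so that the conditional expectation makes sense, which is already asserted in Proposition~\ref{prop:comparison-coupling-irred} (and amounts to noting that $\Qirr_n$ contains at least one copy of $\pat$ with positive probability for $n$ large enough, for instance through the gluing construction already used to define $\Qfilled_n$).
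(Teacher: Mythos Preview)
Your proof is correct and follows essentially the same approach as the paper: apply Proposition~\ref{prop:comparison-coupling-irred} with a function depending only on the quadrangulation, then use $\condProb{\vec U_n\in\occPat(\Qirr_n)}{\Qirr_n}=N(\Qirr_n)/(4n)$ to rewrite the conditional expectation as an unconditional one against the bias $\beta_n$. The paper obtains the normalization $\Expect{N(\Qirr_n)}=4n\cdot\Prob{\vec U_n\in\occPat(\Qirr_n)}$ by specializing to $\phi\equiv 1$, which is the same computation you do by taking expectations.
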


\begin{proof}
	Let $\phi\colon\ensQirred_n\to \R$ be a fixed mapping.
	If $\vec U_n$ is a uniformly random oriented edge of $\Qirr_n$, then we have $\condProb{U_n\in\occPat(\Qirr_n)}{\Qirr_n}=N(\Qirr_n)/4n$.
	Hence by Proposition~\ref{prop:comparison-coupling-irred}, we get:
	\begin{align*}
	\Expect{\phi(\Qfilled_n)}
		=	\frac{%
				\Expect{%
					\phi(\Qirr_n)\cdot
					\indic{ \{\vec U_n\in\occPat(\Qirr_n)\} }}
			}{%
				\Prob{\vec U_n\in\occPat(\Qirr_n)}
			}
		=	\frac{%
				\Expect{%
					\phi(\Qirr_n)\cdot
					\frac{N(\Qirr_n)}{4n}}
			}{%
				\Prob{\vec U_n\in\occPat(\Qirr_n)}
			}.
	\end{align*} 
	The result follows by observing that if we apply the above with $\phi\equiv 1$, then we get $\Expect{N(\Qirr_n)}=4n\cdot\P{(\vec U_n\in\occPat(\Qirr_n))}$.
\end{proof}

Corollary~\ref{cor:bias-coupling-irred} tells us that it is sufficient to prove that $\beta_n(\Qfilled_n)\to1$ in the $L^1$ sense to get Proposition~\ref{prop:dTV-coupling-irred}.
We will actually prove that this convergence holds in $L^2$.
Let us begin with a simple preparatory lemma.

\begin{lemma}\label{lem:number-occ-after-removal}
	Let $n\geq11$.
	For $\q\in\ensQirred_n$ and $\vec e\in\occPat(\q)$, we have the identity $N(\q)=N(\rempat(\q;\vec e))+3$.
\end{lemma}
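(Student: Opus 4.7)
The plan is first to use Remark~\ref{rem:couting-occ-unrooted-vs-rooted} to reduce the identity $N(\q) = N(\rempat(\q;\vec e)) + 3$ to the analogous statement about \emph{unrooted} copies of $\pat$, with ``$+3$'' replaced by ``$+1$'', since each unrooted copy contributes exactly three rooted occurrences. Concretely, letting $\q_0$ denote the unrooted copy of $\pat$ at $\vec e$ (bounded by the simple $6$-cycle $C$) and $\q' := \rempat(\q;\vec e)$, I aim to establish the bijection
\[
\bigl\{\text{unrooted copies of } \pat \text{ in } \q\bigr\} \;\longleftrightarrow\; \{\q_0\}\,\sqcup\,\bigl\{\text{unrooted copies of } \pat \text{ in } \q'\bigr\}.
\]

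One direction is immediate: re-attaching the removed pattern yields an inclusion of $\q'$ as a submap of $\q$, and this inclusion sends each unrooted copy of $\pat$ in $\q'$ to an unrooted copy in $\q$, necessarily distinct from $\q_0$ since the nine interior faces of $\q_0$ are not present in $\q'$. For the reverse direction, I must show that every unrooted copy $\q_1$ of $\pat$ in $\q$ with $\q_1 \neq \q_0$ lies on the exterior side of $C$, and is therefore contained in $\q'$.

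This ``separation'' of distinct copies of $\pat$ is the main obstacle. The plan is to distinguish two cases based on the bounding $6$-cycle $C_1$ of $\q_1$. If $C_1$ equals $C$ as an edge set, then $\q_0$ and $\q_1$ are forced to occupy opposite sides of the same cycle (otherwise $\q_1 = \q_0$), so $\q_1 \subset \q'$. Otherwise $C$ and $C_1$ are distinct simple $6$-cycles, and I would rule out any overlap of their interiors by a case analysis modeled on that in the proof of Lemma~\ref{lem:pattern-vs-irreducibility}: any common edge of $C$ and $C_1$ or any crossing at a shared vertex would, together with the irreducibility of $\q$ (every $4$-cycle bounds a face) and the internal rigidity of $\pat$ (it contains no non-facial $4$-cycle and no $6$-cycle other than its boundary enclosing all nine quadrangular faces on one side), produce a short separating cycle forcing either $\q_0 = \q_1$ or a violation of irreducibility. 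Combining the two directions then yields the bijection, and hence $N(\q) = N(\q') + 3$.
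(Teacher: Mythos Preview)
Your proposal is correct and takes essentially the same approach as the paper: both reduce to unrooted copies via Remark~\ref{rem:couting-occ-unrooted-vs-rooted} and then argue that two distinct unrooted copies of $\pat$ in $\q$ cannot overlap, so that removing one decreases the count by exactly one. The paper compresses the non-overlap argument into a single ``easily seen'', whereas you spell out the bijection and sketch a case analysis on the bounding $6$-cycles; your plan is sound, though you should also explicitly dispose of the case where $C$ and $C_1$ are vertex-disjoint (then nesting is impossible since $\pat$ has only nine interior faces, and disjoint interiors give $\q_1\subset\q'$ directly).
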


\begin{proof}
		It is easily seen that two unrooted copies of $\pat$ in $\q$ can overlap if and only if they are the same copy, so that ``removing'' one copy by forming $\rempat(\q;\vec e)$ decreases by exactly one the number of unrooted copies.
		The result follows since $N(\q)$ is three times the number of such copies by Remark~\ref{rem:couting-occ-unrooted-vs-rooted}.
\end{proof}

The next lemma will be central to our argument to prove that $N(\Qirr_n)$ is concentrated.
It relies on local limit results by Addario-Berry~\cite{Addario-Berry14}.

\begin{lemma}\label{lem:local-lim-irred}
	Let $n\geq11$.
	We let $(\Qirrhex_{n-9},\Qfilled_n,\vec \ehex)$ be given by Construction~\ref{constr:coupling-irred} and we let $\Qirr_n$ be uniformly random in $\ensQirred_n$.
	For $n\geq11$, we let $\smash{\vec U_{n}}$ and $\smash{\vec U'_{n}}$ be uniformly random oriented edges of $\Qirr_n$ and $\Qirrhex_{n-9}$ respectively.
	Then the following limits exist and are equal:
	\begin{align}\label{eq:lim-proba-local-lim-irred}
	\lim_{n\to\infty}\Prob{\vec U_n\in\occPat(\Qirr_n)}
	= \lim_{n\to\infty}\Prob{\vec U'_n\in\occPat(\Qirredhex_{n-9})}.
	\end{align}
	Furthermore, if we denote by $c$ the common limit, then we have $c>0$.
\end{lemma}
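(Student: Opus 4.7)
The plan is to invoke Benjamini--Schramm type local convergence for both models, and observe that the event in question is local.

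First I would identify the probabilities in \eqref{eq:lim-proba-local-lim-irred} as \emph{local} statistics: whether $\vec e \in \occPat(\q)$ depends only on the combinatorial ball of radius $\diam(\pat) + 1$ around $\vec e$ in $\q$, so $\indic{\{\vec e \in \occPat(\q)\}}$ is continuous in the local topology on rooted maps. Consequently, if I can show that both pointed random maps $(\Qirredhex_{n-9}, \vec U'_n)$ and $(\Qirr_n, \vec U_n)$ converge in distribution (for the local topology) to the \emph{same} random infinite pointed map $(\m_\infty, \vec \rho_\infty)$, then both probabilities will converge to $c := \Prob{\vec \rho_\infty \in \occPat(\m_\infty)}$.

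For local convergence of $(\Qirredhex_{n-9}, \vec U'_n)$, I would invoke Addario-Berry's analysis~\cite{Addario-Berry14} (based on the increasing coupling of Proposition~\ref{prop:Addario-Berry-coupling-irred} together with the binary-tree bijection of Fusy--Poulalhon--Schaeffer~\cite{FusySchaefferPoulalhon08} and standard local limits for binary trees). For $(\Qirr_n, \vec U_n)$, I would then transfer to the same limit by noting that the hexagonal face in $\Qirredhex_{n-9}$ is at graph distance tending to infinity from a uniformly chosen oriented edge, with probability $1 - o(1)$: it is a single macroscopic feature of bounded perimeter, and a uniformly chosen edge in a large random irreducible quadrangulation (with or without a hexagonal face) is ``macroscopically far'' from any such feature. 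Thus, around a typical edge, the two models admit the same local limit.

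Positivity of $c$ would then follow from a bijective and enumerative estimate: by Lemma~\ref{lem:pattern-vs-irreducibility}, the map $(\q, \vec e) \mapsto \rempat(\q;\vec e)$ induces a bijection from $\{(\q, \vec e) : \q \in \ensQirred_n,\, \vec e \in \occPat(\q)\}$ onto $\ensQirredhex_{n-9}$, so
\[
\Prob{\vec U_n \in \occPat(\Qirr_n)} = \frac{|\ensQirredhex_{n-9}|}{4n \cdot |\ensQirred_n|};
\]
combined with the classical asymptotics $|\ensQirred_n| \sim a\,n^{-5/2}\rho^n$ and $|\ensQirredhex_n| \sim b\,n^{-3/2}\rho^n$ (going back to Mullin--Schellenberg~\cite{MullinSchellenberg68}), this ratio converges to a strictly positive constant. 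The main obstacle is identifying a common Benjamini--Schramm local limit for the two models; once this is granted (\emph{e.g.}, via Addario-Berry's combinatorics together with the ``invisible hex face'' observation above), the rest of the argument is straightforward.
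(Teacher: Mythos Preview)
Your overall strategy---use Benjamini--Schramm local convergence for both models, observe that the event is local, and compute positivity via the bijection with $\ensQirredhex_{n-9}$ and exact asymptotics---is precisely the paper's approach. The paper cites \cite{Addario-Berry14} directly for the fact that \emph{both} $(\Qirr_n,\vec U_n)$ and $(\Qirredhex_{n-9},\vec U'_n)$ converge to the same local limit (Addario-Berry passes from the hexagon model to the sphere model by closing the hexagon with a diagonal edge and conditioning on irreducibility, rather than your ``the hexagon is far from a uniform edge'' heuristic, though the spirit is the same).

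However, your positivity computation contains two errors that happen to cancel. First, the map $(\q,\vec e)\mapsto \rempat(\q;\vec e)$ is \emph{not} a bijection onto $\ensQirredhex_{n-9}$: it is $4n$-to-$1$, since $\rempat(\q;\vec e)$ is rooted at $\rev(\vec e)$ and forgets the original root of $\q$. The correct identity (after using re-rooting invariance of the uniform law on $\ensQirred_n$, as the paper does) is
\[
\Prob{\vec U_n\in\occPat(\Qirr_n)}
=\Prob{\vec\rho(\Qirr_n)\in\occPat(\Qirr_n)}
=\frac{|\ensQirredhex_{n-9}|}{|\ensQirred_n|},
\]
with no factor of $4n$. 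Second, your asymptotic $|\ensQirredhex_n|\sim b\,n^{-3/2}\rho^n$ is wrong: the exact formula from \cite{FusySchaefferPoulalhon08} gives $|\ensQirredhex_{n+3}|=\tfrac{6}{(n+2)(n+1)}\binom{2n}{n}$, which has exponent $-5/2$, the same as $|\ensQirred_n|$. With the correct exponents and the correct formula (no $4n$), the ratio converges to the positive constant $3^6/4^{12}$; with your version, the two mistakes fortuitously compensate, but each step is incorrect as written.
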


\begin{proof}
	It is a consequence%
	\footnote{Addario--Berry proves in Corollary~4.4.~of \cite{Addario-Berry14} that $(\Qirredhex_{n})_n$ admits a Benjamini--Schramm local limit, and in the proof of his Theorem~5.2., he shows that if we add uniformly an edge in $\Qirredhex_{n}$ which cuts the hexagon into two quadrangles, then conditioning on the event that the resulting quadrangulation is irreducible does not change the local limit \textit{around a uniformly random oriented edge}.
		Under such a conditioning, the resulting irreducible quadrangulation is uniform in $\ensQirred_{n+1}$, as is easily verified and implicitly used in the proof of \cite[Theorem 5.2]{Addario-Berry14}.}
	of results of Addario-Berry in \cite{Addario-Berry14} that $\Qirr_n$ and $\Qirredhex_{n-9}$, re-rooted at a uniformly random oriented edge respectively, both admit a Benjamini--Schramm \textit{local limit} in distribution as $n\to\infty$, and that the limits are the same in distribution.
	This means that if $\smash{\vec U_{n}}$ and $\smash{\vec U'_{n}}$ are uniformly random oriented edges of $\Qirr_n$ and $\Qirrhex_{n-9}$ respectively, then for every $k\geq1$, there exists a probability measure $p^{(k)}(\diff\m)$ on planar maps such that for every set $\Scal$ of finite planar maps, we have
	\begin{align}\label{eq:local-lim-irred-quad}
	\lim_{n\to\infty}\Prob{\ball^{\Qirr_n}(\vec U_n,k)\in\Scal}
	= \lim_{n\to\infty}\Prob{\ball^{\Qirrhex_{n-9}}(\vec U'_n,k)\in\Scal}
	=p^{(k)}(\Scal),
	\end{align}
	where $\ball^\q(\vec e,k)$ is the ball of radius $k$ centered at the base vertex of the oriented edge $\vec e$, in a quadrangulation $\q$ seen as a planar map rooted at $\vec e$.
	
	Since $\pat$ has diameter $5$, we have for every $k\geq5$ the identities of events $\{\vec U_n\in\occPat(\Qirr_n)\}=\{\ball^{\Qirr_n}(\vec U_n,k)\in\Scal_{\pat}\}$ and  $\{\vec U'_n\in\occPat(\Qirredhex_{n-9})\}=\{\ball^{\Qirrhex_{n-9}}(\vec U'_n,k)\in\Scal_{\pat}\}$, where $\Scal_{\pat}$ is the set of finite planar maps which have an occurrence of $\pat$ at their oriented root edge.
	Hence by \eqref{eq:local-lim-irred-quad}, then
	\begin{align*}
	\lim_{n\to\infty}\Prob{\vec U_n\in\occPat(\Qirr_n)}
	= \lim_{n\to\infty}\Prob{\vec U'_n\in\occPat(\Qirredhex_{n-9})}
	=p^{(5)}(\Scal_{\pat}).
	\end{align*}
	It remains to justify that $c=p^{(5)}(\Scal_{\pat})$ is positive.
	Since the uniform distribution on $\ensQirred_n$ is invariant under the operation of re-rooting at a uniformly random oriented edge, we have that
	\begin{align*}
	c	=\lim_{n\to\infty}\Prob{\vec U_n\in\occPat(\Qirr_n)}
		= \lim_{n\to\infty}\Prob{\vec\rho(\Qirr_n)\in\occPat(\Qirr_n)},
	\end{align*}
	where $\vec\rho(\Qirr_n)$ is the oriented root edge of $\Qirr_n$.
	Consider the operation of filling with $\pat$ the hexagon in an irreducible quadrangulation of the hexagon $\q$ such that the root edges of $\pat$ and $\q$ are matched, and then rooting at the root edge of $\q$ with a reversed orientation.
	Clearly this defines a bijection between $\ensQirredhex_{n-9}$ and the subset of $\ensQirred_n$ consisting of irreducible quadrangulations which have an occurrence of $\pat$ at their oriented root edge.
	Hence, we get that:
	\begin{align*}
	\Prob{\vec U_n\in\occPat(\Qirr_n)}
		=	\frac{|\ensQirredhex_{n-9}|}{|\ensQirred_n|}.
	\end{align*}
	By known enumerative results of Fusy, Poulalhon, and Schaeffer \cite{FusySchaefferPoulalhon08}, we have the expression%
		\footnote{
			The formula for $|\ensQirredhex_{n+3}|$ is found in Corollary 4.9 in \cite{FusySchaefferPoulalhon08}.
			Note that in their formulation, $|\mathcal D'_n|$ counts irreducible quadrangulations of the hexagon having $n$ vertices, that is $n+2$ quadrangular faces, or $n+3$ faces in total.
		}
	\begin{align*}
	|\ensQirredhex_{n+3}|=\frac{6}{(n+2)(n+1)}\binom{2n}{n}\sim \frac{6}{\sqrt\pi}\cdot\frac{4^n}{n^{5/2}}.
	\end{align*}
	On the other hand, by results from Tutte's seminal paper \cite{Tutte63}, we have the asymptotic:
	\begin{align*}
	|\ensQirred_n|\sim \frac{2}{3^5\sqrt\pi}\cdot\frac{4^n}{n^{5/2}}.
	\end{align*}
	We deduce from the two last displays that $\lim_n |\ensQirredhex_{n-9}|/|\ensQirred_n|=3^6/4^{12}$, so that the constant $c$ is indeed positive.
\end{proof}

We can now justify the asymptotic concentration of $N(\Qirr_n)/\Expect{N(\Qirr_n)}$.

\begin{lemma}\label{lem:conv-proba-number-occ}
	We have $N(\Qirr_n)/\Expect{N(\Qirr_n)}\to 1$ in the $L^2$ sense, as $n\to\infty$.
\end{lemma}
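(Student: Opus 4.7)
The plan is to reduce the $L^2$ convergence to a second-moment estimate and then extract that estimate from the size-biasing identity already encoded in Proposition~\ref{prop:comparison-coupling-irred}, together with the removal identity of Lemma~\ref{lem:number-occ-after-removal} and the local-limit asymptotics of Lemma~\ref{lem:local-lim-irred}. Since $R_n := N(\Qirr_n)/\Expect{N(\Qirr_n)}$ has mean $1$ by definition, proving $R_n\to 1$ in $L^2$ amounts to showing $\Expect{N(\Qirr_n)^2} = (1+o(1))\Expect{N(\Qirr_n)}^2$. By Lemma~\ref{lem:local-lim-irred}, if $c>0$ denotes the common limit appearing there, the normalization is of the right scale: $\Expect{N(\Qirr_n)} = 4n\cdot\Prob{\vec U_n\in\occPat(\Qirr_n)} \sim 4cn$.

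The key step is a size-biasing computation. Letting $\vec U_n$ be a uniformly random oriented edge of $\Qirr_n$, a direct conditioning gives
\begin{align*}
\condExpect{N(\Qirr_n)}{\vec U_n\in\occPat(\Qirr_n)}
    = \frac{\Expect{N(\Qirr_n)\cdot|\occPat(\Qirr_n)|/(4n)}}{\Prob{\vec U_n\in\occPat(\Qirr_n)}}
    = \frac{\Expect{N(\Qirr_n)^2}}{\Expect{N(\Qirr_n)}}.
\end{align*}
On the other hand, Proposition~\ref{prop:comparison-coupling-irred} applied with $\phi(\q,\vec e)=N(\q)$ identifies this same conditional expectation with $\Expect{N(\Qfilled_n)}$. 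But by Construction~\ref{constr:coupling-irred} we have $\Qirrhex_{n-9}=\rempat(\Qfilled_n;\vec\ehex)$, so Lemma~\ref{lem:number-occ-after-removal} yields $N(\Qfilled_n) = N(\Qirrhex_{n-9})+3$. Combining, we obtain the clean identity
\begin{align*}
\Expect{N(\Qirr_n)^2} = \Expect{N(\Qirr_n)}\cdot\Bigl(\Expect{N(\Qirrhex_{n-9})}+3\Bigr).
\end{align*}

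To conclude, both $\Expect{N(\Qirr_n)}\sim 4cn$ and $\Expect{N(\Qirrhex_{n-9})} = 4(n-9)\cdot\Prob{\vec U'_{n-9}\in\occPat(\Qirredhex_{n-9})}\sim 4cn$ by Lemma~\ref{lem:local-lim-irred}. Thus
\begin{align*}
\frac{\Expect{N(\Qirr_n)^2}}{\Expect{N(\Qirr_n)}^2}
    = \frac{\Expect{N(\Qirrhex_{n-9})}+3}{\Expect{N(\Qirr_n)}}
    \xrightarrow[n\to\infty]{} \frac{4c}{4c} = 1,
\end{align*}
which gives the desired $L^2$ concentration. I do not anticipate a real obstacle: every ingredient has been set up in the preceding subsections, and the only point to watch is that Proposition~\ref{prop:comparison-coupling-irred} is applied to a function $\phi$ that happens not to depend on the marked edge, which is consistent with its statement.
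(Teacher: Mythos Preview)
Your argument is correct and follows essentially the same route as the paper: size-bias via Proposition~\ref{prop:comparison-coupling-irred} to express $\Expect{N(\Qirr_n)^2}$ in terms of $\Expect{N(\Qfilled_n)}$, then Lemma~\ref{lem:number-occ-after-removal} and Lemma~\ref{lem:local-lim-irred} to close the loop. One small slip: the number of oriented edges of $\Qirrhex_{n-9}$ is $2(2n-15)$ rather than $4(n-9)$, but since both are $\sim 4n$ this does not affect the asymptotic and your conclusion stands.
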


\begin{proof}
	Let us put ourselves in the setting of Lemma~\ref{lem:local-lim-irred}, and use the same notation for $\vec U_n$ and $\vec U'_n$.
	The first convergence in \eqref{eq:lim-proba-local-lim-irred} from this lemma gives an estimate on the first moment of $N(\Qfilled_n)$, since it can be re-written as:
	\begin{align}\label{eq:proof-cvg-bias-coupling-irred-0}
	\lim_{n\to\infty}\frac{1}{4n}\cdot \Expect{N(\Qirr_n)}
		=c,
	\qquad\text{that is}\quad
	\Expect{N(\Qirr_n)}\sim 4nc,
	\end{align}
	for some $c>0$.
	Let us now estimate the second moment of $N(\Qirr_n)$.
	Since $\vec U_n$ is a uniformly random oriented edge of $\Qirr_n$, we have:
	\begin{multline*}
	\Expect{N(\Qirr_n)^2}
		=	4n \cdot\Expect{N(\Qirr_n)\cdot\indic{\{\vec U_n\in\occPat(\Qirr_n)\}}}\\
		=	4n \cdot
			\Prob{\vec U_n\in\occPat(\Qirr_n)}
			\cdot
			\condExpect{N(\Qirr_n)}{\vec U_n\in\occPat(\Qirr_n)}.
	\end{multline*}
	Hence, using Proposition~\ref{prop:comparison-coupling-irred} and the last display, we get:
	\begin{align}\label{eq:proof-cvg-bias-coupling-irred-1}
	\Expect{N(\Qirr_n)^2}
		&= 4n \cdot
			\Prob{\vec U_n\in\occPat(\Qirr_n)}
			\cdot
			\Expect{N(\Qfilled_n)}\nonumber \\
		&\sim  4n c
			\cdot
			\Expect{N(\Qfilled_n)},
	\end{align}
	where we used \eqref{eq:lim-proba-local-lim-irred} again, to get the asymptotic equivalence.
	Now in Construction~\ref{constr:coupling-irred}, we have $\Qirredhex_{n-9}=\rempat(\Qfilled_n;\vec\ehex)$, and thus $N(\Qfilled_n)=N(\Qirrhex_{n-9})+3$ by Lemma~\ref{lem:number-occ-after-removal}.
	Since $\vec U'_n$ is a uniformly random oriented edge of $\Qirrhex_{n-9}$, and since $\Qirrhex_{n-9}$ has $2(2n-15)$ oriented edges, we obtain:
	\begin{align*}
	\condProb{\vec U'_n\in\occPat(\Qirrhex_{n-9})}{\Qirrhex_{n-9}}
		=\frac{N(\Qirrhex_{n-9})}{2(2n-15)}
		=\frac{N(\Qfilled_n)-3}{2(2n-15)}
	\end{align*}
	The expectation of the left-hand side converges to $c>0$ by \eqref{eq:lim-proba-local-lim-irred}, so that the last display gives:
	\begin{align*}
	\Expect{N(\Qfilled_n)}\sim 4nc.
	\end{align*}
	If we combine this with \eqref{eq:proof-cvg-bias-coupling-irred-1}, we get:
	\begin{align}\label{eq:proof-cvg-bias-coupling-irred-2}
	\Expect{N(\Qirr_n)^2}
		\sim 16n^2 c^2.
	\end{align}
	Wrapping up, by \eqref{eq:proof-cvg-bias-coupling-irred-0} and \eqref{eq:proof-cvg-bias-coupling-irred-2}, we have that $\Expect{N(\Qirr_n)^2}\sim\Expect{N(\Qirr_n)}^2$.
	Hence,
	\begin{align*}
	\Var\Bigl(\frac{N(\Qirr_n)}{\Expect{N(\Qirr_n)}}\Bigr)
	=\frac{\Expect{N(\Qirr_n)^2}-\Expect{N(\Qirr_n)}^2}{\Expect{N(\Qirr_n)}^2}
	\xrightarrow[n\to\infty]{}0.
	\end{align*}
	Since furthermore the random variable $N(\Qirr_n)/\Expect{N(\Qirr_n)}$ has mean $1$, the preceding proves that $N(\Qirr_n)/\Expect{N(\Qirr_n)}\to 1$ in $L^2$ sense.
\end{proof}

Wrapping up, we can now prove Proposition~\ref{prop:dTV-coupling-irred}.

\begin{proof}[Proof of Proposition~\ref{prop:dTV-coupling-irred}]
	We have $\beta_n(\Qirr_n)=N(\Qirr_n)/\Expect{N(\Qirr_n)}\to 1$ in the $L^2$ sense, and therefore also in the $L^1$ sense by the Cauchy--Schwarz inequality.
	Hence $\dTV(\Law(\Qfilled_n),\Law(\Qirr_n))\to 0$ by Corollary~\ref{cor:bias-coupling-irred}.
\end{proof}


\section{Conclusion}

We have detailed all the different steps of our proof of Theorem~\ref{thm:scaling-limit-irred} whose structure is laid out in Section~\ref{sec:structure-proof}, thus establishing that the Brownian sphere is the scaling limit of uniformly random \textit{irreducible} quadrangulations with $n$ faces.
We believe that our method should adapt to other models, and in particular to \textit{simple quadrangulations} and \textit{simple triangulations}.
This would give a new proof of a result of Addario-Berry and Albenque \cite{AddarioBerryAlbenque17}.
The main ingredient we need to adapt is the existence of couplings by ``face-openings'' for uniformly random simple quadrangulations/triangulations with $n$ faces, $n\geq 2$.
In the triangulation case, such couplings have been constructed by Caraceni and Stauffer in \cite{CaraceniStauffer23}, and we believe that their methods can be adapted to construct such couplings in the quadrangulation case.

Let us also note that the methodology developed by Addario-Berry and Wen in \cite{AddarioBerryWen17} could also be used to deduce the scaling limit of simple quadrangulations from that of its irreducible blocks, whose scaling limit can be deduced from our Theorem~\ref{thm:scaling-limit-irred}.
This gives a second new proof path for this result.

Lastly, let us mention that if we only want to prove that a uniformly random largest simple component $\Qsfrak_n$ of $Q_n\sim\rmUnif(\Quads_n)$ converges jointly with $Q_n$ to the same Brownian sphere (thus bypassing the arguments of Sections~\ref{sec:bottlenecks-and-Hausdorff-cvg} and~\ref{sec:exch-and-Prokhorov}),
then we can do as follows.
\begin{enumerate}
	\item Our proof of Theorem~\ref{thm:scaling-limit-irred} shows that $(Q_n,\Qirrfrak_n)$ suitably renormalized converge jointly to the same Brownian sphere, where  $\Qirrfrak_n$ is a uniformly random largest irreducible component of $\Qsfrak_n$.
	\item Given $\q$ a quadrangulation, $\qsim$ a simple component of $\q$, and $\qirr$ an irreducible component of $\qsim$, the corresponding measured metric spaces satisfy
	\begin{align*}
	\X(\qirr)\orderGHP\X(\qsim)\orderGHP \X(\q).
	\end{align*}
	\item We can therefore conclude by a straightforward application of our GHP sandwich theorem given as Theorem~\ref{thm:sandwich-thm-GHP-random} that $(Q_n,\Qsfrak_n,\Qirrfrak_n)$ suitably renormalized converge jointly to the same Brownian sphere.
\end{enumerate}
We let the reader fill in the details, and in particular we leave item 2.~above as an exercise to the reader.

\printbibliography

\end{document}